\documentclass[11pt,reqno]{amsart}
\usepackage[utf8]{inputenc}

\usepackage[margin=1in]{geometry}
\usepackage{amsmath}
\usepackage{amsfonts}
\usepackage{amssymb}
\usepackage{amsthm}
\usepackage{enumitem}
\usepackage{mathrsfs}
\usepackage{mathtools}
\usepackage{tikz-cd}
\usepackage{enumitem}

\usepackage[colorlinks=true,hyperindex, linkcolor=magenta, pagebackref=false, citecolor=cyan,pdfpagelabels]{hyperref}

\newtheorem{theorem}{Theorem}[section]
\newtheorem{prop}[theorem]{Proposition}
\newtheorem{lemma}[theorem]{Lemma}
\newtheorem{cor}[theorem]{Corollary}

\theoremstyle{definition}
\newtheorem{definition}[theorem]{Definition}
\newtheorem{example}[theorem]{Example}
\newtheorem{remark}[theorem]{Remark}

\newtheorem{question}[theorem]{Question}

\renewcommand{\rm}{\mathrm}

\newcommand{\Spec}{\mathrm{Spec} \,}

\newcommand{\eps}{\epsilon}

\newcommand{\vp}{\varphi}

\newcommand{\bA}{{\mathbf A}}

\newcommand{\bF}{{\mathbf F}}
\newcommand{\bG}{{\mathbf G}}

\newcommand{\bP}{{\mathbf P}}
\newcommand{\bQ}{{\mathbf Q}}

\newcommand{\bZ}{{\mathbf Z}}

\newcommand{\bt}{{\mathbf t}}

\newcommand{\cG}{\mathcal{G}}

\newcommand{\cO}{\mathcal{O}}

\newcommand{\cT}{\mathcal{T}}

\newcommand{\sA}{{\mathscr A}}

\newcommand{\sC}{{\mathscr C}}
\newcommand{\sD}{{\mathscr D}}

\newcommand{\sL}{{\mathscr L}}

\newcommand{\sN}{{\mathscr N}}
\newcommand{\sO}{{\mathscr O}}

\newcommand{\sU}{{\mathscr U}}

\newcommand{\sX}{{\mathscr X}}
\newcommand{\sY}{{\mathscr Y}}

\newcommand{\fb}{\mathfrak{b}}

\newcommand{\fg}{\mathfrak{g}}

\newcommand{\fm}{\mathfrak{m}}
\newcommand{\fn}{\mathfrak{n}}

\newcommand{\fp}{\mathfrak{p}}
\newcommand{\fq}{\mathfrak{q}}
\newcommand{\fr}{\mathfrak{r}}

\newcommand{\fu}{\mathfrak{u}}

\newcommand{\fz}{\mathfrak{z}}

\newcommand{\bGa}{\bG_{\rm{a}}}
\newcommand{\bGm}{\bG_{\rm{m}}}

\DeclareMathOperator{\Ad}{Ad}

\DeclareMathOperator{\Aut}{Aut}
\DeclareMathOperator{\chara}{char}
\DeclareMathOperator{\Cl}{Cl}

\DeclareMathOperator{\diag}{diag}

\DeclareMathOperator{\GL}{GL}

\DeclareMathOperator{\Hom}{Hom}

\DeclareMathOperator{\id}{id}
\DeclareMathOperator{\im}{im}

\DeclareMathOperator{\Lie}{Lie}

\DeclareMathOperator{\Pic}{Pic}

\DeclareMathOperator{\PGL}{PGL}

\DeclareMathOperator{\SL}{SL}
\DeclareMathOperator{\SO}{SO}
\DeclareMathOperator{\Sp}{Sp}
\DeclareMathOperator{\Spin}{Spin}

\DeclareMathOperator{\Stab}{Stab}

\DeclareMathOperator{\Transp}{Transp}

\newcommand{\wtil}[1]{\wtil}

\newcommand{\ov}[1]{\overline{#1}}

\newcommand{\gp}[1]{#1\textrm{-}\rm{gp}}

\title{Central isogenies and conjugacy classes in reductive groups}
\author{Sean Cotner}

\begin{document}
\bibliographystyle{halpha-abbrv}

\begin{abstract}
    Steinberg described the group of components of the centralizer of a semisimple element of a connected semisimple algebraic group $G$ as a subgroup of the fundamental group of $G$. We show that this description can be generalized to explain the fact that centralizers of unipotent elements can fail to be reduced when the universal cover of $G$ is not \'etale. As applications, we compute generic multiplicities in the special fibers of moduli spaces of L-parameters and universal deformation rings, and we show there is no Springer isomorphism for $\PGL_p$ in characteristic $p$.
\end{abstract}

\maketitle

\section{Introduction}

Let $k$ be an algebraically closed field of characteristic $p \geq 0$, let $G$ be a simply connected semisimple $k$-group, and let $g \in G(k)$. Steinberg showed \cite[Theorem 8.1]{SteinbergEndomorphisms} that if $g$ is semisimple, then the centralizer $Z_G(g)$ is connected. More generally, if $\pi\colon G \to G'$ is a central isogeny, $g$ is semisimple, and $g' = \pi(g)$, then \cite[Theorem 9.1(a)]{SteinbergEndomorphisms} shows that $\pi_0(Z_{G'}(g'))$ is isomorphic to a subgroup of $(\ker \pi)(k)$. Indeed, $G'$ acts on $G$ by conjugation, so there is a natural map $Z_{G'}(g') \to \ker \pi$ given by $h \mapsto (hgh^{-1})g^{-1}$, and this induces a monic homomorphism 
\[
i_g\colon Z_{G'}(g')/\pi(Z_G(g)) \to \ker \pi.
\]
By Steinberg's connectedness result and the smoothness of $Z_{G'}(g')$ \cite[Corollary to Theorem 9.2]{Borel}, we have $Z_{G'}(g')/\pi(Z_G(g)) \cong \pi_0(Z_{G'}(g'))$. It is easy to check that if $\mu \subset (\ker \pi)(k)$ is a subgroup, then the following are equivalent:
\begin{enumerate}
    \item the image of $i_g$ contains $\mu$,
    \item the conjugacy class $C_g$ of $g$ is stable under multiplication by $\mu$.
\end{enumerate}

If $g$ is instead unipotent, then the map $i_g$ still makes sense, but it does not appear to be very interesting: indeed, the uniqueness of the Jordan decomposition shows that a unipotent orbit cannot be stable under multiplication by a nontrivial central element of $G(k)$. However, unlike in the case treated above, it is no longer true that $Z_{G'}(g')$ is smooth in general, even when $Z(G')$ is smooth. This may be seen most easily in the following example.

\begin{example}\label{example:pgl2-centralizer}
    Let $G' = \PGL_2$ over a field $k$ of characteristic $2$, let $u = \begin{pmatrix}
        1 &1 \\ 0 &1
    \end{pmatrix} \in \SL_2(k)$, and let $u'$ be the image of $u$ in $G'(k)$. Then \cite[Example 3.4]{Cotner-Springer} shows that $Z_{G'}(g')$ is not smooth. In fact, the map $i_u\colon Z_{G'}(u')/\pi(Z_{\SL_2}(u)) \to \mu_2$ is faithfully flat: this follows from monicity of $i_u$ and the fact that $\pi(Z_{\SL_2}(u)) \subset B'$ (by another direct calculation).
\end{example}

Thus we see that $i_g$ can be the trivial morphism on points, despite being scheme-theoretically nontrivial. For a subgroup \textit{scheme} $\mu \subset \ker \pi$, the equivalence of (1) and (2) still holds by the same argument, so we are led to the following precise question, where $Z(G)$ is the center of $G$.

\begin{question}\label{question:main}
    For given $g \in G(k)$, what is the scheme-theoretic stabilizer $\Stab_{Z(G)}(C_g)$?
\end{question}

To the author's knowledge, Question~\ref{question:main} has only been considered previously for semisimple $g$. We will answer Question~\ref{question:main} when $g$ is regular, which is already enough for some interesting applications. By the discussion above, it is most interesting to understand the stabilizer in $\Stab_{Z(G)^0}(C_g)$. If $H$ is an algebraic group over $k$, we denote by $\sD H$ the derived group of $H$.

\begin{theorem}[Theorem~\ref{theorem:stabilizer-of-Steinberg}]\label{theorem:intro-stability}
    If $g \in G(k)$ is regular with Jordan decomposition $g = tu$, then $\Stab_{Z(G)^0}(C_g) = \sD Z_G(t) \cap Z(G)^0$.
\end{theorem}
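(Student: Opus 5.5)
Here $G$ is a connected reductive group over the algebraically closed field $k$, and $g = tu$ is regular. The plan is to reduce to the centralizer $H := Z_G(t)^0$. This is a connected reductive group containing a maximal torus of $G$, so $Z(G)^0 \subset Z(G) \subset H$. Inside $H$, the element $u$ is a regular unipotent element.

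\textbf{Step 1 (reduction to $H$).} A central element $z$ (scheme-theoretically, an $R$-point $z$ of $Z(G)^0$) stabilizes $C_g$ exactly when $zg$ is conjugate to $g$ fppf-locally. Since $Z(G)^0$ is connected and $zt$ is again semisimple, the uniqueness of Jordan decomposition (valid in families after base change) shows that the conjugating element $h$ must satisfy $hth^{-1} = zt$ and $huh^{-1} = u$. One should treat this carefully with the transporter scheme $\Transp_G(g, zg)$ over $Z(G)^0$. Since $zt$ and $t$ have the same centralizer $Z_G(t)$, the condition becomes the following: for $z \in Z(G)^0$, the coset $zt$ lies in the $Z_G(u)$-conjugacy class of $t$, and the relevant elements lie in $Z_G(t)$. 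I would rephrase the whole question as follows: $z \in \Stab_{Z(G)^0}(C_g)$ if and only if $zu$ is conjugate to $u$ in $H$ (after using $t \in Z(H)$). In other words, $\Stab_{Z(G)^0}(C_g) = \Stab_{Z(G)^0}(C^H_u \cdot t)$, and this reduces to the regular unipotent class of $H$ twisted by the central element $t$. The key point is that for a connected $z$-family near $1$, conjugating $t$ to $zt$ forces $h$ into $Z_G(t)$ only up to connected-component issues, which $Z(G)^0$ avoids.

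\textbf{Step 2 (regular unipotent case).} Now prove that for connected reductive $H$ with regular unipotent $u$ and central $t$, the stabilizer in $Z(H)$ of $t\,C^H_u$ is $\sD H \cap Z(H)$, and then intersect with $Z(G)^0$.

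The inclusion $\supseteq$ goes as follows. Let $\widetilde{H} \to \sD H$ be the simply connected cover. As in Example~\ref{example:pgl2-centralizer}, the map $Z_{\sD H}(u) \to Z(\sD H)$, $h \mapsto huh^{-1}u^{-1}$, is the scheme-theoretic $i_u$. One shows it is surjective onto $Z(\sD H)$ using the structure of the centralizer of a regular unipotent: $Z_H(u) = Z(H) \times Z_U(u)$ when $p$ is good. In general, I would compare dimensions and use the faithful flatness of the map $H/Z_H(u) \to C_u$ together with the fact that the regular unipotent class in $\sD H/Z(\sD H)$ has a reduced centralizer only up to the kernel.

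I would make this concrete as follows. Consider $\widetilde{H} \to H^{\rm ad}$. Regular unipotent centralizers in $\widetilde{H}$ are $Z(\widetilde{H}) \times (\text{smooth unipotent})$, of the same dimension as in $H^{\rm ad}$. So $Z_{H^{\rm ad}}(\bar u)$ contains the full quotient, and $i_u$ is onto $Z(\widetilde{H})$, hence onto its image $\sD H \cap Z(H)$.

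For $\subseteq$, note that any $h$ conjugating $tu$ to $ztu$ maps to $H/\sD H$, a torus, where the image of $z$ must be trivial because conjugation is trivial on the abelianization. Hence $z \in \sD H$.

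\textbf{Main obstacle.} The hard part is surjectivity of $i_u$ onto the possibly non-étale $Z(\widetilde{H})$, which needs scheme-theoretic control of regular unipotent centralizers in all characteristics, including bad ones. I would try to do this by showing that the scheme $Z_{H^{\rm ad}}(\bar u)$ has dimension $\operatorname{rank}$-independent, namely $\dim = 0$ on the central part, and that $Z_{\widetilde{H}}(u)/Z(\widetilde{H})$ is smooth. A second approach is to use the flat family of Steinberg's cross section to reduce to the known case $t$ regular semisimple by a specialization argument.
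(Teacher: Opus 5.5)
\textbf{The gap is the inclusion $\sD Z_G(t)\cap Z(G)^0\subseteq \Stab_{Z(G)^0}(C_g)$, which is the real content of the theorem.} After reducing to $H=Z_G(t)$, you must show that the infinitesimal group $Z(\sD H)^0$ stabilizes the regular unipotent class of $H$. Equivalently, the monic map $i_u$ must hit all of it. Your argument does not show this. Equality of the dimensions of $\pi(Z_{\widetilde H}(u))$ and $Z_{H^{\mathrm{ad}}}(\bar u)$ says nothing about the finite group scheme $Z_{H^{\mathrm{ad}}}(\bar u)/\pi(Z_{\widetilde H}(u))$. That quotient is exactly the image of $i_u$, and a priori it could be trivial.

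Moreover, the conclusions you draw are false: $i_u$ is not onto $Z(\widetilde H)$, and the $Z(H)$-stabilizer of $tC^H_u$ is not $\sD H\cap Z(H)$. By Jordan decomposition over $k$, no nontrivial $k$-point of the center stabilizes a unipotent class; for example, $-1\in\SL_2$ does not when $p\neq 2$. So at most $Z(\widetilde H)^0$ can be hit.

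Your fallbacks do not help either. A specialization argument only carries to the special fiber the flat closure of a \emph{generic} stabilizer. In characteristic $p$ a semisimple class has trivial $Z(G)^0$-stabilizer (Lemma~\ref{lemma:transverse-orbit-centralizer}), so degenerating regular semisimple elements in equal characteristic yields nothing.

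The missing idea is the paper's Lemma~\ref{lemma:Springer-iso-6.4}:
\begin{enumerate}
\item Lift to a DVR $A$ of mixed characteristic $(0,p)$.
\item Use Steinberg's results (Lemma~\ref{lemma:stabilizing-prime-power}) to choose a semisimple $t\in\cG(K)$ of $p$-power order whose class is stable under the full $p$-power torsion $\mu$ of $Z(\cG_K)$.
\item Such a $t$ extends to $\cG(A)$ and specializes to a unipotent element. The fiber $\chi^{-1}(\chi(t))$ is $A$-flat with special fiber $\chi^{-1}(\chi(1))_k$.
\item Lemma~\ref{lemma:stability-equivalences} passes stability from $C_t$ to the Steinberg fiber. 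Hence the flat closure of $\mu$, namely $Z(G)^0$, stabilizes the unipotent Steinberg fiber.
\end{enumerate}
The paper then passes to general regular $g$ by testing stability on $T\cap\chi^{-1}(\chi(t))$ (Lemma~\ref{lemma:omega-cap-Steinberg}), rather than by manipulating conjugating elements.

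Your Step~1 also needs repair. You first impose $hth^{-1}=zt$ and $huh^{-1}=u$, and then switch to $h\in H$ with $huh^{-1}=zu$. Only the second is correct: after normalizing $h$ to have trivial special fiber, the first forces $z=1$.

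``Uniqueness of Jordan decomposition in families'' is not a valid principle over non-reduced bases, and its failure is precisely the phenomenon studied here. In characteristic $2$, $z=1+\epsilon\in\mu_2(k[\epsilon]/(\epsilon^2))$ lies in a torus yet satisfies $z^2=1$.

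The correct reduction can be justified directly. Normalize $h$ so that $h_s=1$. Then $hth^{-1}$ lies in $Z_G(zg)=Z_G(g)$, which is contained in $Z_G(t)$ scheme-theoretically because $t$ is a polynomial in $g$ in a faithful representation. It also lies in $C_t$, so $hth^{-1}=t$ by Lemma~\ref{lemma:transverse-orbit-centralizer}. With this fix your abelianization argument for $\subseteq$ works; what is missing is the $\supseteq$ direction described above.
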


Note that in general, Question~\ref{question:main} is only interesting when $Z(G)$ is non-reduced, so it remains to understand the cases $G \cong \SL_n$ when $p \mid n$, $G \in \{\Sp_n, \Spin_n, \mathrm{E}_7\}$ when $p = 2$, or $G \cong \mathrm{E}_6$ when $p = 3$. Bootstrapping from Theorem~\ref{theorem:intro-stability}, we will answer Question~\ref{question:main} in Proposition~\ref{prop:stability-type-a} when $G = \SL_n$ and $g$ is unipotent. In the body of this paper, we will actually deal with a slightly more general question, involving twisted centralizers, where again we will answer the question only in a suitably 	``regular" case. However, the answer (Theorem~\ref{theorem:torsor-identification}) is rather complicated to state, so we leave it to the body of the paper.

The proof of Theorem~\ref{theorem:intro-stability} (and its generalization Theorem~\ref{theorem:torsor-identification}) involves reducing to the case that $g$ is unipotent, in which case we must show that the unipotent variety is $Z(G)^0$-stable. This is achieved as follows: let $\cO$ be a discrete valuation ring with residue field $k$ and characteristic $0$ fraction field $K$, and let $\cG$ is a split reductive $\cO$-group scheme with special fiber $G$. We lift $g$ to a section $x \in \cG(\cO)$ whose generic fiber has conjugacy class stabilized by translation by the full $p$-power-torsion subgroup of $Z(\cG)(K)$. Such a section $x$ is found using Steinberg's results cited above and \cite[Theorem 9.1(b)]{SteinbergEndomorphisms}. The regularity of $g$ is used to ensure that the special fiber of the closure of $C_{x_K}$ is equal to $C_g$, not just an infinitesimal thickening of $C_g$. (See Remark~\ref{remark:non-regular} and Question~\ref{question:finiteness}.)

We now describe two applications of Theorem~\ref{theorem:intro-stability}: first, we show that there is no Springer isomorphism for semisimple groups with non-\'etale universal cover; second, we give an application to multiplicities in moduli spaces of L-parameters, with an eye toward Shotton's $\ell \neq p$ version of the Breuil--M\'ezard conjecture.

\subsection{Unipotent schemes and Springer isomorphisms}

Continue to assume that $G$ is connected and semisimple, but drop the assumption that $G$ is simply connected. Let $G/\!/ G$ denote the GIT quotient for the conjugation action of $G$ on itself, and let $\chi = \chi_G\colon G \to G/\!/ G$ be the natural map. Similarly, let $\fg/\!/G$ be the GIT quotient for the conjugation of $G$ on its Lie algebra $\fg$, and let $\eta = \eta_G\colon \fg \to \fg/\!/G$ be the natural map. Recall that the \textit{unipotent variety} of $G$ is defined by $\sU_G^{\rm{var}} = \chi^{-1}(\chi(1))_{\rm{red}}$, and the \textit{nilpotent variety} of $G$ is defined by $\sN_G^{\rm{var}} = \eta^{-1}(\eta(0))_{\rm{red}}$.

When $p$ is ``good" for $G$ and does not divide $|\pi_1(G)|$, then there exists a $G$-equivariant isomorphism of varieties $\sU_G^{\rm{var}} \cong \sN_G^{\rm{var}}$ (see \cite{Springer-isomorphism}, \cite{Bardsley-Richardson}, or more recently \cite[Theorem 1.1]{Integral-Springer}). Springer also showed that there is no such isomorphism if $p$ is not good, and he showed in \cite[Remark 3.8]{Springer-isomorphism} (via a rather indirect argument) that there is no such isomorphism when $G = \PGL_2$ and $p = 2$; the same is shown in \cite[Corollary 7.0.3]{Sobaje-unip} using calculations of Taylor. The following example gives a more direct argument in this case, yielding a stronger result.

\begin{example}\label{example:pgl2}
Continuing Example~\ref{example:pgl2-centralizer}, let $G = \PGL_2$ over a field $k$ of characteristic $2$. An element of $\SL_2(k)$ is unipotent if and only if it has trace $0$ and determinant $1$, so we have
\[
\sU_{\SL_2}^{\rm{var}} = \left\{\begin{pmatrix} a & b \\ c & a \end{pmatrix}: a^2 - bc = 1 \right\}.
\]
By Theorem~\ref{theorem:intro-stability} (or an easy calculation), $\sU_{\SL_2}^{\rm{var}}$ is stable under left translation by the central $\mu_2 \subset \SL_2$, and it follows that the natural map $\sU_{\SL_2}^{\rm{var}} \to \sU_G^{\rm{var}}$ is a $\mu_2$-torsor. This shows
\[
\sU_G^{\rm{var}} = \sU_{\SL_2}^{\rm{var}}/\mu_2 \cong \left\{\begin{pmatrix} a & b \\ c & a \end{pmatrix}: a^2 - bc \neq 0 \right\}/\bGm \cong \bP^2 - \{a^2 = bc\}.
\]
A similar calculation shows that the map $\sN_{\SL_2}^{\rm{var}} \to \sN_G^{\rm{var}}$ is an $\alpha_2$-torsor and $\sN_G^{\rm{var}} \cong \bA^2$. These varieties are not isomorphic, since they have non-isomorphic Picard groups: $\Pic(\sU_G^{\rm{var}}) = \bZ/2\bZ$ and $\Pic(\sN_G^{\rm{var}}) = 0$.
\end{example}

As far as the author is aware, there are no examples in the literature beyond $\PGL_2$ for which $p$ is good and divides $|\pi_1(G)|$ and for which it is determined whether a Springer isomorphism exists for $G$. The following theorem deals with every such case in a rather strong way.

\begin{theorem}\label{theorem:intro-no-springer-iso}
    Suppose that $p$ divides $|\pi_1(G)|$.
    \begin{enumerate}
        \item (Corollary~\ref{cor:normal}) $\sU_G^{\rm{var}}$ is normal and lci.
        \item (Theorem~\ref{theorem:no-springer-iso}) There is no isomorphism of $k$-varieties $\sU_G^{\rm{var}} \not\cong \sN_G^{\rm{var}}$. In fact, if $\sN_G^{\rm{var}}$ is normal then $\Pic(\sU_G^{\rm{var}}) \not\cong \Pic(\sN_G^{\rm{var}})$.
        \item (Proposition~\ref{prop:no-dominant-map}) If $p$ is good for $G$, then there is no dominant $G$-equivariant $k$-morphism $\sU_G^{\rm{var}} \to \sN_G^{\rm{var}}$ or $\sN_G^{\rm{var}} \to \sU_G^{\rm{var}}$.
    \end{enumerate}
\end{theorem}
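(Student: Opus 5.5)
The plan is to pass to the simply connected cover $\pi\colon \widetilde G \to G$, with $\mu = \ker \pi$ and $\mu^0$ its (nontrivial, since $p \mid |\pi_1(G)|$) infinitesimal part. By Steinberg, the fiber $\chi_{\widetilde G}^{-1}(\chi(1))$ is a reduced, normal complete intersection equal to the closure of the regular unipotent class $C_u$. Theorem~\ref{theorem:intro-stability}, applied with $g = u$ regular unipotent (so $t = 1$ and $\sD Z_{\widetilde G}(t) = \widetilde G$), shows that $C_u$, and hence $\sU_{\widetilde G}^{\rm{var}} = \ov{C_u}$, is stable under $Z(\widetilde G)^0 \supset \mu^0$.

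Factor $\pi$ as $\widetilde G \to \widetilde G/\mu^0 \to G$, where the second map is an \'etale isogeny. That map is injective on the unipotent locus, since unipotent lifts are unique. It follows that $\pi$ induces an isomorphism $\sU_{\widetilde G}^{\rm{var}}/\mu^0 \xrightarrow{\sim} \sU_G^{\rm{var}}$, and that $\sU_{\widetilde G}^{\rm{var}} \to \sU_G^{\rm{var}}$ is a $\mu^0$-torsor, exactly as in Example~\ref{example:pgl2}. I would check reducedness of the quotient by descending normality along this fppf torsor; normality and the lci property both descend along faithfully flat maps (Avramov for lci). This gives part (1).

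For part (2), I use the torsor to produce a homomorphism from the character group $X(\mu^0) \cong \bigoplus \bZ/p^{a_i}$, which is nontrivial, into $\Pic(\sU_G^{\rm{var}})$. Its kernel is controlled by units on $\sU_{\widetilde G}^{\rm{var}}$. These are constants: the variety is normal, and it contains a dense orbit $C_u$ of the connected group $\widetilde G$ whose boundary has codimension $\ge 2$. Any unit then has $\widetilde G$-invariant divisor, hence is a $\widetilde G$-semi-invariant, hence a constant because $\widetilde G$ is semisimple. So $\Pic(\sU_G^{\rm{var}})$ contains nontrivial $p$-torsion.

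On the other side, $\sN_G^{\rm{var}}$ is a positively graded affine cone whose degree-zero part is $k$. If it is normal, then $\Pic(\sN_G^{\rm{var}}) = \Pic(k) = 0$, by the standard fact that $\Pic(A) = \Pic(A_0)$ for normal graded $A$ (via the $\bGm$-contraction). This gives the Picard statement. For the non-isomorphism in general: if $\sN_G^{\rm{var}}$ is not normal, it cannot be isomorphic to the normal variety $\sU_G^{\rm{var}}$ from part (1).

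For part (3), a dominant $G$-equivariant morphism must carry the dense regular orbit into the dense regular orbit. It therefore yields a surjective $G$-map of homogeneous spaces of equal dimension, and hence an inclusion of scheme-theoretic stabilizers $Z_G(u) \subset Z_G(e)$ or $Z_G(e) \subset Z_G(u)$ with finite flat quotient. The plan is to rule out both inclusions by comparing the stabilizers concretely inside a Borel subgroup, using that $p$ is good.

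On the unipotent side, Theorem~\ref{theorem:intro-stability} and the torsor above show that $Z_G(u)$ is non-reduced: its infinitesimal part maps onto $\mu^0$ via $i_u$, as in Example~\ref{example:pgl2-centralizer}. On the nilpotent side, the analogous computation shows the non-reducedness of $Z_G(e)$ is governed instead by an $\alpha$-type (Lie algebra) phenomenon. I would compare the infinitesimal quotients $Z/Z_{\rm{red}}$ (multiplicative versus unipotent type) and the Lie algebras $\Lie Z_G(u)$, $\Lie Z_G(e)$ as subspaces of $\fg$. Since $p$ is good, the reduced parts agree in dimension and are connected modulo the center, so an inclusion of finite index would force an inclusion of infinitesimal parts. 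That inclusion would embed a multiplicative group scheme into a unipotent one or vice versa, which is impossible.

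This last step is the one I expect to be the main obstacle, because it requires pinning down $Z_G(e)$ scheme-theoretically in good but non-very-good characteristic. I would try first to do this by an explicit computation in the case $G = \PGL_p$, and then reduce the general case to simple factors.
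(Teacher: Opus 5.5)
Your parts (1) and (2) are sound and follow the paper's route. The stability theorem makes $\sU_{\widetilde G}^{\mathrm{var}}\to\sU_G^{\mathrm{var}}$ a $(\ker\pi)^0$-torsor, and normality and lci descend along it. Characters of $(\ker\pi)^0$ give nontrivial classes in $\Pic(\sU_G^{\mathrm{var}})$ because units on $\sU_{\widetilde G}^{\mathrm{var}}$ are constant. A normal cone has trivial Picard group; your homotopy-invariance argument for normal graded rings is a fine substitute for the paper's blow-up argument.

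The genuine gap is in part (3), at exactly the step you flag, and it is the heart of the matter rather than a technicality. If $Z_G(e)$ were just $Z(G)\cdot Z_G(e)_{\mathrm{red}}$, then $Z_G(e)\subset Z_G(u)$ would actually hold. So ``the analogous computation shows'' must be replaced by a proof that $Z_G(e)$ has a nontrivial \emph{unipotent} infinitesimal part beyond the center. Nothing like the stability theorem is available for nilpotents in general; the paper supplies this in Lemma~\ref{lemma:nilpotent-variety-stable}. Since $p$ is good, $(\ker\pi)^0$ lives in the centers of type $\mathrm{A}_{n-1}$ factors with $p\mid n$. For $\SL_n$ the nilpotent scheme is cut out by the characteristic polynomial. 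Writing $n=p^em$ with $p\nmid m$, the identity $\det(tI-(X+aI))=(t-a)^n=(t^{p^e}-a^{p^e})^m$ shows stability under central $a$ with $a^{p^e}=0$. This yields an exact sequence $1\to\pi(Z_{\widetilde G}(\widetilde e))\to Z_G(e)\to \Lie(\ker\pi)\cap\alpha\to 1$, via $g\mapsto \Ad_g(\widetilde e)-\widetilde e$, where $\alpha$ is that infinitesimal central subgroup. The cokernel is a nontrivial unipotent group scheme. A $\PGL_p$ computation followed by reduction to simple factors would not replace this. It misses $\PGL_n$ with $p\mid n\neq p$ and non-adjoint quotients, and non-product isogeny types such as $(\SL_p\times\SL_p)/\mu_p$ do not reduce to simple factors in any evident way.

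Second, even with that computation, comparing $Z/Z_{\mathrm{red}}$ as you propose needs adjusting when $Z(G)$ is non-reduced. For $G=\SL_{p^2}/\mu_p$ one has $Z(G)\cong\mu_p$, and $Z_G(e)/Z_G(e)_{\mathrm{red}}$ is an extension of a unipotent group by $Z(G)^0$. So it is not unipotent, and ``a multiplicative group cannot embed in a unipotent one'' does not apply as stated. The clean fix, used in the paper, is to quotient instead by $N=\pi(Z_{\widetilde G}(\widetilde u))\supset Z(G)$. In $\widetilde G$ both centralizers are $Z(\widetilde G)$ times a smooth connected unipotent group, so equality of the reduced parts gives $Z_{\widetilde G}(\widetilde u)=Z_{\widetilde G}(\widetilde e)$. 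Then $N$ is the kernel of both $i_u$ and the map above, hence normal in both centralizers. Now $Z_G(u)/N\cong(\ker\pi)^0$ is nontrivial of multiplicative type and $Z_G(e)/N$ is nontrivial unipotent. An inclusion in either direction would give a monomorphism between them, which is impossible.
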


Theorem~\ref{theorem:intro-no-springer-iso}(1) was known when $p$ does not divide $|\pi_1(G)|$ for a long time; see the proof of \cite[Lemma 4.4]{Integral-Springer} and the references in \cite[Lemma 4.1]{Integral-Springer}. The key in general is to show that, if $\pi\colon \widetilde{G} \to G$ is the universal cover, then the induced map $\sU_{\widetilde{G}}^{\rm{var}} \to \sU_G^{\rm{var}}$ is a torsor for a finite $k$-group scheme (which does not even require Theorem~\ref{theorem:intro-stability}; see Lemma~\ref{lemma:stability-equivalences}), after which the result follows from descent. The reason for the strange statement of (2) is that I do not know whether $\sN_G^{\rm{var}}$ is always normal in bad characteristic; in any case, if it is not normal, then it follows from (1) that $\sU_G^{\rm{var}} \not\cong \sN_G^{\rm{var}}$. We do not calculate $\Pic(\sU_G^{\rm{var}})$ on the nose, and instead simply exhibit a ``large" subgroup of it; see Remark~\ref{remark:alpha2-torsor} and Remark~\ref{remark:guess-at-full-pic}. Strengthening (3), if $G \cong \PGL_p$ then we will show in Example~\ref{example:pglp-nonconstant} that there is no \textit{non-constant} $G$-equivariant $k$-morphism $\sU_G^{\rm{var}} \to \sN_G^{\rm{var}}$ or $\sN_G^{\rm{var}} \to \sU_G^{\rm{var}}$.

There are a number of other variations on this theme given in Section~\ref{section:counterexamples} when $p$ divides $|\pi_1(G)|$: for instance, in Theorem~\ref{theorem:regular-unipotent-centralizer} we show that if $u \in G(k)$ is regular unipotent, then $Z_G(u)$ is not commutative, and in Corollary~\ref{cor:unip-small-char} we show that the Springer resolution is not birational and $\chi^{-1}(\chi(1))$ is generically non-reduced. We prove some analogous results for nilpotents in Section~\ref{ss:nilp}. Using these results, we verify that various hypotheses in the results of \cite{Integral-Springer} are optimal.

\subsection{Moduli spaces of L-parameters}

Next, we move on to the moduli space of L-parameters. Let $F$ be a local field of (finite) residue cardinality $q$, and let $W_F$ be the Weil group of $F$. Let $A$ be a complete discrete valuation ring of mixed characteristic $(0, \ell)$, where $\ell \nmid q$, and let $G$ be a split reductive group scheme over $A$. Suppose that $W_F$ acts on $G$ through a finite quotient $W_0$, preserving a Borel pair $(B, T)$. For simplicity, in this introduction we will assume that $G$ is semisimple and the $W_F$-action on $G$ is unramified. Let $\pi\colon \widetilde{G} \to G$ be the universal cover, and let $\chi\colon G \to G/\!/G$ denote the GIT quotient map. Let $k$ be the residue field of $A$, which we will assume is perfect, let $\varpi$ be a uniformizer of $A$, and let $K$ be the fraction field of $A$. Let $\sX_G$ denote the moduli space of tame L-parameters $W_F \to G$ over $A$, as in \cite{DHKM}. (We will recall the definition in Section~\ref{ss:l-param}.)

If $X$ is an irreducible locally noetherian scheme, then we use the notation $\mu(X)$ to refer to the \textit{generic multiplicity} of $X$, a positive integer whose definition we recall in Definition~\ref{def:mult}. Shotton's $\ell \neq p$ version of the Breuil--M\'ezard conjecture \cite{Shotton-Breuil-Mezard}, \cite{Shotton-local-proof} involves studying $\mu(\sC_k)$, where $\sC$ is an open subscheme of the closure of an irreducible component of $(\sX_G)_K$ with irreducible special fiber. In \cite{Shotton-local-proof} and \cite{Shotton-irreducible}, Shotton shows that when $Z(G)$ is smooth and $|\pi_1(G)|$ is invertible in $A$, then for every ``$\Sigma$-regular" $\sC$ we have $\mu(\sC_k) = 1$. In \cite{Shotton-local-proof}, these are used in the case $G = \GL_n$ with trivial $W_F$-action to prove his conjecture (in a suitably 	``tame" case). The following theorem shows that there is no such equality if one weakens the hypotheses on $G$. 

\begin{theorem}\label{theorem:intro-l-param}
    Suppose that $K$ is large enough that every irreducible component of $(\sX_G)_K$ is geometrically irreducible and $(\ker \pi)(K) = (\ker \pi)(\ov{K})$. Let $C$ be an irreducible component of $(\sX_G)_K$, and let $D$ be a component of $(\ov{C} \cap \sX_G^{\Sigma\textrm{-}\rm{reg}})_k$. Let $g \in G(A)$ be a section with regular fibers such that the map $C \to G$, $(\Phi, \Sigma) \mapsto \Sigma$ factors through $\chi^{-1}(\chi(g))$. Let $\widetilde{g} \in \widetilde{G}(A)$ lift $g$, and let $\widetilde{g}_k = \widetilde{t}_k \widetilde{u}_k$ and $g_k = t_k u_k$ and $g_K = t_K u_K$ be the Jordan decompositions. Then $D$ is of generic multiplicity
        \[
        \mu(D) = \frac{|\pi_0(Z_{G_k}(t_k))| \cdot |(\ker \pi)_k^0 \cap \sD Z_{\widetilde{G}_k}(\widetilde{t}_k)| \cdot |Z_{\widetilde{G}_k}(\widetilde{g}_k)/((\ker \pi)_k \cdot Z_{\widetilde{G}_k}(\widetilde{g}_k)_{\rm{red}})|}{|\pi_0(Z_{G_K}(t_K))| \cdot |(Z_{\widetilde{G}_K}(\widetilde{g}_K)/((\ker \pi)_K \cdot Z_{\widetilde{G}_K}(\widetilde{g}_K)^0))[\ell^\infty]|},
        \]
        where $[\ell^\infty]$ refers to the subgroup of $\ell$-power torsion.
\end{theorem}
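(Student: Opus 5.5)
We first reduce the computation of $\mu(D)$ to a comparison of orbit structures. We work in the framework of Definition~\ref{def:mult} and the description of $\sX_G$ from \cite{DHKM}. Over the $\Sigma$-regular locus, the map $(\Phi,\Sigma)\mapsto \Sigma$ is smooth onto its image in $G$, at least after restricting to the fibre of $\chi$ through $g$. So we expect $\ov{C}\cap \sX_G^{\Sigma\textrm{-}\rm{reg}}$ to be smooth-locally a product of a smooth scheme with the closure $\ov{C_{g_K}}$ of the conjugacy class of $g_K$ inside $\chi^{-1}(\chi(g))$. Since generic multiplicity is insensitive to smooth base change, $\mu(D)$ should equal the generic multiplicity of the special fibre of $\ov{C_{g_K}}$, possibly with $C_{g_K}$ replaced by a union of conjugacy classes (the components of $(\sX_G)_K$ with given $\Sigma$). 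The plan is therefore:
\begin{enumerate}
\item reduce to this local orbit-closure statement;
\item pass to $\widetilde G$, where regular centralizers are better behaved;
\item compute degrees of orbit maps on both fibres.
\end{enumerate}

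For the orbit computation, consider the orbit maps $G_K\to C_{g_K}$ and $G_k\to C_{g_k}$, $h\mapsto hgh^{-1}$. The special fibre of $\ov{C_{g_K}}$ is supported on $C_{g_k}$ by regularity of $g_k$; this is the same argument as in Theorem~\ref{theorem:intro-stability}. Its multiplicity is then the ratio of the "scheme-theoretic sizes" of the finite parts of the two centralizers, i.e.
\[
\mu = \frac{|Z_{G_k}(g_k)/Z_{G_k}(g_k)^0_{\rm red}|}{|\pi_0(Z_{G_K}(g_K))|}.
\]
The $\ell^\infty$-part in the denominator enters because the components of $C$ are indexed modulo the prime-to-$\ell$ part: prime-to-$\ell$ components lift and are detected in both fibres, so they cancel.

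To evaluate the numerator, lift to $\widetilde g$ and use the exact sequence
\[
1\to \ker\pi \to \pi^{-1}(Z_G(g)) \to Z_G(g) \to 1,
\]
together with the map $i_g\colon Z_{G}(g)/\pi(Z_{\widetilde G}(\widetilde g))\to \ker\pi$ from the introduction. This gives a factorization of $Z_{G_k}(g_k)/Z_{G_k}(g_k)^0_{\rm red}$ into three pieces:
\begin{enumerate}
\item the image of $i_{g_k}$, whose étale part is $\pi_0(Z_{G_k}(t_k))$ by Steinberg's theorem and whose infinitesimal part is $\Stab_{(\ker\pi)^0_k}(C_{\widetilde g_k})=(\ker\pi)^0_k\cap \sD Z_{\widetilde G_k}(\widetilde t_k)$ by Theorem~\ref{theorem:intro-stability};
\item the quotient $Z_{\widetilde G_k}(\widetilde g_k)/((\ker\pi)_k\cdot Z_{\widetilde G_k}(\widetilde g_k)_{\rm red})$, which measures the non-reducedness not coming from the center.
\end{enumerate}
Over $K$ everything is reduced, and the same dévissage yields $\pi_0(Z_{G_K}(t_K))$ times the corresponding quotient of $Z_{\widetilde G_K}(\widetilde g_K)$. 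Multiplying these orders gives the formula. For the orders to multiply, we must check that the degrees of finite flat group schemes are multiplicative in the relevant extensions; since all groups here are commutative or normal quotients, this should follow from standard facts on quotients.

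The main obstacle is step (1): justifying that $\mu(D)$ equals the orbit-theoretic ratio. This requires showing that $\ov{C}$ near a $\Sigma$-regular point is flat over $A$, and that its special fibre is generically a thickening of $D$ whose length equals the degree of $\ov{G\cdot g}_k\to C_{g_k}$-type data. Here I would imitate Shotton's argument \cite{Shotton-irreducible}. First use that $\Phi$ is determined smooth-locally by $\Sigma$ on the regular locus, via the centralizer of a regular element being smooth-locally a torus-like group of Frobenius twisted fixed points. Then compute the length by comparing the degrees $[G_K : \text{orbit}]$ via flatness of the closure of the orbit map. The $\ell^\infty$ restriction will also need care: prime-to-$\ell$ finite étale group schemes extend étale over $A$, so they contribute equally to both fibres, whereas $\ell$-power components of the generic centralizer can merge in the special fibre.
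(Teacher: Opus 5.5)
\textbf{There is a genuine gap: your step (1) is false, and the rest of the argument inherits the error.}

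\emph{The reduction to an orbit closure.} The fibres of $(\Phi,\Sigma)\mapsto\Sigma$ are torsors under $Z_G(\prescript{\rm{Fr}}{}{\Sigma})$. On the special fibre these centralizers are non-smooth in exactly the cases where the formula is nontrivial. Already for $\SL_2$ with $\ell=2$, every centralizer contains the infinitesimal $\mu_2$. For $\PGL_2$ the centralizer of a regular unipotent is not even smooth modulo the centre (Example~\ref{example:pgl2-centralizer}, Theorem~\ref{theorem:regular-unipotent-centralizer}). This is precisely the input of Shotton's argument that breaks; the paper only gets flatness of the $\Sigma$-projection for almost simply connected groups (Lemma~\ref{lemma:shotton-smooth}).

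So $\ov{C}\cap\sX_G^{\Sigma\textrm{-}\rm{reg}}$ is not smooth-locally a product with an orbit closure, and $\mu(D)$ is not the multiplicity of $(\ov{C_{g_K}})_k$. A concrete check: for $\SL_2$ and $\ell=2$, the Steinberg fibre $\chi^{-1}(\chi(g))$ is $A$-flat with reduced special fibre, so the orbit closure has multiplicity $1$. Yet $\mu(D)=2$ when $g_k$ is regular unipotent and $g_K$ is not (Example~\ref{example:multiplicity-theorem-illustrated}).

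In fact, consider the finite map from the Steinberg fibre of $\widetilde g$ to $\ov{C_{g_K}}$. By Lemma~\ref{lemma:stability-equivalences} it is a torsor on the generic fibre and on the reduced special fibre. Applying Lemma~\ref{lemma:multiplicity-behavior} to it shows that the orbit closure has multiplicity only $|\Stab_{(\ker\pi)_k}(C_{\widetilde g_k})|/|\Stab_{(\ker\pi)_K}(C_{\widetilde g_K})|$. That accounts for the first two numerator factors and the first denominator factor; your identification of these via Steinberg and Theorem~\ref{theorem:intro-stability} is correct. The remaining factors live in the $\Phi$-direction, which your reduction discards.

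\emph{The centralizer ratio.} The ratio $|Z_{G_k}(g_k)/Z_{G_k}(g_k)^0_{\rm{red}}|/|\pi_0(Z_{G_K}(g_K))|$ does not repair this; in fact no formula in the finite parts of $Z_{G_k}(g_k)$, $Z_{G_K}(g_K)$ and $\ell$ can be right. Take $\ell=3$ in both of the following cases:
\begin{enumerate}
\item $\SL_2$, with $g_k$ conjugate to $-u$ ($u$ regular unipotent) and $g_K$ regular semisimple;
\item $\PGL_2$, with $g_k$ the image of $\diag(i,-i)$ and $g_K$ regular semisimple not conjugate to it.
\end{enumerate}
In both, $Z_{G_k}(g_k)$ is smooth with component group $\bZ/2$ and $Z_{G_K}(g_K)$ is connected. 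Yet $\mu(D)=1$ in the first case and $\mu(D)=2$ in the second (Example~\ref{example:multiplicity-theorem-illustrated}). In the $\SL_2$ case the extra component $-1$ neither merges nor cancels. It splits the special fibre over the class of $g_k$ into two components of multiplicity one, so it changes the number of $D$'s, not $\mu(D)$.

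Correspondingly, your dévissage does not produce what you claim. One has
$|Z_{G_k}(g_k)/Z_{G_k}(g_k)^0_{\rm{red}}| = |\Stab_{(\ker\pi)_k}(C_{\widetilde g_k})|\cdot|Z_{\widetilde G_k}(\widetilde g_k)/((\ker\pi)_k\cdot Z_{\widetilde G_k}(\widetilde g_k)^0_{\rm{red}})|$.
Replacing $Z^0_{\rm{red}}$ by $Z_{\rm{red}}$ silently drops $\pi_0(Z_{\widetilde G_k}(\widetilde g_k)_{\rm{red}})$ modulo $(\ker\pi)(k)$. Over $K$, your ``corresponding quotient'' by $Z_{\rm{red}}$ is trivial, whereas the correct term is a component group truncated to its $\ell$-power torsion.

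\emph{What is missing is the paper's two-step mechanism.}
\begin{enumerate}
\item Pass to $\sX_{\widetilde G,\ker\pi}$, which is a $\ker\pi$-torsor over $\sX_G$. The group $(\ker\pi)(A)$ permutes the connected components of the preimage of $\sC^0$ in each fibre, with stabilizers $\Stab_{(\ker\pi)(A)}(C_{\widetilde g_K})$ and $(\ker\pi)^0(A)\times\Stab_{(\ker\pi)(k)}(C_{\widetilde g_k})$. Lemma~\ref{lemma:multiplicity-behavior} turns this into the stabilizer factors.
\item On $\sX_{\widetilde G,\ker\pi,z}$, Lemma~\ref{lemma:unip-comp-mult} shows that each component over the class of $\widetilde g_k$ has multiplicity $|Z_{\widetilde G_k}(\widetilde g_k)/((\ker\pi)_k\cdot Z_{\widetilde G_k}(\widetilde g_k)_{\rm{red}})|$. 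This uses smoothness of $Z_{\widetilde G_k}(\widetilde g_k)/Z(\widetilde G_k)$ (Theorem~\ref{theorem:smoothness-of-twisted-centralizers}), which is where regularity is really used. Lemma~\ref{lemma:asc-mult} then shares this multiplicity among the generic components $C_z$, for $z$ in the finite multiplicative part of the centre, whose closures contain $D$. Counting these produces the $\ell$-power torsion in the denominator.
\end{enumerate}
The asymmetry between $Z_{\rm{red}}$ in the numerator and $Z^0$ with $[\ell^\infty]$ in the denominator is ``multiplicity of one special component divided by the number of generic components specializing to it''. It is not a ratio of sizes of finite parts of centralizers.
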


We will give a more general statement in Theorem~\ref{theorem:l-param-mult}, which includes the case that the $W_F$-action is ramified. (This is the main motivation for working with twisted stabilizers in the body of the paper.) In fact, Theorem~\ref{theorem:intro-l-param} combines the statements of Theorem~\ref{theorem:intro-stability} and Theorem~\ref{theorem:l-param-mult}; in the ramified case, the formula becomes even more complicated when combined with Theorem~\ref{theorem:torsor-identification}, so we do not state it explicitly.

\begin{example}\label{example:multiplicity-theorem-illustrated}
	We illustrate here Theorem~\ref{theorem:intro-l-param} in the cases of $G = \SL_2$ and $G = \PGL_2$, where $W_F$ acts trivially on $G$. First, assume $G = \SL_2$. In this case, $G$ is simply connected, so we get
	\[
	\mu(D) = \frac{|Z_{G_k}(g_k)/Z_{G_k}(g_k)_{\rm{red}}|}{|(Z_{G_K}(g_K)/(Z_{G_K}(g_K)^0))[\ell^\infty]|}.
	\]
	All centralizers in $\SL_2$ are smooth except in characteristic $2$, so if $\ell \neq 2$ then $\mu(D) = 1$. If $\ell = 2$, then $Z_{G_k}(g_k)$ is reduced unless $g_k$ is regular unipotent, and in that case we have $|Z_{G_k}(g_k)/Z_{G_k}(g_k)_{\rm{red}}| = 2$. If moreover $g_K$ is regular unipotent, then we have $\mu(D) = 1$, and in all other cases $\mu(D) = 2$.

    Now consider the case $G = \PGL_2$. Note that $\widetilde{G} = \SL_2$, and $\pi\colon \SL_2 \to G$ is the natural map. If $\ell \neq 2$, then the expression in Theorem~\ref{theorem:intro-l-param} simplifies to 
    \[
        \mu(D) = \frac{|\pi_0(Z_{G_k}(t_k))|}{|\pi_0(Z_{G_K}(t_K))|}.
    \]
    If $L$ is a field, then up to conjugacy the only element of $\SL_2(L)$ whose conjugacy class has nontrivial stabilizer in $\mu_2$ is $g_0 \coloneqq\begin{pmatrix} i &1 \\ &-i \end{pmatrix}$, where $i^2 = -1$. Thus $\mu(D) = 1$ unless $g_k$ is conjugate to $g_0$. If $g_k$ is conjugate to $g_0$, then $\mu(D) = 2$ unless $g_K$ is also conjugate to $g_0$, in which case $\mu(D) = 1$.
    
    If instead $\chara k = 2$, then the formula simplifies to
    \[
    \mu(D) = \frac{|\mu_2 \cap \sD Z_{\widetilde{G}_k}(\widetilde{t}_k)|}{|\pi_0(Z_{G_K}(t_K))|}.
    \]
    The term $|\mu_2 \cap \sD Z_{\widetilde{G}_k}(\widetilde{t}_k)|$ is $1$ unless $\widetilde{t}_k = 1$, i.e., $g_k$ is regular unipotent, and $|\pi_0(Z_{G_K}(t_K))| = 1$ unless $t_K$ is conjugate to $g_0$, so again $\mu(D) = 1$ unless $g_k$ is conjugate to $g_0$. If $g_k$ is conjugate to $g_0$, then again $\mu(D) = 2$ unless $g_K$ is also conjugate to $g_0$, in which case $\mu(D) = 1$.
\end{example}

The $\ell \neq p$ Breuil--M\'ezard conjecture has not been formulated beyond $\GL_n$, and we will not do so here, but we plan to return to it in the future. One may hope that, in spite of Theorem~\ref{theorem:intro-l-param}, perhaps at least the irreducible components of the Galois deformation rings of generic tame L-parameters with regular monodromy are formally smooth. If $Z(G)$ is smooth and $\pi$ is \'etale, then this follows from Shotton's results. However, we show in Example~\ref{example:universal-deformation-ring} that this is not the case for $G = \PGL_2$ in residue characteristic $2$, and in fact it is not true for $G$ of adjoint type if the universal cover is not \'etale. In fact, the arguments show that the minimally ramified component can fail to be formally smooth in this case.

We begin with some generalities on commutative algebra and descent for line bundles in Section~\ref{section:prelim}. In Section~\ref{section:twisted}, we study twisted centralizers in general, and we correct a result in the literature. The main results described above are then contained in Sections~\ref{section:stability}, \ref{section:counterexamples}, and \ref{section:moduli}.

\subsection{Notation and conventions}

If $A$ is an abelian group and $n$ is a positive integer, then $A[n^\infty]$ denotes the subgroup of $A$ consisting of those $a \in A$ such that $n^m a = 0$ for some $m \geq 0$. Similarly, if $G$ is a finite commutative group scheme over a field $k$, then $G[n^\infty]$ denotes the $k$-subgroup scheme of $n$-power torsion.

If $k$ is a field, then we use $\ov{k}$ to denote an algebraic closure of $k$.

If $X$ is a scheme, then $X_{\rm{red}}$ refers to the underlying reduced subscheme of $X$.

If $R$ is a ring and $X$ is an affine $R$-scheme, then $R[X]$ denotes the coordinate ring of $X$. If $R$ is a discrete valuation ring, then $X_\eta$ denotes the generic fiber of $X$ over $\Spec R$. If $R$ is a field and $X$ is $R$-finite, then $|X|$ denotes $\dim_R R[X]$.

\subsection{Acknowledgements}

I thank Ben Church, Spencer Dembner, Vaughan McDonald, and Jack Shotton for helpful conversations. I thank LoG(M) students Veer Agarwal, Linkun Ma, and Yan Yu for using Macaulay2 to help with a calculation in Remark~\ref{remark:twisted-steinberg-fiber}. This material is based upon work supported by the National Science Foundation under Award No.\ 2402231.

\section{Preliminaries}\label{section:prelim}

We begin with some technical results concerning generic multiplicities and descent for line bundles. We advise the reader to skip this section on a first reading, referring back to it as necessary in the sequel.

\subsection{Commutative algebra}

The main point of this section is to prove Lemma~\ref{lemma:multiplicity-behavior}, though we include a few other generalities. We use $\ell_A(M)$ to denote the length of an $A$-module $M$.

\begin{lemma}\label{lemma:length-behavior-2}
    If $(B, \fm) \to (C, \fn)$ is a morphism of artin local rings, then
    \begin{equation}\label{eqn:length-behavior-2-1}
    \ell_B(C) = \ell_{B/\fm}(C/\fn) \cdot \ell_C(C).
    \end{equation}
    If $C$ is flat over $B$, then
    \begin{equation}\label{eqn:length-behavior-2-2}
    \ell_C(C) = \ell_B(B) \cdot \ell_{C/\fm C}(C/\fm C).
    \end{equation}
\end{lemma}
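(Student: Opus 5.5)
For \eqref{eqn:length-behavior-2-1}, I will take a composition series of $C$ as a $C$-module,
\[
0 = M_0 \subset M_1 \subset \cdots \subset M_r = C,
\]
with successive quotients $M_i/M_{i-1} \cong C/\fn$ and $r = \ell_C(C)$. Each $M_i$ is an ideal of $C$, hence a $B$-submodule via $B \to C$. Since length is additive in short exact sequences of $B$-modules, this gives
\[
\ell_B(C) = \sum_i \ell_B(M_i/M_{i-1}) = r \cdot \ell_B(C/\fn).
\]
It remains to see that $\ell_B(C/\fn) = \ell_{B/\fm}(C/\fn)$. Because $C$ is local and artinian, the map $B \to C$ is local: $\fm$ maps into $\fn$, since $\fm$ is nilpotent and nilpotent elements of $C$ lie in $\fn$. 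So $\fm$ kills $C/\fn$, and $B$-submodules of $C/\fn$ are the same as $B/\fm$-subspaces. Hence the $B$-length equals the $B/\fm$-dimension. Implicitly this requires $\ell_B(C) < \infty$, i.e.\ $[C/\fn : B/\fm] < \infty$ (e.g.\ $C$ finite over $B$); otherwise both sides are infinite and the identity holds trivially.

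For \eqref{eqn:length-behavior-2-2}, I will take a composition series of $B$,
\[
0 = N_0 \subset N_1 \subset \cdots \subset N_s = B,
\]
with $N_j/N_{j-1} \cong B/\fm$ and $s = \ell_B(B)$. Tensoring with $C$ over $B$ is exact by flatness, so the $N_j \otimes_B C$ form a filtration of $C$ by $C$-submodules. Its successive quotients are
\[
(N_j/N_{j-1}) \otimes_B C \cong (B/\fm) \otimes_B C = C/\fm C.
\]
Additivity of $\ell_C$ then yields $\ell_C(C) = s \cdot \ell_C(C/\fm C)$.

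Finally, $C$-submodules of $C/\fm C$ coincide with $C/\fm C$-submodules, so $\ell_C(C/\fm C) = \ell_{C/\fm C}(C/\fm C)$. This gives \eqref{eqn:length-behavior-2-2}.

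There is no real obstacle here. The only points needing care are the locality of $B \to C$, which is automatic since $\fm$ is nilpotent, and the finiteness caveat in \eqref{eqn:length-behavior-2-1}.
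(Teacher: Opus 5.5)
Your proof of \eqref{eqn:length-behavior-2-1} is the same as the paper's, and you also justify the locality of $B \to C$, which the paper uses without comment.

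For \eqref{eqn:length-behavior-2-2} you take a different route. The paper uses the fact that flat modules over an artinian local ring are free. It writes $C \cong B^m$, so that $\ell_B(C) = m\,\ell_B(B)$. It then applies \eqref{eqn:length-behavior-2-1} to $B/\fm \to C/\fm C$ to get $m = \ell_{B/\fm}(C/\fn)\cdot \ell_{C/\fm C}(C/\fm C)$. Comparing with \eqref{eqn:length-behavior-2-1} for $B \to C$, it cancels the common factor $\ell_{B/\fm}(C/\fn)$.

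Your filtration of $C$ by the submodules $N_j \otimes_B C$ computes $\ell_C(C)$ directly, without \eqref{eqn:length-behavior-2-1} and without the freeness theorem. The paper's version is economical because it reuses the first formula. Yours is more general: the paper's cancellation step needs $\ell_{B/\fm}(C/\fn) < \infty$, i.e.\ a finite residue field extension, whereas your argument works for any flat map of artinian local rings.

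That generality is what Lemma~\ref{lemma:mult-flat-cover} actually needs when the flat morphism has positive-dimensional fibers. Examples are $Y \times \bA^1 \to Y$, or the map $m$ in the proof of Lemma~\ref{lemma:unip-comp-mult}. In such cases $C/\fn$ has infinite degree over $B/\fm$ at the generic points.
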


\begin{proof}
    Let $0 = C_0 \subset C_1 \cdots \subset C_n = C$ be a filtration such that $C_i/C_{i-1} \cong C/\fn$ for $1 \leq i \leq n$, so $\ell_C(C) = n$. Since $\ell_B(C/\fn) = \ell_{B/\fm}(C/\fn)$, it follows that $\ell_B(C) = n \cdot \ell_{B/\fm}(C/\fn)$, proving (\ref{eqn:length-behavior-2-1}). For (\ref{eqn:length-behavior-2-2}), note that flat modules over artin local rings are free, so $C \cong B^m$ as $B$-modules for some $m$, and $\ell_B(C) = m\ell_B(B)$. Equation (\ref{eqn:length-behavior-2-1}) applied to $B/\fm \to C/\fm C$ shows
    \[
    m = \ell_{B/\fm}(C/\fm C) = \ell_{B/\fm}(C/\fn) \cdot \ell_{C/\fm C}(C/\fm C).
    \]
    Combining these equations with (\ref{eqn:length-behavior-2-1}) yields (\ref{eqn:length-behavior-2-2}).
\end{proof}

\begin{definition}\label{def:mult}
    Let $X$ be an irreducible locally noetherian scheme, and let $\eta$ be the generic point of $X$. We define the \textit{generic multiplicity} $\mu(X)$ of $X$ as $\ell_{\sO_{X, \eta}}(\sO_{X, \eta})$ (also known as the Hilbert--Samuel multiplicity of $\sO_{X,\eta}$).
\end{definition}

Note that, if $U$ is an open subscheme of $X$, then $\mu(U) = \mu(X)$. 

\begin{lemma}\label{lemma:mult-flat-cover}
    Let $f\colon X \to Y$ be a flat morphism of irreducible noetherian schemes. We have
    \[
    \mu(X) = \mu(Y) \cdot \mu(f^{-1}(Y_{\rm{red}})).
    \]
\end{lemma}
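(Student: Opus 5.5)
The plan is to reduce everything to a computation in one local ring, at the generic points. Let $\xi$ be the generic point of $X$ and $\eta = f(\xi)$. Since $f$ is flat and $X$, $Y$ are irreducible, going-down for flat morphisms shows that $\eta$ is the generic point of $Y$. Indeed, $f(\xi)$ generizes to the generic point of $Y$, and any generization of $f(\xi)$ lifts to a generization of $\xi$, which must be $\xi$ itself. So the local homomorphism $B := \sO_{Y,\eta} \to C := \sO_{X,\xi}$ is flat, and both rings are artin local: each is the local ring of a noetherian scheme at its unique minimal point, hence zero-dimensional.

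Next I identify the fiber term. The scheme $f^{-1}(Y_{\rm{red}}) = X \times_Y Y_{\rm{red}}$ is a closed subscheme of $X$ with the same underlying space, since it is defined by the nilpotent ideal sheaf $\sN_Y \sO_X$. Hence it is irreducible with generic point $\xi$. Its local ring there is $C/\fn_B C$, where $\fn_B$ is the nilradical of $B$. Because $B$ is artin local, its nilradical equals its maximal ideal $\fm$, and the local ring of $Y_{\rm{red}}$ at $\eta$ is the field $B/\fm$. So
\[
\mu(f^{-1}(Y_{\rm{red}})) = \ell_{C/\fm C}(C/\fm C).
\]
The one point needing care is that localization commutes with forming $\sN_Y\sO_X$; this holds because taking stalks is exact and $(\sN_Y)_\eta = \fm$.

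Finally, apply equation (\ref{eqn:length-behavior-2-2}) of Lemma~\ref{lemma:length-behavior-2} to the flat map $B \to C$ of artin local rings. It gives
\[
\ell_C(C) = \ell_B(B)\cdot \ell_{C/\fm C}(C/\fm C),
\]
which reads $\mu(X) = \mu(Y)\cdot \mu(f^{-1}(Y_{\rm{red}}))$.

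There is no serious obstacle. The only step requiring thought is the first one: checking that $f$ sends the generic point to the generic point, so that the two local rings in question are the artin local rings computing the multiplicities. That follows from going-down for flat morphisms.
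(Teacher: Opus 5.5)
Your proposal is correct and is the paper's own argument: localize at the generic points and apply Lemma~\ref{lemma:length-behavior-2}(\ref{eqn:length-behavior-2-2}) to the flat local map $\sO_{Y,\eta}\to\sO_{X,\xi}$ of artin local rings. You simply spell out the steps the paper leaves implicit, namely going-down to get $f(\xi)=\eta$ and the identification $\sO_{f^{-1}(Y_{\rm{red}}),\xi}=C/\fm C$.

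One caveat applies equally to the paper. At generic points the residue extension $k(\eta)\subset k(\xi)$ is often infinite, e.g.\ when $f$ has positive relative dimension. There the paper's proof of (\ref{eqn:length-behavior-2-2}) cancels the infinite quantity $\ell_{B/\fm}(C/\fn)$, which is not valid. Equation (\ref{eqn:length-behavior-2-2}) is better justified by tensoring a composition series of $B$ (with factors $B/\fm$) with the flat $B$-module $C$: each factor contributes $\ell_C(C/\fm C)$.
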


\begin{proof}
    Localizing around the generic points of $X$ and $Y$, we may assume $X = \Spec B$ and $Y = \Spec C$ for artin local rings $B$ and $C$. The result is then precisely Lemma~\ref{lemma:length-behavior-2}(\ref{eqn:length-behavior-2-2}).
\end{proof}

If $A$ is a DVR with uniformizer $\pi$ and $M$ is an $A$-module, then we write $\ov{M} = M/\pi M$.

\begin{lemma}\label{lemma:length-behavior-1}
    Let $A$ be a DVR with uniformizer $\pi$, let $B$ be a flat noetherian $A$-algebra with $\ov{B}$ artinian, and let $M$ be a finite $A$-flat $B$-module such that $M[1/\pi] \cong B[1/\pi]^n$. We have
    \[
    \ell_{\ov{B}}(\ov{M}) = n\ell_{\ov{B}}(\ov{B}).
    \]
\end{lemma}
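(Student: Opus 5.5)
The plan is to reduce to the case where $M$ is free over a suitable local ring, by filtering $\ov{B}$ and comparing lengths along $\pi$-adic data.

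\textbf{Step 1: reduce to a local ring.} Since $\ov{B}$ is artinian, it is a finite product of artin local rings, one for each maximal ideal of $B$ containing $\pi$. Localizing $B$ at such a maximal ideal preserves all the hypotheses: flatness over $A$, $A$-flatness of $M$, and $M[1/\pi] \cong B[1/\pi]^n$. Both sides of the identity are additive over these factors. So we may assume $B$ is local, with maximal ideal $\fm \ni \pi$, and then $\dim B \leq 1$.

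\textbf{Step 2: compare with $\dim_K$-type invariants via an Euler characteristic.} Consider the function
\[
M \mapsto \ell_{\ov{B}}(M/\pi M) - \ell_{\ov{B}}(M[\pi]),
\]
defined on finite $B$-modules $M$ such that $M/\pi M$ has finite length, where $M[\pi]$ denotes the $\pi$-torsion. This is the Euler characteristic of the complex $M \xrightarrow{\pi} M$. The key properties are:
\begin{itemize}
\item By the snake lemma it is additive in short exact sequences.
\item It vanishes on modules of finite length, since for such modules multiplication by $\pi$ has kernel and cokernel of equal length.
\end{itemize}
For $A$-flat $M$ the torsion term is zero, so the quantity equals $\ell_{\ov{B}}(\ov{M})$. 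The same holds for $B$ itself.

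\textbf{Step 3: compare $M$ with $B^n$.} Choose an isomorphism $M[1/\pi] \cong B[1/\pi]^n$. After scaling by a power of $\pi$, we obtain a map $f\colon B^n \to M$ that becomes an isomorphism after inverting $\pi$.

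Its kernel and cokernel are finite $B$-modules killed by a power of $\pi$. Since $B/\pi^N B$ is artinian (being a successive extension of quotients of $\ov{B}$), these modules have finite length.

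Applying additivity from Step 2 to
\[
0 \to \ker f \to B^n \to M \to \operatorname{coker} f \to 0
\]
and using that the Euler characteristic vanishes on finite-length modules, we get
\[
\ell_{\ov{B}}(\ov{M}) = n\,\ell_{\ov{B}}(\ov{B}).
\]
In fact $\ker f = 0$, because $B$ is $A$-flat and hence $\pi$-torsion free, but this is not needed.

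\textbf{Main obstacle.} The main point to check is the vanishing of the Euler characteristic on finite-length modules. It follows by induction on a composition series, since it is additive and on a simple module $\pi$ acts by zero, so the kernel and cokernel of $\pi$ coincide.

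A secondary point is that Step 2 requires all modules involved to have $M/\pi M$ of finite length. This holds because each module is finite over $B$ and $\ov{B}$ is artinian. With this in place, Step 1 is actually unnecessary.
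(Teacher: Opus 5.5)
Your proof is correct and is essentially the paper's argument. The paper takes a free submodule $N \cong B^n$ of $M$ (your $f$, which you note is injective) with $M/N$ of finite length. It then uses the sequences $0 \to (M/N)[\pi] \to M/N \xrightarrow{\pi} M/N \to \ov{M/N} \to 0$ and $0 \to (M/N)[\pi] \to \ov{N} \to \ov{M} \to \ov{M/N} \to 0$; these are exactly your vanishing of the Euler characteristic on finite-length modules and its additivity, written out for this one short exact sequence. (One small imprecision: a simple $B$-module need not be killed by $\pi$ when $B$ has maximal ideals not containing $\pi$, but then $\pi$ acts invertibly and the Euler characteristic is still $0$; in any case the first four-term sequence gives the vanishing directly.)
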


\begin{proof}
    Let $N \subset M$ be a free $B$-submodule of rank $n$, which exists because $M$ is finite over $B$ and $M \subset M[1/\pi] \cong B[1/\pi]^n$. Since $B$ is noetherian and $\ov{B}$ is artinian, it follows that $M/N$ is of finite length over $B$, so from the exact sequence
    \[
    0 \to (M/N)[\pi] \to M/N \xrightarrow{\pi} M/N \to \ov{M/N} \to 0
    \]
    we get
    \[
    \ell_{\ov{B}}((M/N)[\pi]) = \ell_B((M/N)[\pi]) = \ell_B(\ov{M/N}) = \ell_{\ov{B}}(\ov{M/N}).
    \]
    Since $M$ is $A$-flat, we have another exact sequence
    \[
    0 \to (M/N)[\pi] \to \ov{N} \to \ov{M} \to \ov{M/N} \to 0
    \]
    and hence
    \[
    \ell_{\ov{B}}(\ov{M}) = \ell_{\ov{B}}(\ov{N}) = \ell_{\ov{B}}(\ov{B}^n) = n\ell_{\ov{B}}(\ov{B}),
    \]
    as desired.
\end{proof}

\begin{definition}\label{def:deg}
    Let $f\colon X \to Y$ be a quasi-finite morphism of integral locally noetherian schemes. If $\gamma$ is the generic point of $Y$, then the \textit{degree} $\deg(f)$ of $f$ is the rank of $f^{-1}(\gamma)$ over $k(\gamma)$.
\end{definition}

\begin{lemma}\label{lemma:multiplicity-behavior}
    Let $A$ be a DVR, and let $f\colon \sX \to \sY$ be a finite morphism of flat noetherian $A$-schemes such that $f_\eta$ is free of rank $n$. If $\ov{\sY}$ is irreducible and $\ov{\sX}_1, \dots, \ov{\sX}_m$ are the irreducible components of $\ov{\sX}$, then
    \[
    \mu(\ov{\sY}) = \frac{1}{n}\sum_{i=1}^m \deg(\ov{f}_i) \mu(\ov{\sX}_i),
    \]
    where $\ov{f}_i$ is the restricted map $(\ov{\sX}_i)_{\rm{red}} \to \ov{\sY}_{\rm{red}}$.
\end{lemma}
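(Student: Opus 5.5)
The plan is to reduce to a computation at the generic point of $\ov{\sY}$ and then apply Lemma~\ref{lemma:length-behavior-1} and Lemma~\ref{lemma:length-behavior-2}.

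\textbf{Reduction.} Let $\gamma$ be the generic point of $\ov{\sY}$. Since $\sY$ is $A$-flat and noetherian, $\gamma$ has codimension one in $\sY$ and $\pi$ is a nonzerodivisor in $\sY$. So $B \coloneqq \sO_{\sY,\gamma}$ is a flat noetherian $A$-algebra whose special fiber $\ov{B} = \sO_{\ov{\sY},\gamma}$ is artinian, and $\mu(\ov{\sY}) = \ell_{\ov{B}}(\ov{B})$. Let $M = (f_*\sO_{\sX})_\gamma$. This is a finite $B$-module, since $f$ is finite, and it is $A$-flat, since $\sX$ is. Also $M[1/\pi]$ is the localization of $f_{\eta*}\sO_{\sX_\eta}$, which is free of rank $n$, so $M[1/\pi] \cong B[1/\pi]^n$.

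\textbf{Length of $\ov{M}$.} Lemma~\ref{lemma:length-behavior-1} gives $\ell_{\ov{B}}(\ov{M}) = n\,\mu(\ov{\sY})$. It remains to compute $\ell_{\ov{B}}(\ov{M})$ directly. Note that $\ov{M}$ is the semilocal ring of $\ov{\sX}$ along the fiber $\ov{f}^{-1}(\gamma)$, which is finite. Because $\ov{B}$ is artinian, $\ov{M}$ is an artinian ring, so it is the product of its localizations at the points $x \in \ov{f}^{-1}(\gamma)$. I claim these points are exactly the generic points $\xi_i$ of those $\ov{\sX}_i$ dominating $\ov{\sY}$. Indeed, a finite map preserves dimension, so a point over the generic point $\gamma$ cannot specialize from another point. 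Conversely, a component $\ov{\sX}_i$ with $\ov{f}(\ov{\sX}_i)$ not dense has $\deg(\ov{f}_i)=0$ (its fiber over $\gamma$ is empty), and so it contributes nothing to either side.

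\textbf{Per-component contribution.} Applying (\ref{eqn:length-behavior-2-1}) to the local map $\ov{B} \to \sO_{\ov{\sX},\xi_i}$ gives
\[
\ell_{\ov{B}}(\sO_{\ov{\sX},\xi_i}) = [k(\xi_i):k(\gamma)]\cdot \mu(\ov{\sX}_i) = \deg(\ov{f}_i)\,\mu(\ov{\sX}_i).
\]
Here we use $\mu(\ov{\sX}_i) = \ell(\sO_{\ov{\sX},\xi_i})$, which holds because $\xi_i$ is a generic point of $\ov{\sX}$ and the components are the irreducible components. Summing over $i$ and dividing by $n$ yields the formula.

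\textbf{Main obstacle.} The main care needed is at the interpretation step. One must check that $\mu(\ov{\sX}_i)$ means the length of the local ring of $\ov{\sX}$ at $\xi_i$; if components meet, this is the local ring of $\ov{\sX}$ itself at its generic point, which is fine. One must also treat non-dominant components, where the degree is $0$.
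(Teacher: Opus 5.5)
Your proof is correct and is essentially the paper's argument: localize at the generic point $\gamma$ of $\ov{\sY}$, use Lemma~\ref{lemma:length-behavior-1} to get $\ell_{\ov{B}}(\ov{M}) = n\,\mu(\ov{\sY})$, split the artinian ring $\ov{M}$ into its local factors at the points over $\gamma$, and apply (\ref{eqn:length-behavior-2-1}) to each factor. Your extra remarks usefully spell out what the paper's proof leaves implicit: the points over $\gamma$ are the generic points of the dominant components, non-dominant components contribute $\deg(\ov{f}_i)=0$, and $\mu(\ov{\sX}_i)$ means the length of $\sO_{\ov{\sX},\xi_i}$. The cleanest justification for the first remark is that $\ov{M}$ is artinian, not that finite maps preserve dimension.
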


\begin{proof}
    Localizing around the generic point of $\ov{\sY}$, we can pass to the case that $\sY = \Spec B$ for a flat noetherian local $A$-algebra $B$ such that $\ov{B}$ is artin local. If $\sX = \Spec C$, then by assumption $C[1/\pi] \cong B[1/\pi]^n$, so by Lemma~\ref{lemma:length-behavior-1} it follows that
    \[
    n\cdot\mu(\ov{\sY}) = n\ell_{\ov{B}}(\ov{B}) = \ell_{\ov{B}}(\ov{C}) = \sum_{i=1}^m \ell_{\ov{B}}(\ov{C}_i),
    \]
    where $\ov{\sX}_i = \Spec \ov{C}_i$. Moreover, by Lemma~\ref{lemma:length-behavior-2},
    \[
    \ell_{\ov{B}}(\ov{C}_i) = \ell_{\ov{B}_{\rm{red}}}((\ov{C}_i)_{\rm{red}}) \cdot \ell_{\ov{C}_i}(\ov{C}_i) = \deg(\ov{f}_i) \cdot \mu(\ov{\sX}_i).
    \]
    Combining the displayed equations gives the desired result.
\end{proof}

\begin{remark}
    Lemma~\ref{lemma:multiplicity-behavior} is purely a numerical statement; the assumptions of the lemma do not seem to put any further restrictions on the structure of the local ring of the generic point of $\ov{\sY}$.
\end{remark}

The following lemma is only used in Example~\ref{example:universal-deformation-ring}.

\begin{lemma}\label{lemma:preimage-of-completion}
    Let $f\colon X \to Y$ be a finite morphism of locally noetherian schemes. Let $y \in Y$, and suppose that $f^{-1}(y)$ is topologically a single point $x$. Then the natural map $f^{-1}(\Spec \widehat{\cO}_{Y, y}) \to \Spec \widehat{\cO}_{X, x}$ is an isomorphism.
\end{lemma}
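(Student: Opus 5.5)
The statement is local on $Y$ near $y$, so I will first reduce to the affine case. Replace $Y$ by an affine open neighborhood $\Spec B$ of $y$. Since $f$ is finite, $X = \Spec C$ with $C$ a finite $B$-algebra. Let $\fp \subset B$ be the prime corresponding to $y$, and put $\widehat{B} = \widehat{\cO}_{Y,y} = \widehat{B_\fp}$. Then $f^{-1}(\Spec \widehat{B}) = \Spec(C \otimes_B \widehat{B})$. Since $C_\fp = C \otimes_B B_\fp$ is finite over $B_\fp$, we have
\[
C \otimes_B \widehat{B} = C_\fp \otimes_{B_\fp} \widehat{B_\fp},
\]
so we may replace $B$ by $B_\fp$ and $C$ by $C_\fp$. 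This reduces us to the case where $(B, \fm)$ is a noetherian local ring and $C$ is a finite $B$-algebra.

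Next I will identify the fiber and the local ring at $x$. Because $C$ is finite over $B$, $C \otimes_B \widehat{B}$ is the $\fm$-adic completion $\widehat{C}^{\fm}$ of the finite $B$-module $C$. The maximal ideals of $C$ are exactly the primes lying over $\fm$, that is, the points of $f^{-1}(y)$. By hypothesis there is only one, corresponding to $x$. So $C$ is local with maximal ideal $\fn$, and $C = \cO_{X,x}$; here we use that every maximal ideal of a module-finite extension contracts to $\fm$, by going-up or integrality.

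It remains to compare the $\fm C$-adic and $\fn$-adic topologies on $C$. We have $\fm C \subset \fn$. Since $C/\fm C$ is a finite-dimensional algebra over $B/\fm$ with a unique prime $\fn/\fm C$, it is artinian local. Hence $\fn^N \subset \fm C$ for some $N$, and $C$ is noetherian because it is finite over noetherian $B$. The two ideal-adic topologies therefore coincide, giving
\[
C \otimes_B \widehat{B} \cong \widehat{C}^{\fm C} = \widehat{C}^{\fn} = \widehat{\cO}_{X,x}.
\]
Finally, I will check that this isomorphism is the natural map. It is the one induced by the compatible maps $C \to \widehat{\cO}_{X,x}$ and $\widehat{B} \to \widehat{\cO}_{X,x}$.

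The main point requiring care is the identification $C \otimes_B \widehat{B} \cong \widehat{C}^{\fm}$. This holds because $C$ is a finitely generated module over the noetherian ring $B$, by the standard result (e.g.\ Atiyah--Macdonald 10.13). Everything else is formal.
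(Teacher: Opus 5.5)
Your argument is correct and is essentially the paper's proof: localize at $\fp$ so that the finite algebra becomes local and equal to $\cO_{X,x}$, then identify $C\otimes_B\widehat{B}$ with the completion of $C$. The only difference is that you cite Atiyah--Macdonald 10.13 and state explicitly that $\fm C$ is $\fn$-primary, so that the $\fm C$-adic and $\fn$-adic topologies agree. The paper instead carries out the same comparison by hand, via the quotients modulo $\fp^N$ and an inverse limit, and uses this primary-ness only implicitly.
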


\begin{proof}
    Working locally, we may assume $X = \Spec B$ and $Y = \Spec A$, where $B$ is a finite $A$-algebra. Let $\fp \subset A$ and $\fq \subset B$ correspond to $y$ and $x$, respectively. Since $f$ is finite and $B_\fq/\fp B_\fq$ is local and finite over $A_\fp/\fp$, it follows that $\sqrt{\fp B} = \fq$ and thus the natural map $A_{\fp} \otimes_A B \to B_{\fq}$ is an isomorphism. It follows that $\widehat{A}_\fp \otimes_A B \to \widehat{B}_\fq$ is an isomorphism: indeed, we have 
    \[
    (\widehat{A}_\fp \otimes_A B)/\fp^N = A_\fp/\fp^N \otimes_A B = B_\fq/\fp^N B_\fq = \widehat{B}_\fq/\fp^N \widehat{B}_\fq
    \]
    for all $N \geq 1$, so upon passing to inverse limits we obtain
    \[
    \widehat{A}_\fp \otimes_A B = \varprojlim_N (\widehat{A}_\fp \otimes_A B)/\fp^N = \widehat{B}_\fq,
    \]
    where the first equality follows from the fact that $\widehat{A}_\fp \otimes_A B$ is a finite, hence $\fp$-adically complete, $\widehat{A}_\fp$-module.
\end{proof}

Recall that if $X$ is a scheme locally of finite type over a field $k$ and $x \in X(k)$, then the \textit{embedding dimension} of $X$ at $x$, which we will denote by $e_X(x)$, is the dimension of the Zariski cotangent space $\fm_x/\fm_x^2$. Equivalently, $e_X(x)$ is the least integer $d$ such that there exists a neighborhood $U$ of $x$ in $X$ and an unramified morphism $U \to \bA_k^d$.

\begin{lemma}\label{lemma:emb-dim-ineq}
    Let $A$ be a discrete valuation ring with fraction field $K$ and residue field $k$, and let $\sX$ be a finite type $A$-scheme. If $x \in \sX(A)$, then we have $e_{\sX_k}(x_k) \geq e_{\sX_K}(x_K)$.
\end{lemma}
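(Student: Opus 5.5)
The claim reduces to upper semicontinuity of the rank of a matrix of partial derivatives. Work locally around the section $x$. Choose an affine open $\Spec R \subset \sX$ containing the image of $x$, which is possible since $\Spec A$ is local and the image of $x$ lies in any open containing its closed point. Present $R = A[t_1,\dots,t_N]/(f_1,\dots,f_r)$. After translating coordinates by the $A$-point $x$, we may assume $x$ corresponds to the origin, i.e.\ $t_i \mapsto 0$. The Zariski cotangent space of a fiber at the point given by $x$ is then computed by the Jacobian. Concretely, for $L \in \{k, K\}$ we have
\[
e_{\sX_L}(x_L) = N - \operatorname{rank}_L J(x_L),
\]
where $J = (\partial f_j/\partial t_i)$ is evaluated at the origin. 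This holds because $\fm/\fm^2$ is the cokernel of the linear map $L^r \to L^N$ given by the linear parts of the $f_j$, since $\fm_0/\fm_0^2$ of the polynomial ring is $L^N$.

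Evaluating at $x$ gives a single matrix $J(x) \in M_{r\times N}(A)$. Its images in $M_{r\times N}(k)$ and $M_{r\times N}(K)$ are exactly $J(x_k)$ and $J(x_K)$, because the presentations of $\sX_k$ and $\sX_K$ are obtained from that of $\sX$ by base change.

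It then suffices to show $\operatorname{rank}_k \ov{J(x)} \leq \operatorname{rank}_K J(x)$. A nonvanishing minor of $\ov{J(x)}$ has a lift, namely the same minor of $J(x)$. That lift is a unit in $A$, hence nonzero in $K$. So the rank can only drop under reduction, which gives $e_{\sX_k}(x_k) \geq e_{\sX_K}(x_K)$.

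There is no real obstacle here; the only point needing care is that the embedding dimension equals $N$ minus the Jacobian rank even for a non-reduced scheme. This holds because the formula computes $\fm/\fm^2$ of the local ring of the fiber directly, with no smoothness assumption.
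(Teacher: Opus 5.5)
Your proof is correct, but it takes a different route from the paper's. The paper uses the characterization of $e_X(x)$ as the least $d$ for which some neighborhood of $x$ admits an unramified morphism to $\bA^d$. It takes an unramified $U_k \to \bA^d_k$ with $d = e_{\sX_k}(x_k)$ and lifts the $d$ coordinate functions along the surjection $A[U] \to k[U_k]$ to get $f\colon U \to \bA^d_A$. The unramified locus of $f$ is open and contains $x_k$, hence also contains $x_K$, which gives $e_{\sX_K}(x_K) \leq d$.

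You instead fix a presentation and identify each cotangent space with the cokernel of the Jacobian at the origin. You then prove that rank can only drop under reduction, using a minor whose reduction is nonzero and which is therefore a unit. The two arguments rest on the same idea: openness of the unramified locus is essentially your minor argument applied to $\Omega_{U/\bA^d_A}$.

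Your version is self-contained. It avoids the unramified-map characterization of embedding dimension, which the paper quotes without proof, and it avoids openness of the unramified locus. It also makes clear that only locality of $A$ and the fact that $x_K$ specializes to $x_k$ are used. The paper's version is coordinate-free and shorter once those standard facts are granted.
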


\begin{proof}
    Let $d = e_{\sX_k}(x_k)$, and let $U$ be an affine open neighborhood of $x$ in $\sX$ such that there exists an unramified morphism $U_k \to \bA_k^d$. This map corresponds to a ring homomorphism $k[t_1, \dots, t_d] \to k[U_k]$, and we may lift this to a ring homomorphism $A[t_1, \dots, t_d] \to A[U]$, corresponding to an $A$-morphism $f\colon U \to \bA_A^d$. Since the unramified locus of $f$ is open, it follows that $f_K$ is unramified at $x_K$, so $e_{\sX_K}(x_K) \leq d$, as desired.
\end{proof}

\begin{lemma}
	Let $f\colon X \to Y$ be a finitely presented finite morphism of schemes such that $Y$ is reduced and the map $y \mapsto |f^{-1}(y)|$ is locally constant on $Y$. Then $f$ is flat.
\end{lemma}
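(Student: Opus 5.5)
The argument reduces to the case where $X$ and $Y$ are affine and $Y$ is local, and then uses the standard criterion that a finitely presented module over a reduced ring whose fiber dimensions are locally constant is locally free. Flatness is local on $Y$, so I will take $Y = \Spec A$ affine and $X = \Spec B$ with $B$ a finite, finitely presented $A$-algebra. Then $B$ is a finitely presented $A$-module, and $f$ is flat if and only if $B$ is a projective $A$-module.

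The hypothesis must be read with $|f^{-1}(y)|$ meaning the degree $\dim_{k(y)} B \otimes_A k(y)$ of the fiber, which is how the paper uses $|X|$ for finite schemes over a field. Under the purely set-theoretic reading the lemma fails: take $k[x]/(x^2) \to k$ over $Y = \Spec k$, or a normalization that is bijective on points. So the hypothesis is that $y \mapsto \dim_{k(y)} (B \otimes_A k(y))$ is locally constant on $\Spec A$.

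The key step is the standard criterion: a finitely presented module $M$ over a reduced ring $A$ is locally free exactly when $y \mapsto \dim_{k(y)} M \otimes k(y)$ is locally constant. I would prove this directly.

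1. Fix $\fp \in \Spec A$ and set $n = \dim_{k(\fp)} M \otimes k(\fp)$. By Nakayama, choose $n$ elements of $M$ whose images generate $M_\fp$.
2. After localizing at some $s \notin \fp$, these give a surjection $A^n \to M$ whose kernel $N$ is finitely generated, since $M$ is finitely presented.
3. Shrink further so that the fiber dimension is identically $n$ on $\Spec A$. For every prime $\fq$, the sequence $N \otimes k(\fq) \to k(\fq)^n \to M \otimes k(\fq) \to 0$ is exact, and the last map is a surjection between $n$-dimensional spaces, hence an isomorphism. So the image of $N$ in $k(\fq)^n$ is zero.
4. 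Therefore every coordinate of every element of $N \subset A^n$ lies in $\fq$ for all primes $\fq$, that is, in the nilradical of $A$.
5. Since $A$ is reduced, $N = 0$, so $M \cong A^n$ is free near $\fp$.

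Applying this with $M = B$ shows that $B$ is locally free, so $f$ is flat.

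The main obstacle is only the hypothesis interpretation. If the author really intends the geometric number of points, one needs an extra assumption such as $f$ being étale over a dense open subset or $X$ being reduced, and I would state the fiber-degree reading explicitly. Noetherian hypotheses are not needed, because finite presentation is exactly what makes the kernel $N$ finitely generated in step 2.
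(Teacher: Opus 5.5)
Your proof is correct, and reading $|f^{-1}(y)|$ as the fiber degree is exactly the paper's convention ($|X| = \dim_R R[X]$ for $X$ finite over a field $R$). The skeleton matches the paper's: a Nakayama surjection $A^n \to B$ lifting a basis of one fiber, then killing the kernel using reducedness and equality of fiber degrees. The finish is different.

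The paper spreads out and localizes to make $A$ noetherian local. It shows the kernel vanishes at the minimal primes $\fp$, where $A_\fp = k(\fp)$ is flat over $A$. It then uses that the total ring of fractions is the finite product $\prod_\fp k(\fp)$, so the kernel is torsion inside the torsion-free module $A^n$. You instead check vanishing in every fiber over a small affine neighbourhood and use that the nilradical is the intersection of all primes. Both finishes amount to the injectivity of $A \to \prod_{\fp \text{ minimal}} A_\fp$ for reduced $A$. The paper's noetherian hypothesis is used only to identify this product with the total ring of fractions.

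Your route needs no noetherian reduction. In the paper that reduction must also carry the constancy of fiber degrees down to a noetherian model. Your route also needs no finite presentation: the finite generation of $N$ that you extract from it is never used in steps 3--5. Only finiteness of $B$ as an $A$-module matters, and local freeness then gives finite presentation a posteriori.

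One correction to your side remark: the example $k[x]/(x^2) \to k$ over $\Spec k$ is not a counterexample to the set-theoretic reading. Everything over $\Spec k$ is flat, and the closed immersion $\Spec k \hookrightarrow \Spec k[x]/(x^2)$ has non-reduced target. Your normalization example $\Spec k[t] \to \Spec k[t^2, t^3]$ does work, as does $\Spec k[t,x]/(x^2, tx) \to \Spec k[t]$.
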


\begin{proof}
	Working locally on $Y$ and spreading out, we may assume that $Y = \Spec A$ and $X = \Spec B$, where $B$ is finite $A$-algebra and $A$ is a noetherian local ring with maximal ideal $\fm$. Choose $b_1, \dots, b_n \in B$ whose images in $B/\fm B$ form an $A/\fm$-basis. By Nakayama's lemma, the map $A^n \to B$ given by $(a_1, \dots, a_n) \mapsto \sum_{i=1}^n a_ib_i$ is surjective. This gives rise to a short exact sequence
	\[
	0 \to K \to A^n \to B \to 0.
	\]
	If $\fp \subset A$ is a minimal prime ideal, then the residue field $k(\fp)$ is flat over $A$ by reducedness, so $K \otimes_A k(\fp) = 0$. Let $k = \prod_\fp k(\fp)$ be the total ring of fractions of $A$, which is the product of the residue fields of $A$ at minimal primes $\fp$; note that the product is finite since $A$ is noetherian. So
	\[
	K \otimes_A k = \prod_{\fp \text{ minimal}} K \otimes_A k(\fp) = 0. 
	\]
	Since $k$ is the localization of $A$ at the set of nonzerodivisors of $A$, it follows that every element of $K$ is killed by some nonzerodivisor of $A$. But $A^n$ is torsion-free, so this implies $K = 0$.
\end{proof}

\subsection{Descent for line bundles}

Recall that if $\pi\colon Y \to X$ is an fpqc morphism of schemes, then quasicoherent sheaves on $X$ can be described in terms of \textit{descent data} for quasicoherent sheaves on $Y$; this will be lightly recalled in the proof of the following lemma, but see \cite[Chapter 6]{BLR} for details. In particular, one can describe the kernel of $\pi^*\colon \Pic(X) \to \Pic(Y)$ in terms of descent data for the trivial line bundle on $Y$. In this section, we make such descent data explicit when $Y \to X$ is a torsor for a group scheme over a field.

\begin{lemma}\label{lemma:descent}
	Let $k$ be a field, let $H$ be a finite type $k$-group scheme, and let $\pi\colon Y \to X$ be an $H$-torsor of $k$-schemes such that $\bGm(Y) = k^\times$. Then there is a natural isomorphism between $\ker(\Pic(X) \to \Pic(Y))$ and the set of $\psi \in \bGm(H \times_k Y)$ satisfying
	\begin{align}\label{align:relation}
	\psi(g, hy)\psi(h, y) = \psi(gh, y)
	\end{align}
	for all local sections $g, h$ of $H$ and $y$ of $Y$. In particular, there is a natural injective homomorphism
	\[
	i\colon \Hom_{\gp{k}}(H, \bGm) \to \ker(\Pic(X) \to \Pic(Y)).
	\]
\end{lemma}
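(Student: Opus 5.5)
The plan is to use fpqc descent along $\pi$ and make the descent datum for $\sO_Y$ explicit. Since $\pi$ is an $H$-torsor, the action map together with the projection gives an isomorphism $H \times_k Y \xrightarrow{\sim} Y \times_X Y$, $(h, y) \mapsto (hy, y)$. Similarly, $H \times H \times Y \cong Y \times_X Y \times_X Y$ via $(g, h, y) \mapsto (ghy, hy, y)$. A line bundle $\sL$ on $X$ with $\pi^*\sL \cong \sO_Y$ is then equivalent, by \cite[Chapter 6]{BLR}, to the trivial bundle $\sO_Y$ equipped with a descent isomorphism $p_1^*\sO_Y \xrightarrow{\sim} p_2^*\sO_Y$ on $Y \times_X Y$ satisfying the cocycle condition. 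Such an isomorphism is multiplication by a unit of $Y \times_X Y$, hence by some $\psi \in \bGm(H \times_k Y)$.

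The first step is to translate the cocycle condition $p_{13}^*\phi = p_{23}^*\phi \circ p_{12}^*\phi$ through the two identifications above. The projections correspond to $(g,h,y) \mapsto (gh, y)$, $(g, hy)$ and $(h, y)$ respectively, which yields exactly relation (\ref{align:relation}), up to fixing the convention of which projection is the source. Isomorphisms of descent data between two such trivialized data are given by units $f \in \bGm(Y)$, acting by $\psi(h,y) \mapsto \psi(h,y) f(hy) f(y)^{-1}$. The hypothesis $\bGm(Y) = k^\times$ makes $f$ constant, so this action is trivial. Therefore the isomorphism classes of descent data on $\sO_Y$ are in bijection with the set of such $\psi$, and this set is a group under pointwise multiplication.

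Next, I will check that the bijection is a group homomorphism, with tensor product of line bundles corresponding to the product of cocycles. This follows because descent data tensor componentwise. I will also check that the bijection is exactly the kernel: every $\sL$ in $\ker(\Pic(X) \to \Pic(Y))$ admits a trivialization $\pi^*\sL \cong \sO_Y$, and changing the trivialization changes it only by a unit of $Y$, which is a constant and so does not affect $\psi$. Naturality will be clear from the construction.

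Finally, a character $\chi \in \Hom_{\gp{k}}(H, \bGm)$ gives $\psi(h, y) = \chi(h)$, which satisfies (\ref{align:relation}) precisely because $\chi$ is a homomorphism. Distinct characters give distinct functions on $H \times Y$ (since $Y$ is nonempty, say after base change to a point of $Y$), so $i$ is injective, and it is a homomorphism. The main obstacle is only bookkeeping: getting the identification $Y \times_X Y \cong H \times Y$ and the order of the projections consistent, so that the cocycle condition comes out as (\ref{align:relation}) rather than some variant with the inverse or the opposite order.
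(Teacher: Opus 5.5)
Your proposal is correct and follows essentially the same route as the paper. Both arguments use fpqc descent along the torsor, identify $Y \times_X Y \cong H \times_k Y$ so that the cocycle condition becomes (\ref{align:relation}), use $\bGm(Y) = k^\times$ to show that changing the trivialization (the coboundary $f(hy)f(y)^{-1}$) has no effect, and take $\psi(h,y) = \chi(h)$ for the map $i$. Your handling of the coboundaries is a bit cleaner than the paper's $a^2 = a$ remark, and your identification $(h,y) \mapsto (hy,y)$ versus the paper's $(h,y) \mapsto (y,hy)$ is immaterial, since both give exactly (\ref{align:relation}).
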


\begin{proof}
	Let $\pi_i: Y \times_X Y \to Y$ and $\pi_{ij}: Y \times_X Y \times_X Y \to Y \times_X Y$ be the natural projections. By fpqc descent for quasicoherent sheaves, a line bundle on $X$ (up to isomorphism) is equivalent to the data of a line bundle $\sL$ on $Y$ along with an isomorphism $\vp: \pi_1^*\sL \to \pi_2^*\sL$ satisfying the cocycle condition $\pi_{23}^*\vp \circ \pi_{12}^*\vp = \pi_{13}^*\vp$ (considered up to isomorphism of such data). Elements of $\ker(\Pic(X) \to \Pic(Y))$ therefore correspond to elements of $\bGm(Y \times_X Y)$ satisfying a cocycle condition, up to isomorphism. Using the isomorphism $H \times_{\Spec k} Y \to Y \times_X Y$ given by $(h, y) \mapsto (y, hy)$ and unraveling the cocycle condition, we find that $\vp$ corresponds to a morphism $\psi: H \times_{\Spec k} Y \to \bGm$ satisfying (\ref{align:relation}), up to isomorphism. Furthermore, $\psi$ corresponds to the trivial line bundle on $X$ if and only if $\psi$ is obtained as obtained as the pullback of an element $a \in \bGm(Y) = k^\times$. For such an $a$, (\ref{align:relation}) shows $a^2 = a$, so in fact $a = 1$, i.e., there are no nontrivial isomorphisms between two different descent data as above. The final claim follows from the fact that, given a $k$-homomorphism $\chi\colon H \to \bGm$, one can define $\psi$ as above by $\psi(h, y) = \chi(h)$.
\end{proof}

\begin{remark}\label{remark:alpha2-torsor}
If $Y$ is proper or $k$ is algebraically closed and $H$ is smooth, then the map $i$ of Lemma~\ref{lemma:descent} is an isomorphism, but under weaker hypotheses it is not.

For example, take $p = 2$, let $k$ be a field of characteristic $2$, let $Y_0$ be a smooth proper variety over $k$ on which $\mu_2$ acts freely (e.g., an ordinary elliptic curve with full rational $2$-torsion), and let $Y = Y_0 \times \Spec k[a, b]/(a^2 + b^2 - a)$. Note that $\mu_2$ acts on $\Spec k[a, b]/(a^2 + b^2 - a)$ by acting trivially on $a$ and through the unique nontrivial character $\chi$ on $b$. Thus the diagonal action of $\mu_2$ on $Y$ is free, and the map $Y \to X \coloneqq Y/\mu_2$ is a $\mu_2$-torsor. Moreover, we have $\Spec k[a, b]/(a^2 + b^2 - a) \cong \bA^1$, so properness of $Y_0$ implies $k[Y]^\times = k^\times$. Unraveling Lemma~\ref{lemma:descent} shows that an element of $\ker(\Pic(X) \to \Pic(Y))$ is equivalent to the data of $f \in k[Y]_1$, $g \in k[Y]_\chi$ such that $f^2 + g^2 = f$. In this situation, one can take $f$ (resp.\ $g$) to be the composition of $a$ (resp.\ $b$) with the projection to $\Spec k[a, b]/(a^2 + b^2 - a)$ to show that $\ker(\Pic(X) \to \Pic(Y)) \neq \Hom_{\gp{k}}(\mu_2, \bGm) = \bZ/2$.

As another example, if $p = 2$ then there is an $\alpha_2$-torsor
\[
Y \coloneqq \{(x, y): x^{10} + y^6 + x^2y^2 + x^4 + y = 0\} \to X \coloneqq \{(x, y): x^5 + y^6 + xy^2 + x^2 + y = 0\}
\]
between smooth affine curves inside $\bA^2$, sending $(x, y)$ to $(x^2, y)$. Using the Jacobian criterion, one checks that the closure $\overline{X}$ of $X$ inside $\bP^2$ is a smooth curve with precisely one point at infinity, so $\bGm(X) = k^\times$. Since $Y$ is smooth and connected and $Y \to X$ is finite, it follows that $\bGm(Y) = k^\times$. Unraveling the conditions on a descent datum for a line bundle, we find that line bundles on $X$ pulling back to the trivial bundle on $Y$ correspond to functions $f$ in $k[Y]$ such that $\frac{\partial{f}}{\partial{x}} = f^2$. Taking $f(x, y) = x^5 + y^3 + xy$ shows that $\Pic(X) \to \Pic(Y)$ is not injective.
\end{remark}

\section{Twisted centralizers}\label{section:twisted}

Throughout this section, we will fix a scheme $S$ (which, after Section~\ref{ss:flat}, will be the spectrum of a field) and a reductive $S$-group scheme $G$. Let $\sigma$ be an $S$-automorphism of $G$ which preserves a pinning $(B, T, \{X_\alpha\}_{\alpha \in \Delta})$.\footnote{This is not necessary and is only done for convenience; one can always reduce to this case by using the fact that every $S$-automorphism of $G$ can be written as the composition of an inner automorphism with a pinning-preserving one. Thus we will use the results of this section below even for non-pinning-preserving automorphisms.} Let $W$ denote the Weyl group of $(G, T)$. For $\alpha \in \Delta$, let $x_\alpha\colon \bGa \to U_\alpha$ be the isomorphism corresponding to $X_\alpha$. Use $\sD G$ to denote the derived group of $G$.

\subsection{Flatness}\label{ss:flat}

For $g \in G(S)$, let $Z_{G,\sigma}(g)$ denote the twisted centralizer scheme, so for each $S$-scheme $S'$ the set $Z_{G,\sigma}(g)(S')$ consists of $h \in G(S')$ such that $hg = g\sigma(h)$. In this section, we prove the flatness of twisted centralizers over the $\sigma$-regular locus. For the purposes of Section~\ref{ss:smooth}, an important consequence is that if $G$ is semisimple and simply connected, $S = \Spec k$ for a field $k$, and $b \in B(k)$ is $\sigma$-regular, then $Z_{B,\sigma}(b) \subset B$. This is claimed in \cite[Proposition 5.2.19(4)]{Xiao-Zhu}, but the proof there is not complete, as we now explain.

Write $B = TU$, where $U$ is the unipotent radical of $B$. First, \cite[Proposition 5.2.4(2) and (4)]{Xiao-Zhu} claim that if $u \in U(k)$ is $\sigma$-regular then $Z_{G,\sigma}(u) \subset B$. However, this is not correct: in fact, if $\sigma = 1$, then Theorem~\ref{theorem:regular-unipotent-centralizer} will show that $Z_G(u) \subset B$ if and only if $\chara k \nmid |\pi_1(\sD G)|$ (and Example~\ref{example:pgl2-centralizer} has already illustrated that $Z_G(u)$ can fail to lie in $B$). When $\sigma \neq 1$, there are other issues related to $Z(G)$, which the following example shows.

\begin{example}
    Let $\sigma$ be the automorphism of $G = \GL_2$ given by $\sigma(g) = \det(g)^{-1} g$, and suppose $\chara k = 2$. In this case, $(1-\sigma)Z \cap \sD G = \mu_2$. If $u = \begin{pmatrix} 1 &1 \\ 0 &1 \end{pmatrix}$, then we have
    \[
    Z_{G,\sigma}(u) \cong \{g \in \SL_2\colon gug^{-1} \in \mu_2 \cdot u\} = \pi^{-1}(Z_{\PGL_2}(\pi(u))),
    \]
    where $\pi\colon \SL_2 \to \PGL_2$ is the natural quotient. By Example~\ref{example:pgl2-centralizer}, the centralizer $Z_{\PGL_2}(\pi(u))$ does not lie in any Borel and in fact $Z_{\PGL_2}(\pi(u))$ is not smooth.
\end{example}

The main issue with the proof of \cite[Proposition 5.2.4(2)]{Xiao-Zhu} is that \cite[Lemma 5.2.5]{Xiao-Zhu} is partially incorrect: the claims about $\fu_x$ and $\fb_x$ are correct, but the final claim that $\fg_x = \fb_x$ is only true when $G$ is semisimple and simply connected, as the above examples show. Since \cite[Lemma 5.2.5]{Xiao-Zhu} is used in the proof of \cite[Proposition 5.2.19(4)]{Xiao-Zhu}, the proof of that result is also incomplete. However, the latter result is still true, as we will show. The other results of \cite{Xiao-Zhu} (which we use extensively below) are unaffected by these issues.

If $g \in G(k)$, let $c_g\colon G \to G$ be the map $c_g(h) = ghg^{-1}$. If $\tau = c_g \sigma$, then $Z_{G,\sigma}(g) = G^\tau$, so the study of twisted centralizers is equivalent to the study of the fixed points of automorphisms. The latter perspective is somewhat cleaner, but for the applications to L-parameters in this paper it is for the most part more convenient to work with twisted centralizers.

Recall the morphism $\chi_\sigma\colon G \to G/\!/_\sigma G$ to the twisted GIT quotient. By \cite[Theorem 3.10]{Cotner-conn-comps}, the formation of $G/\!/_\sigma G$ commutes with all base change, and the natural map $T_\sigma/W^\sigma \to G/\!/_\sigma G$ is an isomorphism. 
Let $G^{\sigma\textrm{-}\rm{reg}}$ denote the open subscheme of $G$ such that for every $S$-scheme $S'$, the set $G^{\sigma\textrm{-}\rm{reg}}(S')$ consists of those $g \in G(S')$ such that $Z_{G_{S'},\sigma}(g)$ is of the same (constant) relative dimension as $T^\sigma$.

\begin{lemma}\label{lemma:steinberg-map-flat}
    If $G$ is semisimple and simply connected, then $\chi_\sigma$ is flat and $\chi_\sigma|_{G^{\sigma\textrm{-}\rm{reg}}}$ is smooth.
\end{lemma}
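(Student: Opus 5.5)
The plan is the standard ``miracle flatness'' argument applied fiberwise over $S$. First reduce to the case that $S$ is the spectrum of a field. The target $G/\!/_\sigma G \cong T_\sigma/W^\sigma$ is flat over $S$, and its formation commutes with base change by \cite[Theorem 3.10]{Cotner-conn-comps}. Also $G$ is $S$-flat and finitely presented. So by the fibral flatness criterion (EGA IV$_3$ 11.3.10) it suffices to prove flatness of $\chi_\sigma$ on each geometric fiber of $S$. Smoothness of the restriction to $G^{\sigma\textrm{-}\rm{reg}}$ follows once we know flatness together with smoothness of the fibers of $\chi_\sigma$ at $\sigma$-regular points, and that fiberwise statement can also be checked over geometric points of $S$. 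Hence we may assume $S = \Spec k$ with $k$ algebraically closed.

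In this case $G$ is smooth and irreducible of dimension $\dim G$. Since $G$ is simply connected and $\sigma$ preserves a pinning, $T_\sigma/W^\sigma$ should be an affine space of dimension $r = \dim T^\sigma$. I expect this from Steinberg's twisted Chevalley theorem, or directly from the identification in \cite[Theorem 3.10]{Cotner-conn-comps}, since the twisted coinvariants of a simply connected torus by a diagram automorphism again give a torus whose character lattice is permuted by $W^\sigma$ with basis the orbit sums of fundamental weights. So the target is regular. By miracle flatness (a morphism from a Cohen--Macaulay scheme to a regular scheme whose fibers all have the expected dimension $\dim G - r$ is flat), it remains to show that every fiber of $\chi_\sigma$ has pure dimension $\dim G - r$.

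For the dimension count I would use the twisted analogue of Steinberg's results as in \cite{Xiao-Zhu}. Each fiber is a finite union of $\sigma$-twisted conjugacy classes, namely finitely many unipotent classes attached to a fixed twisted-semisimple part, and it contains a dense open $\sigma$-regular class. The key input is that twisted conjugacy classes in a fiber have dimension at most $\dim G - r$, with equality exactly for the regular ones. This gives the upper bound. The lower bound $\dim \geq \dim G - r$ is automatic from the dominance of $\chi_\sigma$, which is surjective by the Chevalley isomorphism. Upper semicontinuity of fiber dimension handles the rest.

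For smoothness at a $\sigma$-regular point $g$, flatness reduces the question to smoothness of the fiber at $g$. There the orbit map $G \to \chi_\sigma^{-1}(\chi_\sigma(g))$, $h \mapsto hg\sigma(h)^{-1}$, has image an open orbit of dimension $\dim G - r$. I will show the fiber is reduced at $g$ by comparing the tangent space of the fiber with the image of $\fg$ under $1 - \Ad(g)\sigma$. Equivalently, I will show that the differential $d\chi_\sigma$ at $g$ has rank $r$. I expect this rank statement to be the main obstacle, since it is precisely where simple connectivity enters: the examples above show that Lie-algebra centralizers can jump when $\pi_1$ is not étale. I would attack it by reducing to a regular element in a Steinberg-type cross-section $\prod_{\alpha} U_\alpha \dot{s}_\alpha$ of representatives, taken over twisted orbits of simple roots, following \cite{Xiao-Zhu}. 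On that section, $\chi_\sigma$ restricts to an isomorphism onto $T_\sigma/W^\sigma$, as in Steinberg's cross-section theorem. This gives a local section through a conjugate of $g$ and hence rank $r$, and $G$-equivariance transports the conclusion to all regular points.
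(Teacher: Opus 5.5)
Your proposal is correct and is essentially the paper's argument: the paper makes the same reduction to $S=\Spec k$, using the fibral flatness criterion and the base-change compatibility of $G/\!/_\sigma G$ from \cite[Theorem 3.10]{Cotner-conn-comps}, and then simply cites \cite[Corollary 4.3.3, Corollary 5.3.3]{Xiao-Zhu} for the field case, which your miracle-flatness dimension count and twisted Steinberg cross-section sketch reprove from the standard inputs. One correction in the last step: comparing the tangent space of the fiber with the image of $1-\Ad(g)\sigma$ is not equivalent to $d\chi_\sigma$ having rank $r$, and it fails even for simply connected $G$ (for $\SL_2$ in characteristic $2$ and $g$ regular unipotent, the non-reduced center makes $\Lie Z_G(g)$ $2$-dimensional, so that image is a line while the fiber is smooth of dimension $2$ at $g$), so the rank statement should come from the cross-section, as you ultimately propose.
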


\begin{proof}
    By the fibral flatness criterion and the fact that the formation of $G/\!/_\sigma G$ commutes with all base change \cite[Theorem 3.10]{Cotner-conn-comps}, we may assume $S = \Spec k$ for a field $k$. In this case, the result is \cite[Corollary 4.3.3, Corollary 5.3.3]{Xiao-Zhu}.
\end{proof}

Define the universal twisted centralizer $Z_{G,\sigma}$ be the closed $G$-subgroup scheme of $G \times_S G$ defined by the Cartesian diagram
\[
\begin{tikzcd}
    Z_{G,\sigma} \arrow[r] \arrow[d]
        &G \times_S G \arrow[d, "{(g, h) \mapsto (gh\sigma(g)^{-1}, h)}"] \\
    G \arrow[r, "\Delta"]
        &G \times_S G
\end{tikzcd}
\]

\begin{lemma}\label{lemma:flatness-of-fixed-point-scheme}
    If $G$ is semisimple and simply connected, then $Z_{G,\sigma} \times_G G^{\sigma\textrm{-}\rm{reg}} \to G^{\sigma\textrm{-}\rm{reg}}$ is a finitely presented flat commutative group scheme.
\end{lemma}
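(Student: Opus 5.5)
The statement has three parts: finite presentation, flatness, and commutativity. I would handle them in that order.

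\emph{Finite presentation.} This is formal. $Z_{G,\sigma}$ is a closed subscheme of $G \times_S G$ cut out by the pullback of the diagonal of $G$. Since $G$ is affine and finitely presented over $S$, the diagonal is a finitely presented closed immersion. Hence $Z_{G,\sigma} \to G$ is finitely presented, and so is its base change to the open subscheme $G^{\sigma\textrm{-}\rm{reg}}$.

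\emph{Flatness.} I would use the fibral flatness criterion. Both $Z_{G,\sigma}|_{G^{\sigma\textrm{-}\rm{reg}}}$ and $G^{\sigma\textrm{-}\rm{reg}}$ are finitely presented and flat over $S$. So it suffices to prove flatness after base change to each geometric point $\Spec k \to S$. This reduces us to $S = \Spec k$ with $k$ algebraically closed, where $G^{\sigma\textrm{-}\rm{reg}}$ is smooth.

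In that setting I would apply miracle flatness, which needs two inputs.
\begin{enumerate}
\item $Z_{G,\sigma}|_{G^{\sigma\textrm{-}\rm{reg}}}$ is Cohen--Macaulay.
\item Every fiber has dimension $\dim T^\sigma$.
\end{enumerate}
Input (2) holds by the definition of $G^{\sigma\textrm{-}\rm{reg}}$.

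For input (1), I would identify $Z_{G,\sigma}$ with the fiber product $G \times_{G/\!/_\sigma G} G$ via $(g,h) \mapsto (h, gh\sigma(g)^{-1})$. The two elements $h$ and $gh\sigma(g)^{-1}$ are $\sigma$-conjugate, so they have the same image under $\chi_\sigma$. This gives a closed immersion of $Z_{G,\sigma}$ into the fiber product, lying over the first projection.

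Over the regular locus I expect this map to be an isomorphism onto an open subscheme. The justification is Steinberg-type: $\chi_\sigma|_{G^{\sigma\textrm{-}\rm{reg}}}$ is smooth by Lemma~\ref{lemma:steinberg-map-flat}, and its fibers are single $\sigma$-twisted orbits \cite{Xiao-Zhu}. Hence the action map $G \times G^{\sigma\textrm{-}\rm{reg}} \to G^{\sigma\textrm{-}\rm{reg}} \times_{G/\!/_\sigma G} G^{\sigma\textrm{-}\rm{reg}}$ is flat and surjective with fibers the centralizers. Its source is smooth.

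A cleaner route is to avoid the isomorphism claim altogether. The map $G \times G^{\sigma\textrm{-}\rm{reg}} \to G^{\sigma\textrm{-}\rm{reg}} \times_{G/\!/_\sigma G} G^{\sigma\textrm{-}\rm{reg}}$ goes from a smooth scheme to a scheme that is smooth over $G^{\sigma\textrm{-}\rm{reg}}$, since it is a base change of the smooth map $\chi_\sigma|_{G^{\sigma\textrm{-}\rm{reg}}}$. Both source and target are regular, and the fibers are equidimensional of dimension $\dim T^\sigma$. Miracle flatness therefore makes this map flat. Its fiber over the diagonal section $\{(h,h)\}$ is exactly $Z_{G,\sigma}|_{G^{\sigma\textrm{-}\rm{reg}}}$. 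Flatness over $G^{\sigma\textrm{-}\rm{reg}}$ then follows, because the diagonal $G^{\sigma\textrm{-}\rm{reg}} \to G^{\sigma\textrm{-}\rm{reg}} \times_{G/\!/_\sigma G} G^{\sigma\textrm{-}\rm{reg}}$ is a section and flatness is preserved under base change.

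\emph{Commutativity.} A flat finitely presented group scheme over a reduced base is commutative if its geometric fibers are commutative. More carefully: the commutator map factors through the identity section if it does so on a schematically dense open subset. I would therefore show that fibers over a dense open subset are commutative and then propagate.

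\begin{enumerate}
\item Over the dense open subset of $\sigma$-regular semisimple elements, the twisted centralizer is a form of $T^\sigma$ up to finite components. It is commutative by Steinberg's results, since $G$ is simply connected \cite{Xiao-Zhu}.
\item To propagate, I would use that $Z_{G,\sigma}|_{G^{\sigma\textrm{-}\rm{reg}}}$ is flat over the reduced base $G^{\sigma\textrm{-}\rm{reg}}$. Hence $Z_{G,\sigma}|_{G^{\sigma\textrm{-}\rm{reg}}}$ has no embedded components over the complement, and the commutator morphism to $Z_{G,\sigma}$ agrees with the identity on a schematically dense open subset.
\end{enumerate}

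\textbf{Main obstacle.} I expect the hard step to be the schematic density of the generic part of $Z_{G,\sigma}|_{G^{\sigma\textrm{-}\rm{reg}}}$. Flatness over an integral base gives that associated points lie over the generic point only when the generic fibers behave well. If this fails, I would fall back on citing the commutativity of regular twisted centralizers in \cite[Corollary 5.3.3]{Xiao-Zhu} fiberwise, and then use flatness to conclude.
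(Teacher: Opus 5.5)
Your flatness argument in the ``cleaner route'' is the paper's argument. You factor $(g,h)\mapsto(gh\sigma(g)^{-1},h)$ through $G^{\sigma\textrm{-}\rm{reg}}\times_{G/\!/_\sigma G}G^{\sigma\textrm{-}\rm{reg}}$, which is smooth by Lemma~\ref{lemma:steinberg-map-flat}. You prove this action map flat by fibral flatness plus miracle flatness, and then pull back along the diagonal.

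You should, however, discard your first route, for two reasons.
\begin{enumerate}
\item Applying the fibral flatness criterion directly to $Z_{G,\sigma}\times_G G^{\sigma\textrm{-}\rm{reg}}\to G^{\sigma\textrm{-}\rm{reg}}$ presupposes that the source is $S$-flat, which is part of what is being proved. The criterion has to be applied to the action map, whose source $G\times_S G^{\sigma\textrm{-}\rm{reg}}$ is $S$-smooth, together with the fact that the formation of $G/\!/_\sigma G$ commutes with base change.
\item On $Z_{G,\sigma}$ the map $(g,h)\mapsto(h,gh\sigma(g)^{-1})$ is just $(g,h)\mapsto(h,h)$, so it is not an immersion. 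The correct statement is that $Z_{G,\sigma}$ is the fiber of the action map over the diagonal, as you use later.
\end{enumerate}

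The genuine gap is in the commutativity step. The lemma is stated over an arbitrary scheme $S$, so $G^{\sigma\textrm{-}\rm{reg}}$ need not be reduced. Your propagation principle is only valid over a reduced base. For instance, over $k[\epsilon]/(\epsilon^2)$ the law $(x,y)(x',y')=(x+x',y+y'+\epsilon xy')$ on $\bGa^2$ defines a flat group scheme with commutative fiber, and it is not commutative.

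The missing step is the reduction the paper makes first.
\begin{enumerate}
\item By the Existence and Isomorphism Theorems, one may assume $S$ is integral (e.g.\ a pinned split model over $\bZ$).
\item Then $Z_{G,\sigma}\times_G G^{\sigma\textrm{-}\rm{reg}}$ is $S$-flat, so its generic fiber is schematically dense, and one may assume $S$ is a field.
\item Over a field, $G^{\sigma\textrm{-}\rm{reg}}$ is integral. \cite[Lemma 5.2.14]{Xiao-Zhu} provides a dense open $\Omega$ over which the fibers are tori, and flatness over the integral base makes the part over $\Omega$ schematically dense.
\end{enumerate}

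Your ``main obstacle'' is not actually one. For a flat morphism of locally noetherian schemes to an integral scheme, every associated point of the source lies over the generic point, with no condition on the generic fiber.

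Your fallback does not work either. \cite[Corollary 5.3.3]{Xiao-Zhu} concerns smoothness of $\chi_\sigma$ on the regular locus, not commutativity. Scheme-theoretic commutativity of regular twisted centralizers is essentially what this lemma establishes, and it can fail when $G$ is not simply connected (Theorem~\ref{theorem:regular-unipotent-centralizer}).
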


\begin{proof}
    Note that the map $G \times_S G^{\sigma\textrm{-}\rm{reg}} \to G \times_S G$, $(g, h) \mapsto (gh\sigma(g)^{-1}, h)$ factors through the closed subscheme $G^{\sigma\textrm{-}\rm{reg}} \times_{G/\!/_\sigma G} G^{\sigma\textrm{-}\rm{reg}}$, which is smooth by Lemma~\ref{lemma:steinberg-map-flat}. Furthermore, by fibral flatness, miracle flatness, and the definition of $\sigma$-regularity, the factored map $G \times_S G^{\sigma\textrm{-}\rm{reg}} \to G^{\sigma\textrm{-}\rm{reg}} \times_{G/\!/_\sigma G} G^{\sigma\textrm{-}\rm{reg}}$ is faithfully flat. Thus from the Cartesian diagram
    \[
    \begin{tikzcd}
        Z_{G,\sigma} \times_G G^{\sigma\textrm{-}\rm{reg}} \arrow[r] \arrow[d]
        &G \times_S G^{\sigma\textrm{-}\rm{reg}} \arrow[d, "{(g, h) \mapsto (gh\sigma(g)^{-1}, h)}"] \\
    G^{\sigma\textrm{-}\rm{reg}} \arrow[r, "\Delta"]
        &G^{\sigma\textrm{-}\rm{reg}} \times_{G/\!/_\sigma G} G^{\sigma\textrm{-}\rm{reg}}
    \end{tikzcd}
    \]
    we conclude the finite presentation and flatness claims. For commutativity, we may use the Existence and Isomorphism Theorems to reduce to the case $S$ is an integral scheme, in which case flatness allows us to pass to the generic fiber to assume $S$ is the spectrum of a field. In this case, \cite[Lemma 5.2.14]{Xiao-Zhu} shows that $G^{\sigma\textrm{-}\rm{reg}}$ admits a dense open subscheme $\Omega$ such that the $\Omega$-group scheme $Z_{G,\sigma} \times_G \Omega$ has torus fibers. In particular, $Z_{G,\sigma} \times_G \Omega$ is commutative, and the same is therefore true of $Z_{G,\sigma}$ by flatness.
\end{proof}

\begin{lemma}\label{lemma:twisted-centralizer-in-B}
    Suppose $S = \Spec k$ and $G$ is semisimple and simply connected. If $b \in B(k)$ is $\sigma$-regular, then $Z_{G,\sigma}(b) \subset B$.
\end{lemma}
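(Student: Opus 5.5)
The plan is to prove the containment $Z_{G,\sigma}(b) \subset B$ by deformation. We first establish it on a dense open set of $\sigma$-regular elements of $B$, where the twisted centralizer is a torus. We then pass to closures, using the flatness of the universal twisted centralizer from Lemma~\ref{lemma:flatness-of-fixed-point-scheme}. The point is that the containment is a closed condition along a flat family, so it spreads from a dense subset to all of $B^{\sigma\textrm{-}\rm{reg}} \coloneqq B \cap G^{\sigma\textrm{-}\rm{reg}}$.

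Concretely, let $\sZ \coloneqq Z_{G,\sigma} \times_G B^{\sigma\textrm{-}\rm{reg}}$. By Lemma~\ref{lemma:flatness-of-fixed-point-scheme} (base change of flatness), $\sZ$ is a flat, finitely presented group scheme over $B^{\sigma\textrm{-}\rm{reg}}$. Consider the closed subscheme $\sZ \times_G B \subset \sZ$, where $\sZ \to G$ is the map to the first factor. We must show this closed subscheme is all of $\sZ$, i.e. that the ideal cutting it out vanishes.

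\textbf{Step 1.} Since $\sZ$ is flat over $B^{\sigma\textrm{-}\rm{reg}}$, and $B^{\sigma\textrm{-}\rm{reg}}$ is an open subscheme of the smooth irreducible variety $B$, hence integral, the generic fiber (or any dense open subset of the base) is schematically dense in $\sZ$. So it suffices to find a dense open $V \subset B^{\sigma\textrm{-}\rm{reg}}$ such that $Z_{G,\sigma}(b) \subset B$ for $b \in V$, scheme-theoretically. We must also check that $B^{\sigma\textrm{-}\rm{reg}}$ is nonempty; this holds because regular semisimple-type elements of $T$ (those $\sigma$-regular in $T$) lie in it.

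\textbf{Step 2.} Take $V$ to be the preimage in $B^{\sigma\textrm{-}\rm{reg}}$ of the locus $\Omega$ from \cite[Lemma 5.2.14]{Xiao-Zhu}, intersected with a suitable ``$\sigma$-regular semisimple'' locus. For $b = tu \in V$, we would like $b$ to be $U$-$\sigma$-conjugate into $T$. The twisted conjugation map $U \times T^{\rm{reg}} \to B$ is dominant by a dimension count: generically its fibers are $U^\sigma$-like, and its image has dimension $\dim B$ after accounting for $(1-\sigma)$ on $T$. Replacing $b$ by a $B$-$\sigma$-conjugate, which preserves the property $Z_{G,\sigma}(b) \subset B$, we reduce to $b = t \in T$ sufficiently generic. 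Then $Z_{G,\sigma}(t)$ is the fixed scheme of $c_t\sigma$. For generic $t$ this is smooth with identity component $T^{\sigma,0}$, which has torus fibers by \cite[Lemma 5.2.14]{Xiao-Zhu}, and it is connected by Steinberg's theorem applied to the semisimple automorphism $c_t\sigma$ of the simply connected group $G$ \cite[Theorem 8.1]{SteinbergEndomorphisms}. Hence $Z_{G,\sigma}(t) = (T^\sigma)^0 \subset T \subset B$.

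\textbf{Step 3.} Given Step 2, the closed subscheme $\sZ \times_G B$ contains $\sZ|_V$. Since $\sZ|_V$ is schematically dense in $\sZ$ by flatness over an integral base, we get $\sZ \times_G B = \sZ$, and restricting to the fiber at $b$ gives the lemma.

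The main obstacle is Step 2. Its difficulty is in making ``generic'' precise: we must produce a dense open subset of $B^{\sigma\textrm{-}\rm{reg}}$ (not merely of $G^{\sigma\textrm{-}\rm{reg}}$) on which elements are $B$-$\sigma$-conjugate into $T$ with reduced, connected twisted centralizer. We also need the dominance of $U \times T \to B$ under twisted conjugation, which requires care when $\sigma$ acts nontrivially on $T$. A fallback is to use $\Omega \cap B$ directly: if $B \cap \Omega$ is dense in $B^{\sigma\textrm{-}\rm{reg}}$, then for $b \in \Omega \cap B$ the centralizer is a torus containing a maximal torus of $Z_{B,\sigma}(b)$ of the same dimension, hence equal to it and lying in $B$.
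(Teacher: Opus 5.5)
Your outline matches the paper's proof. Flatness of $Z_{G,\sigma}\times_G B^{\sigma\textrm{-}\mathrm{reg}}\to B^{\sigma\textrm{-}\mathrm{reg}}$ (Lemma~\ref{lemma:flatness-of-fixed-point-scheme}) reduces the claim to $b$ in a dense open. Such $b$ is $B$-$\sigma$-conjugate to some $t\in T(k)$, and then $Z_{G,\sigma}(t)=T^\sigma\subset B$. The paper takes the last two facts directly from \cite[Lemma 5.2.14, Remark 5.2.12]{Xiao-Zhu}.

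The step that fails is your appeal to \cite[Theorem 8.1]{SteinbergEndomorphisms}. That theorem needs a semisimple automorphism, and $c_t\sigma$ is never semisimple when $\chara k$ divides the order of $\sigma$. An example is $\SL_{2n+1}$ with its graph automorphism in characteristic $2$, which is precisely a case the lemma must cover. To see this, note that $T=T^\sigma\cdot(1-\sigma)T$, so you can write $t=s\,a\sigma(a)^{-1}$ with $s\in T^\sigma$ and $a\in T$. Then $c_t\sigma=c_a(c_s\sigma)c_a^{-1}$. Since $c_s$ is semisimple and commutes with $\sigma$, the unipotent part of $c_s\sigma$ is the $p$-part of $\sigma$, which is nontrivial.

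You do not need connectedness from Steinberg; here is how to repair the argument. First replace $\Omega$ by its saturation under $\sigma$-conjugation. This is open, being the image of $G\times\Omega$ under $(g,x)\mapsto gx\sigma(g)^{-1}$, and it still has torus fibers. Now take $t\in T\cap\Omega$. Then $Z_{G,\sigma}(t)$ is a torus containing $T^\sigma$, because $st\sigma(s)^{-1}=sts^{-1}=t$ for $s\in T^\sigma$. Both have dimension $\dim T^\sigma$ by $\sigma$-regularity, so $Z_{G,\sigma}(t)=(T^\sigma)^0_{\mathrm{red}}\subset T$.

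What you left open is that $T\cap\Omega\neq\emptyset$. This follows from your dominance claim for $U\times T\to B$ together with \cite[Lemma 5(i)]{Springer-twisted}, which says every element is $\sigma$-conjugate into $B$. Together they make the $\sigma$-conjugates of $T$ dense in $G$, so they meet the saturated $\Omega$. Your description of the fibers of $U\times T\to B$ needs correcting, though. The fiber over $t'\in T$ is $U^{c_{t'}\sigma}\times\{t'\}$, which is finite for generic $t'$. It is not ``$U^\sigma$-like'': since $\dim U+\dim T=\dim B$, positive-dimensional generic fibers would contradict dominance.

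With these pieces in place, the image of $U\times(T\cap\Omega)$ contains a dense open $V\subset B^{\sigma\textrm{-}\mathrm{reg}}$. On $V$ you have $Z_{G,\sigma}(b)=uZ_{G,\sigma}(t)u^{-1}\subset B$, and your Steps 1 and 3 then finish the proof. Your fallback argument is not usable as written, because nothing you say shows $\dim Z_{B,\sigma}(b)=\dim T^\sigma$.
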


\begin{proof}
    By Lemma~\ref{lemma:flatness-of-fixed-point-scheme}, the map $Z_{G,\sigma} \times_{G} B^{\sigma\textrm{-}\rm{reg}} \to B^{\sigma\textrm{-}\rm{reg}}$ is flat, so it is enough to prove the lemma for $b$ lying in a dense open subscheme of $B$. By \cite[Lemma 5.2.14]{Xiao-Zhu}, we may therefore assume $b \in B(k)$ is $\sigma$-regular semisimple. In that case, we have $b = b_0t\sigma(b_0)^{-1}$ for some $t \in T(k)$ and $b_0 \in B(k)$ and $Z_{G,\sigma}(b) = b_0Z_{G,\sigma}(t)b_0^{-1}$. By \cite[Remark 5.2.12]{Xiao-Zhu}, we have $Z_{G,\sigma}(t) = T^\sigma \subset B$, and the result follows.
\end{proof}

\subsection{Smoothness}\label{ss:smooth}

From now on, we will assume $S = \Spec k$. In this section, we will consider smoothness properties of twisted centralizers $Z_{G,\sigma}(g)$.

\begin{lemma}\label{lemma:reg-unip-centralizer}
    If $u \in U(k)$ is $\sigma$-regular, then $Z_{B,\sigma}(u) = Z(G)^\sigma \cdot Z_{U,\sigma}(u)$ and $Z_{U,\sigma}(u)$ is smooth.
\end{lemma}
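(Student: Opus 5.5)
The plan is to prove the two assertions separately: first the product decomposition, by projecting $B \to B/U \cong T$ and using the simple-root coordinates of $u$ to force the torus part to be central; then smoothness of $Z_{U,\sigma}(u)$, by a filtration argument on $U$. The step I expect to be hardest is the surjectivity on graded pieces in the smoothness argument.

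For the decomposition, let $h = tv$ be a point of $Z_{B,\sigma}(u)$ with values in a $k$-algebra, where $t \in T$ and $v \in U$. The defining equation is $h u \sigma(h)^{-1} = u$, and its image in $B/U \cong T$ gives $t = \sigma(t)$, so $t \in T^\sigma$. Next I pass to the abelianization
\[
U/[U,U] \cong \prod_{\alpha \in \Delta} U_\alpha \cong \bGa^{\Delta},
\]
writing $\bar u = (c_\alpha)_{\alpha \in \Delta}$ and $\bar v = (v_\alpha)$. Since $\sigma$ preserves the pinning, it permutes the coordinates, and the equation becomes the system of identities
\[
\alpha(t)\bigl(v_\alpha + c_\alpha - v_{\sigma^{-1}(\alpha)}\bigr) = c_\alpha, \qquad \alpha \in \Delta .
\]
Because $t$ is $\sigma$-fixed, $\alpha(t)$ is constant along each $\sigma$-orbit $O \subset \Delta$. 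Summing the identities over $O$ makes the $v$-terms telescope, leaving $\alpha(t)\, s_O = s_O$ with $s_O = \sum_{\beta \in O} c_\beta$.

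The input I need here is the criterion of Xiao--Zhu for $\sigma$-regularity of elements of $U$: $u$ is $\sigma$-regular exactly when $s_O \neq 0$ for every orbit $O$. I would either cite it or rederive it by twisted-conjugating $u$ by $U(k)$ so that each orbit has a single nonzero coordinate. Granting it, $s_O$ is a unit in $k$, so $\alpha(t) = 1$ for all $\alpha \in \Delta$, and this holds scheme-theoretically. Hence $t \in Z(G) \cap T^\sigma = Z(G)^\sigma$. Since $t$ is central and $\sigma$-fixed, it cancels from the equation, so $v \in Z_{U,\sigma}(u)$. Conversely $Z(G)^\sigma \subset Z_{B,\sigma}(u)$, and $Z(G)^\sigma \cap U = 1$. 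So multiplication gives an isomorphism $Z(G)^\sigma \times Z_{U,\sigma}(u) \to Z_{B,\sigma}(u)$.

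For smoothness of $Z_{U,\sigma}(u)$, I filter $U$ by the normal subgroups $U_{\geq n}$ generated by root groups of height $\geq n$. These are stable under $\sigma$, under $c_u$, and under the twisted action $w \mapsto w u \sigma(w)^{-1}u^{-1}$, and their graded pieces $U_{\geq n}/U_{\geq n+1}$ are vector groups. Let $Z_n = Z_{U,\sigma}(u) \cap U_{\geq n}$. I claim each $Z_n/Z_{n+1}$ is the kernel of a linear map of vector groups, namely $1 - \Ad(u)\sigma$ induced on $\Lie(U_{\geq n})/\Lie(U_{\geq n+1})$. A kernel of a linear map of vector groups is again a vector group, hence smooth. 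Since an extension of smooth groups is smooth, smoothness of $Z_{U,\sigma}(u)$ would then follow by induction on $n$.

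The main obstacle is the surjectivity on graded pieces. One must show that every point of the kernel on $\mathrm{gr}_n$ lifts to a point of $Z_n$, rather than only to an element centralizing $u$ modulo $U_{\geq n+1}$. This amounts to surjectivity of the twisted Lang-type map $U_{\geq n+1} \to U_{\geq n+1}$, $w \mapsto w\,(u\sigma(w)u^{-1})^{-1}$, modulo the appropriate subgroup. I would first try to establish it by a dimension count: regularity gives $\dim Z_{G,\sigma}(u) = \dim T^\sigma$. Combined with the decomposition above and Lemma~\ref{lemma:twisted-centralizer-in-B} (applied after passing to the simply connected cover, where the centralizer lies in $B$), this should bound $\dim Z_{U,\sigma}(u)$ from below. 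The aim is to show it equals the sum of the dimensions of the graded kernels, which forces every graded step to be surjective.
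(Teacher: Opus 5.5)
Your product decomposition is correct and is essentially the paper's argument. The paper uses Xiao--Zhu to normalize $u$ to $\prod_i x_{\alpha_i}(1)$ and then solves the same equations modulo $U'$. Your orbit-sum telescoping needs only $s_O\neq 0$.

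The implication you actually use, $\sigma$-regular $\Rightarrow$ all $s_O\neq 0$, has a direct proof. Your proposed rederivation via $U(k)$-conjugation addresses the normal form, not this implication. The direct argument: $T^\sigma U$ preserves $U$ under twisted conjugation and multiplies $s_O$ by $\alpha(t)$. So if $s_O(u)=0$, the twisted $T^\sigma U$-orbit of $u$ lies in the hypersurface $\{s_O=0\}$ of $U$, which forces $\dim Z_{G,\sigma}(u)\geq \dim T^\sigma+1$.

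The smoothness half has a genuine gap. The claim that $Z_n/Z_{n+1}$ is the kernel of the map induced by $1-\Ad(u)\sigma$ on $\mathrm{gr}_n$ is false, so the lifting statement you plan to prove is false as well.
\begin{itemize}
\item Since $[U,U_{\geq n}]\subset U_{\geq n+1}$, conjugation by $u\in U$ (and $\Ad(u)$) acts trivially on $\mathrm{gr}_n$. The induced map is therefore just $1-\sigma$, with kernel $\mathrm{gr}_n^\sigma$.
\item All that holds is an injection $Z_n/Z_{n+1}\hookrightarrow \mathrm{gr}_n^\sigma$, and it need not be surjective. For $\sigma=1$ your claim would give $Z_U(u)=U$, whereas $\dim Z_U(u)=|\Delta|$. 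So no dimension count can force the graded steps to be surjective.
\item Any argument of this shape presents $Z_{U,\sigma}(u)$ as an iterated extension of vector groups, hence proves it connected. But for $\sigma=1$ and $p$ bad, $Z_U(u)$ is smooth yet disconnected (cf.\ the disconnectedness of $Z_G(u)/Z(G)$ used in Lemma~\ref{lemma:no-springer-bad}).
\end{itemize}

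What is missing is a tangent-space computation. The Lie algebra of $Z_{U,\sigma}(u)$ is $\ker(1-\Ad(u)\circ d\sigma)$ on $\fu$. Smoothness therefore amounts to bounding the dimension of this kernel by $\dim Z_{U,\sigma}(u)=\dim T^\sigma-\dim Z(G)^\sigma$. Your determination of $\dim Z_{U,\sigma}(u)$ is the easy half. The upper bound on the Lie algebra kernel has to use the degree-raising part of $\Ad(u)$ coming from the simple-root coordinates of $u$, which is exactly what vanishes on the height-graded pieces. That computation is the (correct) first statement of Xiao--Zhu's Lemma 5.2.5 about $\fu_x$, which is what the paper cites for smoothness.
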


\begin{proof}
    Let $\Delta_1, \dots, \Delta_n$ be the $\sigma$-orbits in $\Delta$, and for each $i$ choose $\alpha_i \in \Delta_i$. By \cite[Propositions 5.2.4(4), 5.2.19(2)]{Xiao-Zhu}\footnote{The part of \cite[Proposition 5.2.4(4)]{Xiao-Zhu} to which we refer here (i.e., the first sentence) is unaffected by the comments at the beginning of this section.}, we may assume $u = \prod_{i=1}^n x_{\alpha_i}(1)$. Fix a $k$-algebra $R$, and let $b \in Z_{B,\sigma}(u)(R)$. Write $b = sv$ for $s \in T(R)$ and $v \in U(R)$. We have
    \[
    u = svu\sigma(v)^{-1}\sigma(s)^{-1} = s\sigma(s)^{-1} \cdot \sigma(s)vu\sigma(v)^{-1}\sigma(s)^{-1},
    \]
    so it follows that $s = \sigma(s)$ and $u = svu\sigma(v)^{-1}s^{-1}$. Let $U'$ be the closed subgroup of $U$ generated by the root groups corresponding to non-simple roots, and say $v \equiv \prod_{\alpha \in \Delta} x_\alpha(v_\alpha) \pmod{U'}$ for $v_\alpha \in R$. Since $s = \sigma(s)$, we have
    \begin{equation}\label{eqn:sigma-regular-1}
    svu\sigma(v)^{-1}s^{-1} \equiv \prod_{i=1}^n \prod_{j=0}^{|\Delta_i| - 1} x_{\sigma^j\alpha_i}(\alpha_i(s)(\delta_{j0} + v_{\sigma^j\alpha_i} - v_{\sigma^{j-1}\alpha_i})) \pmod{U'}.
    \end{equation}
    We will now see this implies $\alpha_i(s) = 1$ for all $i$, which is enough to establish the first claim. If $|\Delta_i| = 1$, i.e., $\sigma\alpha_i = \alpha_i$, then (\ref{eqn:sigma-regular-1}) shows immediately that $\alpha_i(s) = 1$. If $|\Delta_i| > 1$, then (\ref{eqn:sigma-regular-1}) and the definition of $u$ combine to show $0 = \alpha_i(s)(v_{\sigma^j\alpha_i} - v_{\sigma^{j-1}\alpha_i})$ for $j \not\equiv 0 \pmod{|\Delta_i|}$. It follows that $v_{\alpha_i} = v_{\sigma^j\alpha_i}$ for all $j$. Moreover, we have $1 = \alpha_i(s)(1 + v_\alpha - v_{\sigma^{-1}\alpha}) = \alpha_i(s)$, as desired. Smoothness of $Z_{U,\sigma}(u)$ follows from the first (correct) statement of \cite[Lemma 5.2.5]{Xiao-Zhu}.
\end{proof}

The proof of the following lemma is very similar to that of \cite[Lemma 3.11]{DHKM}.

\begin{lemma}\label{lemma:smooth-center-ss}
    If $\tau$ is a semisimple automorphism of $G$, then $G^\tau$ is a reductive group and the quotient $Z((G^\tau)^0)/(Z(G)^\tau)^0$ is smooth.
\end{lemma}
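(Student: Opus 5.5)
We reduce to the case that $\tau$ preserves a Borel pair $(B,T)$ of $G$ (over $\ov{k}$, which is harmless since smoothness and reductivity can be checked after field extension). This is possible because a semisimple automorphism of a connected reductive group normalizes some Borel pair; this is Steinberg's theorem \cite[Theorem 7.5]{SteinbergEndomorphisms}. Reductivity of $G^\tau$ is likewise classical from \cite{SteinbergEndomorphisms}: $(G^\tau)^0$ is reductive with maximal torus $(T^\tau)^0$, whose roots are the restrictions of roots of $G$ to $(T^\tau)^0$. The genuine content is therefore the smoothness of the quotient $Z((G^\tau)^0)/(Z(G)^\tau)^0$.

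Both groups in the quotient are of multiplicative type. We have $(Z(G)^\tau)^0 \subset Z(G) \cap T$. Also $Z((G^\tau)^0)$ lies in the maximal torus $(T^\tau)^0$ of $(G^\tau)^0$, so it is a closed subgroup of multiplicative type of $T$. A quotient of groups of multiplicative type is of multiplicative type. Such a group is smooth if and only if its character group has no $p$-torsion, where $p = \chara k$. So the plan is to work with character groups.

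Set $H = (G^\tau)^0$ and $S = (T^\tau)^0$. Then $X^*(Z(H)) = X^*(S)/\langle \Phi_H\rangle$, where $\Phi_H \subset X^*(S)$ is the root system of $H$. Similarly $X^*((Z(G)^\tau)^0)$ is a quotient of $X^*(T)$. The character group of the quotient $Z(H)/(Z(G)^\tau)^0$ is the kernel of the restriction map
\[
X^*(S)/\langle\Phi_H\rangle \to X^*((Z(G)^\tau)^0).
\]
We must show this kernel has no $p$-torsion.

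Following \cite[Lemma 3.11]{DHKM}, I would identify the kernel concretely. Characters of $S$ are $\tau$-coinvariants of $X^*(T)$ modulo torsion. The roots of $H$ are restrictions of roots $\alpha$ of $G$, subject to a condition on the values of $\tau$ on the root groups in the $\tau$-orbit of $\alpha$ (the twisting of the semisimple part). The kernel is then $\langle \text{restrictions of all roots of } G\rangle / \langle \Phi_H\rangle$, saturated appropriately. The torsion in this kernel comes from two sources:
\begin{enumerate}
    \item root orbits $\alpha$ whose restriction is a root only up to a multiple (for instance the $2\alpha$-type phenomenon in type $A_{2n}$);
    \item a finite-order "eigenvalue" condition, which produces quotients like $\bZ/m$ with $m$ the order of a root of unity in $k^\times$.
\end{enumerate}
Since $\tau$ is semisimple, these roots of unity have order prime to $p$. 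Thus all torsion in the kernel is prime to $p$.

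The main obstacle is making this bookkeeping precise in bad small cases. These include characteristic $2$ with a type $A_{2n}$ orbit, where restricted roots may be divisible by $2$ but the corresponding root group of $H$ is still present. Here I would argue directly that the relevant lattice index equals the order of an element of $\mu_m(k)$ or of $\tau$ acting on a finite set, hence prime to $p$. Alternatively, I would reduce via $\tau = c_t\sigma$, with $t \in T$ and $\sigma$ pinning-preserving of order prime to $p$. This makes $G^\tau = Z_{G^\sigma}(t)$-type up to finite index and lets one use the known smoothness of centralizers of semisimple elements in $G^\sigma$.
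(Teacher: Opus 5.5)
Your lattice reduction is sound as far as it goes. With $S=(T^\tau)^0$ and $H=(G^\tau)^0$, the character group of $Z(H)/(Z(G)^\tau)^0$ is an extension of a finite group of order prime to $p$ (coming from $\pi_0(Z(G)^\tau)$) by $\bZ\Phi|_S/\bZ\Phi_H$. So its $p$-torsion is that of $\bZ\Phi|_S/\bZ\Phi_H$, and everything reduces to showing that the torsion of $\bZ\Phi|_S/\bZ\Phi_H$ has order prime to $p$.

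That is the whole content of the lemma, and you assert it rather than prove it. The heuristic does not suffice. Suppose $\lambda\in\bZ\Phi|_S$ satisfies $p\lambda\in\bZ\Phi_H$. The fact that roots of unity in $k$ have order prime to $p$ only tells you that the relevant eigenvalue is $1$: for $\sigma=1$, $\lambda(t)^p=1$ forces $\lambda(t)=1$. So $\lambda$ lies in the lattice of characters killing $t$. You still need the genuinely root-theoretic fact that an element of the saturation of $\bZ\Phi_H$ which kills $t$ already lies in $\bZ\Phi_H$.

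In the untwisted case this is a Kac-coordinate computation. For $G$ simple adjoint and $t$ of order $n$ with Kac coordinates $(s_i)$, the torsion of $\bZ\Phi/\bZ\Phi_t$ is cyclic of order $d=\gcd\{m_i : s_i\neq 0\}$. This $d$ divides $n=\sum m_is_i$, and a generator takes a value of exact order $d$ at $t$. For $\sigma\neq 1$ you would need a twisted analogue, which you do not supply. To speak of ``the order'' at all, you must first reduce to $\tau$ of finite order prime to $p$, for instance by passing to the Levi subgroup $G^{\Sigma^0}$ where $\Sigma$ is the multiplicative-type group generated by $\tau$; your plan skips this.

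Your fallback does not close the gap. Smoothness of centralizers of semisimple elements in $G^\sigma$ gives no control over their centers: $\SL_p$ in characteristic $p$ is smooth with non-smooth center.

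The paper avoids the combinatorics altogether:
\begin{enumerate}
\item It reduces to $\tau$ of finite order prime to $p$ and to $G$ adjoint.
\item It lifts $(G,\tau)$ to $W(k)$. There $(\bG^\tau)^0$ is reductive, so $Z((\bG^\tau)^0)$ is of multiplicative type.
\item Smoothness of the special fiber then amounts to the component group of the characteristic-zero center having order prime to $p$.
\item Writing $\tau=c_t\sigma$ with $t$ and $\sigma$ of order prime to $p$, this last point is imported from Digne--Michel.
\end{enumerate}
Your worry about type $A_{2n}$ in characteristic $2$ is moot. The image of a semisimple $\tau$ in $\mathrm{Out}(G)$ factors through $\pi_0(\Sigma)$, which has order prime to $p$. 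Hence $\sigma$ has order prime to $p$ and $G^\sigma$ is smooth.
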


\begin{proof}
    Semisimplicity of $\tau$ means that the algebraic subgroup $\Sigma$ of $\Aut_{G/k}$ generated by $\tau$ is of multiplicative type. Thus we may pass from $G$ to $G^{\Sigma^0}$ (a Levi subgroup of $G$) to assume that $\tau$ is of finite order not divisible by $\ell$. In that case, note that $G^\tau$ is reductive by \cite[Theorem 2.1]{Prasad-Yu-finite}. For the second claim, let $G_{\rm{ad}}$ denote the adjoint group of $G$, so $\tau$ induces an automorphism (which we will also denote by $\tau$) of $G_{\rm{ad}}$. The map $(G^\tau)^0 \to ((G_{\rm{ad}})^\tau)^0$ is surjective, and there is a short exact sequence
    \[
    1 \to Z(G)^\tau \cap (G^\tau)^0 \to Z((G^\tau)^0) \to Z((G_{\rm{ad}}^\tau)^0) \to 1,
    \]
    so we may pass from $G$ to $G_{\rm{ad}}$ to assume that $G$ is of adjoint type.

    We may assume $k = \ov{k}$. If $\chara k = 0$, then this follows from Cartier's theorem, so assume $\chara k = p > 0$. Let $\bG$ be the split reductive group scheme over $W(k)$ with special fiber $G$, and lift $\tau$ to a semisimple $W(k)$-automorphism of $\bG$ of order prime to $p$. Note that $(\bG^\tau)^0$ is a reductive $W(k)$-group scheme by, for instance, \cite[Corollary 3.7]{Booher-Tang}, so $Z((\bG^\tau)^0)$ is of multiplicative type. It therefore suffices to show that the component group of $Z((\bG^\tau)^0)_{W(k)[1/p]}$ is of order prime to $p$. By \cite[Theorem 7.5]{SteinbergEndomorphisms} and \cite[Lemma 5]{Springer-twisted}, we can write $\tau = c_t\sigma$ for an automorphism $\sigma$ preserving a pinning $(B, T, \{X_\alpha\})$ and $t \in T^\sigma(\ov{W(k)[1/p]})$, where $c_t$ denotes the $t$-conjugation map on $\bG_{\ov{W(k)[1/p]}}$. Note that $\sigma$ and $t$ are both of order prime to $p$, so the result follows from \cite[Theorem 3.11]{DM-quass}.
\end{proof}

\begin{lemma}\label{lemma:smooth-center-pinning}
    Suppose $G$ is simple and simply connected and $b \in B(k)$. If $(b,\sigma) = \tau_{\rm{ss}} \tau_{\rm{u}}$ is the Jordan decomposition of $(b,\sigma)$ in $G \rtimes \langle \sigma \rangle$, then $Z(G^{\tau_{\rm{ss}}})^{\sigma}/Z(G)^\sigma$ is smooth.
\end{lemma}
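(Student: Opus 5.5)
We reduce to Lemma~\ref{lemma:smooth-center-ss} by factoring the quotient through the identity component. Write $\tau = c_b\sigma$ acting on $G$, and let $\tau_{\rm{ss}}$ be its semisimple part. Since $\tau_{\rm{ss}}$ commutes with $\tau$, and $(b,\sigma)$ normalizes $G \rtimes \langle\sigma\rangle$-pieces compatibly, the group $Z(G^{\tau_{\rm{ss}}})$ is stable under $\sigma$. Note that $G$ is simply connected. Steinberg's theorem \cite[Theorem 8.1]{SteinbergEndomorphisms}, in its version for semisimple automorphisms (see also \cite[Theorem 7.5]{SteinbergEndomorphisms}), then gives that $G^{\tau_{\rm{ss}}}$ is connected. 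Lemma~\ref{lemma:smooth-center-ss} says $G^{\tau_{\rm{ss}}}$ is reductive and $Z(G^{\tau_{\rm{ss}}})/(Z(G)^{\tau_{\rm{ss}}})^0$ is smooth. Since $Z(G)$ is finite, $(Z(G)^{\tau_{\rm{ss}}})^0$ is infinitesimal, and the key is to control the finite part $Z(G)^{\tau_{\rm{ss}}}$ and then take $\sigma$-invariants.

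The steps are as follows.
\begin{enumerate}
\item Observe that $\tau_{\rm{ss}}$ acts on $Z(G)$ through $\sigma$, since inner automorphisms act trivially on the center. Hence $Z(G)^{\tau_{\rm{ss}}}$ coincides with $Z(G)^{\sigma^m}$ for a suitable power; in fact the semisimple part of $\sigma$ is $\sigma$ itself, because $\sigma$ has finite order. So $Z(G)^{\tau_{\rm{ss}}} = Z(G)^{\sigma}$ up to replacing $\sigma$ by its prime-to-$p$ part, and the difference between the two is a question only about the finite group $\langle\sigma\rangle$ acting on $Z(G)$.
\item Write $M = Z(G^{\tau_{\rm{ss}}})$, a group of multiplicative type, and $N = Z(G)^{\tau_{\rm{ss}}} \subset M$. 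Lemma~\ref{lemma:smooth-center-ss} shows $M/N^0$ is smooth, so $M/N$ is smooth as a quotient of a smooth group.
\item Pass to $\sigma$-invariants. Taking $\sigma$-fixed points in $1 \to N \to M \to M/N \to 1$ gives $1 \to N^\sigma \to M^\sigma \to (M/N)^\sigma$. The map $M^\sigma/N^\sigma \to (M/N)^\sigma$ is a monomorphism, hence a closed immersion. If I can show that $(M/N)^\sigma$ is smooth, or more precisely that the image is smooth, then $M^\sigma/Z(G)^\sigma$ is smooth, once I verify $N^\sigma = Z(G)^\sigma$; this holds since $\tau_{\rm{ss}}$ and $\sigma$ agree on $Z(G)$.
\end{enumerate}

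The main obstacle is step 3. Fixed points of a finite-order automorphism on a smooth multiplicative-type group can fail to be smooth when the order is divisible by $p$; for instance, $\sigma$ swapping factors of $\bGm^2$ gives a diagonal, which is fine, but inversion gives $\mu_2$. Here $\sigma$ is a pinning-preserving automorphism of a simple simply connected group, so its order is $1$, $2$, or $3$, and the issue arises only for $p = 2$ or $3$. I would handle this by a case check on the types $A_n$, $D_n$, $E_6$ with the graph automorphism. An alternative is to redo the proof of Lemma~\ref{lemma:smooth-center-ss} directly with the group $\langle \tau_{\rm{ss}}, \sigma\rangle$. That route lifts to $W(k)$ via \cite[Theorem 3.11]{DM-quass}, and reduces to showing that the component group of $M^\sigma_{W(k)[1/p]}/Z^\sigma$ has order prime to $p$. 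This can be checked using the explicit descriptions $T^\sigma$ and $Z(G)^\sigma$ in \cite{Xiao-Zhu}.
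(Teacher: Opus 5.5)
\textbf{There is a genuine gap, and it sits at the only case that carries content.} As you note, trouble arises only when $\sigma$ has order $p\in\{2,3\}$; the prime-to-$p$ case follows from Lemma~\ref{lemma:smooth-center-ss}. For order $p$, your Step 3 is a plan (a case check, or a lifting argument), not a proof.

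Step 1 also hides why this case is hard. In characteristic $p$ an element of order $p$ is unipotent, so the claim that the semisimple part of $\sigma$ is $\sigma$ is false here. Since the Jordan decomposition is functorial under $G\rtimes\langle\sigma\rangle\to\langle\sigma\rangle$, $\tau_{\rm{ss}}$ maps to $1$. So $\tau_{\rm{ss}}=c_s$ for some semisimple $s\in G(k)$, and $N=Z(G)$. (The identity $N^\sigma=Z(G)^\sigma$ still holds, because $\tau_{\rm{ss}}$ acts on $Z(G)$ through a power of $\sigma$, not because they agree.)

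Lemma~\ref{lemma:smooth-center-ss} then gives only the untwisted statement that $Z(Z_G(s))/Z(G)^0$ is smooth. It says nothing about $\sigma$-fixed points. Your own $\mu_2\subset\bGm$ example shows that fixed points of an order-$p$ action on a smooth multiplicative group can fail to be smooth. So Steps 1--2 do not imply the conclusion.

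The lifting alternative does not close the gap either. $\sigma$ itself has order $p$, and $\tau_{\rm{ss}}$ need not commute with $\sigma$. Both the proof of Lemma~\ref{lemma:smooth-center-ss} and \cite[Theorem 3.11]{DM-quass} need prime-to-$p$ orders, so neither applies. The claim that the relevant component group over $W(k)[1/p]$ has prime-to-$p$ order is just a restatement of what must be proved.

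\textbf{The missing input, as the paper supplies it.} First, Lemma~\ref{lemma:smooth-center-ss} is used only to replace $Z(G^{\tau_{\rm{ss}}})$ by $Z(L)$, for a $\sigma$-stable Levi subgroup $L$ with $Z(L)^0=Z(G^{\tau_{\rm{ss}}})^0$. It then suffices to show that $Z(L)^\sigma/Z(G)^\sigma$ is smooth.

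Second, the paper uses the theory of fixed points of pinned automorphisms from \cite{ALRR}. Unless $G$ is of type $A_{2n}$ and $p=2$:
\begin{enumerate}
\item $G^\sigma$ is smooth;
\item $Z(G)^\sigma=Z(G^\sigma)$ and $Z(L)^\sigma=Z(L^\sigma)$;
\item $L^\sigma$ is a Levi subgroup of a parabolic subgroup of $G^\sigma$.
\end{enumerate}
Hence $Z(L)^\sigma/Z(G)^\sigma=Z(L^\sigma)/Z(G^\sigma)$, the center of a Levi subgroup modulo the center of the ambient reductive group, which is smooth.

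The remaining case is $G=\SL_{2n+1}$ with $p=2$. There a direct computation gives $Z(L)^\sigma\cong\bGm^{r-1}$, while $Z(G)^\sigma$ is trivial.

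A case check along your lines would have to cover every $\sigma$-stable Levi subgroup, i.e.\ every torus that can occur as $Z(G^{\tau_{\rm{ss}}})^0$, not just $T$. The results of \cite{ALRR} are what make this uniform.
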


\begin{proof}
    We may assume $k = \ov{k}$.
    Note that $\sigma$ is of prime order. If $\sigma$ is of order not divisible by $\chara k$, then the result follows from Lemma~\ref{lemma:smooth-center-ss}, so we may assume $\sigma$ is of order $\chara k$. Lemma~\ref{lemma:smooth-center-ss} implies that $Z(G^{\tau_{\rm{ss}}})^0 = Z(L)^0$ for a $\sigma$-stable Levi subgroup $L$, so we need only show $Z(L)^\sigma/Z(G)^\sigma$ is smooth.
    
    By \cite[Lemma 6.7]{ALRR}, the subgroup $Z(G)^\sigma$ (resp.\ $Z(L)^\sigma$) is the center of $G^\sigma$ (resp.\ $L^\sigma$), and by \cite[Lemma 6.5]{ALRR} if $G^\sigma$ is smooth then $L^\sigma$ is a Levi subgroup of a parabolic subgroup of $G^\sigma$. By \cite[Theorem 5.1, Remark 5.3]{ALRR}, if either $\chara k \neq 2$ or $G$ is not of type $A_{2n}$ for $n \geq 1$, then $G^\sigma$ is smooth, so
    \[
    Z(L)^\sigma/Z(G)^\sigma = Z(L^\sigma)/Z(G^\sigma)
    \]
    is smooth.
    
    From now on, we consider the case that $G \cong \SL_{2n+1}$, $\chara k = 2$, and $\sigma \neq 1$. We may assume that $\sigma$ is the standard pinning-preserving automorphism $\sigma(g) = J(g^\top)^{-1}J^{-1}$, where $J$ is the anti-diagonal matrix with alternating $1$s and $-1$s. Let $T$ be the diagonal torus of $G$ and let $\Delta$ be the set of simple roots corresponding to the upper-triangular Borel $B$ of $G$. If $P$ is a parabolic subgroup of $G$ containing $B$ and $L$ is a $\sigma$-stable Levi of $P$ containing $T$, then $L$ corresponds to an ordered partition $(m_1, \dots, m_r)$ of $n+1$ and a direct calculation shows $Z(L)^\sigma \cong \bGm^{r-1}$, proving the lemma in this case.
\end{proof}


\begin{theorem}\label{theorem:smoothness-of-twisted-centralizers}
    If $G = Z \times \sD G$, where $Z$ is the maximal central torus of $G$, and $g \in G(k)$ is $\sigma$-regular, then $Z_{G,\sigma}(g)/Z(G)^\sigma$ is smooth.
\end{theorem}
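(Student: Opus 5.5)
The plan is to split off the central torus, and then handle $\sD G$ by passing to a Borel subgroup and taking a Jordan decomposition, which reduces everything to Lemma~\ref{lemma:reg-unip-centralizer} and Lemma~\ref{lemma:smooth-center-pinning}.

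\emph{Step 1: remove the central torus.} Since $\sigma$ preserves the pinning, it preserves both the maximal central torus $Z$ and $\sD G$, so it acts factorwise on $G = Z \times \sD G$. Write $g = (z, g')$. The twisted centralizer then splits as
\[
Z_{G,\sigma}(g) = Z_{Z,\sigma}(z) \times Z_{\sD G,\sigma}(g') = Z^\sigma \times Z_{\sD G,\sigma}(g'),
\]
because $Z$ is commutative. Likewise $Z(G)^\sigma = Z^\sigma \times Z(\sD G)^\sigma$. Moreover $g$ is $\sigma$-regular if and only if $g'$ is $\sigma$-regular in $\sD G$, since the torus factor contributes $\dim Z^\sigma$ to both sides of the dimension count. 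Hence $Z_{G,\sigma}(g)/Z(G)^\sigma \cong Z_{\sD G,\sigma}(g')/Z(\sD G)^\sigma$, and we may assume $G$ is semisimple. Both the centralizer and the center decompose compatibly along the simple factors permuted by $\sigma$. After the usual restriction-of-scalars reduction to a single $\sigma$-orbit of simple factors, we may therefore also assume $G$ is simple, so that Lemma~\ref{lemma:smooth-center-pinning} is available.

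\emph{Step 2: the semisimple case, assuming $G$ is simply connected.} We may assume $k = \ov{k}$. Twisted-conjugate $g$ into $B$, say $g = b$. By Lemma~\ref{lemma:twisted-centralizer-in-B}, $Z_{G,\sigma}(b) = Z_{B,\sigma}(b)$. Let $(b,\sigma) = \tau_{\rm{ss}}\tau_{\rm{u}}$ be the Jordan decomposition in $G \rtimes \langle\sigma\rangle$. Then $Z_{G,\sigma}(b)$ is the centralizer of $\tau_{\rm{u}}$ in $H \coloneqq G^{\tau_{\rm{ss}}}$. By Lemma~\ref{lemma:smooth-center-ss}, $H$ is reductive, and in it $\tau_{\rm{u}}$ acts as a regular unipotent twisted element: $\sigma$-regularity of $b$ forces $\tau_{\rm{u}}$-regularity in $H$. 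Choosing a Borel subgroup of $H$ stable under the relevant pinning-preserving part, Lemma~\ref{lemma:reg-unip-centralizer} applied in $H$ yields
\[
Z_{G,\sigma}(b) = Z(H)^\sigma \cdot Z_{U_H,\sigma}(\tau_{\rm{u}}),
\]
with the second factor smooth. It then suffices to show that $Z(H)^\sigma/Z(G)^\sigma$ is smooth, which is exactly Lemma~\ref{lemma:smooth-center-pinning}. Finally, the product of a smooth group with an image of a smooth unipotent group has smooth quotient by $Z(G)^\sigma$, since that quotient is an extension of smooth groups.

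\emph{Main obstacle: $\sD G$ not simply connected.} The step I expect to be hardest is the reduction from $\sD G$ to its universal cover $\pi\colon \widetilde{G} \to \sD G$, with $\mu = \ker \pi$. Choose a lift $\widetilde{g}$ of $g$. The preimage $\pi^{-1}(Z_{\sD G,\sigma}(g))$ is the group
\[
H' = \{h : h\widetilde{g}\sigma(h)^{-1}\widetilde{g}^{-1} \in \mu\},
\]
and $\pi^{-1}(Z(\sD G)^\sigma) = \{z \in Z(\widetilde{G}) : z\sigma(z)^{-1} \in \mu\}$. There are maps from both of these groups to $\mu$, given by $h \mapsto h\widetilde{g}\sigma(h)^{-1}\widetilde{g}^{-1}$ and $z \mapsto z\sigma(z)^{-1}$, with kernels $Z_{\widetilde{G},\sigma}(\widetilde{g})$ and $Z(\widetilde{G})^\sigma$ respectively. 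One would then want to show that the images of these two maps in $\mu$ agree, so that the quotient in question equals $Z_{\widetilde{G},\sigma}(\widetilde{g})/Z(\widetilde{G})^\sigma$, to which Step 2 applies.

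This is precisely where I am least confident. Example~\ref{example:pgl2-centralizer} shows that for $\PGL_2$ in characteristic $2$ the regular unipotent centralizer is not smooth, even though $Z(\PGL_2)$ is trivial. So I would first check whether the hypothesis $G = Z \times \sD G$ is meant to include that $\sD G$ is simply connected. If it is, then Step 2 finishes the proof. If it is not, then Example~\ref{example:pgl2-centralizer} itself contradicts the statement, and the reduction described in this paragraph cannot succeed.
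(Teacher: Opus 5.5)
Your proposal is correct and takes essentially the same route as the paper. The paper's proof likewise begins by passing to $\sD G$ ``to assume that $G$ is semisimple and simply connected,'' so you are right that simple connectivity of $\sD G$ is an implicit hypothesis, and your $\PGL_2$ counterexample in characteristic $2$ is confirmed by Theorem~\ref{theorem:regular-unipotent-centralizer}. From there the paper reduces to a simple factor via $G \cong H^m$, conjugates $g$ into $B$, applies Lemma~\ref{lemma:twisted-centralizer-in-B}, passes to $(G^{\tau_{\rm{ss}}}, \tau_{\rm{u}})$ using Lemma~\ref{lemma:smooth-center-pinning}, and finishes with Lemma~\ref{lemma:reg-unip-centralizer}, exactly as you do. The one step you leave implicit is that $G^{\tau_{\rm{ss}}}$ is connected, which holds by \cite[Theorem 8.1]{SteinbergEndomorphisms} using simple connectivity and is needed before applying Lemma~\ref{lemma:reg-unip-centralizer} inside it.
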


\begin{proof}
    Note that if $g = (z, g_0)$, then $Z_{G,\sigma}(g) = Z^\sigma \times Z_{\sD G,\sigma}(g_0)$, so we may pass from $G$ to $\sD G$ to assume that $G$ is semisimple and simply connected. Next, we reduce to the case that $G$ is simple. By decomposing $G$ into simple factors, we may assume that $\sigma$ permutes the simple factors of $G$ transitively. Thus we may choose an isomorphism $G \cong H^m$, where $H$ is simple and $\sigma = \sigma_0 \circ \sigma_1$, where $\sigma_1(h_1, \dots, h_m) = (h_2, \dots, h_m, h_1)$ and $\sigma_0$ is the identity on each simple factor of $G$ except the last, on which it is pinning-preserving. Note that if $g = (h_1, \dots, h_m)$, then a point $(x_1, \dots, x_m)$ lies in $Z_{G,\sigma}(g)$ if and only if $(x_1, \dots, x_m) = (h_1x_2h_1^{-1}, \dots, h_{m-1}x_mh_{m-1}^{-1}, h_m\sigma_0(x_1)h_m^{-1})$. This is the case if and only if
    \[
    (x_1, \dots, x_m) = (x_1, h_1^{-1}x_1h_1^{-1}, \dots, h_{m-1}^{-1} \cdots h_1^{-1}x_1h_1 \cdots h_{m-1}),
    \]
    and $h_1 \cdots h_m \sigma_0(x_1) h_m^{-1} \cdots h_1^{-1} = x_1$. This shows that $Z_{G,\sigma}(g)/Z(G)^\sigma \cong Z_{H,\sigma_0}(h_1 \cdots h_m)/Z(H)^{\sigma_0}$, so we may reduce from $(G, \sigma, g)$ to $(H, \sigma_0, h_1 \cdots h_m)$ to assume that $H$ is simple.

    By \cite[Lemma 5(i)]{Springer-twisted}, we may assume $g \in B(k)$, in which case Lemma~\ref{lemma:twisted-centralizer-in-B} shows that $Z_{G,\sigma}(g) \subset B$. If $(g,\sigma) = \tau_{\rm{ss}} \tau_{\rm{u}}$ is the Jordan decomposition of $(g,\sigma)$ in $G \rtimes \langle\sigma\rangle$, then we have $Z_{G,\sigma}(g) = (G^{\tau_{\rm{ss}}})^{\tau_{\rm{u}}}$, so by Lemma~\ref{lemma:smooth-center-pinning} we may pass from $(G, (g,\sigma))$ to $(G^{\tau_{\rm{ss}}}, \tau_{\rm{u}})$ to assume that $(g,\sigma)$ is unipotent, i.e., $g$ is unipotent. (Note that since $G$ is semisimple and simply connected, the group $G^{\tau_{\rm{ss}}}$ is connected by \cite[Theorem 8.1]{SteinbergEndomorphisms}.) In this case, the result is Lemma~\ref{lemma:reg-unip-centralizer}.
\end{proof}

\begin{remark}
    Theorem~\ref{theorem:smoothness-of-twisted-centralizers} is proven in \cite[Corollary 3.5]{Integral-Springer} when $\sigma = 1$. There, the argument proceeds by first establishing the result when $g \in G(k)$ is unipotent, and then relying on a somewhat delicate argument of Steinberg involving root systems, showing that if $\chara k \nmid |\pi_1(\sD G)|$ and $t \in G(k)$ is semisimple, then $\chara k \nmid |\pi_1(\sD Z_G(t))|$. To deal with general $\sigma$, the analogous claim is that, if the image of $\ker \pi_{\sD G}$ in $Z(G_{\mathrm{sc}})_\sigma$ is smooth, $(Z \cap \sD G)^0 \subset (1-\sigma)Z \cap (1-\sigma)Z(\sD G)$, and $\tau$ is a semisimple automorphism of $G$ commuting with $\sigma$, then the image of $\ker \pi_{\sD G^\tau}$ in $Z((G^\tau)_{\mathrm{sc}})_\sigma$ is smooth and $(Z(G^\tau)^0_{\rm{red}} \cap \sD G^\tau)^0 \subset (1-\sigma)Z(G^\tau)^0_{\rm{red}} \cap (1-\sigma)Z(\sD G^\tau)$. We were unable to show this directly, which is why the argument given here is somewhat different. Still, it would be interesting to have a direct root-system-theoretic proof of this statement.
\end{remark}

\section{Stability for twisted conjugacy classes}\label{section:stability}

Throughout this section, let $G$ be a connected reductive group over an algebraically closed field $k$ and let $\sigma\colon G \to G$ be a $k$-automorphism preserving a pinning\footnote{As mentioned in Section~\ref{section:twisted}, this assumption is just for convenience.} $(B, T, \{X_\alpha\})$, where $B = TU$. Let $\chi_\sigma = \chi_{G, \sigma}\colon G \to G/\!/_\sigma G$ be the twisted GIT quotient. If $\sigma = 1$, we write $\chi \coloneqq \chi_\sigma$. Let $\Phi$ denote the set of roots of $(G, T)$, and let $W$ denote the Weyl group of $\Phi$.

Let $C_{g,\sigma}$ denote the twisted conjugacy class of $g$ in $G$, a reduced locally closed subscheme. Let $C_g = C_{g,1}$. The main aim of this section is to compute $\Stab_{Z(G)}(C_{g,\sigma})$ for certain elements $g \in G(k)$. If $g$ is regular, this amounts to understanding $\Stab_{Z(G)}(\chi_\sigma^{-1}(\chi_\sigma(g))_{\sigma\textrm{-}\rm{reg}})$, which we will handle in Section~\ref{section:Steinberg}. We will describe $\Stab_{Z(\sD G)^0}(C_{g,\sigma})$ first in the case $\sigma = 1$, and then bootstrap from this to describe the stabilizer of $C_g$ in $Z(G)$ for all regular $g$. In fact, we will focus on understanding unipotent $g$ first. Using this, we will be able to describe the stabilizer of $C_{g,\sigma}$ for $\sigma$-regular $g$, as well as $C_g$ for all $g$ when $G = \SL_n$.

\subsection{Steinberg fibers}\label{section:Steinberg}

By \cite[Corollary 5.3.5]{Xiao-Zhu}, if $G$ is semisimple and simply connected then (twisted) Steinberg fibers $\chi_\sigma^{-1}(\chi_\sigma(g))$ are simply the closures of $\sigma$-regular twisted conjugacy classes in $G$. Thus in the regular case, the study of closures of twisted conjugacy classes can be aided by the Steinberg map $\chi_\sigma$.

If $\sigma = 1$, then $\chi_\sigma$ is well-behaved only if $\sD G$ is simply connected; otherwise, it is (for instance) rarely flat, and it always has non-reduced fibers (see Corollary~\ref{cor:unip-small-char}). A similar phenomenon remains true for general $\sigma$ (but the precise conditions on $G$ are slightly harder to describe; see Corollary~\ref{cor:asc-adjoint-smooth}). Nonetheless, the underlying reduced subschemes of the fibers of $\chi_\sigma$ are still useful. The main results of this section are Theorem~\ref{theorem:stabilizer-of-Steinberg} and Theorem~\ref{theorem:torsor-identification}, which combine with the following lemma to give a rather precise description of the fibers of $\chi_\sigma$ in terms of the corresponding fibers for a sort of ``universal cover" (to be defined below) for $G$.

\begin{lemma}\label{lemma:stability-equivalences}
    Suppose $G = Z \times \sD G$, where $\sD G$ is simply connected, let $g \in G(k)$ be $\sigma$-regular, and let $\mu \subset Z(G)$ be a closed subscheme. The following are equivalent:
    \begin{enumerate}
        \item $C_{g,\sigma}$ is stable under $\mu$-multiplication
        \item $\ov{C}_{g,\sigma}$ is stable under $\mu$-multiplication,
        \item $B \cap \ov{C}_{g,\sigma}$ is stable under $\mu$-multiplication,
        \item $T \cap \ov{C}_{g,\sigma}$ is stable under $\mu$-multiplication,
        \item there exists $x \in \ov{C}_{g,\sigma}(k)$ such that $\mu x \subset \ov{C}_{g,\sigma}$.
    \end{enumerate}
    In particular, if $\pi\colon G \to G'$ is a central isogeny such that $\ker \pi$ is $\sigma$-stable and $\mu = \Stab_{\ker \pi}(\ov{C}_{g,\sigma})$, then the map $\ov{C}_{g,\sigma} \to \ov{C}_{\pi(g),\sigma'}$ is a $\mu$-torsor, where $\sigma'$ is the automorphism of $G'$ induced by $\sigma$.
\end{lemma}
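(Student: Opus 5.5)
The proof will go around a cycle of implications, with most steps formal. The one substantive input is the description of $\ov{C}_{g,\sigma}$ as a Steinberg fiber.

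\textbf{Setup.} Since $G = Z \times \sD G$ with $\sD G$ simply connected, \cite[Corollary 5.3.5]{Xiao-Zhu} (applied to the $\sD G$-factor, with the $Z$-factor handled directly) identifies $\ov{C}_{g,\sigma}$ with the Steinberg fiber $F \coloneqq \chi_\sigma^{-1}(\chi_\sigma(g))$. Moreover, $C_{g,\sigma}$ is the open subscheme $F \cap G^{\sigma\textrm{-}\rm{reg}}$ of $F$. Multiplication by a central $\mu$ commutes with twisted conjugation. Hence $\mu$-translation of a twisted-conjugation-stable closed subscheme is again twisted-conjugation-stable, and it preserves $\sigma$-regularity, since $Z_{G,\sigma}(zh) = Z_{G,\sigma}(h)$ for central $z$. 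We interpret ``stable under $\mu$'' as saying that the multiplication map $\mu \times X \to G$ factors through $X$.

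\textbf{The cycle.}
\begin{itemize}
\item (1)$\Rightarrow$(2): take closures. The map $\mu \times \ov{C}_{g,\sigma} \to G$ sends the schematically dense subscheme $\mu \times C_{g,\sigma}$ into $\ov{C}_{g,\sigma}$; this uses flatness of $\mu \times -$ over $k$, so density is preserved.
\item (2)$\Rightarrow$(1): intersect with the regular locus, which $\mu$ preserves.
\item (2)$\Rightarrow$(3),(4): intersect with $B$ or $T$, using $\mu \subset Z(G) \subset T$.
\item (3)$\Rightarrow$(5) and (4)$\Rightarrow$(5): trivial, once we check that $T \cap \ov{C}_{g,\sigma}$ has a $k$-point. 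This holds because $T_\sigma/W^\sigma \cong G/\!/_\sigma G$, so every Steinberg fiber meets $T$.
\end{itemize}

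\textbf{The key step, (5)$\Rightarrow$(2).} Given $x \in F(k)$ with $\mu x \subset F$, apply $\chi_\sigma$ to obtain $\chi_\sigma(zx) = \chi_\sigma(x)$ for $z \in \mu$. I will use that $Z(G)$ acts on $G/\!/_\sigma G$ compatibly with $\chi_\sigma$, because translation by central elements commutes with twisted conjugation. Thus $\mu$ lies in the scheme-theoretic stabilizer of the point $\chi_\sigma(x) = \chi_\sigma(g)$. Then $\mu \cdot F = \mu \cdot \chi_\sigma^{-1}(\chi_\sigma(g)) \subset \chi_\sigma^{-1}(\chi_\sigma(g)) = F$, as a morphism $\mu \times F \to G$ factoring through $F$. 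This is clean precisely because $F$ is defined as a scheme-theoretic fiber.

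The main subtlety is the identification of $\ov{C}_{g,\sigma}$ (reduced) with the scheme-theoretic fiber $F$. This requires reducedness of $F$, which follows from flatness of $\chi_\sigma$ with smooth regular locus (Lemma~\ref{lemma:steinberg-map-flat}) together with density of the regular locus in each fiber, so that $F$ is generically reduced and Cohen--Macaulay (a fiber of a flat map to a smooth base). The twisted version for $Z \times \sD G$ must be checked factorwise.

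\textbf{The torsor claim.} Let $\mu = \Stab_{\ker\pi}(\ov{C}_{g,\sigma})$, so $\mu$ acts on $X \coloneqq \ov{C}_{g,\sigma}$. The action is free, since $\mu$ acts by translation in $G$. The map $X \to G'$ is $\mu$-invariant and lands in $\ov{C}_{\pi(g),\sigma'}$. It remains to show $X \to \ov{C}_{\pi(g),\sigma'}$ is a $\mu$-torsor, by checking that $X \times_{G'} X \cong \mu \times X$ and that the map is faithfully flat. For the first, note that $X \times_{G'} X$ is the subscheme of pairs $(x, zx)$ with $z \in \ker\pi$ and $zx \in X$. By the equivalence (5)$\Leftrightarrow$(2), applied to the closed subscheme $\{z : zX \subset X\}$, this forces $z \in \mu$. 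More precisely, I will argue $\{(z,x) : zx \in X\} = \mu \times X$: this locus is stable under twisted conjugation in $x$, and each $x$ gives a subgroup scheme independent of $x$, by the equivalence applied point by point and then by flatness. For faithful flatness, note that $\pi$ maps $X$ onto $\ov{C}_{\pi(g),\sigma'}$, since it is finite, surjective on the orbit, and the target is reduced. Then the quotient $X/\mu$ maps finitely and bijectively onto the reduced target and is an isomorphism over the dense orbit. I expect this last identification, $X/\mu \cong \ov{C}_{\pi(g),\sigma'}$ (including normality of the target), to be the main obstacle. If needed, I would prove it by comparing degrees via $\ker\pi/\mu$ acting simply on the set of translates $zX$.
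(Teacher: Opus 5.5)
Your proof of the equivalences (1)--(5) is correct, and your key step (5)$\Rightarrow$(2) takes a different, somewhat cleaner route than the paper. The paper uses \cite[Lemma 5(i)]{Springer-twisted} to put $g$ and $x$ in $B(k)$ with equal $T$-components. It then uses $U$-invariance of $\chi_\sigma|_B$ to pass from constancy of $\chi_\sigma$ on $\mu x$ to constancy on $\mu g$. You instead descend $Z(G)$-translation to an action $a$ of $Z(G)$ on $G/\!/_\sigma G$. This is legitimate, since taking invariants commutes with the flat base change $k \to k[Z(G)]$. Then $\mu x \subset F$ forces $\mu \subset \Stab_{Z(G)}(\chi_\sigma(g))$, hence $\mu F \subset F$, with no reduction to $B$.

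Your argument also works in families. Write $X = \ov{C}_{g,\sigma}$ and $X' = \ov{C}_{\pi(g),\sigma'}$. For $(z,x) \in (\ker\pi \times X)(R)$ one has $zx \in X(R)$ if and only if $a(z,\chi_\sigma(g)) = \chi_\sigma(g)$. So $\{(z,x) : zx \in X\} = \Stab_{\ker\pi}(\chi_\sigma(g)) \times X = \mu \times X$ scheme-theoretically. This is exactly $\mu \times X \cong X \times_{X'} X$, so your appeal to ``point by point and then by flatness'' is unnecessary.

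The genuine gap is faithful flatness of $X \to X'$. You leave this as the main obstacle, and none of the fixes you suggest work.
\begin{enumerate}
\item Normality of $X'$ is not available: the paper deduces it (Corollary~\ref{cor:normal}) by descent along this very torsor.
\item A finite, bijective, birational map $X/\mu \to X'$ onto a reduced target need not be an isomorphism; think of the normalization of a cuspidal cubic.
\item ``$\ker\pi/\mu$ acting simply on the translates $zX$'' has no content when $\ker\pi$ is infinitesimal, e.g.\ $\ker\pi = \mu_{p^2} \supset \mu = \mu_p$, which is exactly the interesting case.
\end{enumerate}

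The missing idea is to use scheme-theoretic fibers, which you already control. The isomorphism $X \times_{X'} X \cong \mu \times X$ shows that the fiber of the finite map $X \to X'$ over each $k$-point $\pi(x)$ is $\mu x$, of length $|\mu|$. By upper semicontinuity of fiber length, and since $X'$ is Jacobson, the fiber length is $|\mu|$ at every point of $X'$. A finite morphism onto a reduced scheme with locally constant fiber length is flat. This is the unnumbered lemma in Section~\ref{section:prelim}, which the paper implicitly uses before applying the fibral isomorphism criterion over $X'$.

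Equivalently, in your formulation: the fibers of $X/\mu \to X'$ over closed points are reduced points. So by Nakayama (and openness of the surjectivity locus on the Jacobson scheme $X'$), $X/\mu \to X'$ is a closed immersion. Being surjective onto the reduced scheme $X'$, it is an isomorphism. Either way $X \to X'$ is fppf, and together with $\mu \times X \cong X \times_{X'} X$ this gives the torsor claim.
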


\begin{proof}
    The implication $(1) \Rightarrow (2)$ follows from the fact that $C_{g,\sigma}$ is schematically dense in $\ov{C}_{g,\sigma}$. The implications $(2) \Rightarrow (3) \Rightarrow (4)$ are clear because $T$ contains $\mu$. The implication $(4) \Rightarrow (5)$ follows from the fact that $T \cap \ov{C}_{g,\sigma}$ is nonempty by \cite[Lemma 5(iii)]{Springer-twisted}. Thus we need only show $(5) \Rightarrow (1)$. By \cite[Lemma 5(i)]{Springer-twisted}, we may assume $g, x \in B(k)$ and that the $T$-components of $g$ and $x$ are equal. By \cite[Corollary 5.3.5]{Xiao-Zhu}, we have $\ov{C}_{g,\sigma} = \chi_\sigma^{-1}(\chi_\sigma(g))$, so (5) says that $\chi_\sigma|_{\mu x}$ is constant. Since $\chi_\sigma|_B$ is invariant under $U$-multiplication, it follows that $\chi_\sigma|_{\mu g}$ is constant and thus (1) follows.

    For the final claim, note that $\pi_g\colon \ov{C}_{g,\sigma} \to \ov{C}_{g',\sigma'}$ is $\mu$-invariant. If $x' \in \ov{C}_{g',\sigma'}(k)$, then since (2) and (5) are equivalent we have $\pi_g^{-1}(x') \cong \Transp_{\ker \pi}(x, \ov{C}_{g,\sigma}) = \mu$, where $x \in \ov{C}_{g,\sigma}(k)$ is any lift of $x'$. Thus the natural map $\mu \times \ov{C}_{g,\sigma} \to \ov{C}_{g,\sigma} \times_{\ov{C}_{g',\sigma'}} \ov{C}_{g,\sigma}$ given by $(z, x) \mapsto (zx, x)$ is an isomorphism by the fibral isomorphism criterion (applied to the structure morphisms to $\ov{C}_{g',\sigma'}$), so $\pi_g$ is a $\mu$-torsor.
\end{proof}

\begin{cor}\label{cor:normal}
	If $g \in G(k)$ is $\sigma$-regular, then $C_{g,\sigma}$ is smooth and $\ov{C}_{g,\sigma}$ is normal and lci.
\end{cor}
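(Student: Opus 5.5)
Our plan is to reduce to the case where $\sD G$ is simply connected and $G = Z \times \sD G$, handle that case using the smoothness of $\chi_\sigma$ on the regular locus, and then descend along a $z$-extension. The corollary is stated for general connected reductive $G$, so we first choose a central extension $1 \to S \to \widetilde{G} \to G \to 1$ in which $\widetilde{G} = Z_{\widetilde{G}} \times \sD\widetilde{G}$ with $\sD\widetilde{G}$ simply connected. We take $\widetilde{G} = (Z \times G_{\mathrm{sc}})$, where $Z$ is the maximal central torus of $G$ and $G_{\rm{sc}}$ is the simply connected cover of $\sD G$. The kernel of the surjection $\widetilde{G} \to G$ is then a finite multiplicative central subgroup scheme $\nu$, and we lift $\sigma$ to $\widetilde{\sigma}$ on $\widetilde{G}$ preserving $\nu$ and a pinning.

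Next we treat the case $G = \widetilde{G}$. By Lemma~\ref{lemma:steinberg-map-flat}, applied to the simply connected factor (the torus factor is harmless), $\chi_\sigma$ is flat with smooth restriction to the $\sigma$-regular locus. By \cite[Corollary 5.3.5]{Xiao-Zhu}, $\ov{C}_{g,\sigma} = \chi_\sigma^{-1}(\chi_\sigma(g))$ is a fiber of a flat morphism from a smooth variety to the smooth variety $T_\sigma/W^\sigma$; the target is smooth for the simply connected factor, and for the product with the torus it is a product. Hence $\ov{C}_{g,\sigma}$ is lci, being cut out by $\dim$ of the target many equations in a smooth variety with the correct codimension. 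The open orbit $C_{g,\sigma}$ is exactly the $\sigma$-regular locus of this fiber, which is smooth, and so $C_{g,\sigma}$ is smooth. The complement of $C_{g,\sigma}$ in the fiber has codimension $\geq 2$: the fiber is a finite union of orbits of strictly smaller dimension, and in fact non-regular orbits have even codimension, which we get from the symplectic or orbit-dimension parity argument, or alternatively by citing \cite{Xiao-Zhu}. Therefore the fiber is $R_1$, and since it is lci it is Cohen--Macaulay and thus $S_2$. Serre's criterion then gives normality.

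Finally we descend to $G$. Let $\widetilde{g}$ lift $g$; it is $\widetilde\sigma$-regular, because twisted centralizer dimensions agree up to the finite group $\nu$. The image of $\ov{C}_{\widetilde{g},\widetilde\sigma}$ in $G$ is $\ov{C}_{g,\sigma}$. By the final claim of Lemma~\ref{lemma:stability-equivalences}, the map $\ov{C}_{\widetilde{g},\widetilde\sigma} \to \ov{C}_{g,\sigma}$ is a $\mu$-torsor for the finite group scheme $\mu = \Stab_{\nu}(\ov{C}_{\widetilde g,\widetilde\sigma})$; the reduced image agrees with $\ov{C}_{g,\sigma}$ because the quotient is reduced, being a quotient of a reduced scheme by a free action. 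This map is fppf, so normality and the lci property descend, since both are local properties that descend along flat surjections with regular (here, finite-group torsor) fibers. Smoothness of $C_{g,\sigma}$ holds for any orbit in a reduced variety over an algebraically closed field by homogeneity.

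The main obstacle is the codimension-two statement for the boundary of $C_{g,\sigma}$ in the twisted setting, which is needed for normality. We also need to make sure that normality genuinely descends along a torsor under a possibly non-reduced $\mu$. For the latter, we would use that $\mathcal{O}$ of the quotient is the $\mu$-invariants of a normal domain, and integral closedness of rings of invariants.
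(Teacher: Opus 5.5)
Your proposal is correct and follows essentially the paper's proof: pass to $\widetilde{G} = Z \times G_{\mathrm{sc}}$, where the simply connected case is exactly \cite[Corollary 5.3.3, Corollary 5.3.5]{Xiao-Zhu} (which also settles your codimension-two concern), and then descend along the finite flat torsors $\ov{C}_{\widetilde{g},\sigma} \to \ov{C}_{g,\sigma}$ and $C_{\widetilde{g},\sigma} \to C_{g,\sigma}$ given by Lemma~\ref{lemma:stability-equivalences}. One correction: the fibers of a $\mu$-torsor are not regular when $\mu$ is non-reduced, but no fiber hypothesis is needed, because normality and regularity descend along any faithfully flat map of noetherian rings (and complete intersections descend along flat local maps by Avramov's theorem), so your invariant-ring argument is valid but not needed.
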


\begin{proof}
	If $G$ is a torus, then this is clear: indeed, $\ov{C}_{g,\sigma} = C_{g,\sigma}$ is smooth in this case. If $G$ is semisimple and simply connected, then this is \cite[Corollary 5.3.3, Corollary 5.3.5]{Xiao-Zhu}. In general, let $Z$ be the maximal central torus of $G$ and let $\widetilde{G} = Z \times \sD G$, so $\sigma$ induces a $k$-automorphism of $\widetilde{G}$ such that the multiplication map $\pi\colon \widetilde{G} \to G$ is $\sigma$-equivariant. If $\widetilde{g} \in \widetilde{G}(k)$ lifts $g$, then $\widetilde{g}$ is clearly $\sigma$-regular, so Lemma~\ref{lemma:stability-equivalences} shows that the map $\ov{C}_{\widetilde{g},\sigma} \to \ov{C}_{g,\sigma}$ is a torsor for a finite flat $k$-group scheme, and similarly for $C_{\widetilde{g},\sigma} \to C_{g, \sigma}$. The result therefore follows from \cite[Corollary 23.3, Theorem 23.7, Corollary 23.9]{Matsumura}.
\end{proof}

\begin{remark}\label{rmk:stability-non-reg}
    In the implication $(5) \Rightarrow (1)$ of Lemma~\ref{lemma:stability-equivalences}, it is important that $g$ be $\sigma$-regular. For example, consider the element
    \[
    g = \begin{pmatrix}
        1 & & & \\
          &1 & & \\
          & &-1 &1 \\
          & & &-1
    \end{pmatrix}
    \]
    of $\SL_4(k)$, where $\chara k \neq 2$. Note that $C_g$ is not invariant under multiplication by $-1$, but if $t$ is the semisimple part of $g$ then $C_t$ is invariant under multiplication of $-1$.

\end{remark}

We begin now by studying stability results for regular conjugacy classes in the usual sense; this will ultimately be combined with results of \cite{Springer-twisted} to deduce stability for twisted conjugacy classes in general.

\begin{lemma}\label{lemma:stabilizing-prime-power}
    Let $\ell \neq \chara k$ be prime, and suppose $k = \ov{k}$. If $\mu$ is the $k$-group scheme of $\ell$-power torsion in $Z(\sD G)$, then there is some semisimple $t \in G(k)$ of $\ell$-power order such that $\mu = \Stab_{Z(G)}(C_t)$.
\end{lemma}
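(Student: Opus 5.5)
We construct $t$ explicitly inside the maximal torus $T$ and compute the stabilizer through Steinberg's results on semisimple centralizers, recalled in the introduction. Since $\ell \neq \chara k$, the subgroup $\mu \subset Z(\sD G)$ is a finite étale $\ell$-group, and $\Stab_{Z(G)}(C_t)$ will also turn out to be étale. So it suffices to compare $k$-points. For semisimple $t$, a central $z \in Z(G)(k)$ stabilizes $C_t$ if and only if $zt$ is $G(k)$-conjugate to $t$. For $t, zt \in T(k)$ this holds if and only if $zt = w(t)$ for some $w \in W$. Hence
\[
\Stab_{Z(G)(k)}(C_t) = \{w(t)t^{-1} : w \in W,\ w(t)t^{-1} \in Z(G)\}.
\]

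Every element of this set lies in $Z(G) \cap \sD G = Z(\sD G)$, since $w(t)t^{-1} \in \sD G$. If $t$ has $\ell$-power order, then so does $w(t)t^{-1}$, and therefore the stabilizer is contained in $\mu(k)$. This gives one inclusion for every $\ell$-power-order $t$. Because $\Stab_{Z(G)}(C_t)$ is a closed subgroup scheme of $Z(G)$ whose $k$-points lie in the étale group $\mu$, it suffices to check on the reduced level, after first arguing that the stabilizer is contained in the étale $\mu$. One clean way: $Z(G) \times C_t \to G$ has stabilizer containing $Z(G)^0$-contributions only through $C_t$ itself, and $Z_G(t)$ is smooth.

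For the reverse inclusion, we reduce to $G$ semisimple simply connected. Let $\pi\colon \widetilde{G} = Z \times (\sD G)_{\rm sc} \to G$. Choosing $\widetilde{t}$ with image $t$ reduces the problem to the semisimple simply connected factor, since the $\ell$-part of the center maps onto $\mu$. In the simply connected case we invoke \cite[Theorem 9.1(b)]{SteinbergEndomorphisms}. Via the isomorphism $i_t$ of the introduction, it says that for the isogeny $G \to G/\mu$ there exists semisimple $t$ with $\pi_0(Z_{G/\mu}(\pi(t))) \cong \mu$. Equivalently, $C_t$ is $\mu$-stable by the equivalence of (1) and (2) in the introduction.

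Concretely, $t$ is built as follows. In $T = Y \otimes k^\times$, with $Y$ the coweight lattice, $\mu(k) \cong (P^\vee/Q^\vee)[\ell^\infty]$. Take $t = \exp(2\pi i\,\lambda/\ell^N)$-type elements, corresponding to points of the fundamental alcove. The affine Weyl group stabilizer of an alcove point is isomorphic to $P^\vee/Q^\vee$, and we choose a point fixed by the $\ell$-part of $\Omega \cong P^\vee/Q^\vee$, for instance the barycenter of the $\Omega_\ell$-orbit of vertices. We then need to show the chosen point has $\ell$-power order after scaling; this is possible by averaging the $\Omega_\ell$-orbit of the vertex $0$, which introduces only denominators that are powers of $\ell$ times vertex denominators. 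This is the main obstacle, because the vertex coordinates can involve non-$\ell$ denominators (coefficients of the highest coroot). To handle it, I would instead take any $\ell$-power-torsion point in the open region fixed by $\Omega_\ell$. Such points are dense in that rational affine subspace, since $\mathbf{Z}[1/\ell]$-points are dense. The subspace contains the averaged point, so it is defined over $\mathbf{Q}$, and its $\mathbf{Z}[1/\ell]$-points are dense as long as it is cut out by equations with rational coefficients that admit a $\mathbf{Z}[1/\ell]$-solution. It does admit one, because $\Omega_\ell$ is an $\ell$-group and averaging over it requires only dividing by $|\Omega_\ell|$. For such a $t$, every $\omega \in \Omega_\ell$ gives $w(t) = zt$ with $z$ the corresponding central element, which yields $\mu \subset \Stab$.

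Finally, both sides are étale: $\mu$ by hypothesis, and the stabilizer because it is sandwiched between $\mu$ and a subgroup whose points lie in $\mu$. The stabilizer's scheme structure is automatically reduced, since it acts freely on $\ov{C}_t = C_t$, which is smooth and closed. This gives equality.
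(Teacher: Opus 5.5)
Your $k$-point arguments are fine, but the scheme-theoretic half of the statement is not established. Both reasons you give for $\Stab_{Z(G)}(C_t)$ being reduced are invalid. Being ``sandwiched'' between $\mu$ and a group whose $k$-points lie in $\mu(k)$ only controls $k$-points. A finite group scheme acting freely on a smooth closed scheme need not be reduced: already $\mu_p$ acts freely on $\bGm$ in characteristic $p$, and this is exactly the phenomenon the paper studies (cf.\ Example~\ref{example:pgl2}).

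This is not a formality. If $p = \chara k$ divides $|Z(\sD G)|$, e.g.\ $G = \SL_p$, then $\mu = 1$ while $Z(G) = \mu_p$ has a single $k$-point. Equality on $k$-points then gives no information, and the real content of the lemma is that no infinitesimal part of $Z(G)$ stabilizes $C_t$. What makes this true is the semisimplicity of $t$; you can supply it in either of two ways.
\begin{enumerate}
\item Use the introduction. $\Stab_{Z(G)}(C_t)$ is the image of the monomorphism $i_t\colon Z_{G_{\mathrm{ad}}}(\ov{t})/\pi(Z_G(t)) \to Z(G)$, where $\pi\colon G \to G_{\mathrm{ad}}$. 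Its source is finite and smooth because centralizers of semisimple elements are smooth. This is what the paper's appeal to Steinberg's identification $\Stab_{Z(G)}(C_t) \cong \pi_0(Z_{G_{\mathrm{ad}}}(\ov{t}))$ encodes.
\item Argue on tangent spaces as in Lemma~\ref{lemma:transverse-orbit-centralizer}. If $1+\epsilon X$ with $X \in \Lie Z(G)$ stabilizes $C_t$, then $X$ lies in $T_tC_t = \im(\Ad_{t^{-1}}-1)$ and also in $\ker(\Ad_{t^{-1}}-1)$. These meet in $0$, so the stabilizer has trivial Lie algebra and is \'etale.
\end{enumerate}

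With this repaired, the rest of your proof works and takes a genuinely different, more elementary route than the paper. The paper takes an arbitrary $\ov{t}_0$ from Steinberg's Theorem~9.1. It uses density of torsion in the group generated by $\ov{t}_0$, plus a Weyl group argument, to replace $\ov{t}_0$ by a finite-order element with the same centralizer, and then passes to the $\ell$-part. Finally it invokes Steinberg's torsion results to see that the component group is an $\ell$-group equal to $\mu$. You get the upper bound for free, since every stabilizing element is $w(t)t^{-1}$, whose order divides that of $t$. You get the lower bound from an explicit alcove point.

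Your ``main obstacle'' does not actually arise, and the density detour can be dropped; as written it is circular, since it presupposes a $\bZ[1/\ell]$-point of the fixed locus. If $\omega \in \Omega_\ell$ acts by $x \mapsto w_\omega(x) + \lambda_\omega$, then $\omega(0) = \lambda_\omega$ has class of $\ell$-power order in $P^\vee/Q^\vee$. Hence $\lambda_\omega \in \ell^{-a}Q^\vee$, and the barycenter of $\Omega_\ell \cdot 0$ already lies in $Q^\vee \otimes \bZ[1/\ell]$, which gives an element of $\ell$-power order. No genericity is needed, because your upper bound holds for every $t$ of $\ell$-power order. The appeal to Steinberg's Theorem~9.1(b) in your third paragraph is then superfluous, and by itself it would not give $\ell$-power order.
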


\begin{proof}
    It is straightforward to reduce to the case that $G$ is semisimple and simply connected. By \cite[Lemma 9.2]{SteinbergEndomorphisms}, if $t \in G(k)$ is semisimple with image $\ov{t} \in G_{\rm{ad}}(k)$, then $\Stab_{Z(G)}(C_t) \cong \pi_0(Z_{G_{\rm{ad}}}(\ov{t}))$, so it is equivalent to find $\ov{t} \in G_{\rm{ad}}(k)$ of $\ell$-power order such that $\pi_0(Z_{G_{\rm{ad}}}(\ov{t})) \cong \mu$.
    
    By \cite[Theorem 9.1]{SteinbergEndomorphisms}, there is some semisimple $\ov{t}_0 \in G_{\rm{ad}}(k)$ such that $\pi_0(Z_{G_{\rm{ad}}}(\ov{t}_0)) \cong \mu$. Let $M$ be the closed subgroup scheme of $G_{\rm{ad}}$ generated by $\ov{t}_0$. Note that $M$ is of multiplicative type, so its torsion points are dense; since $Z_{G_{\rm{ad}}}(\ov{t}) \subset Z_{G_{\rm{ad}}}(\ov{t}_0)$ for all $\ov{t} \in M(k)$, upper semicontinuity of fiber dimension applied to the universal centralizer implies that there is an open set $\Omega \subset M$ containing $\ov{t}_0$ such that $Z_{G_{\rm{ad}}}(\ov{t})^0 = Z_{G_{\rm{ad}}}(\ov{t}_0)^0$ for all $\ov{t} \in \Omega(k)$. Let $T$ be a maximal $k$-torus of $G$ containing $M$ and let $W$ is the Weyl group of $(G, T)$, so every map $\Stab_W(\ov{t}) \to \pi_0(Z_{G_{\rm{ad}}}(\ov{t}))$ is surjective by \cite[Lemma 2.14]{SteinbergTorsion}. If $w \in W$ is an element not fixing $\ov{t}_0$, then the fixed point scheme for the action of $w$ on $T$ is a closed $k$-subscheme not containing $t_0$; thus by shrinking $\Omega$, we may assume that $Z_{G_{\rm{ad}}}(\ov{t}) = Z_{G_{\rm{ad}}}(\ov{t}_0)$ for all $\ov{t} \in \Omega(k)$. By density of the torsion points, it follows that there is some $\ov{t}_1 \in M(k)$ of finite order such that $Z_{G_{\rm{ad}}}(\ov{t}_1) = Z_{G_{\rm{ad}}}(\ov{t}_0)$.
    
    Write the order of $\ov{t}_1$ as $\ell^m n$, where $\ell \nmid n$, and let $\ov{t} = \ov{t}_1^n$, so $\ov{t}$ is of $\ell$-power order. We have
    \[
    Z_{G_{\rm{ad}}}(\ov{t}_1) = Z_{Z_{G_{\rm{ad}}}(\ov{t})}(\ov{t}_1^{\ell^m})
    \]
    and an exact sequence
    \[
    1 \to \pi_0(Z_{Z_{G_{\rm{ad}}}(\ov{t})^0}(\ov{t}_1^{\ell^m})) \to \mu \to \pi_0(Z_{G_{\rm{ad}}}(\ov{t})).
    \]
    By \cite[Corollary 2.16(b)]{SteinbergTorsion}, the order of $\pi_0(Z_{Z_{G_{\rm{ad}}}(\ov{t})^0}(\ov{t}_1^{\ell^m}))$ is prime to $\ell$ and thus $1$ by choice of $\mu$. Moreover, the same reference shows that $\pi_0(Z_{G_{\rm{ad}}}(\ov{t})) \subset Z(G)$ is of order a power of $\ell$, hence must be equal to $\mu$.
\end{proof}

\begin{lemma}\label{lemma:Springer-iso-6.4}
    If $\sD G$ is simply connected, then $\Stab_{Z(G)}(\chi^{-1}(\chi(1))) = Z(\sD G)^0$.
\end{lemma}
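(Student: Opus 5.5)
The plan is to prove the two inclusions separately. Throughout, $\chi^{-1}(\chi(1))$ is the closure of the regular unipotent class $C_u$ (by \cite[Corollary 5.3.5]{Xiao-Zhu} applied to $\sD G$). By Lemma~\ref{lemma:stability-equivalences}, any statement about stabilizing it can be checked on $C_u$ or on a single point.

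\textbf{Easy inclusion: the stabilizer lies in $Z(\sD G)^0$.} First reduce from $G = Z\times\sD G$ (after an isogeny) to $G$ semisimple and simply connected; the central torus factor plays no role in the unipotent fiber. The stabilizer lies inside $\sD G$, since $\chi^{-1}(\chi(1)) \subset \sD G$ and left multiplication by a central element outside $\sD G$ moves $\sD G$ off itself. Next, on $k$-points the stabilizer is trivial by uniqueness of the Jordan decomposition: if $z$ is a nontrivial semisimple central element, then $zu$ is not unipotent. Since $k$ is algebraically closed, a subgroup scheme of $Z(\sD G)$ with trivial $k$-points is infinitesimal. Hence the stabilizer is contained in $Z(\sD G)^0$.

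\textbf{Hard inclusion: $Z(\sD G)^0$ stabilizes $\chi^{-1}(\chi(1))$.} We may assume $p=\chara k>0$ and that $Z(\sD G)^0 = Z(\sD G)[p^\infty]$ is nontrivial. The approach is a lifting argument to characteristic $0$.
\begin{enumerate}
\item Let $\cO = W(k)$ with fraction field $K$, and let $\cG$ be the split simply connected group over $\cO$ lifting $G$. Then $\mu := Z(\cG)[p^\infty]$ is a finite flat multiplicative $\cO$-group with special fiber $Z(G)^0$.
\item By Lemma~\ref{lemma:stabilizing-prime-power} applied over $\ov K$ with $\ell = p$, there is a semisimple $t \in \cG(\ov K)$ of $p$-power order with $\Stab_{Z(\cG_{\ov K})}(C_t) = \mu_{\ov K}$. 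After a finite extension of $\cO$ we may take $t \in \cT(\cO)$. Since $t$ has $p$-power order, its reduction $t_k$ equals $1$.
\item Choose $x = t\,v \in \cG(\cO)$ with $v$ a regular unipotent section of $Z_{\cG}(t)$ (product of simple root group elements). Then $x_K$ is regular, $x_k$ is regular unipotent, and $\mu_K$ stabilizes $C_{x_K}$ by Steinberg's results on stabilizers of regular classes with given semisimple part.
\item Let $\sC \subset \cG$ be the fiber $\chi^{-1}(\chi(x))$. It is flat over $\cO$ by Lemma~\ref{lemma:steinberg-map-flat} (base change of a flat map along a section). Its generic fiber is $\ov{C}_{x_K}$ and its special fiber is $\chi^{-1}(\chi(1))$, which is reduced, being normal by Corollary~\ref{cor:normal}. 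Thus $\sC$ is the flat closure of $\ov C_{x_K}$.
\item The action map $\mu\times\sC \to \cG$ sends the generic fiber into $\sC$. Since $\mu\times\sC$ is $\cO$-flat and $\sC$ is closed, it lands in $\sC$. On special fibers this gives that $Z(G)^0$ stabilizes $\chi^{-1}(\chi(1))$.
\end{enumerate}

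\textbf{Main obstacle.} The delicate part is step 3 together with the identification in step 4. One must make sure that $\chi(x)$ really reduces to $\chi(1)$, and that the special fiber of the closure is the whole (reduced) Steinberg fiber rather than a thickening of it. Regularity is exactly what provides this, via flatness of $\chi$ and the fact that Steinberg fibers are reduced. If the twisted-regularity claim in step 3 is not available directly, I would instead verify the $\mu_K$-stability through the $T$-criterion (4) of Lemma~\ref{lemma:stability-equivalences} applied to $\cG_K$. That criterion reduces the question to $\mu_K$-stability of the $W$-orbit of $t$, which holds by the choice of $t$.
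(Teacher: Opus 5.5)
Your argument is correct and is essentially the paper's proof. Both lift to mixed characteristic, use Lemma~\ref{lemma:stabilizing-prime-power} to find $t$ of $p$-power order whose class is stable under the $p$-power torsion of the center, transfer this to the Steinberg fiber via criterion (5) of Lemma~\ref{lemma:stability-equivalences}, and specialize using $\cO$-flatness of $\chi^{-1}(\chi(t))$, whose special fiber is $\chi^{-1}(\chi(1))$ because $t_k = 1$.

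The auxiliary element $v$ is unnecessary: since $\chi(tv) = \chi(t)$, your $\sC$ is exactly the paper's $\chi^{-1}(\chi(t))$. Your claim that $x_k$ is regular unipotent is false in general; for $G = \SL_p$ the element $t$ is regular semisimple, so $v = 1$ and $x_k = 1$. Nothing in your argument actually uses that claim.
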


\begin{proof}
    We may assume $k = \ov{k}$ and $G = \sD G$. First note that $\Stab_{Z(G)}(\chi^{-1}(\chi(1))) \subset Z(\sD G)^0$ since $\chi^{-1}(\chi(1))(k)$ consists of unipotent elements and every element of $Z(G)(k)$ is semisimple. Thus we need only show the reverse inclusion.
    
    Let $p = \chara k$, and let $A$ be a DVR of mixed characteristic $(0, p)$ with residue field $k$ and fraction field $K$. Let $\cG$ be a reductive $A$-group scheme with special fiber $G$. By Lemma~\ref{lemma:stabilizing-prime-power}, there is a semisimple element $t \in \cG(K)$ of $p$-power order such that $\Stab_{Z(\cG)_K}(C_t)$ is the group $\mu$ of $p$-power torsion in $Z(\cG)_K$, i.e., the generic fiber of $Z(\cG)^0$. Note that, after passing to a finite extension of $K$, the element $t$ is $\cG(K)$-conjugate to an element of $\cG(A)$: indeed, $t$ is contained in $T(K)$ for a maximal $K$-torus $T \subset \cG_K$, which (after passing to a finite extension of $K$ and conjugating) we may assume is the generic fiber of a maximal $A$-torus $\cT$ of $\cG$ (since maximal $A$-tori of $\cG$ exist Zariski-locally by \cite[Expos\'e XIV, Corollaire 3.20]{SGA3II}). Since any torsion element of $T(K)$ necessarily lies in $\cT(A)$, we have $t \in \cG(A)$. Being of $p$-power order, the specialization $t_s \in G(k)$ is unipotent. By Lemma~\ref{lemma:stability-equivalences}, it follows that $\mu = \Stab_{Z(\cG)_K}(\chi^{-1}(\chi(t))_K)$. Since $\chi^{-1}(\chi(t))$ is $A$-flat with special fiber $\chi^{-1}(\chi(1))_k$, the result follows.
\end{proof}

\begin{remark}
    If $\chara k = p$ and $G = \SL_p$, then the proof of Lemma~\ref{lemma:Springer-iso-6.4} boils down to the observation that if $t = \diag(1, \zeta_p, \zeta_p^2, \dots, \zeta_p^{p - 1}) \in \SL_p(\bZ_p[\zeta_p])$, then $\chi^{-1}(\chi(t))$ is stabilized by all of $Z(G)$, and $\chi^{-1}(\chi(t))_{\bF_p} = \chi^{-1}(\chi(1))_{\bF_p}$.
\end{remark}

We now move on to understanding $\Stab_{Z(G)}(\chi^{-1}(\chi(g)))$ for general elements $g$. If $t \in T(k)$, let $W_t$ be the subgroup of $W$ consisting of those $w$ such that $wtw^{-1} = t$.

\begin{lemma}\label{lemma:omega-cap-Steinberg}
    Suppose $\sD G$ is simply connected. If $t \in T(k)$, then
    \[
    T \cap \chi^{-1}(\chi(t)) = \coprod_{w \in W/W_t} w(T \cap \chi_{Z_G(t)}^{-1}(\chi_{Z_G(t)}(t)))w^{-1}.
    \]
\end{lemma}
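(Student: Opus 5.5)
We compare both sides as closed subschemes of $T$. The plan is to show they are equal topologically, then compare their scheme structures, using that $\chi|_T$ factors through the finite flat (when $\sD G$ is simply connected) quotient $T \to T/W \cong G/\!/G$.

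The first step identifies the left side. Since $G/\!/G \cong T/W$ and $\chi|_T$ is the quotient map $q\colon T \to T/W$, the left side is the scheme-theoretic fiber $q^{-1}(q(t))$. Likewise, $H \coloneqq Z_G(t)$ is connected reductive with maximal torus $T$ and Weyl group $W_t$; connectedness holds because $\sD G$ is simply connected (Steinberg). One caveat: $\sD H$ need not be simply connected, so $H/\!/H$ might not be $T/W_t$. The $T$-part of $\chi_H^{-1}(\chi_H(t))$ is at least the preimage of $\chi_H(t)$ under $T \to T/W_t \to H/\!/H$. I would try to prove equality with the fiber $q_t^{-1}(q_t(t))$ of $q_t\colon T \to T/W_t$ by working inside $G$: elements of $T$ are semisimple, and the restriction of $H/\!/H$-invariants to $T$ is injective into $k[T]^{W_t}$. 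If this is delicate, one can instead note that the statement only concerns $T \cap (\cdot)$, so one may interpret $\chi_{Z_G(t)}$ through $T/W_t$.

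The second step handles points. A point $s \in T(\ov k)$ lies in $q^{-1}(q(t))$ if and only if $s = wtw^{-1}$ for some $w \in W$. Writing the set of such $s$ as $\bigcup_{w \in W/W_t} w\,q_t^{-1}(q_t(t))\,w^{-1}$, the pieces $w\,q_t^{-1}(q_t(t))\,w^{-1}$ are topologically the single points $wtw^{-1}$, and these are distinct for distinct cosets. So the union is disjoint as sets. Hence $q^{-1}(q(t))$, being finite, decomposes as a disjoint union of its local pieces at the points $wtw^{-1}$, and conjugation by $w$ maps the piece at $t$ isomorphically onto the piece at $wtw^{-1}$. It therefore suffices to show that the local piece of $q^{-1}(q(t))$ at $t$ equals $q_t^{-1}(q_t(t))$, whose underlying space is $\{t\}$.

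The third step is this local comparison, which I expect to be the main obstacle. Factor $q = r \circ q_t$ with $r\colon T/W_t \to T/W$. Because the underlying space of $q_t^{-1}(q_t(t))$ is $\{t\}$, the local piece at $t$ of $q^{-1}(q(t))$ is $q_t^{-1}$ of the local piece at $q_t(t)$ of $r^{-1}(q(t))$. It thus suffices to show that $r$ is étale at $q_t(t)$, so that this local piece is the reduced point $q_t(t)$. This is a standard fact: $W_t$ is the stabilizer of $t$ in $W$, so $\widehat{\cO}_{T/W,q(t)} \cong (\widehat{\cO}_{T,t})^{W_t} \cong \widehat{\cO}_{T/W_t,q_t(t)}$ (invariants of a finite group commute with flat completion, and the orbit splits). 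I would prove this via the decomposition $\widehat{\cO}_{T, Wt} = \prod_{W/W_t}\widehat{\cO}_{T,wt}$ and taking $W$-invariants. That step should be routine; the real care is the first step's identification of $\chi_{Z_G(t)}$ restricted to $T$.
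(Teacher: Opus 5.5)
Your argument is correct, but it takes a different route from the paper's.

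\textbf{The paper's route.} The paper argues globally by counting lengths. The right side is visibly contained in the left. Since $\sD G$ is simply connected, $T/W$ is smooth (Steinberg), so miracle flatness makes $T \to T/W$ finite flat of degree $|W|$, and the left side has length exactly $|W|$. The $|W/W_t|$ conjugates on the right are pairwise disjoint. Each has length at least $|W_t|$ by upper semicontinuity of fiber rank for the finite map $T \to T/W_t$. Hence the inclusion is an equality.

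\textbf{Your route.} You argue locally. You split the finite fiber into its pieces at the points $wtw^{-1}$ and use $W$-equivariance to reduce to the piece at $t$. You then identify that piece with the fiber of $T \to T/W_t$, because $T/W_t \to T/W$ is \'etale at the image of $t$, $W_t$ being the inertia group of $t$. Your completion computation $\widehat{\cO}_{T/W,q(t)} \cong (\widehat{\cO}_{T,t})^{W_t} \cong \widehat{\cO}_{T/W_t,q_t(t)}$ is sound, and so is deducing \'etaleness from it when the residue fields are algebraically closed.

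\textbf{Comparison.} The paper's argument is shorter once Steinberg's smoothness of $T/W$ is available. Yours uses no flatness or smoothness of $T/W$ and works for any finite group acting on an affine scheme. In your version the hypothesis on $\sD G$ enters only through connectedness of $Z_G(t)$, which guarantees its Weyl group is $W_t$.

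\textbf{Your first step.} The worry there is unfounded. For every connected reductive group $H$ with maximal torus $T$ and Weyl group $W_H$, restriction of invariant functions gives an isomorphism $T/W_H \cong H/\!/H$; this is the Chevalley restriction theorem, which the paper invokes as the Chevalley--Steinberg theorem. Simple connectivity of $\sD H$ only decides whether this quotient is smooth. So $T \cap \chi_{Z_G(t)}^{-1}(\chi_{Z_G(t)}(t))$ is literally the fiber of $T \to T/W_t$, and no reinterpretation of the statement is needed. Note that the injectivity of restriction you proposed to use would not suffice on its own. If $k[H]^H \to k[T]^{W_t}$ were only injective, the fiber of $T \to H/\!/H$ over $\chi_H(t)$ could strictly contain the fiber of $T \to T/W_t$. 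It is surjectivity that gives the equality.
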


\begin{proof}
    We may and do assume $k = \ov{k}$ and $G$ is semisimple. We have a commutative diagram
    \[
    \begin{tikzcd}
    T \arrow{d}[swap]{\chi_{T,t}} \arrow[rd, "{\chi_{T}}"]
        & \\
    T/W_t \arrow[r, "\pi"]
        &T/W
    \end{tikzcd}
    \]
    By the Chevalley--Steinberg theorem \cite[Corollary 6.4]{SteinbergEndomorphisms} $\chi_{T,t}$ is the Steinberg map associated to $Z_G(t)$ (because $Z_G(t)$ has Weyl group $W_t$), we must show
    \[
    \chi_{T}^{-1}(\chi_{T}(t)) = \coprod_{w \in W/W_t} w\chi_{T,t}^{-1}(\chi_{T,t}(t))w^{-1}.
    \]
    The inclusion $(\supset)$ is evident, and since both sides are finite it is enough to show that the degree of the left side is at most the degree of the right side. By \cite[Lemma 6.3]{SteinbergReg}, the quotient $T/W$ is smooth, so miracle flatness implies that $\chi_T$ is finite flat of degree $|W|$, so $\chi_T^{-1}(\chi_T(t))$ is of degree $|W|$. Moreover, $\chi_{T,t}$ is finite of degree $|W_t|$. Two conjugates $w\chi_{T,t}^{-1}(\chi_{T,t}(t))w^{-1}$ and $w'\chi_{T,t}^{-1}(\chi_{T,t}(t))w'^{-1}$ are disjoint unless $w = w'$, so $\coprod_{w \in W/W_t} \chi_{T,t}^{-1}(\chi_{T,t}(wtw^{-1}))$ is of degree $\geq |W/W_t| \cdot |W_t| = |W|$, and we are done.
\end{proof}

\begin{theorem}\label{theorem:stabilizer-of-Steinberg}
    If $\sD G$ is simply connected and $t \in G(k)$ is semisimple, then for all $x \in \chi^{-1}(\chi(t))(k)$, we have
    \[
    \Stab_{Z(\sD G)^0}(\chi^{-1}(\chi(t))) = \sD Z_G(t) \cap Z(\sD G)^0
    \]
    as subschemes of $Z(\sD G)^0$.
\end{theorem}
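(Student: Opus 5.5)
The plan is to reduce to a stabilizer computation on the torus and then to the unipotent case of the centralizer $H \coloneqq Z_G(t)$, where Lemma~\ref{lemma:Springer-iso-6.4} applies. We may assume $k=\ov{k}$, $G=\sD G$ is semisimple and simply connected, and $t\in T(k)$. Put $\mu = Z(G)^0$. By Lemma~\ref{lemma:stability-equivalences} (applied with $\sigma=1$ to the regular class whose closure is $\chi^{-1}(\chi(t))$, via \cite[Corollary 5.3.5]{Xiao-Zhu}), a closed subscheme $\nu\subset\mu$ stabilizes $\chi^{-1}(\chi(t))$ if and only if it stabilizes $T\cap\chi^{-1}(\chi(t))$. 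So it suffices to compute the stabilizer in $\mu$ of the finite scheme $T\cap\chi^{-1}(\chi(t))$.

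Next I would use Lemma~\ref{lemma:omega-cap-Steinberg}. It writes $T\cap\chi^{-1}(\chi(t))$ as the disjoint union, over $w\in W/W_t$, of the conjugates $w(T\cap\chi_H^{-1}(\chi_H(t)))w^{-1}$. Here $H$ is connected by Steinberg and has simply connected derived group. Since $\mu$ is infinitesimal, it acts trivially on the underlying topological space. It therefore preserves each of these open-closed pieces setwise, and the stabilizer of the union is the intersection of the stabilizers of the pieces. Conjugation by $w$ fixes $\mu$ pointwise, so every piece has the same stabilizer. The problem thus becomes computing $\Stab_\mu(T\cap\chi_H^{-1}(\chi_H(t)))$. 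Translating by $t^{-1}$, which is central in $H$ and commutes with $\chi_H$-fibers, this equals $\Stab_\mu(T\cap\chi_H^{-1}(\chi_H(1)))$.

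The main step is to show that $\Stab_{Z(H)}(T\cap\chi_H^{-1}(\chi_H(1)))\cap\mu = \sD H\cap\mu$. Write $H$ as a quotient of $Z^0_H\times\sD H$, where $Z^0_H$ is the central torus of $H$. For the inclusion $\supseteq$: Lemma~\ref{lemma:Springer-iso-6.4}, applied to $\sD H$, shows that $Z(\sD H)^0$ stabilizes the unipotent Steinberg fiber of $\sD H$. I would then check that $\chi_H^{-1}(\chi_H(1))$ coincides with the unipotent Steinberg fiber of $\sD H$, since $H/\!/H = (T/W_t)$ and unipotent fibers lie in $\sD H$. Because $\sD H\cap\mu$ is infinitesimal, it lies in $Z(\sD H)^0$, which gives this inclusion. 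For the inclusion $\subseteq$: suppose $\nu\subset\mu$ stabilizes the fiber. Then $\nu$ maps $1$ into $\chi_H^{-1}(\chi_H(1))\subset\sD H$, at least scheme-theoretically after the reduction. So $\nu\subset\sD H$.

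The hardest point is this scheme-theoretic containment: showing that $\chi_H^{-1}(\chi_H(1))$, and in particular its intersection with $T$, lies scheme-theoretically in $\sD H$, i.e.\ in $T\cap\sD H$, when $H$ is not semisimple. I would handle it on $T$ through the map $T\to T/W_t$. The invariant functions on $T$ include the characters trivial on $T\cap\sD H$, namely the characters of $H/\sD H$. These force any point of the fiber over $\chi_H(1)$ to be killed by all such characters, hence to lie in $T\cap\sD H$.
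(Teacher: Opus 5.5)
Your outline matches the paper's proof. You use Lemma~\ref{lemma:stability-equivalences} to pass to $T\cap\chi^{-1}(\chi(t))$ and Lemma~\ref{lemma:omega-cap-Steinberg} to isolate one piece. You then translate by $t^{-1}$ and finish with Lemma~\ref{lemma:Springer-iso-6.4} for $\sD H$, where $H=Z_G(t)$. Your argument for the inclusion $\subseteq$ is correct: a stabilizing $\nu$ satisfies $\nu=\nu\cdot 1\subset T\cap\chi_H^{-1}(\chi_H(1))\subset T\cap\sD H$, because characters of $H/\sD H$ restrict to $W_t$-invariant functions on $T$. This spells out something the paper leaves tacit.

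The gap is in $\supseteq$. You assert that $H$ has simply connected derived group, but Steinberg's theorem only gives that $H$ is connected, and the assertion is false in general. For example, let $G$ be simply connected of type $\mathrm{E}_6$ over $k$ of characteristic $3$, so $Z(G)^0=\mu_3\neq 1$. Let $t$ be an involution whose centralizer has type $\mathrm{A}_1\mathrm{A}_5$. Then $H\cong(\SL_2\times\SL_6)/\mu_2$, since $(-1,-1)$ acts trivially on the $27$-dimensional representation. So Lemma~\ref{lemma:Springer-iso-6.4}, which assumes a simply connected derived group, does not apply to $\sD H$ as you invoke it.

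The missing ingredient is Steinberg's result that $p=\chara k$ does not divide $|\pi_1(\sD Z_G(t))|$ for $G$ simply connected and $t$ semisimple. This is the fact quoted from \cite[Corollary 3.2(1)]{Integral-Springer} in the proof of Corollary~\ref{cor:unip-small-char}. With it, the universal cover $H'\to\sD H$ has finite \'etale kernel $F$. The group $F$ acts freely at $\chi_{H'}(1)$ on $H'/\!/H'$, and $\sD H/\!/\sD H=(H'/\!/H')/F$. Hence $H'/\!/H'\to\sD H/\!/\sD H$ is \'etale at $\chi_{H'}(1)$, so $\chi_{H'}^{-1}(\chi_{H'}(1))$ maps isomorphically onto $\chi_{\sD H}^{-1}(\chi_{\sD H}(1))$. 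Also $Z(H')^0\to Z(\sD H)^0$ is an isomorphism. So the conclusion of Lemma~\ref{lemma:Springer-iso-6.4} transfers to $\sD H$. (The paper applies that lemma to $\sD Z_G(t)$ and likewise leaves this step unstated.)

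A smaller point: containment of $\chi_H^{-1}(\chi_H(1))$ in $\sD H$ does not by itself give $\chi_H^{-1}(\chi_H(1))=\chi_{\sD H}^{-1}(\chi_{\sD H}(1))$. You also need $\sD H/\!/\sD H\to H/\!/H$ to be a closed immersion. This holds because restriction $X^*(T)\to X^*(T\cap\sD H)$ is surjective and injective on the root lattice. Consequently each $W_t$-orbit sum on $T\cap\sD H$ is the restriction of a $W_t$-orbit sum on $T$.
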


\begin{proof}
    If we write $t = z_0t_0$ for $z_0 \in Z(G)(k)$ and $t_0 \in \sD G(k)$, then $\chi^{-1}(\chi(t)) = z_0 \cdot \chi_{\sD G}^{-1}(\chi_{\sD G}(t_0))$, so we may pass from $(G, t)$ to $(\sD G, t_0)$ to assume that $G$ is semisimple. Fix a $k$-algebra $R$. By Lemma~\ref{lemma:stability-equivalences} and Lemma~\ref{lemma:omega-cap-Steinberg}, translation by $z \in Z(G)^0(R)$ preserves $\chi^{-1}(\chi(t))$ if and only if it preserves $w(T \cap \chi_{Z_G(t)}^{-1}(\chi_{Z_G(t)}(t)))w^{-1}$ for all $w \in W$. Since $z$-translation commutes with the $W$-action, it is equivalent that $z$-translation preserves $T \cap \chi_{Z_G(t)}^{-1}(\chi_{Z_G(t)}(t))$. By centrality of $t$ in $Z_G(t)$, we have
    \[
    \chi_{Z_G(t)}^{-1}(\chi_{Z_G(t)}(t)) = t \cdot \chi_{Z_G(t)}^{-1}(\chi_{Z_G(t)}(1)) = t \cdot \chi_{\sD Z_G(t)}^{-1}(\chi_{\sD Z_G(t)}(1)).
    \]
    Now Lemma~\ref{lemma:Springer-iso-6.4} implies that $T \cap \chi_{\sD Z_G(t)}^{-1}(\chi_{\sD Z_G(t)}(1))$ is stable under $z$-translation if and only if $z \in Z(\sD Z_G(t))^0(R) = (\sD Z_G(t) \cap Z(\sD G)^0)(R)$.
\end{proof}

Theorem~\ref{theorem:stabilizer-of-Steinberg} gives a complete description of the stabilizer of $\chi^{-1}(\chi(t))$ in $Z(\sD G)^0$. Since $\Stab_{Z(G)}(\chi^{-1}(\chi(t))) \subset Z(\sD G)$, \cite[Theorem 9.1]{SteinbergEndomorphisms} shows that if $\sD G$ is simply connected and $t \in G(k)$ is semisimple, then the stabilizer of $t$ in $Z(G)_{\rm{red}}$ is isomorphic to $\pi_0(Z_{G^{\rm{ad}}}(\ov{t}))$, where $G^{\rm{ad}}$ is the adjoint quotient of $G$ and $\ov{t}$ is the image of $t$ in $G^{\rm{ad}}$.

The following corollary complements \cite[Lemma 4.4]{Integral-Springer}.

\begin{cor}\label{cor:unip-small-char}
Let $p = \chara k > 0$.
\begin{enumerate}
	\item\label{item:chi-unipotent-variety-non-reduced} If $t \in G(k)$ is semisimple, then $\chi^{-1}(\chi(t))$ is of generic multiplicity $\geq |\pi_1(\sD(Z_G(t)^0))[p^\infty]|\cdot |\pi_0(Z_G(t))|$. In particular, if $p$ divides $|\pi_1(\sD G)|$, then $\chi^{-1}(\chi(1))$ is generically non-reduced.
    \item\label{item:springer-resolution-generically-torsor} Let $B$ be a Borel $k$-subgroup of $G$ with unipotent radical $U$ and let $\widetilde{\sU}_G = G \times^B U$ be the Springer resolution of $\sU_G^{\rm{var}}$. If $\pi_G: \widetilde{G} \to \sD(G)$ is the universal cover, then the Springer resolution $\widetilde{\sU} \to \sU_G^{\rm{var}}$ is a $(\ker \pi_G)^0$-torsor over $\sU_{G, \rm{reg}}$.
\end{enumerate}
\end{cor}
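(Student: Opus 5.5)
The plan is to reduce both parts to the group $G' \coloneqq Z \times \widetilde{G}$, where $Z$ is the maximal central torus of $G$. Let $\pi\colon G' \to G$ be the multiplication map, a central isogeny with $\ker \pi \cong \ker \pi_G$ up to a smooth (étale) part coming from $Z \cap \sD G$; in particular $(\ker\pi)^0 = (\ker \pi_G)^0$. Since $\sD G'$ is simply connected, Lemma~\ref{lemma:steinberg-map-flat} applies to $\chi_{G'}$, as do Lemma~\ref{lemma:stability-equivalences} and Theorem~\ref{theorem:stabilizer-of-Steinberg}.

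\textbf{Part (2).} The Springer resolution does not see the isogeny: $U$ maps isomorphically onto the unipotent radical of $B$ from $B' \coloneqq \pi^{-1}(B)$, so $G' \times^{B'} U \to G \times^B U$ is a torsor under $\ker \pi$ acting on the first factor. Since $\ker\pi$ is central in $B'$, I expect this map to be an isomorphism after passing to $\widetilde{\sU}$.

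The key steps for $G'$ are as follows.
\begin{enumerate}
\item The map $\widetilde{\sU}_{G'} \to \sU^{\rm{var}}_{G'}$ is an isomorphism over the regular locus. Its fiber over a regular unipotent $u \in B'(k)$ is $Z_{G'}(u)/(Z_{G'}(u)\cap B')$ scheme-theoretically, and this is a point because $Z_{G'}(u) \subset B'$ by Lemma~\ref{lemma:twisted-centralizer-in-B} with $\sigma = 1$. Together with smoothness of the regular locus, this gives the isomorphism.
\item Lemma~\ref{lemma:Springer-iso-6.4} shows that $\Stab_{\ker\pi}(\chi_{G'}^{-1}(\chi_{G'}(1)))$ is exactly $(\ker\pi)^0$, since the reduced part is excluded because unipotent classes are not stable under nontrivial semisimple central points.
\item The final claim of Lemma~\ref{lemma:stability-equivalences} then says that $\sU_{G',\rm{reg}} \to \sU_{G,\rm{reg}}$ is a $(\ker\pi)^0$-torsor.
\end{enumerate}
Composing, the Springer resolution over $\sU_{G,\rm{reg}}$ is identified with this torsor.

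\textbf{Part (1).} Let $X = \chi_G^{-1}(\chi_G(t))$ and lift $t$ to a semisimple $t' \in G'(k)$. Since $\pi$ is finite flat, Lemma~\ref{lemma:mult-flat-cover} gives
\[
\mu(\pi^{-1}(X)') = \mu(X)\cdot|(\ker\pi)^0|,
\]
where the prime denotes the component of $\pi^{-1}(X)$ containing the regular part of $F \coloneqq \chi_{G'}^{-1}(\chi_{G'}(t'))$. Now $\chi_G\circ\pi$ factors as $\chi_{G'}$ followed by $\bar\pi\colon G'/\!/G' = T'/W \to T/W$, and $\chi_{G'}$ is smooth on the regular locus. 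So $\mu(\pi^{-1}(X)')$ equals the length of the local ring of $\bar\pi^{-1}(\chi(t))$ at $\chi_{G'}(t')$. The map $\bar\pi$ is the quotient by the free-in-$T'$ action of $\ker\pi$, so it is generically of degree $|\ker\pi|$, and the points of $\bar\pi^{-1}(\chi(t))$ form one orbit with stabilizer
\[
S \coloneqq \Stab_{\ker\pi}(F).
\]
Hence the local length is at least $|\ker\pi|$ divided by the number of orbit points, namely $|\ker\pi|\cdot|S(k)|/|(\ker\pi)(k)| = |(\ker\pi)^0|\cdot|S(k)|$, counted more carefully as $|(\ker\pi)^0|\cdot|S|/|S^0|$ if only an inequality is available. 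Cancelling $|(\ker\pi)^0|$ should leave $\mu(X) \geq |S|$.

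It then remains to identify $|S|$:
\begin{itemize}
\item By Theorem~\ref{theorem:stabilizer-of-Steinberg}, $S^0 = \sD Z_{G'}(t')\cap(\ker\pi)^0$, whose order is $|\pi_1(\sD(Z_G(t)^0))[p^\infty]|$. This holds since $\sD Z_{G'}(t')$ is the simply connected cover of $\sD(Z_G(t)^0)$, and its kernel meets $(\ker\pi)^0$ in the $p$-part.
\item By Steinberg's map $i_{t'}$ from the introduction, the étale part $S(k)$ has order $|\pi_0(Z_G(t))|$.
\end{itemize}
For $t=1$ and $p \mid |\pi_1(\sD G)|$, the bound is $>1$.

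The main obstacle is the length computation for $\bar\pi$: $T/W$ need not be smooth, so the lower bound $|(\ker\pi)^0|\cdot|S|/|S^0|\cdot|S^0|$ must be obtained honestly. I would attempt this by working on $T' \to T$ and using Lemma~\ref{lemma:multiplicity-behavior}, or by computing directly with the $\mu$-torsor of Lemma~\ref{lemma:stability-equivalences} on $\ov{C}_{g'}$ for $g'$ regular with semisimple part $t'$.
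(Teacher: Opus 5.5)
\textbf{Part (1): the first displayed formula is false.} Your route differs from the paper's. The paper lifts $t$ to a DVR of mixed characteristic and compares $\widetilde\chi^{-1}(\widetilde\chi(\widetilde{\bt}))$ with the closure of the generic fibre. You stay in characteristic $p$ and use the finite map $\bar\pi\colon T'/W\to T/W$. That route can be made to work, but not as written.

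By Lemma~\ref{lemma:mult-flat-cover}, $\mu(\pi^{-1}(X)')=\mu(X)\cdot\mu(\pi^{-1}(X_{\mathrm{red}})')$, and you implicitly take the second factor to be $|(\ker\pi)^0|$. However, Lemma~\ref{lemma:stability-equivalences} says $F=\ov{C}_{g'}\to X_{\mathrm{red}}$ is already an $S$-torsor, with $F$ reduced. Hence $\pi^{-1}(X_{\mathrm{red}})\cong\ker\pi\times^S F$. Count degrees: total degree $|\ker\pi|$, there are $|(\ker\pi)(k)/S(k)|$ components, and the residue degree of $F$ over $X_{\mathrm{red}}$ is $|S|$. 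So the component through $F$ has multiplicity $|(\ker\pi)^0|/|S^0|$, not $|(\ker\pi)^0|$.

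Concretely, take $G=\PGL_2$, $p=2$, $t=1$. Then $\pi^{-1}(\sU_G^{\mathrm{var}})=\sU_{\SL_2}$ is reduced, the fibre of $\bar\pi$ is $k[x]/(x^2)$, and $\mu(X)=2$. Your formula instead forces $\mu(X)=1$.

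Two further problems follow from this:
\begin{itemize}
\item Your ``cancellation'' only yields $\mu(X)\geq|S(k)|$, yet you assert $|S|$.
\item The inequality you single out as the main obstacle, $\lambda\geq|(\ker\pi)^0|\cdot|S|$ (with $\lambda$ the local length of $\bar\pi^{-1}(\chi(t))$ at $\chi_{G'}(t')$), is false. Here it would say $2\geq 4$.
\end{itemize}

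With the correct multiplicity there is no obstacle left. You have $\mu(X)\cdot|(\ker\pi)^0|/|S^0|=\lambda$, and your semicontinuity bound gives $\lambda\geq|(\ker\pi)^0|\cdot|S(k)|$. Together these give $\mu(X)\geq|S^0|\cdot|S(k)|=|S|$. This corrected argument trades the mixed-characteristic lift for the map of GIT quotients, in the spirit of Remark~\ref{remark:twisted-steinberg-fiber}. Its key input is the same as the paper's: $F\to X_{\mathrm{red}}$ is an $S$-torsor, with $S^0$ given by Theorem~\ref{theorem:stabilizer-of-Steinberg} and $S(k)$ by Steinberg's $i_{t'}$.

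Separately, $\sD Z_{\widetilde G}(\widetilde t)$ need not be the simply connected cover of $\sD(Z_G(t)^0)$; centralizers of involutions in $\mathrm{E}_8$ are counterexamples. What you actually need is $p\nmid|\pi_1(\sD Z_{\widetilde G}(\widetilde t))|$, as in \cite[Corollary 3.2(1)]{Integral-Springer}. That is enough to match the $p$-parts.

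\textbf{Part (2): same approach as the paper, with one false step.} Your strategy is the paper's: the Springer resolution is unchanged by the isogeny, it is an isomorphism over the regular locus upstairs, and the torsor comes from Lemma~\ref{lemma:stability-equivalences} plus the stabilizer computation. But $(\ker\pi)^0=(\ker\pi_G)^0$ is false in general, because $Z\cap\sD G$ can be infinitesimal. For $G=\GL_p$ in characteristic $p$, $(\ker\pi)^0\cong\mu_p$ while $\ker\pi_G=1$.

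What is true is $\Stab_{\ker\pi}(\{1\}\times\sU_{\widetilde G})=\ker\pi\cap Z(\sD G')^0=(\ker\pi_G)^0$, since points with nontrivial $Z$-coordinate move $\{1\}\times\sU_{\widetilde G}$. So your step 2 is false as stated, and two errors happen to cancel. The clean fix is to work directly with $\pi_G\colon\widetilde G\to\sD G$, as the paper does.
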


\begin{proof}
We can reduce to the case that $G$ is semisimple and $k$ is algebraically closed. Let $\pi_G: \widetilde{G} \to G$ be the universal cover, and let $\widetilde{\chi}\colon \widetilde{G} \to \widetilde{G}/\!/\widetilde{G}$ be the Steinberg morphism. For (\ref{item:chi-unipotent-variety-non-reduced}), let $A$ be a DVR of mixed characteristic $(0, p)$ with residue field $k$ and fraction field $K$ and let $\bG$ be the split semisimple group scheme over $A$ whose special fiber is $G$. Let $\pi: \widetilde{\bG} \to \bG$ be the universal cover of $\bG$, so the GIT quotient map $\widetilde{\chi}\colon \widetilde{\bG} \to \widetilde{\bG}/\!/\widetilde{\bG}$ is flat with geometrically reduced fibers. Lift $t$ to an element $\bt \in \bG(A)$ such that $\bt_K$ is strongly regular semisimple, and pass to a faithfully flat extension of $A$ to assume there is a lift $\widetilde{\bt} \in \widetilde{\bG}(A)$ of $\bt$. If $\sC_\bt$ is the schematic closure of the generic fiber of $\chi^{-1}(\chi(\bt))$, then by Lemma~\ref{lemma:stability-equivalences}, Theorem~\ref{theorem:stabilizer-of-Steinberg}, and \cite[Theorem 4.1(1)]{Lee-adjoint}, the map $\widetilde{\chi}^{-1}(\widetilde{\chi}(\widetilde{\bt})) \to \sC_\bt$ is generically an isomorphism, and it is a $(\ker \pi \cap \sD Z_{\widetilde{G}}(\widetilde{t}))^0 \times \pi_0(Z_G(t))$-torsor on the special fiber. By \cite[Corollary 3.2(1)]{Integral-Springer}, the order of $\pi_1(\sD Z_{\widetilde{G}}(\widetilde{t}))$ is not divisible by $p$. Because $\sD Z_{\widetilde{G}}(\widetilde{t}) \to \sD(Z_G(t)^0)$ is a central isogeny, it follows that $(\ker \pi \cap \sD Z_{\widetilde{G}}(\widetilde{t}))^0$ is the identity component of the kernel of the universal cover of $\sD(Z_G(t)^0)$. Since $(\sC_\bt)_k \subset \chi^{-1}(\chi(t))$, Lemma~\ref{lemma:mult-flat-cover} proves the claim. 

For (\ref{item:springer-resolution-generically-torsor}), recall that the Springer resolution $\widetilde{\sU}_{\widetilde{G}} \to \sU_{\widetilde{G}}$ is an isomorphism over $\sU_{\widetilde{G}, \rm{reg}}$ by \cite[\S 6]{Steinberg-Desingularization}. Since the unipotent radical of a Borel is unchanged after arbitrary central isogeny, the definition of the Springer resolution shows that $\widetilde{\sU}_{\widetilde{G}} \to \widetilde{\sU}_G$ is an isomorphism. Now we have a commutative diagram
\[
\begin{tikzcd}
\widetilde{\sU}_{\widetilde{G}} \arrow[r] \arrow[d]
    &\widetilde{\sU}_G \arrow[d] \\
\sU_{\widetilde{G}} \arrow[r]
    &\sU_G^{\rm{var}}
\end{tikzcd}
\]
By Lemma~\ref{lemma:stability-equivalences} and Theorem~\ref{theorem:stabilizer-of-Steinberg}, the lower horizontal morphism is a $(\ker \pi_G)^0$-torsor. Combined with the previous remarks, it follows that the right vertical morphism is a $(\ker \pi_G)^0$-torsor over $\sU_{G, \rm{reg}}$, as claimed.
\end{proof}

\begin{remark}\label{remark:twisted-steinberg-fiber}
    In Corollary~\ref{cor:unip-small-char}(2), the inequality can be strict even when $\chara k = 0$. Indeed, \cite[Theorem 3]{Steinberg-Pittie} shows that if $k$ is algebraically closed and $G$ is simple then $G/\!/G$ is regular if and only if $G$ is simply connected or $G \cong \SO_{2n+1}$. For any other simple $G$, let $\pi\colon \widetilde{G} \to G$ be the universal cover, and assume for simplicity that $\pi$ is \'etale. The induced map $f\colon \widetilde{G}/\!/\widetilde{G} \to G/\!/G$ cannot be flat, because the source is regular but the target is not. Since $f$ is finite, it follows from commutative algebra that the fibers of $f$ cannot all have the same rank. If $\widetilde{g} \in \widetilde{G}(k)$ has image $x$ in $(G/\!/G)(k)$, then $f^{-1}(x)$ has precisely $[\ker\pi:\Stab_{\ker \pi}(C_{\widetilde{g}})]$ points, and these are permuted by the action of $\ker \pi$. Also $f$ is generically \'etale because $\pi$ is \'etale and $\chi$ is generically smooth \cite[Theorem 3.10]{Slodowy}, so $f$ is of degree $|\ker \pi|$. As already mentioned, there is some $\widetilde{g}$ with image $x$ in $(G/\!/G)(k)$ such that $f^{-1}(x)$ is of rank $> |\ker \pi|$. If $\widetilde{x}$ is the image of $\widetilde{g}$ in $(\widetilde{G}/\!/\widetilde{G})(k)$, then the component $C$ of $f^{-1}(x)$ supported at $\widetilde{x}$ is of rank $> |\Stab_{\ker \pi}(C_{\widetilde{g}})|$. Since $\pi$ is \'etale and $\widetilde{\chi}$ is flat with geometrically reduced fibers, Lemma~\ref{lemma:mult-flat-cover} shows that $C$ and $\chi^{-1}(x)$ have the same generic multiplicities, so the inequality of Corollary~\ref{cor:unip-small-char}(2) is strict.
    
    For an explicit example, let $G = \PGL_3$ over an algebraically closed field $k$, and let $t = \diag(1, \zeta, \zeta^2)$, where $\zeta$ is a primitive $3$rd root of unity (unless $\chara k = 3$, in which case let $\zeta = 1$). Corollary~\ref{cor:unip-small-char}(2) then says that the generic multiplicity of $\chi^{-1}(\chi(t))$ is $\geq 3$, while in fact a(n involved) calculation reveals the generic multiplicity to be $5$. One can also compute that if $G = \PGL_4$ and $t = \diag(1, i, -1, -i)$, where $i^2 = -1$, then $\chi^{-1}(\chi(t))$ is of generic multiplicity $10$. If instead $t = \diag(1, -1, 1, -1)$ and $\chara k \neq 2$, then $\chi^{-1}(\chi(t))$ is of generic multiplicity $3$. (I thank LoG(M) students Veer Agarwal, Linkun Ma, and Yan Yu for help calculating these latter two examples.)

    In general, I do not expect any simple method for computing the generic multiplicity of $\chi^{-1}(\chi(t))$ on the nose. In any case, for applications to Section~\ref{section:moduli}, this multiplicity is not very important, as the scheme $\chi^{-1}(\chi(t))$ will not naturally show up; a more relevant object is the scheme $\sC_{\bt}$ appearing in the proof of Corollary~\ref{cor:unip-small-char}(2).
\end{remark}

\subsection{Twisted Steinberg fibers}

Now we move on to the general case of twisted conjugacy classes of $\sigma$-regular elements. First suppose that $G$ is semisimple and simply connected. In that case, \cite{Springer-twisted} introduced a connected semisimple group $G_\sigma$ with maximal torus $T_\sigma$ and set of roots $\Phi^\sigma$, consisting of sums $\alpha_\cO \coloneqq \sum_{\alpha \in \cO} \alpha$, where $\cO$ ranges over all $\sigma$-orbits in $\Phi$.
By \cite[Proposition 1, Proposition 2]{Springer-twisted}, the group $G_\sigma$ is semisimple, and it has Weyl group $W^\sigma$. Thus by the usual Chevalley--Steinberg theorem \cite[Corollary 6.4]{SteinbergReg}, we have $G_\sigma/\!/G_\sigma \cong T_\sigma/W^\sigma$. (Note that both implicit actions of $W^\sigma$ on $T_\sigma$ are the same, as one can see by \cite[D\'efinition-Th\'eor\`eme 1.15]{DM-non-connexes}).

\begin{lemma}\label{lemma:stabilizer-not-in-type-A}
    Suppose that $G$ is semisimple and simply connected and there is no root $\alpha$ of $(G, T)$ such that $\langle \alpha^\vee, \sigma(\alpha)\rangle = -1$. If $q\colon T \to T_\sigma$ is the quotient and $\ov{q(t)}$ is the image of $q(t)$ in $G_\sigma^{\rm{ad}}$, then
    \[
    \Stab_{Z(G)^0}(\chi_\sigma^{-1}(\chi_\sigma(t))) = Z(G)^0 \cap q^{-1}(Z(G_\sigma) \cap \sD Z_{G_\sigma}(q(t))).
    \]
\end{lemma}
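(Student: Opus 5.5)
The plan is to transfer the twisted problem on $G$ to an untwisted problem on Springer's group $G_\sigma$, and then invoke Theorem~\ref{theorem:stabilizer-of-Steinberg} there.

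\textbf{Step 1: reduce to the torus.} By Lemma~\ref{lemma:stability-equivalences}, translation by a point $z \in Z(G)^0(R)$ preserves $\chi_\sigma^{-1}(\chi_\sigma(t)) = \ov{C}_{t,\sigma}$ if and only if it preserves $T \cap \chi_\sigma^{-1}(\chi_\sigma(t))$. Here I take $t \in T(k)$, which is possible by \cite[Lemma 5]{Springer-twisted}.

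\textbf{Step 2: express the torus slice through $G_\sigma$.} By \cite[Theorem 3.10]{Cotner-conn-comps}, the restriction of $\chi_\sigma$ to $T$ factors as
\[
T \xrightarrow{q} T_\sigma \to T_\sigma/W^\sigma \cong G_\sigma/\!/G_\sigma .
\]
Hence
\[
T \cap \chi_\sigma^{-1}(\chi_\sigma(t)) = q^{-1}\bigl(T_\sigma \cap \chi_{G_\sigma}^{-1}(\chi_{G_\sigma}(q(t)))\bigr).
\]
Since $q$ is a homomorphism, $z$-translation upstairs corresponds to $q(z)$-translation downstairs. Because $q$ is faithfully flat, the translate of $q^{-1}(Y)$ by $z$ equals $q^{-1}$ of the translate of $Y$ by $q(z)$. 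So $z$ stabilizes the left side if and only if $q(z)$ stabilizes $T_\sigma \cap \chi_{G_\sigma}^{-1}(\chi_{G_\sigma}(q(t)))$.

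\textbf{Step 3: apply the untwisted result on $G_\sigma$.} I first check that $q(Z(G)) \subset Z(G_\sigma)$. This holds because every root of $G_\sigma$ has the form $\alpha_\cO$, which is a sum of roots of $G$, each trivial on $Z(G)$. Next, by Lemma~\ref{lemma:stability-equivalences} applied to $G_\sigma$ with trivial twist, stability of the torus slice under $q(z)$ is equivalent to stability of the full Steinberg fiber $\chi_{G_\sigma}^{-1}(\chi_{G_\sigma}(q(t)))$. Finally, Theorem~\ref{theorem:stabilizer-of-Steinberg}, together with the description of the reduced part of the stabilizer by \cite[Theorem 9.1]{SteinbergEndomorphisms}, identifies that stabilizer as $Z(G_\sigma) \cap \sD Z_{G_\sigma}(q(t))$. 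Since $q(z)$ lies in the infinitesimal group $q(Z(G)^0)$, only the $Z(G_\sigma)^0$ part of the stabilizer matters. Pulling back along $q$ gives the formula in the statement.

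\textbf{Main obstacle: simple connectedness of $G_\sigma$.} Theorem~\ref{theorem:stabilizer-of-Steinberg} and Lemma~\ref{lemma:stability-equivalences} both require $G_\sigma$ to be simply connected, and this is exactly where the hypothesis on the roots enters. The condition that no root $\alpha$ has $\langle \alpha^\vee, \sigma(\alpha)\rangle = -1$ excludes the $A_{2n}$-type orbits. For every other $\sigma$-orbit $\cO$, the coroot of $\alpha_\cO$ is $\sum_{\beta \in \cO}\beta^\vee$, restricted to $T_\sigma$. I will check that these coroots for simple orbits form a basis of $X_*(T_\sigma) = X_*(T)^\sigma$. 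This should follow because the $\sum_{\beta\in\cO}\beta^\vee$ for simple $\cO$ span the $\sigma$-invariants of the free permutation lattice $\bigoplus_{\alpha \in \Delta} \bZ\alpha^\vee = X_*(T)$. It then gives $\pi_1(G_\sigma) = 0$.

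I expect this coroot bookkeeping, and the matching between the two $W^\sigma$-actions already noted via \cite{DM-non-connexes}, to be the delicate step. In type $A_{2n}$ the coroot picks up a factor of $2$, so the argument genuinely fails there.
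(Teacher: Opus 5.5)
Your outline is the paper's proof: pass to the torus slice by Lemma~\ref{lemma:stability-equivalences}, write that slice as $q^{-1}$ of the corresponding slice for $G_\sigma$ using $T_\sigma/W^\sigma \cong G_\sigma/\!/G_\sigma$, apply Lemma~\ref{lemma:stability-equivalences} and Theorem~\ref{theorem:stabilizer-of-Steinberg} on $G_\sigma$, and pull back along $q$. Your checks that $z \cdot q^{-1}(Y) = q^{-1}(q(z)Y)$ and that $q(Z(G)^0) \subset Z(G_\sigma)^0$ make explicit what the paper leaves implicit.

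The only step you do differently is the simple connectedness of $G_\sigma$, which the paper quotes from \cite[Corollary 1(i)]{Springer-twisted}. There your bookkeeping is wrong as written. The orbit sum $\sum_{\beta \in \cO} \beta^\vee$ is not the coroot of $\alpha_\cO$: for a pairwise orthogonal orbit, its image in $X_*(T_\sigma)$ pairs with $\alpha_\cO$ to $2|\cO|$. Also, $X_*(T_\sigma)$ is not the invariant lattice $X_*(T)^\sigma$. That lattice is $X_*$ of $(T^\sigma)^0$, and the isogeny $(T^\sigma)^0 \to T_\sigma$ sends $\sum_{\beta \in \cO}\beta^\vee$ to $|\cO|\, q_*\beta^\vee$. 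The correct picture is this. Since $X^*(T_\sigma) = X^*(T)^\sigma$ is free on orbit sums of fundamental weights, $X_*(T_\sigma)$ is the coinvariant lattice $X_*(T)_\sigma$. It has basis $q_*\beta^\vee$, one $\beta$ per $\sigma$-orbit in $\Delta$. The coroot of $\alpha_\cO$ is $q_*\beta^\vee$ for any single $\beta \in \cO$; it pairs to $2$, and its reflection is $\prod_{\beta \in \cO} s_\beta$ on $X^*(T)^\sigma$. With this normalization your idea works. Because $X_*(T)$ is the permutation lattice on $\Delta^\vee$, the simple coroots of $G_\sigma$ form a basis of $X_*(T_\sigma)$, so $\pi_1(G_\sigma) = 0$. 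For an orbit $\{\alpha, \sigma\alpha\}$ with $\langle \alpha^\vee, \sigma\alpha\rangle = -1$ one gets $\langle \alpha_\cO, q_*\alpha^\vee\rangle = 1$, which is the factor $2$. With orbit sums the check comes out backwards: nontrivial orthogonal orbits give $2|\cO|$, while that $A_2$-type orbit gives exactly $2$.

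There are three smaller points.

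\textbf{First}, $\chi_\sigma^{-1}(\chi_\sigma(t))$ is not $\ov{C}_{t,\sigma}$ unless $t$ is $\sigma$-regular; for $\sigma = 1$ and $t = 1$ it is the whole unipotent fiber. By \cite[Corollary 5.3.5]{Xiao-Zhu} it is $\ov{C}_{g,\sigma}$ for a $\sigma$-regular $g$ in the same fiber. Lemma~\ref{lemma:stability-equivalences} must be applied to such a $g$, both on $G$ and on $G_\sigma$.

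\textbf{Second}, the stabilizer of the Steinberg fiber in all of $Z(G_\sigma)$ is not $Z(G_\sigma) \cap \sD Z_{G_\sigma}(q(t))$. For $q(t) = 1$ the latter is $Z(G_\sigma)$, while no nontrivial $k$-point of the center preserves the unipotent fiber. Theorem~\ref{theorem:stabilizer-of-Steinberg} computes only the stabilizer in $Z(G_\sigma)^0$. \cite[Theorem 9.1]{SteinbergEndomorphisms} describes the reduced part as a component group of a centralizer in $G_\sigma^{\rm{ad}}$, not as that intersection. Drop that claim and rely on $q(Z(G)^0) \subset Z(G_\sigma)^0$, as you go on to do. That inclusion is also what lets you replace $Z(G_\sigma)^0$ by $Z(G_\sigma)$ in the final formula.

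\textbf{Third}, the hypothesis $\langle \alpha^\vee, \sigma(\alpha)\rangle \neq -1$ excludes $A_2$-type orbits only when $\sigma$ stabilizes each simple factor. Take $\SL_{2n+1}^m$ with $m \geq 2$ and $\sigma$ cycling the factors with an outer twist. Then $\sigma(\alpha)$ always lies in another factor, so the hypothesis holds. Yet the orbit of a middle simple root contains $\alpha$ and $\sigma^m\alpha$ with $\langle \alpha^\vee, \sigma^m\alpha\rangle = -1$, and $G_\sigma$ is not simply connected. Your argument, like the paper's appeal to Springer, needs the orbit-wise form of the hypothesis. In Theorem~\ref{theorem:torsor-identification} those cases are handled by Lemma~\ref{lemma:stabilizer-in-type-A} instead.
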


\begin{proof}
    Note that by \cite[Corollary 1(i)]{Springer-twisted}, the hypotheses imply that $G_\sigma$ is simply connected.\footnote{Case-checking shows that the hypothesis of this lemma implies that $G$ admits a simple factor isomorphic to $\SL_{2n+1}$ for some $n$.} As remarked above, we have $G_\sigma/\!/G_\sigma \cong T_\sigma/W^\sigma \cong G/\!/_\sigma G$ in such a way that the diagram
    \[
    \begin{tikzcd}
        T \arrow[r, "q"] \arrow[rd, "\chi_{G,\sigma}|_T"]
            &T_\sigma \arrow[d, "\chi_{G_\sigma}|_T"] \\
            &T_\sigma/W^\sigma
    \end{tikzcd}
    \]
    is commutative. In particular, we have
    \[
    \chi_{G,\sigma}|_T^{-1}(\chi_{G,\sigma}|_T(t)) = q^{-1}(\chi_{G_\sigma}|_T^{-1}(\chi_{G_\sigma}|_T(q(t)))).
    \]
    By Lemma~\ref{lemma:stability-equivalences} and Theorem~\ref{theorem:stabilizer-of-Steinberg}, the stabilizer of $T_\sigma \cap \chi_{G_\sigma}^{-1}(\chi_{G_\sigma}(q(t)))$ in $Z(G_\sigma)^0$ is $Z(G_\sigma)^0 \cap \sD Z_{G_\sigma}(q(t))$, so the stabilizer of $T \cap \chi_{G,\sigma}^{-1}(\chi_{G,\sigma}(t)) = q^{-1}(T_\sigma \cap \chi_{G_\sigma}^{-1}(\chi_{G_\sigma}(q(t))))$ is as claimed.
\end{proof}


\begin{lemma}\label{lemma:invariant-by-coinvariants}
    If $\sD G$ is simply connected and $g \in G(k)$, then the fiber $\chi_\sigma^{-1}(\chi_\sigma(g))$ is stable under multiplication by $(1-\sigma)Z(G)$.
\end{lemma}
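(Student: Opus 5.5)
The plan is to show that $\chi_\sigma$ itself is invariant under translation by $(1-\sigma)Z(G)$. Granting this, the fiber $\chi_\sigma^{-1}(\chi_\sigma(g))$ is stable under that translation. Here $(1-\sigma)Z(G)$ means the image of the homomorphism $Z(G) \to Z(G)$, $z \mapsto z\sigma(z)^{-1}$, taken scheme-theoretically: it is the fppf image, a closed subgroup scheme since the map is a homomorphism of affine group schemes of finite type over $k$. Stability of a closed subscheme $Y \subset G$ under a closed subgroup scheme $\mu \subset Z(G)$ means that the multiplication map $\mu \times Y \to G$ factors through $Y$. Because $Z(G) \to (1-\sigma)Z(G)$ is faithfully flat, it suffices to check this factorization after pulling back along $Z(G) \times Y \to (1-\sigma)Z(G) \times Y$. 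Thus we must show that for every $k$-algebra $R$, every $z \in Z(G)(R)$ and every $h \in G(R)$, we have $\chi_\sigma(z\sigma(z)^{-1}h) = \chi_\sigma(h)$.

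For the key identity, take $z \in Z(G)(R)$ central. Twisted conjugation of $h$ by $z$ gives
\[
z h \sigma(z)^{-1} = z\sigma(z)^{-1} h,
\]
using centrality of $z$ to move it past $h$. Since $\chi_\sigma$ is by definition invariant under the twisted conjugation action $(x, h) \mapsto xh\sigma(x)^{-1}$ of $G$ on itself, it is in particular invariant under the action of $z \in G(R)$. Therefore $\chi_\sigma(z\sigma(z)^{-1}h) = \chi_\sigma(h)$ functorially in $R$. In other words, the two morphisms $Z(G) \times G \to G/\!/_\sigma G$ given by $(z, h) \mapsto \chi_\sigma(z\sigma(z)^{-1}h)$ and $(z,h) \mapsto \chi_\sigma(h)$ coincide. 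Restricting to $Z(G) \times \chi_\sigma^{-1}(\chi_\sigma(g))$ then shows that the translated points land in the fiber, scheme-theoretically.

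The only point needing care is the descent from $Z(G)$ to the image subgroup $(1-\sigma)Z(G)$, which I expect to be the main (mild) obstacle. Concretely, let $Y = \chi_\sigma^{-1}(\chi_\sigma(g))$ and consider the multiplication map $m\colon (1-\sigma)Z(G) \times Y \to G$. The preimage $m^{-1}(Y)$ is a closed subscheme whose pullback along the fpqc cover $Z(G) \times Y \to (1-\sigma)Z(G) \times Y$ is everything. Since closed subschemes descend along fpqc covers, $m^{-1}(Y)$ is everything as well, so $m$ factors through $Y$.

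The hypothesis that $\sD G$ is simply connected is not needed for this argument, though it is harmless. Note also that the argument does not need to pass through $T$ or Lemma~\ref{lemma:stability-equivalences}, and so avoids any regularity assumption on $g$.
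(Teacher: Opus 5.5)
Your proof is correct and is essentially the paper's argument: for central $s$, $\chi_\sigma(s\sigma(s)^{-1}h) = \chi_\sigma(sh\sigma(s)^{-1}) = \chi_\sigma(h)$ by invariance of $\chi_\sigma$ under twisted conjugation. Your fpqc-descent step from $Z(G)$ to its image $(1-\sigma)Z(G)$ makes explicit scheme-theoretic bookkeeping that the paper leaves implicit, and you are right that the hypothesis that $\sD G$ is simply connected plays no role.
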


\begin{proof}
    Let $R$ be a $k$-algebra, and suppose $s \in Z(G)(R)$. Because $\chi_\sigma$ is invariant under $\sigma$-conjugation, it follows that
    \[
    \chi_\sigma(g) = \chi_\sigma(sg\sigma(s)^{-1}) = \chi_\sigma(s\sigma(s)^{-1}g),
    \]
    as desired.
\end{proof}

\begin{lemma}\label{lemma:stabilizer-in-type-A}
    Suppose that $G \cong \SL_{2n+1}^m$ for some $n \geq 1$ and $m \geq 1$, and $\sigma = \sigma_0 \circ \sigma_1$, where $\sigma_1$ is defined by $\sigma_1(h_1, \dots, h_m) = (h_2, \dots, h_m, h_1)$ and $\sigma_0$ acts trivially on every simple factor of $G$ except the last, on which it is not inner. If $g \in G(k)$, then $\chi_\sigma^{-1}(\chi_\sigma(g))$ is stable under $Z(G)$-multiplication.
\end{lemma}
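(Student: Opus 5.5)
The plan is to use Lemma~\ref{lemma:invariant-by-coinvariants}: the fiber $\chi_\sigma^{-1}(\chi_\sigma(g))$ is stable under multiplication by $(1-\sigma)Z(G)$. It therefore suffices to show that the image of the homomorphism $1-\sigma\colon Z(G)\to Z(G)$, $s\mapsto s\sigma(s)^{-1}$, is all of $Z(G)$, scheme-theoretically as an fppf quotient. A subtlety is that a closed subscheme $X$ stable under the fppf image of $1-\sigma$ is stable under its scheme-theoretic image. To see this, note the multiplication action $Z(G)\times X\to G$, $(s,x)\mapsto s\sigma(s)^{-1}x$, lands in $X$. The map $Z(G)\times X\to \im(1-\sigma)\times X$ is fppf surjective, hence an epimorphism of fppf sheaves. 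The morphism $\im(1-\sigma)\times X\to G$ factoring through $X$ can then be checked after this cover, since $X\subset G$ is closed.

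So the key step is computing $1-\sigma$ on $Z(G)=\mu_{2n+1}^m$, embedded diagonally in each factor. First compute $\sigma$ on the center. On the $i$-th factor, $\sigma_1$ shifts coordinates, so $(\sigma_1 z)_i = z_{i+1}$. Next, $\sigma_0$ is the identity except on the last factor, where it is the non-inner pinning-preserving automorphism $h\mapsto J(h^\top)^{-1}J^{-1}$. This acts on $\mu_{2n+1}$ by inversion.

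Hence $\sigma(z_1,\dots,z_m)=(z_2,\dots,z_m,z_1^{-1})$, and
\[
(1-\sigma)(z_1,\dots,z_m)=(z_1z_2^{-1},\dots,z_{m-1}z_m^{-1},z_mz_1).
\]
The kernel consists of the tuples with $z_1=\dots=z_m=z$ and $z^2=1$. In $\mu_{2n+1}$, with $2n+1$ odd, $z^2=1$ forces $z=1$. This holds scheme-theoretically, since $\mu_{2n+1}\cap\mu_2=\mu_{\gcd(2,2n+1)}=1$; concretely, $z^2=1=z^{2n+1}$ gives $z=z^{2n+1}(z^{2})^{-n}=1$ on $R$-points.

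Thus $1-\sigma$ is a monomorphism of the finite group scheme $Z(G)$ to itself. A monic homomorphism of finite $k$-group schemes is a closed immersion. Since source and target have the same order $(2n+1)^m$, it is an isomorphism, and its image is all of $Z(G)$. Combined with Lemma~\ref{lemma:invariant-by-coinvariants} and the sheaf remark above, this gives the claim for every $g\in G(k)$; no regularity is needed.

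The only real obstacle is confirming that the non-inner pinning-preserving automorphism acts by inversion on $\mu_{2n+1}$, and doing so scheme-theoretically when $p\mid 2n+1$, where the center is infinitesimal. This follows since $\sigma_0$ acts on the last factor's center as $z\mapsto (z^{-1})$ via the transpose-inverse formula, which is valid functorially on $R$-points of scalar matrices. Alternatively, it follows from the action on the character group $\bZ/(2n+1)$ of the center being $-1$, because the nontrivial diagram automorphism of $A_{2n}$ sends the fundamental weight $\varpi_1$ to $\varpi_{2n}\equiv-\varpi_1$ modulo the root lattice.
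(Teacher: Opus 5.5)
Your proposal is correct and follows the paper's argument. You reduce via Lemma~\ref{lemma:invariant-by-coinvariants} to showing that $1-\sigma$ is an automorphism of $Z(G)\cong\mu_{2n+1}^m$, compute $(1-\sigma)(z_1,\dots,z_m)=(z_1z_2^{-1},\dots,z_{m-1}z_m^{-1},z_mz_1)$ using that the non-inner factor acts by inversion, and conclude from the oddness of $2n+1$. The only cosmetic difference is that you get bijectivity from the trivial kernel together with equal orders, whereas the paper describes the image as the tuples with $\prod_i z_i\in Z(G)^2$; your fppf-image remark is harmless but unnecessary once $1-\sigma$ is an isomorphism.
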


\begin{proof}
    By Lemma~\ref{lemma:invariant-by-coinvariants}, it suffices to show that $(1-\sigma)$ induces an automorphism of $Z(G)$. Note that $\sigma_0$ acts on $Z(\SL_{2n+1}) \cong \mu_{2n+1}$ by inversion, so the map $1 - \sigma$ is given on $Z(G)$ by
    \[
    (1 - \sigma)(z_1, \dots, z_m) = (z_1 z_2^{-1}, \dots, z_{m-1} z_m^{-1}, z_m z_1).
    \]
    This shows that $(1-\sigma)Z(G)$ consists of those $(z_1, \dots, z_m) \in Z(G)$ such that $\prod_{i=1}^m z_i \in Z(G)^2$. Since $Z(G)$ is of odd order, we see $(1-\sigma)Z(G) = Z(G)$.
\end{proof}

\begin{theorem}\label{theorem:torsor-identification}
    Suppose $G = Z \times \sD G$, where $Z$ is a torus and $\sD G$ is simply connected, and write $\sD G = \prod_{i=1}^n G_i$ for $\sigma$-stable reductive subgroups $G_i \subset \sD G$ such that $\sigma$ permutes the simple factors of each $G_i$ transitively. If $t \in T(k)$, then 
    \[
    \Stab_{Z(G)^0}(\chi_\sigma^{-1}(\chi_\sigma(t))) = (1-\sigma)Z \times \prod_{i=1}^n (Z(G_i)^0 \cap q^{-1}(Z((G_i)_\sigma) \cap \sD Z_{(G_i)_\sigma}(q(t))),
    \]
    where as above $q\colon T \to T_\sigma$ is the quotient map.
\end{theorem}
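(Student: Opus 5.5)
The plan is to reduce to the individual factors and then apply Lemmas~\ref{lemma:stabilizer-not-in-type-A} and~\ref{lemma:stabilizer-in-type-A} to each factor.

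\textbf{Step 1: separating the central torus.} Write $t = (z_0, t_1, \dots, t_n)$ with $z_0 \in Z(k)$ and $t_i \in (T \cap G_i)(k)$. Since $G = Z \times \prod_i G_i$ with each factor $\sigma$-stable, the twisted GIT quotient and twisted conjugation decompose as products. Hence
\[
\chi_\sigma^{-1}(\chi_\sigma(t)) = \chi_{Z,\sigma}^{-1}(\chi_{Z,\sigma}(z_0)) \times \prod_{i=1}^n \chi_{G_i,\sigma}^{-1}(\chi_{G_i,\sigma}(t_i)).
\]
The stabilizer of a product of nonempty closed subschemes under a product group acting factorwise is the product of the stabilizers. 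This holds because each fiber has a $k$-point, so one can test factor by factor. Note also that $Z(G)^0 = Z \times \prod_i Z(G_i)^0$.

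For the torus factor, the fiber is the twisted conjugacy orbit $\{z_0 s\sigma(s)^{-1}\} = z_0 \cdot (1-\sigma)Z$. Here $(1-\sigma)Z$ is a subtorus, and $Z/\!/_\sigma Z = Z_\sigma$ is the coinvariant quotient, so the fiber of $Z \to Z_\sigma$ is a coset of $(1-\sigma)Z$. Its stabilizer in $Z$ is exactly $(1-\sigma)Z$. This accounts for the first factor in the statement.

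\textbf{Step 2: reducing each $G_i$ to a single simple factor.} Write $G_i \cong H^m$ with $\sigma = \sigma_0\sigma_1$, as in the proof of Theorem~\ref{theorem:smoothness-of-twisted-centralizers}. The two options are as follows.
\begin{enumerate}
\item If $H \cong \SL_{2n+1}$ and $\sigma_0$ is non-inner on it (equivalently, some root has $\langle \alpha^\vee, \sigma\alpha\rangle = -1$), Lemma~\ref{lemma:stabilizer-in-type-A} gives stabilizer all of $Z(G_i)$, hence $Z(G_i)^0$. We then check that the right-hand side also equals $Z(G_i)^0$. In this case $(G_i)_\sigma$ is of type $C$ (adjoint-like, by \cite[Corollary 1]{Springer-twisted}), and $q$ should kill $Z(G_i)$, since $z\mapsto$ the orbit-product sends $Z(G_i)$ into the odd-order part. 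This forces $q(Z(G_i)) \subset$ a trivial group, so $Z(G_i)^0 \cap q^{-1}(\cdots) = Z(G_i)^0$. I would verify this by a direct computation of $q$ on $\mu_{2n+1}^m$.
\item Otherwise, Lemma~\ref{lemma:stabilizer-not-in-type-A} applies verbatim to $(G_i, \sigma, t_i)$ and gives exactly the $i$-th factor.
\end{enumerate}

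\textbf{Step 3: assembling.} Multiply the stabilizers from Steps 1 and 2. I also need to justify why no cross-terms arise from $Z \cap \sD G$. Since $G$ is a direct product $Z \times \sD G$, this intersection is trivial, so the product decomposition is honest.

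\textbf{Main obstacle.} I expect the main difficulty to be the type $A_{2n}$ case of Step 2. There one must confirm that the formula's right-hand side, with $q$ and $Z((G_i)_\sigma)$, collapses to $Z(G_i)^0$, which requires understanding $q$ on the center when $(G_i)_\sigma$ is not simply connected. A secondary point is the scheme-theoretic product-of-stabilizers claim. I would handle it by testing on $R$-points: translation by $(z, z_i)$ preserves a product of closed subschemes iff each $z_i$ preserves its factor, after base change to each factor's nonempty fiber.
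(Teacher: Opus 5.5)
Your proposal is correct and follows essentially the same route as the paper. You split off $Z$, where the fiber is a coset of $(1-\sigma)Z$ (cf.\ Lemma~\ref{lemma:invariant-by-coinvariants}), and then use Lemma~\ref{lemma:stabilizer-in-type-A} for $G_i \cong \SL_{2n+1}^m$ with $\sigma_0$ non-inner and Lemma~\ref{lemma:stabilizer-not-in-type-A} otherwise.

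The step you flag as the main obstacle is immediate and needs no type-$C$ analysis. The proof of Lemma~\ref{lemma:stabilizer-in-type-A} shows $Z(G_i) = (1-\sigma)Z(G_i)$, and $q$ kills $(1-\sigma)T$, so $Z(G_i) \subset q^{-1}(1)$. Hence the $i$-th factor on the right is all of $Z(G_i)^0$. Two remarks on your wording, neither of which affects the argument. Your ``orbit-product''/odd-order heuristic does not by itself force $q(Z(G_i))=1$. Your parenthetical ``equivalently'' is false for $m>1$, since there $\sigma\alpha$ lies in a different simple factor from $\alpha$; you never actually use it.
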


\begin{proof}
    We may assume $k = \ov{k}$. Write $\sD G = \prod_{i=1}^m G_i$ as above. It is clear that we may pass from $G$ either to $Z$ or to some $G_i$ to assume that $G$ is semisimple and $\sigma$ permutes the simple factors of $G$ transitively. The case $G = Z$ follows from Lemma~\ref{lemma:invariant-by-coinvariants}. In the other case, we may choose an isomorphism $G \cong H^m$, where $H$ is simple, such that $\sigma = \sigma_0 \circ \sigma_1$, where $\sigma_1(h_1, \dots, h_m) = (h_2, \dots, h_m, h_1)$ and $\sigma_0$ is the identity on every simple factor of $G$ except the last. If $H \cong \SL_{2n+1}$ for some $n$ and $\sigma_0$ is not inner, then $Z(G) = (1-\sigma)Z(G) \subset q^{-1}(1)$ and the result is Lemma~\ref{lemma:stabilizer-in-type-A}. Otherwise, the hypotheses of Lemma~\ref{lemma:stabilizer-not-in-type-A} apply and the result again follows.
\end{proof}



Now we use Theorem~\ref{theorem:torsor-identification} to strengthen Theorem~\ref{theorem:smoothness-of-twisted-centralizers}. These results will be used in Section~\ref{section:moduli} to deal with multiplicities in the moduli space of L-parameters valued in ramified groups. First, we need a lemma.

\begin{lemma}\label{lemma:twisted-centralizer-isogeny}
    Consider a commutative diagram
    \[
    \begin{tikzcd}
        G' \arrow[r, "\sigma'"] \arrow[d, "f"]
            &G' \arrow[d, "f"] \\
        G \arrow[r, "\sigma"]
            &G
    \end{tikzcd}
    \]
    where $f$ is a central isogeny and $\sigma$, $\sigma'$ are automorphisms. Let $g' \in G'(k)$, and let $g = f(g')$. Then
    \begin{align*}
    Z_{G,\sigma}(g)/f(Z_{G',\sigma'}(g')) \cdot Z(G)^\sigma
    &\cong f^{-1}(Z_{G,\sigma}(g))/Z_{G',\sigma'}(g') \cdot f^{-1}(Z(G)^\sigma) \\
    &\cong \im(\Stab_{Z(G')}(C_{g', \sigma'}) \cap \ker(f) \to Z(G')_{\sigma'}).
    \end{align*}
\end{lemma}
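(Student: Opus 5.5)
The plan is to generalize the map $i_g$ from the introduction to the twisted setting. For the first isomorphism, consider the surjection $f\colon f^{-1}(Z_{G,\sigma}(g)) \to Z_{G,\sigma}(g)$ of fppf sheaves. It is surjective because $f$ is a central isogeny, hence faithfully flat, and its kernel is $\ker f$. Since $\ker f \subset f^{-1}(Z(G)^\sigma)$, the preimage of the subgroup $f(Z_{G',\sigma'}(g')) \cdot Z(G)^\sigma$ is exactly $Z_{G',\sigma'}(g') \cdot f^{-1}(Z(G)^\sigma)$. This gives the first isomorphism formally; I will check normality of the subgroups involved, or else phrase everything as quotient sheaves of cosets.

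For the second isomorphism, define a map
\[
\phi\colon f^{-1}(Z_{G,\sigma}(g)) \to \ker f, \qquad h \mapsto h g' \sigma'(h)^{-1} g'^{-1}.
\]
It lands in $\ker f$ because $f(h) \in Z_{G,\sigma}(g)$. Using centrality of $\ker f$, I check that $\phi(hh') = \phi(h)\phi(h')$, since $h' g' \sigma'(h')^{-1} = \phi(h') g'$ and $\phi(h')$ is central. So $\phi$ is a homomorphism with kernel $Z_{G',\sigma'}(g')$. Its image is contained in $\Stab_{Z(G')}(C_{g',\sigma'}) \cap \ker f$: indeed, $\phi(h) g' = h g' \sigma'(h)^{-1}$, so multiplication by $\phi(h)$ carries the orbit to itself. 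Conversely, if $z$ is a point of $\ker f$ stabilizing $C_{g',\sigma'}$, then $zg' \in C_{g',\sigma'}$. The orbit map $G' \to C_{g',\sigma'}$ is fppf-surjective, since the orbit is the reduced image and the orbit map is flat onto it by homogeneity. Hence fppf-locally $zg' = hg'\sigma'(h)^{-1}$ for some $h$, and then $f(h) \in Z_{G,\sigma}(g)$. So $\phi$ identifies $f^{-1}(Z_{G,\sigma}(g))/Z_{G',\sigma'}(g')$ with $\Stab_{Z(G')}(C_{g',\sigma'}) \cap \ker f$.

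It remains to quotient further by the image of $f^{-1}(Z(G)^\sigma)$. For $h \in f^{-1}(Z(G)^\sigma)$, the element $h$ is central in $G'$ because $f$ is a central isogeny: the commutator map $G' \times G' \to \ker f$ factors through $G \times G$. Hence $\phi(h) = h\sigma'(h)^{-1} = (1-\sigma')(h)$. So the image of $f^{-1}(Z(G)^\sigma)$ is $(1-\sigma')(f^{-1}(Z(G)^\sigma)) = (1-\sigma')Z(G') \cap \ker f$, where I use that $f^{-1}(Z(G)^\sigma)$ consists precisely of the $z \in Z(G')$ with $(1-\sigma')z \in \ker f$. The quotient $(\Stab \cap \ker f)/((1-\sigma')Z(G') \cap \ker f)$ is then the image of $\Stab \cap \ker f$ in $Z(G')_{\sigma'} = Z(G')/(1-\sigma')Z(G')$, by the second isomorphism theorem for fppf sheaves of commutative groups.

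The main obstacle is the scheme-theoretic surjectivity onto the stabilizer, since points alone do not suffice when $\ker f$ is infinitesimal. This is where I need that $C_{g',\sigma'}$, as the reduced orbit, is the fppf quotient $G'/Z_{G',\sigma'}(g')$. That holds over an algebraically closed field because the orbit map to the reduced orbit is faithfully flat by generic flatness and homogeneity. I will also have to make precise that "$z$ stabilizes $C$" means $zC = C$ as subschemes, so that it makes sense over arbitrary $R$-points.
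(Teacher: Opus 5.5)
Your proposal is correct and is essentially the paper's argument: the paper uses the same homomorphism $x \mapsto xg'\sigma'(x)^{-1}g'^{-1}$, composed directly into $Z(G')_{\sigma'}$, and asserts that its image is that of $\Stab_{Z(G')}(C_{g',\sigma'}) \cap \ker f$ and its kernel is $Z_{G',\sigma'}(g')\cdot f^{-1}(Z(G)^\sigma)$. Your factorization through $\ker f$, the identification $\phi(f^{-1}(Z(G)^\sigma)) = (1-\sigma')Z(G') \cap \ker f$, and the appeal to fppf surjectivity of the orbit map onto the reduced orbit supply the details the paper dismisses as ``the definitions show'' and ``simple to check.''
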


\begin{proof}
    The first isomorphism is evident from surjectivity of $f$. For the second, define a homomorphism $\pi\colon f^{-1}(Z_{G,\sigma}(g)) \to Z(G')_{\sigma'}$ by $\pi(x) = xg'\sigma(x)^{-1}(g')^{-1}$. The definitions show that the image of $\pi$ is equal to the image of $\Stab_{Z(G)}(C_{g', \sigma'}) \cap \ker(f)$, and it is simple to check that $\ker(\pi) = Z_{G',\sigma'}(g') \cdot f^{-1}(Z(G)^\sigma)$.
\end{proof}

\begin{definition}\label{def:almost-simply-connected}
    Let $\pi_G\colon \widetilde{G} \coloneqq Z \times G_{\mathrm{sc}} \to G$ be the natural map induced by multiplication, where $Z$ is the maximal central torus of $G$ and $G_{\mathrm{sc}}$ is the universal cover of $\sD G$. We will say that the pair $(G, \sigma)$ is \textit{almost simply connected} if $\ker \pi_G \subset (1-\sigma)Z(\widetilde{G})$, where $\sigma$ is used for the automorphism of $\widetilde{G}$ induced by the automorphism of $G$ of the same name.
\end{definition}

\begin{cor}\label{cor:asc-smooth}
    If $(G, \sigma)$ is almost simply connected and $\widetilde{g} \in \widetilde{G}(k)$ is $\sigma$-regular, then $\pi_G(Z_{\widetilde{G}, \sigma}(\widetilde{g})) = Z_{G, \sigma}(\pi_G(\widetilde{g}))$. In particular, $Z_{G, \sigma}(g)/Z(G)^\sigma$ is smooth for all $\sigma$-regular $g \in G(k)$.
\end{cor}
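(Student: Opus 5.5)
Write $g = \pi_G(\widetilde{g})$. The plan is to apply Lemma~\ref{lemma:twisted-centralizer-isogeny} to the central isogeny $f = \pi_G\colon \widetilde{G} \to G$, which is compatible with $\sigma$. That lemma identifies the cokernel
\[
Z_{G,\sigma}(g)/\pi_G(Z_{\widetilde{G},\sigma}(\widetilde{g})) \cdot Z(G)^\sigma
\]
with the image of $\Stab_{Z(\widetilde{G})}(C_{\widetilde{g},\sigma}) \cap \ker \pi_G$ in the coinvariants $Z(\widetilde{G})_\sigma = Z(\widetilde{G})/(1-\sigma)Z(\widetilde{G})$. By the almost simply connected hypothesis, $\ker \pi_G \subset (1-\sigma)Z(\widetilde{G})$, so this image is trivial, whatever the stabilizer is. We therefore get
\[
Z_{G,\sigma}(g) = \pi_G(Z_{\widetilde{G},\sigma}(\widetilde{g})) \cdot Z(G)^\sigma .
\]
We do not even need Theorem~\ref{theorem:torsor-identification} to compute the stabilizer here.

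It remains to absorb $Z(G)^\sigma$ into $\pi_G(Z_{\widetilde{G},\sigma}(\widetilde{g}))$. Take $z \in Z(G)^\sigma(R)$. After an fppf extension, lift it to $\widetilde{z} \in Z(\widetilde{G})$; this is possible since $\pi_G^{-1}(Z(G)) = Z(\widetilde{G})$ for a central isogeny. Then $\sigma(\widetilde{z})\widetilde{z}^{-1} \in \ker \pi_G \subset (1-\sigma)Z(\widetilde{G})$, so we may write it as $w\sigma(w)^{-1}$ with $w \in Z(\widetilde{G})$, again after fppf extension. Replacing $\widetilde{z}$ by $\widetilde{z}w$ gives a $\sigma$-fixed lift of $z$. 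A $\sigma$-fixed central element lies in $Z_{\widetilde{G},\sigma}(\widetilde{g})$, since $\widetilde{z}\widetilde{g}\sigma(\widetilde{z})^{-1} = \widetilde{g}$. Hence $Z(G)^\sigma \subset \pi_G(Z_{\widetilde{G},\sigma}(\widetilde{g}))$ as fppf sheaves, which proves the first claim. The main care needed is to phrase everything as equalities of fppf subsheaves, i.e.\ of closed subgroup schemes via fppf images, so that the non-reduced kernels are handled scheme-theoretically rather than on $k$-points.

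For the smoothness claim, any $\sigma$-regular $g \in G(k)$ lifts to some $\widetilde{g} \in \widetilde{G}(k)$ because $k = \ov{k}$, and $\widetilde{g}$ is $\sigma$-regular because $\pi_G$ is an isogeny and so preserves centralizer dimensions. By the first part, $Z_{G,\sigma}(g)/Z(G)^\sigma$ is a quotient of $Z_{\widetilde{G},\sigma}(\widetilde{g})/Z(\widetilde{G})^\sigma$. Theorem~\ref{theorem:smoothness-of-twisted-centralizers} applies to $\widetilde{G} = Z \times G_{\rm{sc}}$ and shows that this source is smooth. A quotient of a smooth $k$-group by a normal subgroup is smooth, since it is the image under a faithfully flat map of a smooth group over a perfect field. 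This concludes the proof.
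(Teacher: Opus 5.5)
Your first step is correct and is exactly what the paper's proof uses. Lemma~\ref{lemma:twisted-centralizer-isogeny} together with $\ker\pi_G\subset(1-\sigma)Z(\widetilde G)$ gives $Z_{G,\sigma}(g)=\pi_G(Z_{\widetilde G,\sigma}(\widetilde g))\cdot Z(G)^\sigma$. The paper also cites Lemma~\ref{lemma:invariant-by-coinvariants}, but as you note, the image of $\ker\pi_G$ in the coinvariants is trivial regardless.

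The absorption step, however, is wrong. The element $\widetilde z w$ is $\sigma$-fixed, but $\pi_G(\widetilde z w)=z\,\pi_G(w)$, and $w$ is only known to lie in $Z(\widetilde G)$, not in $\ker\pi_G$. Since $\pi_G(w)$ is again just some element of $Z(G)^\sigma$, the argument is circular. A $\sigma$-fixed lift of $z$ itself exists (fppf locally) if and only if $\sigma(\widetilde z)\widetilde z^{-1}\in(1-\sigma)\ker\pi_G$. The obstruction is the connecting map $Z(G)^\sigma\to(\ker\pi_G)_\sigma$ of the snake lemma for $1-\sigma$ on $1\to\ker\pi_G\to Z(\widetilde G)\to Z(G)\to 1$. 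Almost simple connectedness only makes $(\ker\pi_G)_\sigma\to Z(\widetilde G)_\sigma$ zero; it does not make $(\ker\pi_G)_\sigma$ itself zero.

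This cannot be patched, because $\pi_G(Z_{\widetilde G,\sigma}(\widetilde g))\cap Z(G)=\pi_G(Z(\widetilde G)^\sigma)$ whereas $Z_{G,\sigma}(g)\cap Z(G)=Z(G)^\sigma$. Here is a counterexample. Take $G=\SO_4=(\SL_2\times\SL_2)/\Delta\mu_2$ with $\sigma$ the swap and $\chara k\neq 2$. Then $\ker\pi_G=\Delta\mu_2=(1-\sigma)Z(\widetilde G)$, so $(G,\sigma)$ is almost simply connected. Also $-I\in Z(G)^\sigma(k)\subset Z_{G,\sigma}(g)(k)$ for every $g$. But $(x_1,x_2)\in Z_{\widetilde G,\sigma}((h_1,h_2))$ forces $x_2=h_1^{-1}x_1h_1$, so neither preimage $(1,-1)$, $(-1,1)$ of $-I$ lies in $Z_{\widetilde G,\sigma}(\widetilde g)$. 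So the first assertion holds only in the form with the factor $Z(G)^\sigma$. That weaker form is also all that the two lemmas cited in the paper's one-line proof deliver.

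Your smoothness paragraph uses only that weaker equality. It already makes $Z_{\widetilde G,\sigma}(\widetilde g)/Z(\widetilde G)^\sigma\to Z_{G,\sigma}(g)/Z(G)^\sigma$ faithfully flat. Theorem~\ref{theorem:smoothness-of-twisted-centralizers} then finishes the argument exactly as in the paper. So the ``in particular'' statement is correctly proved once you delete the absorption step.
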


\begin{proof}
    The first claim is immediate from Lemma~\ref{lemma:invariant-by-coinvariants} and Lemma~\ref{lemma:twisted-centralizer-isogeny}. The second follows from Theorem~\ref{theorem:smoothness-of-twisted-centralizers}.
\end{proof}

\begin{cor}\label{cor:asc-adjoint-smooth}
    If $(G, \sigma)$ is almost simply connected, then $\chi_\sigma\colon G \to G/\!/_\sigma G$ is flat. If $G^{\sigma\textrm{-reg}}$ is the open $\sigma$-regular locus in $G$, then $\chi_\sigma|_{G^{\sigma\textrm{-reg}}}$ is smooth.
\end{cor}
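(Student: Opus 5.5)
The plan is to reduce to the case $G = \widetilde{G} = Z \times G_{\mathrm{sc}}$, where flatness and smoothness are already known, and descend along $\pi_G\colon \widetilde{G} \to G$. When $\sD G$ is simply connected, flatness of $\chi_\sigma$ and smoothness on the $\sigma$-regular locus come from Lemma~\ref{lemma:steinberg-map-flat} together with the torus factor. For the torus factor $Z$, the map $Z \to Z/\!/_\sigma Z = Z_\sigma$ is a quotient of a torus by the image of $1-\sigma$. This image is smooth, since it is the image of a smooth group, so the map is a torsor and hence smooth. Thus $\chi_{\widetilde{G},\sigma}$ is flat, and it is smooth on $\widetilde{G}^{\sigma\textrm{-}\rm{reg}}$.

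Next I would compare the quotients. Since $\mu \coloneqq \ker \pi_G$ is central and $\sigma$-stable, $G = \widetilde{G}/\mu$, and $\sigma$-conjugation commutes with $\mu$-translation. So $G/\!/_\sigma G = (\widetilde{G}/\!/_\sigma \widetilde{G})/\mu$, with $\mu$ acting through translation on $\widetilde{G}/\!/_\sigma\widetilde{G} \cong T_\sigma/W^\sigma$, using \cite[Theorem 3.10]{Cotner-conn-comps} for both groups. By almost simple connectedness, $\mu \subset (1-\sigma)Z(\widetilde{G})$. By Lemma~\ref{lemma:invariant-by-coinvariants}, $\chi_{\widetilde{G},\sigma}$ is invariant under $(1-\sigma)Z(\widetilde{G})$-translation, so $\mu$ acts trivially on $\widetilde{G}/\!/_\sigma\widetilde{G}$. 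Hence the natural map $\widetilde{G}/\!/_\sigma\widetilde{G} \to G/\!/_\sigma G$ should be an isomorphism: it is a categorical quotient by a trivial action, and the invariant rings agree because $k[G]^{G} = (k[\widetilde{G}]^{\mu})^{\widetilde{G}} = k[\widetilde{G}]^{\widetilde{G}}$. Concretely, this gives a commutative square
\[
\begin{tikzcd}
\widetilde{G} \arrow[r, "\pi_G"] \arrow[d, "\chi_{\widetilde{G},\sigma}"'] & G \arrow[d, "\chi_{G,\sigma}"] \\
\widetilde{G}/\!/_\sigma\widetilde{G} \arrow[r, "\sim"] & G/\!/_\sigma G
\end{tikzcd}
\]
with the bottom arrow an isomorphism.

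Now $\pi_G$ is fppf, being a $\mu$-torsor, and the composite $\widetilde{G} \to G/\!/_\sigma G$ is flat. Flatness descends along faithfully flat maps: a module $M$ over $G/\!/_\sigma G$ is flat iff $M \otimes k[\widetilde{G}]$ is flat over $G/\!/_\sigma G$ via the composite, using the flatness of $k[\widetilde{G}]$ over $k[G]$. Hence $\chi_{G,\sigma}$ is flat. For smoothness, $\pi_G^{-1}(G^{\sigma\textrm{-}\rm{reg}}) = \widetilde{G}^{\sigma\textrm{-}\rm{reg}}$. This holds because twisted centralizer dimensions agree: $Z_{\widetilde{G},\sigma}(\widetilde{g}) \to Z_{G,\sigma}(g)$ has finite kernel, and its image has finite index by Corollary~\ref{cor:asc-smooth} or Lemma~\ref{lemma:twisted-centralizer-isogeny}. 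Smoothness is fppf-local on the source, so smoothness of the composite descends: for flat morphisms it reduces to geometric regularity of the fibers, and $\pi_G$ restricted to fibers is a $\mu$-torsor.

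The main obstacle is the last point when $\mu$ is non-reduced. Fibers of $\chi_{\widetilde{G},\sigma}$ being smooth does not obviously make their quotients by an infinitesimal group smooth. I would handle this by the fibral criterion instead: each fiber of $\chi_{G,\sigma}$ restricted to the regular locus is $C_{g,\sigma}$ scheme-theoretically. This is because, by Lemma~\ref{lemma:stability-equivalences} and the triviality of the $\mu$-action, the fiber of $\widetilde{G}$ is $\mu$-stable, so it is a $\mu$-torsor over the corresponding fiber of $G$. The fiber is then reduced, hence smooth by Corollary~\ref{cor:normal}. Together with flatness, this gives smoothness.
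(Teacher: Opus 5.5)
Your proposal is correct and follows the paper's route. You identify $\widetilde{G}/\!/_\sigma\widetilde{G}\cong G/\!/_\sigma G$ using $\ker\pi_G\subset(1-\sigma)Z(\widetilde{G})$, and your invariant-ring argument via Lemma~\ref{lemma:invariant-by-coinvariants} is a more explicit version of the paper's two-step quotient. You then descend flatness and smoothness along the $\ker\pi_G$-torsor $\widetilde{G}^{\sigma\textrm{-reg}}\to G^{\sigma\textrm{-reg}}$, exactly as the paper does.

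The obstacle you raise is not real. Regularity descends along faithfully flat maps \cite[Theorem 23.7]{Matsumura}, as used in Corollary~\ref{cor:normal}, so a quotient of a smooth scheme by a free action of an infinitesimal group scheme is smooth. Your fallback fibral argument is nonetheless also valid.
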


\begin{proof}
    If $G = Z \times G'$, where $G'$ is simply connected, then the claim follows from \cite[Corollary 4.3.3, Corollary 5.3.3]{Xiao-Zhu}. In general, the map $\pi_G\colon \widetilde{G} = Z \times G_{\rm{sc}} \to G$ above induces a map $\widetilde{G}/\!/_\sigma\widetilde{G} \to G/\!/_\sigma G$. This map is an isomorphism: indeed, we have
    \[
    \widetilde{G}/\!/_\sigma \widetilde{G} \cong (\widetilde{G}/\!/_\sigma Z(\widetilde{G}))/\!/_\sigma G \cong (G/\!/_\sigma Z(G))/\!/_\sigma G,
    \]
    where the first isomorphism follows from the fact that $G$ is a central quotient of $\widetilde{G}$ and the second follows from the fact that $\widetilde{G}/\!/_\sigma Z(\widetilde{G}) = \widetilde{G}/(1-\sigma)Z(\widetilde{G})$, which is a central quotient of $G$. Since the map $\pi_G$ restricts to a $\ker(\pi_G)$-torsor $\widetilde{G}^{\sigma\textrm{-reg}} \to G^{\sigma\textrm{-reg}}$, the result follows.
\end{proof}

The following two lemmas will be used in Section~\ref{section:moduli}.

\begin{lemma}\label{lemma:almost-simply-connected}
    The map $\pi_G$ factors uniquely as $\widetilde{G} \to \widetilde{G}_\sigma \xrightarrow[]{\pi_{G,\sigma}} G$, where $(\widetilde{G}_\sigma, \sigma)$ is almost simply connected, $\ker \pi_{G,\sigma} \subset Z(\widetilde{G}_\sigma)^\sigma$, and $\ker \pi_{G,\sigma} \cap (1-\sigma)Z(\widetilde{G}_\sigma) = 1$.
\end{lemma}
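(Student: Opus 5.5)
The plan is to build $\widetilde{G}_\sigma$ as a central quotient $\widetilde{G}/N$ for a suitable $\sigma$-stable closed subgroup scheme $N \subset \ker \pi_G$. Every factorization of $\pi_G$ through a central isogeny $\widetilde{G} \to \widetilde{G}_\sigma$ has this form, with $\ker \pi_{G,\sigma} = (\ker\pi_G)/N$. The natural candidate is
\[
N \coloneqq \ker \pi_G \cap (1-\sigma)Z(\widetilde{G}),
\]
which is a closed $\sigma$-stable subgroup scheme of the finite multiplicative-type group $\ker\pi_G$. Since $\pi_G$ is $\sigma$-equivariant, $\sigma$ descends to $\widetilde{G}_\sigma \coloneqq \widetilde{G}/N$, and $\pi_G$ factors through it.

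The verification proceeds in three steps.

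\textbf{Almost simple connectedness.} The maximal central torus and the simply connected cover of $\widetilde{G}_\sigma$ are again $Z$ and $G_{\rm sc}$, so its map $\pi_{\widetilde{G}_\sigma}$ is the quotient $\widetilde{G} \to \widetilde{G}/N$, with kernel $N$. By construction $N \subset (1-\sigma)Z(\widetilde{G})$, so $(\widetilde{G}_\sigma,\sigma)$ is almost simply connected.

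\textbf{Trivial intersection with $(1-\sigma)Z$.} We have $Z(\widetilde{G}_\sigma) = Z(\widetilde{G})/N$, because centers of reductive groups are compatible with central isogenies. Hence $(1-\sigma)Z(\widetilde{G}_\sigma)$ is the image of $(1-\sigma)Z(\widetilde{G})$. Any point of $\ker\pi_{G,\sigma} = \ker\pi_G/N$ lying in this image lifts, fppf-locally, to an element of $\ker\pi_G \cdot N$ that also lies in $(1-\sigma)Z(\widetilde{G})\cdot N$. Since $N \subset \ker \pi_G$, that element lies in $\ker\pi_G \cap (1-\sigma)Z(\widetilde{G}) = N$, so the intersection is trivial.

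\textbf{$\sigma$-invariance of the kernel.} I expect this to be the main obstacle. Let $x$ be a point of $\ker\pi_G$. Then $\sigma(x)x^{-1} = (1-\sigma)(x)^{-1}$ lies in $\ker\pi_G \cap (1-\sigma)Z(\widetilde{G}) = N$, because $\ker\pi_G$ is $\sigma$-stable and $x \in Z(\widetilde{G})$. So $\sigma$ acts trivially on $\ker\pi_G/N$, which gives $\ker \pi_{G,\sigma} \subset Z(\widetilde{G}_\sigma)^\sigma$. This is an equality of functor points and holds scheme-theoretically, since all the groups involved are commutative.

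\textbf{Uniqueness.} Suppose $\widetilde{G}\to \widetilde{G}/N' \to G$ is another factorization satisfying the three conditions. Almost simple connectedness gives $N' \subset (1-\sigma)Z(\widetilde{G})$, and $N' \subset \ker\pi_G$ always, so $N' \subset N$. Conversely, the image of $N$ in $\ker\pi_G/N'$ lies in $(1-\sigma)Z(\widetilde{G}/N')$, so it is trivial by the third condition, and therefore $N \subset N'$. Hence $N = N'$. The conditions $\ker\subset Z^\sigma$ and trivial intersection are only used as stated, so the remaining care is in checking the scheme-theoretic image claims, using that images of homomorphisms between finite-type groups are closed.
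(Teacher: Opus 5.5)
Your proposal is correct and matches the paper's proof. The paper likewise sets $\widetilde{G}_\sigma = \widetilde{G}/(\ker \pi_G \cap (1-\sigma)Z(\widetilde{G}))$, notes that $\sigma$ descends and that $\ker\pi_{G,\sigma} = \ker\pi_G/(\ker \pi_G \cap (1-\sigma)Z(\widetilde{G}))$ is $\sigma$-fixed, and calls the trivial-intersection and uniqueness claims clear; you fill in those details, and one implicit step is worth stating: identifying the maximal central torus of $\widetilde{G}_\sigma$ (and of $\widetilde{G}/N'$) with $Z$ uses $\ker\pi_G \cap (Z \times 1) = 1$, which holds because $Z$ is a subgroup of $G$.
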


\begin{proof}
    Let $\widetilde{G}_\sigma = \widetilde{G}/(\ker \pi_G \cap (1-\sigma)Z(\widetilde{G}))$, so there is a factorization as in the lemma statement. Note that $\sigma$ induces an automorphism of $\widetilde{G}_\sigma$ (which we also call $\sigma$) because the kernel of $\widetilde{G} \to \widetilde{G}_\sigma$ is $\sigma$-stable. Moreover, $\ker \pi_{G,\sigma} = \ker \pi_G/(\ker \pi_G \cap (1-\sigma)Z(\widetilde{G}))$, so $\ker \pi_{G,\sigma} \subset Z(\widetilde{G}_\sigma)^\sigma$. The final equality is clear, and uniqueness is also clear.
\end{proof}

\begin{lemma}\label{lemma:transp-fppf}
    Let $S$ be a scheme, let $G$ be a reductive $S$-group scheme, and let $\sigma$ be a pinning-preserving $S$-automorphism of $G$ such that the pair $(G, \sigma)$ is almost simply connected.\footnote{This can be defined in precisely the same way as Definition~\ref{def:almost-simply-connected}, and it is equivalent to requiring that the $S$-fibers of $(G, \sigma)$ are almost simply connected.} If $x, y \in G(S)$ have $\sigma$-regular fibers and satisfy $\chi_\sigma(x) = \chi_\sigma(y)$, then there is an fppf map $S' \to S$ and $g \in G(S')$ such that $gx\sigma(g)^{-1} = y$.
\end{lemma}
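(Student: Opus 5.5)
The plan is to consider the transporter scheme $P \coloneqq \{g \in G : gx\sigma(g)^{-1} = y\}$, a closed subscheme of $G$ finitely presented over $S$. It suffices to show that $P \to S$ is faithfully flat and locally of finite presentation. Then $P$ is an fppf $S$-scheme, and we may take $S' = P$ with $g$ the tautological section.

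First, $P$ is a left torsor-candidate under the twisted centralizer $Z_{G,\sigma}(y)$ (equivalently, a right one under $Z_{G,\sigma}(x)$). Concretely, $P = G \times_{G} S$, where $G \to G$ is the orbit map $a_x\colon g \mapsto gx\sigma(g)^{-1}$ and $S \to G$ is $y$. So I want to show that the orbit map restricted to $G^{\sigma\textrm{-reg}}$-targets is flat and surjective onto the relevant fiber of $\chi_\sigma$. Equivalently, consider the morphism
\[
\Psi\colon G \times_S G^{\sigma\textrm{-reg}} \to G^{\sigma\textrm{-reg}} \times_{G/\!/_\sigma G} G^{\sigma\textrm{-reg}}, \qquad (g, h) \mapsto (gh\sigma(g)^{-1}, h),
\]
and pull it back along $(y, x)\colon S \to G^{\sigma\textrm{-reg}} \times_{G/\!/_\sigma G} G^{\sigma\textrm{-reg}}$. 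This map is well-defined because $\chi_\sigma(x) = \chi_\sigma(y)$. The pullback is exactly $P$, so it suffices to show that $\Psi$ is faithfully flat.

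I would argue as in the proof of Lemma~\ref{lemma:flatness-of-fixed-point-scheme}, now using Corollary~\ref{cor:asc-adjoint-smooth} in place of Lemma~\ref{lemma:steinberg-map-flat}. By the almost simply connected hypothesis (fiberwise), $\chi_\sigma|_{G^{\sigma\textrm{-reg}}}$ is smooth on fibers. The formation of $G/\!/_\sigma G$ commutes with base change by \cite[Theorem 3.10]{Cotner-conn-comps}, so the target of $\Psi$ is smooth over $S$. The source $G \times_S G^{\sigma\textrm{-reg}}$ is smooth over $S$ as well. By the fibral flatness criterion, it is enough to check flatness of $\Psi$ on fibers over $S$, so we may take $S = \Spec k$ for a field, and after base change $k = \ov{k}$. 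There, by miracle flatness (smooth source and target), flatness follows once all fibers of $\Psi$ have the expected dimension $\dim G + \dim G - \dim(\text{target})$. The fiber over $(y', x')$ is a transporter, hence either empty or a torsor under $Z_{G,\sigma}(x')$. That centralizer has dimension $\dim T^\sigma$ by $\sigma$-regularity, and the target has relative dimension $2\dim G - \dim T^\sigma$ because $\dim G/\!/_\sigma G = \dim T_\sigma = \dim T^\sigma$. So the dimensions match.

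The main obstacle is surjectivity: over $\ov{k}$, any two $\sigma$-regular elements with the same image under $\chi_\sigma$ must be $\sigma$-conjugate. In the simply connected case this is \cite[Corollary 5.3.5]{Xiao-Zhu}: the fiber is the closure of a single $\sigma$-regular class, and the regular locus of the fiber is that class. For general almost simply connected $(G,\sigma)$, I would lift $x, y$ to $\widetilde{x}, \widetilde{y} \in \widetilde{G}(\ov{k})$. By the proof of Corollary~\ref{cor:asc-adjoint-smooth}, $\widetilde{G}/\!/_\sigma\widetilde{G} \to G/\!/_\sigma G$ is an isomorphism, so $\chi_\sigma(\widetilde{x}) = \chi_\sigma(\widetilde{y})$. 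The lifts are $\sigma$-regular, so \cite[Corollary 5.3.5]{Xiao-Zhu} gives $\widetilde{g}$ with $\widetilde{g}\widetilde{x}\sigma(\widetilde{g})^{-1} = \widetilde{y}$, possibly up to the choice of lift. Here the choice of lift is harmless, since any lift works once we know $\chi_\sigma$ agrees. Pushing down via $\pi_G$ then gives the required conjugacy in $G$. This yields faithful flatness of $\Psi$, and base change gives that $P \to S$ is fppf.
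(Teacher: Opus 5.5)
Your proposal is correct and is essentially the paper's argument: the paper also obtains the transporter as the pullback along $(y,x)$ of the twisted orbit map. The paper restricts that map to $G \times F^{\sigma\textrm{-reg}} \to F^{\sigma\textrm{-reg}} \times_S F^{\sigma\textrm{-reg}}$ for the single Steinberg fiber $F = \chi_\sigma^{-1}(\chi_\sigma(x))$, which is just the base change of your $\Psi$ along $\chi_\sigma(x)$, and it uses Corollary~\ref{cor:asc-adjoint-smooth}, fibral flatness, and \cite[Corollary 5.3.5]{Xiao-Zhu} for transitivity. The only differences are small. You get flatness from miracle flatness and a dimension count rather than from Corollary~\ref{cor:asc-smooth}; flatness is all that fppf needs, and the orbit map need not be smooth when $Z(G)^\sigma$ is non-reduced. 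You also spell out the reduction of transitivity to $\widetilde{G}$ via the isomorphism $\widetilde{G}/\!/_\sigma\widetilde{G} \cong G/\!/_\sigma G$ from the proof of Corollary~\ref{cor:asc-adjoint-smooth}.
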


\begin{proof}
    Let $G^{\sigma\textrm{-reg}}$ denote the open subscheme of $G$ consisting of $\sigma$-regular points of $G$. By the fibral flatness criterion and Corollary~\ref{cor:asc-adjoint-smooth}, the preimage $\chi_\sigma^{-1}(\chi_\sigma(x)) = \chi_\sigma^{-1}(\chi_\sigma(y))$ is $S$-smooth. By \cite[Corollary 5.3.5(1)]{Xiao-Zhu}, each fiber of $\chi_\sigma^{-1}(\chi_\sigma(x))^{\sigma\textrm{-reg}} \coloneqq \chi_\sigma^{-1}(\chi_\sigma(x)) \cap G^{\sigma\textrm{-reg}}$ is a single $G$-orbit. It follows from Corollary~\ref{cor:asc-smooth} and the fibral flatness criterion that the map $G \times \chi_\sigma^{-1}(\chi_\sigma(x))^{\sigma\textrm{-reg}} \to (\chi_\sigma^{-1}(\chi_\sigma(x))^{\sigma\textrm{-reg}})^2$, $(g, z) \mapsto (gz\sigma(g)^{-1}, z)$ is smooth and surjective, from which the result follows.
\end{proof}

\subsection{General conjugacy classes}\label{ss:general-conj}

In this section, we will describe the stabilizer of the conjugacy class $C_u = C_{G, u}$ of a unipotent element $u \in \SL_n(k)$ under multiplication by the central $\mu_n$. In order to facilitate a future complete answer to Question~\ref{question:main}, we will work for as long as possible with an arbitrary connected reductive $k$-group $G$.

\begin{lemma}\label{lemma:transverse-orbit-centralizer}
    Let $t \in G(k)$ be semisimple. If $C$ is the $G$-conjugacy class of $t$, then there is a neighborhood $\Omega$ of $t$ in $G$ such that $\Omega \cap C \cap Z_G(t) = \{t\}$ scheme-theoretically.
\end{lemma}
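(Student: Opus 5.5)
We need to show that $C\cap Z_G(t)$ is, near $t$, the reduced point $t$ scheme-theoretically. Here $C$ is the reduced orbit $G/Z_G(t)$, which is smooth, and $Z_G(t)$ is smooth by \cite[Corollary to Theorem 9.2]{Borel} applied to semisimple elements. The plan is a tangent-space transversality argument at $t$: two smooth locally closed subschemes that meet at $t$ with complementary tangent spaces have an intersection that is étale at $t$. Since $t$ is a $k$-point, that intersection is then the reduced point $t$ in a neighborhood $\Omega$.

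\textbf{Step 1: tangent spaces.} Translate by $t^{-1}$, so everything is identified inside $\fg = \Lie G$. The orbit map $G\to C$, $g\mapsto gtg^{-1}$, is smooth, because $C$ is the reduced orbit, the orbit map onto it is generically smooth, and it is equivariant. Its differential at $1$ is $X\mapsto (1-\Ad(t))X$ after translating by $t^{-1}$. Hence $T_t(t^{-1}C)=(1-\Ad(t))\fg$. Also $T_t(t^{-1}Z_G(t))=\Lie Z_G(t)$. This equals $\fg^{\Ad(t)}=\ker(1-\Ad(t))$, using smoothness of $Z_G(t)$, i.e.\ $\Lie Z_G(t)=\fz_\fg(t)$.

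\textbf{Step 2: complementarity.} Since $t$ is semisimple, $\Ad(t)$ is a semisimple endomorphism of $\fg$, so $\fg=\ker(1-\Ad(t))\oplus\im(1-\Ad(t))$. The two tangent spaces are therefore complementary. In particular their intersection is $0$, so the scheme $C\cap Z_G(t)$ has Zariski tangent space $0$ at $t$.

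\textbf{Step 3: conclusion.} A locally finite type $k$-scheme with zero tangent space at a $k$-point is unramified there, so that point is isolated and reduced. The unramified locus is open, and $t$ is isolated in it, so there is an open $\Omega\ni t$ with $\Omega\cap C\cap Z_G(t)=\{t\}$ scheme-theoretically.

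The main point to check carefully is Step 1, in two places. First, the orbit map onto the reduced orbit must be smooth in positive characteristic, so that its tangent image is all of $T_tC$. This holds because $G\to C$ is a $Z_G(t)$-torsor with $Z_G(t)$ smooth, so $C\cong G/Z_G(t)$ is smooth and the quotient map is smooth. Second, $\Lie Z_G(t)=\fz_\fg(t)$, which is exactly the smoothness of centralizers of semisimple elements.
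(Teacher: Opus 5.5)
Your proposal is correct and follows essentially the same argument as the paper. Both identify the tangent spaces of $C$ and $Z_G(t)$ at $t$ (after translation) with $\im(\Ad(t)-1)$ and $\ker(\Ad(t)-1)$, using smoothness of the orbit map and of $Z_G(t)$, and conclude from semisimplicity of $\Ad(t)$ that they intersect trivially. Your closing justification, that $G\to C$ is smooth because $C\cong G/Z_G(t)$ with $Z_G(t)$ smooth, is the right one; the earlier appeal to generic smoothness of the orbit map alone would not suffice in positive characteristic. The rest of your write-up fills in details the paper leaves implicit.
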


\begin{proof}
    It is enough to show that the tangent spaces $\rm{Tan}_t C$ and $\rm{Tan}_t Z_G(t)$ of $C$ and $Z_G(t)$ at $t$ intersect trivially. If one identifies $\rm{Tan}_t G$ with $\fg$ via translation, then the map $\fg \to \rm{Tan}_t C \subset \fg$ induced from the (smooth) $t$-orbit map $G \to C$ is given by $X \mapsto (\Ad_{t^{-1}} - 1)X$. Thus $\rm{Tan}_t C$ can be identified with $\im(\Ad_{t^{-1}} - 1)$. Since $\rm{Tan}_t Z_G(t)$ is identified with $\ker(\Ad_{t^{-1}} - 1)$ and $t$ is semisimple, the result follows.
\end{proof}

\begin{lemma}\label{lemma:semisimple-conjugacy-class-cap-T}
    If $T$ is a maximal $k$-torus of $G$ and $t \in T(k)$ has $G$-conjugacy class $C$, then $C \cap T = \{wtw^{-1}\colon w \in W\}$ scheme-theoretically.
\end{lemma}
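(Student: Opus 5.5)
We need to show that $C \cap T$, with its scheme structure, is the reduced finite set $\{wtw^{-1}\colon w \in W\}$. Set-theoretically this is classical: two elements of $T(k)$ that are $G$-conjugate are already $N_G(T)$-conjugate (e.g.\ \cite[Corollary 6.4]{SteinbergEndomorphisms} and its standard proof via conjugacy of maximal tori in $Z_G(s)^0$). So $C \cap T$ is supported on the finite set $W\cdot t$. It remains to show that $C\cap T$ is reduced, and it suffices to do this at each point. Since the $W$-action (via representatives in $N_G(T)(k)$) preserves both $C$ and $T$, it is enough to treat the point $t$ itself.

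At $t$, the plan is to reduce to Lemma~\ref{lemma:transverse-orbit-centralizer}. Since $t \in T$ and $T$ is commutative, $T \subset Z_G(t)$ as a closed subscheme. Hence
\[
(C \cap T) \cap \Omega \subset C \cap Z_G(t) \cap \Omega = \{t\}
\]
scheme-theoretically, where $\Omega$ is the neighborhood supplied by Lemma~\ref{lemma:transverse-orbit-centralizer}. Shrinking $\Omega$ so that it contains no other point of $W\cdot t$, the local scheme $(C\cap T)\cap\Omega$ is a closed subscheme of the reduced point $\{t\}$ containing $t$, hence equals $\{t\}$.

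It remains to handle the scheme structure on $C$. Here $C$ is the reduced orbit, which is smooth (being an orbit) and locally closed; since $t$ is semisimple it is in fact closed. So $C \cap T$ is a well-defined closed subscheme of $T$. Lemma~\ref{lemma:transverse-orbit-centralizer} was proved via the tangent space computation $\mathrm{Tan}_t C \cap \mathrm{Tan}_t Z_G(t) = 0$, which gives reducedness at $t$ directly.

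Alternatively, one can argue that $\mathrm{Tan}_t(C \cap T) \subset \operatorname{im}(\Ad_{t^{-1}}-1)\cap \mathfrak{t} = 0$, because $\mathfrak{t} \subset \ker(\Ad_{t^{-1}}-1)$ and $\Ad_{t^{-1}}$ is semisimple. This shows $C \cap T$ is étale at $t$.

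The main obstacle is the set-theoretic step: that $G$-conjugate elements of $T$ are $W$-conjugate. For a general connected reductive $G$ with possibly disconnected $Z_G(t')$, this uses that if $gtg^{-1} = t' \in T$, then $T$ and $gTg^{-1}$ are maximal tori of $Z_G(t')^0$, hence conjugate there. Adjusting $g$ by the conjugating element lands it in $N_G(T)$. Everything else is the tangent-space transversality already established.
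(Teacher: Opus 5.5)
Your proof is correct and is essentially the paper's argument. It combines the classical fact that $C(k) \cap T(k) = \{wtw^{-1}\colon w \in W\}$ with the inclusion $T \subset Z_G(t)$ and the transversality of Lemma~\ref{lemma:transverse-orbit-centralizer}, which together give reducedness; the only cosmetic difference is that you move the local statement from $t$ to the other points $wtw^{-1}$ by conjugating with $N_G(T)(k)$, whereas the paper applies Lemma~\ref{lemma:transverse-orbit-centralizer} directly at each $wtw^{-1}$, using $T \subset Z_G(wtw^{-1})$.
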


\begin{proof}
    It is a standard fact that $C(k) \cap T(k) = \{wtw^{-1}\colon w \in W\}$. Since $T \subset Z_G(wtw^{-1})$ for all $w \in W$, Lemma~\ref{lemma:transverse-orbit-centralizer} shows that the intersection $C \cap T$ is transverse, proving the lemma.
\end{proof}

\begin{lemma}\label{lemma:stability-passes-to-jordan}
    Suppose $t, u \in G(k)$ are commuting elements such that $t$ is semisimple and $Z_G(u)_{\rm{red}}$ is a normal $k$-subgroup of $Z_G(u)$.\footnote{For general group schemes, this is not automatic: for instance, if $H = (\bZ/p)^\times \ltimes \mu_p$, then $H_{\rm{red}} = (\bZ/p)^\times$ is not normal in $H$. However, it holds if $Z_G(u)/Z(G)$ is smooth, for instance if $G \cong \SL_n$.}
    If $\mu \subset Z(\sD G)^0$ is a $k$-subgroup scheme such that $C_u$ is stable under multiplication by $\mu$, then the same is true of $C_{tu}$ and $C_{Z_G(t), u}$.
\end{lemma}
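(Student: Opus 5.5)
Since $\mu$ is infinitesimal (it lies in $Z(\sD G)^0$), stability of a reduced orbit $C$ under $\mu$ is equivalent to the orbit map $G \to C$, $h \mapsto hxh^{-1}$, being compatible with multiplication by $\mu$. Concretely, there should exist a morphism $\phi\colon \mu \to G$ with $\phi(z)x\phi(z)^{-1} = zx$, at least after fppf-locally modifying. The plan is to build such maps for $tu$ and for $u$ in $Z_G(t)$, starting from the one for $u$ in $G$.

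\textbf{Step 1: from stability to a map $\mu \to Z_G(u)$-cosets.} Stability of $C_u$ under $\mu$ means $\mu \cdot u \subset C_u$. The orbit map $G \to C_u$ is a $Z_G(u)$-torsor, so we can consider the transporter $\Transp_G(u, zu) = \{h : huh^{-1} = zu\}$, taken over $z \in \mu$. Call this family $P \to \mu$; it is a $Z_G(u)$-torsor over $\mu$. Put differently, the subgroup scheme $H := \{h \in G : huh^{-1}u^{-1} \in \mu\}$ contains $Z_G(u)$ and maps onto $\mu$ via $h \mapsto huh^{-1}u^{-1}$, with kernel $Z_G(u)$. This map is a homomorphism because $\mu$ is central. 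Hence $H/Z_G(u) \cong \mu$ is infinitesimal.

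\textbf{Step 2: $H$ centralizes $t$ up to something harmless.} This is the main obstacle: we want $H \subset Z_G(t)\cdot Z_G(u)$, or better, that $H$ commutes with $t$ modulo $Z_G(u)_{\rm red}$. The idea runs as follows.
\begin{itemize}[label={}]
\item First, $H$ normalizes $Z_G(u)$, since $Z_G(u)$ is the kernel of a homomorphism on $H$. So $H$ also normalizes $Z_G(u)_{\rm red}$, which is characteristic.
\item Next, $t$ lies in $Z_G(u)_{\rm red}(k)$, and the hypothesis that $Z_G(u)_{\rm red}$ is normal in $Z_G(u)$ lets us pass to $\ov{H} := H/Z_G(u)_{\rm red}$. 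This is an extension of the infinitesimal group $\mu$ by the finite group scheme $Z_G(u)/Z_G(u)_{\rm red}$, which is infinitesimal.
\item So $\ov{H}$ is infinitesimal, while the image of $t$ generates a multiplicative-type group $M$ (the closure of $\langle t\rangle$) acting on $H$ by conjugation, since $t$ commutes with $u$.
\item Then we use that an infinitesimal group acted on by a torus/diagonalizable group $M$: the action of $M$ on $H$ preserves $Z_G(u)_{\rm red}$. Apply smoothness of the fixed-point functor for diagonalizable group actions: $H^M \to \ov{H}^M$, and the map $H^M = H \cap Z_G(t) \to \mu$ is still surjective. To see this, note that $M$ acts trivially on $\mu$, and the torsor $H \to \mu$ under $Z_G(u)$ with compatible $M$-action has fibers whose $M$-fixed points are nonempty. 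This holds because $M$ is linearly reductive, so the obstruction, an $H^1(M, \cdot)$, vanishes fppf-locally.
\end{itemize}
Granting this, we get a surjection $Z_{Z_G(t)}(\ \cdot\ ) \supset H \cap Z_G(t) \to \mu$, $h \mapsto huh^{-1}u^{-1}$.

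\textbf{Step 3: conclude.} An $R$-point $h$ of $H\cap Z_G(t)$ lying over $z \in \mu(R)$ satisfies $h\,tu\,h^{-1} = t\,zu = z\,tu$. Hence $z\cdot tu \in C_{tu}(R')$ fppf-locally, so $\mu \cdot tu \subset C_{tu}$. By the orbit description, this gives $\mu C_{tu} = C_{tu}$, because $\mu$ is central and we can conjugate. The same points $h \in Z_G(t)$ show $\mu u \subset C_{Z_G(t),u}$, giving the second claim.

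I expect Step 2, producing $t$-invariant lifts, to be the crux. If the linear-reductivity argument fails, the fallback is to use the Jordan decomposition scheme-theoretically. The element $huh^{-1} = zu$ together with uniqueness of the multiplicative-type closure $M$ would force $hMh^{-1}$ to commute with $u$ and lie in $Z_G(u)$. We would then conjugate within the torus-normalizer inside $Z_G(u)_{\rm red}$, using normality of $Z_G(u)_{\rm red}$ to move $h$ into $Z_G(t)$.
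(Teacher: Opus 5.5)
Steps 1 and 3 are fine. The gap is in Step 2, which is where all the content of the lemma lives.

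\textbf{The first bullet is false.} Saying $Z_G(u)_{\mathrm{red}}$ is ``characteristic'' only means it is stable under $k$-automorphisms of $Z_G(u)$. Conjugation by $R$-points of the non-reduced group $H$ gives $R$-automorphisms, and these need not preserve $(Z_G(u)_{\mathrm{red}})_R$; this is the footnote's phenomenon. Concretely, take $G=\SL_2$, $p=2$, $u$ regular unipotent, $\mu=\mu_2$. Then $Z_G(u)_{\mathrm{red}}=U$, while $H=\pi^{-1}(Z_{\PGL_2}(\ov{u}))$ is not contained in $B=N_G(U)$ by Theorem~\ref{theorem:regular-unipotent-centralizer}. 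So $\ov{H}$ is not a group, and the normality hypothesis (which concerns $Z_G(u)$, not $H$) is not doing the job you assign it.

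\textbf{The surjectivity of $H\cap Z_G(t)\to\mu$ is not proved.} You justify it only by ``$M$ is linearly reductive, so the $H^1(M,\cdot)$ obstruction vanishes'' for a torsor under $Z_G(u)$. But $Z_G(u)$ is non-smooth. The results that actually exist all need smoothness: smoothness of $M$-fixed points, and conjugacy of multiplicative-type subgroups as in \cite[Expos\'e IX, Corollaire 3.3bis]{SGA3II}. The naive induction over square-zero thickenings also breaks, because points of a non-smooth group need not lift. Knowing that $M$ acts trivially on $\mu$ is not enough. What you need is that $M$ acts trivially on the coset space $H/Z_G(u)_{\mathrm{red}}$, i.e.\ $h^{-1}Mh\subset Z_G(u)_{\mathrm{red}}$ for $h\in H$, and nothing in your argument proves that.

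\textbf{The missing idea is rigidity.} For $h\in H(R)$ with $R$ artin local, $hM_Rh^{-1}$ commutes with $huh^{-1}=zu$, hence with $u$, so $hM_Rh^{-1}\subset Z_G(u)_R$. The composite $hM_Rh^{-1}\to (Z_G(u)/Z_G(u)_{\mathrm{red}})_R$ is a homomorphism from a multiplicative-type group to an infinitesimal one that is trivial on the special fiber. It is therefore trivial by \cite[Expos\'e IX, Corollaire 3.5]{SGA3II}. This is exactly where normality of $Z_G(u)_{\mathrm{red}}$ in $Z_G(u)$ enters.

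\textbf{How the paper finishes.} Once $hMh^{-1}$ lies in the smooth group $Z_G(u)_{\mathrm{red}}$, the paper first normalizes $h_s=1$. It then uses \cite[Expos\'e IX, Corollaire 3.3bis]{SGA3II} to find $h'\in Z_G(u)_{\mathrm{red}}(R)$ with $h'_s=1$ and $h'hMh^{-1}h'^{-1}=M_R$. Finally, transversality of $C_t$ and $T$ at $t$ (Lemma~\ref{lemma:semisimple-conjugacy-class-cap-T}) forces $h'ht(h'h)^{-1}=t$. So $h'h\in Z_G(t)(R)$ still conjugates $u$ to $zu$. Your fallback paragraph gestures at this but omits both the rigidity step and the final transversality step.

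\textbf{Your own idea can also be rescued.} With rigidity in hand, $M$ acts trivially on $H/Z_G(u)_{\mathrm{red}}$. Applying smoothness of $M$-fixed points to the smooth $M$-equivariant map $H\to H/Z_G(u)_{\mathrm{red}}$ then shows that $H^M\to H/Z_G(u)_{\mathrm{red}}\to\mu$ is surjective. This route avoids the transversality lemma altogether.
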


\begin{proof}
    We may and do assume $k = \ov{k}$. To show that $C_{Z_G(t), u}$ is $\mu$-stable, it is enough to show that, if $R$ is the coordinate ring of $\mu$, then $(C_{Z_G(t), u})_R$ is stable under multiplication by $\mu(R)$. Thus let $R$ be an artin local $k$-algebra with residue field $k$, and let $z \in \mu(R)$. Since $C_u$ is stable under multiplication by $\mu$, we may extend $R$ to assume that there is some $g \in G(R)$ such that $gug^{-1} = zu$. Since $\mu(k) = \{1\}$, the special fiber $g_s \in G(k)$ centralizes $u$. Thus by passing from $g$ to $g_s^{-1}g$, we may assume $g_s = 1$. Note also that $gtg^{-1} \in Z_G(u)(R)$.

    Let $M$ be the closed $k$-subgroup of $G$ generated by $t$, which is of multiplicative type since $t$ is semisimple. Note that $M_R$ is the smallest closed $R$-subgroup scheme of $G_R$ containing the section $t_R$, from which it follows that $gM_R g^{-1}$ is contained in $Z_G(u)_R$. Since $Z_G(u)/Z_G(u)_{\rm{red}}$ is infinitesimal and $M$ is smooth, the induced map $gM_R g^{-1} \to (Z_G(u)/Z_G(u)_{\rm{red}})_R$ has trivial special fiber. By \cite[Expos\'e IX, Corollaire 3.5]{SGA3II}, it follows that $gM_Rg^{-1} \subset (Z_G(u)_{\rm{red}})_R$.
    
    Since $Z_G(u)_{\rm{red}}$ is smooth, $R$ is strictly henselian, and $g_s = 1$, it follows from \cite[Expos\'e IX, Corollaire 3.3bis]{SGA3II} that there is some $h \in Z_G(u)_{\rm{red}}(R)$ with $h_s = 1$ such that $hgM_R g^{-1}h^{-1} = M_R$. Thus $hgtg^{-1}h^{-1} \in M(R)$, so $hgtg^{-1}h^{-1} = t$ by Lemma~\ref{lemma:semisimple-conjugacy-class-cap-T} and the fact that $h_s g_s = 1$. Since $hgug^{-1}h^{-1} = zu$ and $hg \in Z_G(t)(R)$, we see that $C_{tu}$ and $C_{Z_G(t), u}$ are both stable under multiplication by $\mu$.
\end{proof}


\begin{prop}\label{prop:stability-type-a}
    Let $G = \SL_n$ and let $u \in G(k)$ be unipotent, corresponding to the partition $(m_1, \dots, m_\ell)$ of $n$. If $p^r$ is the largest power of $p$ dividing each $m_i$, then the stabilizer of $C_u$ in $Z(G)$ is the central $\mu_{p^r}$.
\end{prop}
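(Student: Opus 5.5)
We prove the two inclusions separately. Since every point of $Z(G)(k)$ is semisimple and $u$ is unipotent, uniqueness of the Jordan decomposition gives $\Stab_{Z(G)}(C_u) \subset Z(G)^0 = \mu_{p^s}$, where $p^s$ is the $p$-part of $n$. Every subgroup scheme of $\mu_{p^s}$ is of the form $\mu_{p^j}$. So it suffices to show two things: $\mu_{p^r}$ stabilizes $C_u$, and $\mu_{p^{r+1}}$ does not (when $p^{r+1} \mid n$).

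\textbf{The stabilizer contains $\mu_{p^r}$.} Let $L = S(\GL_{m_1} \times \cdots \times \GL_{m_\ell})$ be the block-diagonal Levi, and write $u = (u_1, \dots, u_\ell)$ with each $u_i$ regular unipotent in $\GL_{m_i}$. Then $u$ is regular unipotent in $L$. We have $\sD L = \prod \SL_{m_i}$, which is simply connected, and $Z(L)$ is the block-scalar group. By Lemma~\ref{lemma:Springer-iso-6.4} (or Theorem~\ref{theorem:stabilizer-of-Steinberg} with $t = 1$, applied to the regular class via Lemma~\ref{lemma:stability-equivalences}), the regular unipotent class of $L$ is stable under $Z(\sD L)^0 = \prod_i \mu_{p^{a_i}}$, where $p^{a_i}$ is the $p$-part of $m_i$. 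The diagonal copy of $\mu_{p^r}$ lies in $\prod_i \mu_{p^{a_i}}$, because $r \le a_i$ for all $i$. This diagonal copy is exactly the central $\mu_{p^r} \subset \SL_n$, and it lies in $L$.

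Consequently $\mu_{p^r} \cdot u \subset C_{L,u} \subset C_u$ scheme-theoretically. Now $\mu_{p^r}$ is central, so $z \cdot gug^{-1} = g(zu)g^{-1}$. Hence $\mu_{p^r} C_u$ is contained in the image of $G \times \mu_{p^r} u \to G$, and this image lies in $C_u$. This argument needs some care: $C_u$ is reduced and $\mu_{p^r}$ is infinitesimal, so one should check the containment on $R$-points for artinian $R$, as in the proof of Lemma~\ref{lemma:stability-passes-to-jordan}. The map $(g, z) \mapsto z\, gug^{-1}$ factors through $C_u$ because $zu = huh^{-1}$ fppf-locally for some $h$.

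\textbf{The stabilizer does not contain $\mu_{p^{r+1}}$.} This is the main obstacle. Suppose $\mu_{p^{r+1}}$ stabilizes $C_u$, and choose $i$ with $p^{r+1} \nmid m_i$. I would try to reduce to the Levi $L$ and then to a single block.

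The first step is to show that $\mu_{p^{r+1}}$ stabilizes $C_{L,u}$. Mimicking Lemma~\ref{lemma:stability-passes-to-jordan}, take $z \in \mu_{p^{r+1}}(R)$ and $g \in G(R)$ with $g_s = 1$ and $gug^{-1} = zu$. Pick a torus $S \subset Z(L)$ such that $Z_G(S) = L$, for instance the block scalars of distinct weights. Conjugating by $g$ then transports $S$ into $Z_G(u)$. The hypothesis of Lemma~\ref{lemma:stability-passes-to-jordan} holds, since $Z_G(u)/Z(G)$ is smooth for $\SL_n$. Using SGA3 rigidity of tori, we can then conjugate $gSg^{-1}$ back to $S$ by some $h \in Z_G(u)_{\rm{red}}(R)$. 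This yields $hg \in N_G(S)(R)$ with trivial special fiber, hence $hg \in L(R)$. So $zu$ is $L$-conjugate to $u$.

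The second step projects to the $i$-th block. Project $L \to \GL_{m_i}$, and then apply the determinant-free argument in $\GL_{m_i}$ modulo its center. In $\GL_{m_i}$, central translation by $z$ of the regular unipotent class is realized by conjugation. This forces the corresponding $i$-th component $z$ into $\Stab_{Z(\SL_{m_i})}$ of the regular unipotent class, which by Theorem~\ref{theorem:stabilizer-of-Steinberg} is at most $\mu_{p^{a_i}}$. More concretely, one can compare the characteristic polynomial of $zu_i$, namely $(T - z)^{m_i}$, against $(T-1)^{m_i}$ over $R$. These agree iff $z^j\binom{m_i}{j} = \binom{m_i}{j}$ for all $j$, and since $\binom{m_i}{p^{a_i}}$ is a unit, this forces $z^{p^{a_i}} = 1$. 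That gives only $\mu_{p^{a_i}}$, which is insufficient when $a_i > r$.

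So the real content of this step must be combinatorial. The bound should come from the $p$-adic valuation of $m_i$ with $a_i = r$, for some index $i$ achieving $r = \min_i a_i$. Choosing that $i$ gives $z \in \mu_{p^r}$, a contradiction for $z$ a generic point of $\mu_{p^{r+1}}$.
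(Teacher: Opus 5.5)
Your proof is correct. The lower bound is the paper's argument: Lemma~\ref{lemma:Springer-iso-6.4} for the block Levi $L$, with $\mu_{p^r}=Z(G)^0\cap\sD L$.

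The upper bound takes a different route at the last step.
\begin{itemize}
\item The paper uses Lemma~\ref{lemma:stability-passes-to-jordan} to pass from $C_u$ to the \emph{regular} class $C_{tu}$, where $Z_G(t)=L$. It then reads off $\Stab_{Z(G)^0}(C_{tu})=\sD L\cap Z(G)^0=\mu_{p^r}$ from Theorem~\ref{theorem:stabilizer-of-Steinberg}.
\item You use the other conclusion of that lemma, namely stability of $C_{L,u}$, which you re-derive with the torus $Z(L)^0$ in place of the group generated by $t$. You then project to one block $\GL_{m_i}$ and compare characteristic polynomials.
\end{itemize}
Your route is more elementary for the upper bound. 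It avoids Theorem~\ref{theorem:stabilizer-of-Steinberg} and the lifting machinery behind it, although that machinery still enters your lower bound through Lemma~\ref{lemma:Springer-iso-6.4}. It is, however, specific to type A. The paper's route is uniform and is the template for other groups.

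Two corrections to your write-up.

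First, the hesitation in your last paragraph is unfounded. You already chose $i$ with $p^{r+1}\nmid m_i$, so $a_i=r$. Your computation then finishes the proof directly: $\binom{m_i}{p^{r}}$ is a unit, so $z^{p^r}=1$. Take $R=k[x]/(x^{p^{r+1}}-1)$ and $z=x$, the \emph{universal} point of $\mu_{p^{r+1}}$ (not a ``generic point''). Then $x^{p^r}=1$, which is false since $(x-1)^{p^r}\neq 0$. No further combinatorics is needed.

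Second, the intermediate phrasing via $\Stab_{Z(\SL_{m_i})}$ of the regular unipotent class is imprecise, because $zI_{m_i}$ need not lie in $\SL_{m_i}$ a priori. Keep the characteristic-polynomial version instead: it works in $\GL_{m_i}$ with no such issue.
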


\begin{proof}
    Let $\mu$ be the stabilizer of $C_u$ in $Z(G)$, and note that $\mu \subset Z(G)^0$ because $u$ is unipotent. Let $L$ be a Levi subgroup of $G$ such that $u$ is regular in $L(k)$, and let $t \in L(k)$ be a semisimple element such that $L = Z_G(t)$. By Lemma~\ref{lemma:Springer-iso-6.4}, $C_u$ is stable under multiplication by $Z(G)^0 \cap \sD L = \mu_{p^r}$. Moreover, since $tu$ is regular, Theorem~\ref{theorem:stabilizer-of-Steinberg} shows that the stabilizer of $C_{tu}$ is $\mu_{p^r}$. These observations combine with Lemma~\ref{lemma:stability-passes-to-jordan} to show that $\mu = \mu_{p^r}$.
\end{proof}

\begin{remark}
    The argument of the proposition can be used to give a description of the stabilizer of $C_g$ in $Z(G)$ for general $g \in G(k)$ when $G = \SL_n$; in view of Lemma~\ref{lemma:stability-passes-to-jordan}, it suffices to describe the stabilizer in $Z(G)_{\rm{red}}$. If $g = tu$ is the Jordan decomposition, then this stabilizer is a subgroup of the stabilizer of $C_t$, consisting of those elements preserving $C_{Z_G(t), u}$. In view of Remark~\ref{rmk:stability-non-reg}, it seems unlikely that there is a cleaner description than this.
\end{remark}

\begin{remark}\label{remark:non-regular}
	We conclude this section with some speculations on a method to obtain a complete answer to Question~\ref{question:main} for general $G$. A natural attempt is to mimic the proof of Theorem~\ref{theorem:stabilizer-of-Steinberg} and try to find a lift $x$ of $g$ to $\cG(\cO)$, where $\cO$ is a discrete valuation ring with residue field $k$ and fraction field $K$ of characteristic $0$, $\cG$ is a reductive $\cO$-group scheme with special fiber $G$, and the $Z(\cG_K)$-stabilizer of $C_{x_K}$ is ``large". To make this strategy work, one would need to find $x$ such that $C_g$ is an open subscheme of the special fiber of $\overline{C_{x_K}}$: if $C_g$ is merely a locally closed subscheme (even an open subscheme of the underlying reduced subscheme) of this special fiber, then showing that $\overline{C_{x_K}}$ is stable under $\mu$-translation for a $K$-subgroup scheme $\mu \subset Z(\cG_K)$ would not a priori imply anything about the action of $\overline{\mu}^0$ on $C_g$. In particular, to make this strategy work one would need to choose $x$ to be \textit{pure} in the sense of \cite[Definition 3.8]{Cotner-flat}, i.e., $\dim Z_G(g) = \dim Z_{\cG_K}(x_K)$.
	
	However, this is not enough: \cite[Remark 5.5]{Cotner-flat} shows that if $x \in \cG(\cO)$ is a pure section then it can happen that $Z_{\cG}(x)$ is not flat, and in this case $\overline{C_{x_K}}$ does not have generically reduced generic fiber. Indeed, if $Z_{\cG}(x)_0$ denotes the schematic closure of $Z_{\cG_K}(x_K)$, then $Z_{\cG}(x)_0$ is flat and the map $\cG/Z_{\cG}(x)_0 \to \overline{C_{x_K}}$, $h \mapsto hxh^{-1}$ is quasi-finite, so Lemma~\ref{lemma:multiplicity-behavior} and Zariski's main theorem show that if $Z_{\cG}(x)_0 \neq Z_{\cG}(x)$ then $(\ov{C_{x_K}})_k$ is not reduced. Thus to use this strategy, the first goal would be to lift $g$ to $x$ such that $Z_{\cG}(x)$ is flat. To show that $\overline{C_{x_K}}$ has reduced special fiber, one would then like to apply Lemma~\ref{lemma:multiplicity-behavior} to the orbit map $\cG/Z_{\cG}(x) \to \overline{C_{x_K}}$, which would require showing that this map is finite. Using the valuative criterion of properness, that amounts to giving a positive answer to the following question.
\end{remark}

\begin{question}\label{question:finiteness}
	Let $A$ be a complete discrete valuation ring with residue field $k$ and fraction field $K$, let $\cG$ be a reductive $A$-group scheme, and let $x, y \in \cG(A)$ be two pure sections such that
	\begin{enumerate}
		\item $x_k$ is $\cG(k)$-conjugate to $y_k$,
		\item $x_K$ is $\cG(K)$-conjugate to $y_K$.
	\end{enumerate}
	Does it follow that there is a finite extension $A \subset A'$ of discrete valuation rings such that $x_{A'}$ is $\cG(A')$-conjugate to $y_{A'}$?
\end{question}

We can answer Question~\ref{question:finiteness} positively when $x$ is either fiberwise regular, fiberwise semisimple, or fiberwise unipotent in good characteristic. The first two cases can be extracted from the results of \cite{Integral-Springer} and \cite{Cotner-flat} with some work; for the final case, see \cite[Theorem 5.11]{Cotner-flat}. Using the Jordan decomposition of \cite[Theorem 4.1]{Cotner-flat}, one can generalize the latter two results slightly. Note that the answer to Question~\ref{question:finiteness} is negative if one does not assume that $x$ and $y$ are pure; this can already be seen for the elements $x = \begin{pmatrix}
	1 &p \\ 0&1
\end{pmatrix}$ and $y = \begin{pmatrix}
	1 &p^2 \\ 0&1
\end{pmatrix}$ in $\GL_2(\bZ_p)$. Question~\ref{question:finiteness} can be generalized in an evident way to actions of $G$ on any smooth $A$-scheme, and we do not have any counterexamples in this generality either.

It seems plausible to the author that this strategy could be used to describe the $Z(\sD G)^0$-stabilizers of unipotent elements $g$ which come from the Bala--Carter theorem, by lifting $g$ to a section with semisimple generic fiber using an associated cocharacter. However, it is not clear to the author how to carry out the above steps even in this case, and in bad characteristic (the only interesting remaining case) not all unipotent elements arise in this way. Since the answer to Question~\ref{question:main} is not essential to the applications which follow, we will pursue this no further.

\section{Counterexamples}\label{section:counterexamples}

In this section we show that the hypotheses in \cite[Theorem 3.10]{Integral-Springer} and \cite[Theorem 5.1]{Integral-Springer} are necessary. In Section~\ref{subsection:regular-unipotent-centralizers}, we will show that \cite[Corollary 3.11]{Integral-Springer} fails for centralizers of regular unipotent elements over a field whenever $|\pi_1(\sD G)|$ is zero in the field. In Section~\ref{ss:nilp}, we will observe several pathologies with the nilpotent scheme in small characteristic. In Section~\ref{subsection:no-springer-homeomorphism}, we will show that there isn't even a $G$-equivariant finite $k$-morphism $\sN_G^{\rm{var}} \to \sU_G^{\rm{var}}$ when $|\pi_1(\sD G)|$ is zero in the field. Finally, in Section~\ref{subsection:no-springer-iso}, we will study the Picard groups of the unipotent and nilpotent varieties and show that they are different when $|\pi_1(\sD G)|$ is zero in the field (at least under the assumption that the latter is normal).

\subsection{Centralizers of regular unipotent elements over a field}\label{subsection:regular-unipotent-centralizers}

Our main aim in this section is to prove the following theorem, which will both help us to show that the hypotheses in \cite[Theorem 5.1]{Integral-Springer} are optimal and answer a question left open in \cite[Remark 2.7]{BRR}. We will only deal with usual (non-twisted) orbits and centralizers, since the twisted case requires considerable extra notation but is of dubious additional interest.

\begin{theorem}\label{theorem:regular-unipotent-centralizer}
Let $G$ be a connected reductive group over a field $k$ of characteristic $p \geq 0$ and let $u \in G(k)$ be a regular unipotent element contained in a (unique) Borel subgroup $B$ of $G$. The following are equivalent.
\begin{enumerate}
    \item\label{item:unipotent-centralizer-smooth} $Z_G(u)/Z(G)$ is smooth.
    \item\label{item:unipotent-centralizer-in-borel} $Z_G(u) \subset B$.
    \item\label{item:unipotent-centralizer-product} The multiplication morphism $Z(G) \times Z_U(u) \to Z_G(u)$ is an isomorphism, where $U$ is the unipotent radical of $B$.
    \item\label{item:unipotent-centralizer-fundamental-group} $p \nmid |\pi_1(\sD(G))|$.
\end{enumerate}
Consider the following condition:
\begin{enumerate}[resume]
	\item\label{item:unipotent-centralizer-commutative} $Z_G(u)$ is commutative.
\end{enumerate}
The conditions (\ref{item:unipotent-centralizer-smooth})-(\ref{item:unipotent-centralizer-fundamental-group}) imply (\ref{item:unipotent-centralizer-commutative}), and the converse holds if $p$ is good for $G$.
\end{theorem}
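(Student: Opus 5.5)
We may assume $k = \ov{k}$. We first reduce to $G = Z \times \sD G$ with $\sD G$ simply connected. Let $\pi = \pi_G\colon \widetilde{G} = Z \times G_{\rm{sc}} \to G$ be the natural central isogeny, and let $\widetilde{u} \in \widetilde{G}(k)$ be the unique unipotent lift of $u$. Then $\widetilde{u}$ is regular unipotent in the Borel $\widetilde{B} = \pi^{-1}(B)$, and $\widetilde{U} \to U$ is an isomorphism. Theorem~\ref{theorem:smoothness-of-twisted-centralizers} with $\sigma = 1$ shows that $Z_{\widetilde{G}}(\widetilde{u})/Z(\widetilde{G})$ is smooth. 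Lemma~\ref{lemma:twisted-centralizer-in-B} gives $Z_{\widetilde{G}}(\widetilde{u}) \subset \widetilde{B}$, and Lemma~\ref{lemma:reg-unip-centralizer} gives $Z_{\widetilde{B}}(\widetilde{u}) = Z(\widetilde{G}) \cdot Z_{\widetilde{U}}(\widetilde{u})$. Since $Z(\widetilde{G})$ is of multiplicative type and $\widetilde{U}$ is unipotent, this product is direct. Thus (1)--(4) and (5) all hold for $\widetilde{G}$.

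The key input for $G$ itself is Lemma~\ref{lemma:twisted-centralizer-isogeny} with $\sigma = 1$. It yields
\[
Z_G(u)/\pi(Z_{\widetilde{G}}(\widetilde{u}))\cdot Z(G) \cong \Stab_{\ker \pi}(C_{\widetilde{u}}).
\]
Since $\widetilde{u}$ is unipotent, only the infinitesimal part can stabilize $C_{\widetilde{u}}$. By Lemma~\ref{lemma:stability-equivalences} and Lemma~\ref{lemma:Springer-iso-6.4} (or Theorem~\ref{theorem:stabilizer-of-Steinberg} with $t = 1$, so that $\sD Z_{\widetilde{G}}(1) = \widetilde{G}$), the stabilizer is exactly $(\ker\pi)^0$. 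This group is nontrivial iff $p \mid |\pi_1(\sD G)|$.

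Next we prove (4) $\Rightarrow$ (3), (2), (1), (5). If $p \nmid |\pi_1(\sD G)|$, then $Z_G(u) = \pi(Z_{\widetilde{G}}(\widetilde{u})) \cdot Z(G) = Z(G)\cdot Z_U(u)$. This lies in $B$, and the product is direct since $Z(G) \cap U = 1$. Smoothness of $Z_G(u)/Z(G) \cong Z_U(u) \cong Z_{\widetilde{U}}(\widetilde{u})$ follows from Lemma~\ref{lemma:reg-unip-centralizer}. Commutativity follows because $Z_U(u)$ is commutative, being the image of the commutative $Z_{\widetilde{G}}(\widetilde{u})$ (Lemma~\ref{lemma:flatness-of-fixed-point-scheme}).

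Conversely, suppose $p \mid |\pi_1(\sD G)|$. Then $Q := Z_G(u)/(Z(G)\cdot Z_U(u))$ is the nontrivial infinitesimal group $(\ker\pi)^0$. This kills (3) immediately, since the multiplication map has image $Z(G)Z_U(u) \neq Z_G(u)$. For (1), we have $\dim Z_G(u)/Z(G) = \dim Z_U(u)$, and $Z_U(u)$ is smooth. The quotient $Z_G(u)/Z(G)$ has reduced subgroup containing the image of $Z_U(u)$ of the same dimension and the same (connected) identity component. Hence it is non-reduced, because it strictly contains $Z_U(u)$ with infinitesimal quotient $Q$. For (2), suppose $Z_G(u) \subset B$. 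Then the argument of Lemma~\ref{lemma:reg-unip-centralizer} applies verbatim in $B$: writing $b = sv$, the computation modulo $U'$ forces $\alpha(s) = 1$ for all simple $\alpha$, so $s \in Z(G)$ and $Z_G(u) = Z(G)\cdot Z_U(u)$, a contradiction. That computation is scheme-theoretic and does not use simple connectivity.

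The main obstacle is the converse (5) $\Rightarrow$ (4) in good characteristic. Here we must show that $Z_G(u)$ is noncommutative when $Q \neq 1$. The plan is to identify the extension concretely. The map $Z_G(u) \to Q \subset \ker\pi$ sends $h \mapsto huh^{-1}u^{-1}$, and a point $h$ with $huh^{-1} = zu$ for $z \neq 1$ should fail to commute with $Z_U(u)$. In good characteristic, $Z_U(u)$ is the connected commutative group described via a Springer-type parametrization, or via the Lie algebra centralizer of a regular nilpotent $e$. There one can write $Z_U(u) = \{\exp$-like polynomial expressions in $u\}$, in particular containing $u$-powers and elements $x$ whose weights under the principal cocharacter are the exponents. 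We would use $h$ acting on $Z_U(u)$ by conjugation. The conjugate $huh^{-1} = zu$ already shows that $h$ moves elements of $U$ nontrivially through $Z(G)$-twists. We would then show that $h$ does not centralize a suitable element $y \in Z_U(u)(R)$, over an artinian base $R$ realizing $z$. The natural candidate is to pass to $\Lie$: $\Ad(h)$ acts on $\Lie Z_G(u)$, and in good characteristic $\Lie Z_G(u) = \fz_\fg(e)$-type computations should produce an element moved by $h$. Alternatively, reduce to the rank one (or $\PGL_p$) subcase by passing to a Levi $L$ with $p \mid |\pi_1(\sD L)|$, where an explicit matrix computation as in Example~\ref{example:pgl2-centralizer} exhibits non-commuting points. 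This reduction to a minimal Levi, together with the explicit check, is what I would attempt first.
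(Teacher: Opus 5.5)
\textbf{Verdict.} The equivalence of (1)--(4) and the implication to (5) are essentially the paper's argument, but (5) $\Rightarrow$ (4) in good characteristic is not proved.

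\textbf{What works.} Your argument for the equivalence of (1)--(4), and for (1)--(4) $\Rightarrow$ (5), is fine. Computing $Q=Z_G(u)/(Z(G)\cdot Z_U(u))$ via Lemma~\ref{lemma:twisted-centralizer-isogeny} and Lemma~\ref{lemma:Springer-iso-6.4} is the content of the paper's Lemma~\ref{lemma:centralizer-kernel-smooth}.

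\textbf{A minor slip.} For $\pi\colon Z\times G_{\mathrm{sc}}\to G$, the stabilizer of $C_{\widetilde u}$ in $\ker\pi$ is $\ker(G_{\mathrm{sc}}\to\sD G)^0$, not $(\ker\pi)^0$. For example, for $G=\GL_p$ one has $(\ker\pi)^0\cong\mu_p$ although $\pi_1(\sD G)=1$. Reducing to semisimple $G$ first, as the paper does, avoids this.

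\textbf{The gap.} For (5) $\Rightarrow$ (4) in good characteristic you give only a list of strategies; none is carried out, and good characteristic is never actually used. The strategies also fail as stated.
\begin{itemize}
\item A point $h\in Z_G(u)(R)$ whose lift satisfies $\widetilde h\widetilde u\widetilde h^{-1}=z\widetilde u$ does commute with $u$, and with all its powers, in $G$. So you would have to exhibit some other element of $Z_U(u)$ that $h$ fails to commute with, uniformly in $G$, and you produce none.
\item The Levi reduction cannot work. A regular unipotent element is distinguished, so it lies in no proper Levi, and nothing transfers (non)commutativity of $Z_L(u_L)$ to $Z_G(u)$.
\item For $\PGL_p$, no proper Levi $L$ has $p\mid|\pi_1(\sD L)|$, so "reduce to rank one" is impossible for $p>2$.
\end{itemize}

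\textbf{The missing idea.} Transfer commutativity \emph{upstairs} instead of hunting for noncommuting points downstairs. Take $G$ semisimple with universal cover $\pi$, and suppose $Z_G(u)$ is commutative.
\begin{itemize}
\item By \cite[Expos\'e XVII, Th\'eor\`eme 7.2.1]{SGA3II}, $Z_G(u)\cong M_0\times Z_U(u)$ with $M_0$ of multiplicative type.
\item The preimage $M_1$ of $M_0/Z(G)$ in $\pi^{-1}(Z_G(u))$ is a normal multiplicative-type subgroup, and $\pi^{-1}(Z_G(u))\cong M_1\rtimes Z_U(u)$.
\item This is where good characteristic enters: $Z_U(u)$ is connected \cite[Theorem 4.11]{Springer}. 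Since the automorphism scheme of $M_1$ is \'etale, the action is trivial, so $\pi^{-1}(Z_G(u))$ is commutative.
\item This group contains $\widetilde u$, so the homomorphism $\nu(x)=x\widetilde u x^{-1}\widetilde u^{-1}$ is trivial.
\item Lemma~\ref{lemma:centralizer-kernel-smooth} says $\nu$ is faithfully flat onto $(\ker\pi)^0$. Hence $(\ker\pi)^0=1$, i.e.\ $p\nmid|\pi_1(G)|$.
\end{itemize}
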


\begin{remark}\label{remark:not-always-commutative}
It is stated (and proved) in various places in the group theory literature (see \cite{Springer-note} for good characteristic and \cite{Lou} or \cite[III, Corollary 1.16]{Springer-Steinberg} in general) that $Z_G(u)$ is always commutative whenever $u$ is a regular unipotent element of $G(k)$. This may make the statement of Theorem~\ref{theorem:regular-unipotent-centralizer} appear rather curious. Of course, in the classical references given above, the centralizer $Z_G(u)$ is identified with its set of $k$-points (if $k$ is algebraically closed); equivalently, the statement is that $Z_G(u)_{\rm{red}}$ is commutative. In Lemma~\ref{lemma:centralizer-kernel-smooth}, we will see that if $G$ is semisimple and $\pi_G: \widetilde{G} \to G$ is the universal cover then there is a short exact sequence
\[
1 \to Z_{\widetilde{G}}(\widetilde{u}) \to \pi_G^{-1}(Z_G(u)) \to (\ker \pi_G)^0 \to 1
\]
where $Z_{\widetilde{G}}(\widetilde{u})$ is commutative by \cite[Corollary 3.11]{Integral-Springer}. This gives rise to a short exact sequence
\[
1 \to Z_{\widetilde{G}}(\widetilde{u})/\ker \pi_G \to Z_G(u) \to (\ker \pi_G)^0 \to 1.
\]
In this sense, the only source of non-commutativity is infinitesimal, conforming to the known commutativity results. See Example~\ref{example:pgl2} for an explicit example. Moreover, by \cite[Lemma 3.4]{Integral-Springer}, $Z_U(u)$ is smooth and thus always commutative.
\end{remark}

\begin{proof}[Proof of Theorem~\ref{theorem:regular-unipotent-centralizer}]
First, we note that (\ref{item:unipotent-centralizer-fundamental-group}) $\Rightarrow$ (\ref{item:unipotent-centralizer-smooth}) follows from \cite[Lemma 3.4]{Integral-Springer}. For the remainder, we may and do assume that $k$ is algebraically closed. 

Next we prove (\ref{item:unipotent-centralizer-smooth}) $\Rightarrow$ (\ref{item:unipotent-centralizer-in-borel}). Since $B$ is the unique Borel $k$-subgroup of $G$ containing $u$ and $N_G(B) = B$, it follows that $Z_G(u)(k) = Z_B(u)(k)$. To prove that $Z_G(u)$ is contained in $B$, it is enough to show that the natural homomorphism $Z(G) \times Z_U(u) \to Z_G(u)$ is an isomorphism, and since both sides contain $Z(G)$ the five lemma shows it is enough to check this after modding out by $Z(G)$. But the map $Z_U(u) \to Z_G(u)/Z(G)$ is bijective on geometric points and has trivial kernel, so by smoothness of $Z_G(u)/Z(G)$ it must be an isomorphism.

The implication (\ref{item:unipotent-centralizer-in-borel}) $\Rightarrow$ (\ref{item:unipotent-centralizer-product}) follows from the first statement of \cite[Lemma 3.4]{Integral-Springer}. 

To prove (\ref{item:unipotent-centralizer-product}) $\Rightarrow$ (\ref{item:unipotent-centralizer-commutative}), it suffices to show that $Z_U(u)$ is commutative. For this, we may assume that $G$ is semisimple. If $G$ is simply connected, then this follows from \cite[Corollary 3.11]{Integral-Springer}. In general, if $f: G' \to G$ is a central isogeny of connected semisimple groups, $U'$ is the unipotent radical of $f^{-1}(B)$, and $u'$ is a unipotent element of $U'(k)$ lifting $u$, then the induced map $Z_{U'}(u') \to Z_U(u)$ is an isomorphism. Thus commutativity of $Z_U(u)$ is reduced to the settled simply connected case. 

It remains to show that (\ref{item:unipotent-centralizer-product}) $\Rightarrow$ (\ref{item:unipotent-centralizer-fundamental-group}) and that (\ref{item:unipotent-centralizer-commutative}) $\Rightarrow$ (\ref{item:unipotent-centralizer-fundamental-group}) when $\chara k$ is good for $G$.

\begin{lemma}\label{lemma:centralizer-kernel-smooth}
Suppose $G$ is semisimple and let $f: G' \to G$ be a central isogeny. Let $u' \in G'(k)$ be regular unipotent with image $u = f(u')$, and let $\nu\colon f^{-1}(Z_G(u)) \to \ker f$ be the $k$-homomorphism $g \mapsto gu'g^{-1}u'^{-1}$. There is a short exact sequence
\[
1 \to Z_{G'}(u') \to f^{-1}(Z_G(u)) \xrightarrow[]{\nu} (\ker f)^0 \to 1
\]
\end{lemma}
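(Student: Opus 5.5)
Write $K = \ker f$ and $H = f^{-1}(Z_G(u))$. First, $\nu$ is a well-defined homomorphism: if $g \in H(R)$ then $f(gu'g^{-1}u'^{-1}) = 1$, so $\nu(g) \in K(R)$. Since $K$ is central, $\nu(gh) = g\,\nu(h)u'\,g^{-1}u'^{-1} = \nu(h)\nu(g)$, and because $K$ is commutative this makes $\nu$ a homomorphism. Its kernel is $Z_{G'}(u')$ by definition. So the content of the lemma is that the image of $\nu$ is exactly $K^0$; note that $\nu$ is then automatically faithfully flat onto its image, since $\nu$ factors through the fppf quotient $H/Z_{G'}(u')$, which is a monomorphism into $K$ and hence a closed subgroup scheme.

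The image of $\nu$ is precisely $\Stab_K(C_{u'})$. Indeed, $z \in K(R)$ lies in the image fppf-locally iff $zu' = gu'g^{-1}$ for some $g$, i.e.\ iff $z C_{u'} = C_{u'}$. Here we use that $C_{u'}$ is the fppf image of the orbit map, which is smooth since $Z_{G'}(u')$ is flat; this is the same standard argument as the equivalence of (1) and (2) in the introduction. So it suffices to show $\Stab_K(C_{u'}) = K^0$.

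We may pass to the universal cover. Let $\pi\colon \widetilde{G} \to G'$ be the universal cover, so $f\pi$ is the universal cover of $G$, and $\ker \pi \subset \ker f\pi$. The stabilizer of $C_{\widetilde{u}}$ in $\ker(f\pi)$ maps onto the stabilizer of $C_{u'}$ in $K$. This follows from the torsor statement of Lemma~\ref{lemma:stability-equivalences} applied to regular conjugacy-class closures: $C_{\widetilde u} \to C_{u'}$ is surjective, and $z$ stabilizes $C_{u'}$ iff any lift stabilizes $C_{\widetilde u}\cdot \ker\pi$.

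For simply connected $\widetilde{G}$ and regular unipotent $\widetilde u$, Theorem~\ref{theorem:stabilizer-of-Steinberg} with $t=1$ gives $\Stab_{Z(\widetilde G)^0}(\chi^{-1}(\chi(1))) = Z(\widetilde G)^0$. By Lemma~\ref{lemma:stability-equivalences}, the regular conjugacy class $C_{\widetilde u}$ is then $Z(\widetilde G)^0$-stable. Conversely, since $\widetilde u$ is unipotent and every element of $Z(\widetilde G)(k)$ is semisimple, uniqueness of the Jordan decomposition forces the stabilizer to have trivial $k$-points; hence it is contained in $Z(\widetilde G)^0$. Therefore $\Stab_{Z(\widetilde G)}(C_{\widetilde u}) = Z(\widetilde G)^0 = (\ker f\pi)^0$, and its image in $K = \ker(f\pi)/\ker\pi$ is the image of $(\ker f\pi)^0$. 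That image equals $K^0$, because a surjection of finite group schemes maps identity component onto identity component (the quotient of a connected scheme is connected, and the étale parts match).

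The main obstacle I expect is the passage from $\widetilde G$ to $G'$ when $\ker\pi$ is non-reduced. There one must check that a point of $K$ stabilizing $C_{u'}$ lifts, fppf-locally, to a point stabilizing $C_{\widetilde u}$, and not merely $C_{\widetilde u}\ker\pi$. If this step causes difficulty, I would avoid it by arguing directly on $G'$: the class $z$ is in the image exactly when $z\,\ov{C}_{u'} = \ov{C}_{u'}$. I would then use the fact that $\ov{C}_{u'}$ is the image of $\ov{C}_{\widetilde u}$, which is $(\ker f\pi)^0$-stable, to conclude that $\ov{C}_{u'}$ is $K^0$-stable. Finally, I would restrict to the open regular locus, which is the single orbit $C_{u'}$.
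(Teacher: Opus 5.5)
Your proposal is correct and follows essentially the paper's route: the paper's one-line proof identifies $f^{-1}(Z_G(u))/Z_{G'}(u')$ with $\Stab_{\ker f}(C_{u'})$ via Lemma~\ref{lemma:twisted-centralizer-isogeny} and computes that stabilizer via Lemma~\ref{lemma:Springer-iso-6.4}; your use of Theorem~\ref{theorem:stabilizer-of-Steinberg} at $t=1$ is the same input, and your passage through the universal cover supplies the step for non-simply-connected $G'$ that the paper leaves implicit. Two small fixes: the orbit map $G' \to C_{u'}$ is a $Z_{G'}(u')$-torsor, hence faithfully flat but in general not smooth (that non-smoothness is the whole point here), and faithful flatness is all you need; and the obstacle you flag does not arise, since you only need $\pi\bigl((\ker f\pi)^0\bigr) = (\ker f)^0$ to stabilize $C_{u'} = \pi(C_{\widetilde{u}})$, while $\Stab_{\ker f}(C_{u'}) \subset (\ker f)^0$ follows from your Jordan-decomposition argument applied directly on $G'$.
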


\begin{proof}
This is a direct consequence of Lemma~\ref{lemma:twisted-centralizer-isogeny} and Lemma~\ref{lemma:Springer-iso-6.4}.
\end{proof}

For the remainder, we may and do assume that $G$ is semisimple. If (\ref{item:unipotent-centralizer-product}) holds, then $\pi_G^{-1}(Z_G(u)) = Z_{\widetilde{G}}(\widetilde{u})$, where $\widetilde{u}$ is the unique unipotent lift of $u$ to $\widetilde{G}(k)$, and Lemma~\ref{lemma:centralizer-kernel-smooth} implies $(\ker \pi_G)^0 = 1$, i.e., $p \nmid |\pi_1(G)|$, proving (\ref{item:unipotent-centralizer-fundamental-group}).

For (\ref{item:unipotent-centralizer-commutative}) $\Rightarrow$ (\ref{item:unipotent-centralizer-fundamental-group}), assume that $p$ is good for $G$. We will show first (under the assumption that $Z_G(u)$ is commutative) that if $\pi_G: \widetilde{G} \to G$ is the universal cover, then $\pi_G^{-1}(Z_G(u))$ is commutative. 

Let $f: G' \to G$ be any central isogeny. By \cite[Expos\'e XVII, Th\'eor\`eme 7.2.1]{SGA3II}, since $Z_G(u)$ is commutative there is an isomorphism $Z_G(u) \cong M_0 \times Z_U(u)$ for some multiplicative type subgroup $M_0$ of $Z_G(u)$. If $M_0' = M_0/Z(G)$, then $M_0'$ is finite and connected: to see this, note that the inclusion $Z(G) \times Z_U(u) \to M_0 \times Z_U(u)$ is a nilpotent thickening because $Z_G(u)_{\rm{red}} = Z_B(u)_{\rm{red}}$ and $Z_B(u) = Z(G) \times Z_U(u)$ by \cite[Lemma 3.4]{Integral-Springer}. There is a short exact sequence
\[
1 \to Z(G') \to f^{-1}(Z_G(u)) \to M_0' \times Z_U(u) \to 1.
\]
By \cite[Expos\'e XVII, Proposition 7.1.1]{SGA3II}, the preimage $M_1$ of $M_0'$ in $f^{-1}(Z_G(u))$ is a normal subgroup of multiplicative type, and in particular it is commutative. Moreover, \cite[Expos\'e XVII, Th\'eor\`eme 6.1.1 A) iv)]{SGA3II} shows
\[
f^{-1}(Z_G(u)) \cong M_1 \rtimes Z_U(u).
\]
Since $p$ is good for $G$, the group $Z_U(u)$ is connected by \cite[Theorem 4.11]{Springer}. Since the Aut scheme of $M_1$ is \'etale, it follows that $f^{-1}(Z_G(u)) \cong M_1 \times Z_U(u)$ is commutative.

Now Lemma~\ref{lemma:centralizer-kernel-smooth} implies that the map $\nu\colon \pi_G^{-1}(Z_G(u)) \to \ker \pi_G$, $\nu(x) = x\widetilde{u}x^{-1}\widetilde{u}^{-1}$ is a flat homomorphism. Since $\pi_G^{-1}(Z_G(u))$ is commutative and contains $\widetilde{u}$, this map is trivial. Thus flatness implies $\ker \pi_G$ is \'etale, i.e., $p \nmid |\pi_1(G)|$.
\end{proof}

\begin{cor}\label{cor:non-flat-unip}
Let $A$ be a DVR of mixed characteristic $(0, p)$, and let $G$ be a reductive $A$-group scheme such that $p$ is good for $G$ and divides $|\pi_1(\sD(G))|$. If $u \in G(A)$ is fiberwise regular unipotent, then $Z_G(u)$ is not flat.
\end{cor}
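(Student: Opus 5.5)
Suppose for contradiction that $Z_G(u)$ is $A$-flat. The plan is to compare the generic and special fibers and reach a contradiction with Theorem~\ref{theorem:regular-unipotent-centralizer}.

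First, work on the generic fiber. Since $K$ has characteristic $0$, $Z_{G_K}(u_K)$ is smooth. By Theorem~\ref{theorem:regular-unipotent-centralizer} applied over $K$ (where $0 \nmid |\pi_1|$), it is commutative. If $Z_G(u)$ were flat, it would be the schematic closure of its generic fiber. The commutator morphism $Z_G(u) \times Z_G(u) \to Z_G(u)$ would then agree with the constant map $1$ on the schematically dense generic fiber of the flat scheme $Z_G(u)\times_A Z_G(u)$, hence everywhere. So the special fiber $Z_{G_k}(u_k)$ would be commutative.

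Next, pass to the special fiber. Here $u_k$ is regular unipotent in $G_k$, and $p$ is good for $G_k$. By the converse direction of Theorem~\ref{theorem:regular-unipotent-centralizer}, which is valid in good characteristic, commutativity of $Z_{G_k}(u_k)$ forces $p \nmid |\pi_1(\sD G_k)|$. This contradicts the hypothesis, since $|\pi_1(\sD G)|$ is constant on fibers for the (split-after-étale-extension) reductive $A$-group $G$. We may need to pass to an étale or finite extension of $A$ to make $G$ split and $k$ algebraically closed; flatness and the fundamental group are insensitive to this.

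The step requiring care is the density argument. Because $Z_G(u)$ is a flat finitely presented $A$-group, its fiber square over $A$ is $A$-flat, so the generic fiber is schematically dense in it. The closed subscheme where the commutator equals $1$ contains the generic fiber, so it is everything. As an alternative route that avoids commutativity, one could compare dimensions or degrees: flatness would make $Z_{G_k}(u_k)/Z(G_k)$ have the same order data as the smooth generic fiber modulo center, pushing toward condition (\ref{item:unipotent-centralizer-smooth}). The commutativity route seems cleanest, though, and it uses exactly the good-characteristic hypothesis in the statement.
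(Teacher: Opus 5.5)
Your proposal is correct and is essentially the paper's proof: the generic fiber $Z_{G_K}(u_K)$ is commutative, and the good-characteristic converse of Theorem~\ref{theorem:regular-unipotent-centralizer} makes the special fiber non-commutative. For the generic fiber the paper cites \cite[Corollary 3.11]{Integral-Springer} directly, which also underlies the implication of Theorem~\ref{theorem:regular-unipotent-centralizer} you invoke in characteristic $0$, and your schematic-density argument for the commutator morphism spells out the paper's closing remark that this ``cannot be the case for a flat group scheme.''
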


\begin{proof}
By \cite[Corollary 3.11]{Integral-Springer}, $Z_G(u)_\eta$ is commutative; by Theorem~\ref{theorem:regular-unipotent-centralizer}, $Z_G(u)_s$ is not commutative. This cannot be the case for a flat group scheme.
\end{proof}

\subsection{Pathologies with the nilpotent variety}\label{ss:nilp}

We begin with the following weak analogue of Theorem~\ref{theorem:stabilizer-of-Steinberg} in type $\rm{A}$.

\begin{lemma}\label{lemma:nilpotent-variety-stable}
Let $G = \prod_{i=1}^m\SL_{n_i}$ over a field $k$ of characteristic $p > 0$. If $p^{e_i}$ is the largest power of $p$ dividing $n_i$, then $\sN_G$ is stable under addition by the scheme $\alpha\coloneqq \prod_{i=1}^m\ker [p^{n_i}]_{\fz_i}$, where $\fz_i$ is the center of $\mathfrak{sl}_{n_i}$ and $[p]: \mathfrak{sl}_{n_i} \to \mathfrak{sl}_{n_i}$ is the $p$-operation. If $\mu \subset Z(G)$ is of order divisible by $p$ and $\pi\colon G \to G' = G/\mu$ is the quotient map, then the natural map $\sN_G \to \sN_{G'}^{\rm{var}}$ is a torsor for $\Lie \mu \cap \alpha$. Furthermore, if $X$ is a regular nilpotent element of $\Lie G$, then there is a short exact sequence
\[
1 \to \pi(Z_G(X)) \to Z_G(\pi(X)) \to \Lie \mu \cap \alpha \to 1,
\]
where the second nontrivial map is given by $g \mapsto \Ad_g(X) - X$.
\end{lemma}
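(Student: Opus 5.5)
The plan is to reduce everything to characteristic polynomials, since for $\mathfrak{sl}_n$ the nilpotent scheme $\sN_{\SL_n} = \eta^{-1}(\eta(0))$ is cut out by the non-leading coefficients of $\det(t - X)$. This uses that the invariants of $\SL_n$ on $\mathfrak{sl}_n$ are generated by these coefficients in every characteristic. Everything is compatible with products, so I treat a single factor $G = \SL_n$. Write $n = p^e m$ with $p \nmid m$. I read the scheme in the statement as $\alpha = \ker[p^{e}]_{\fz}$, i.e.\ the exponent should be $e_i$ rather than $n_i$. If $e \geq 1$, then $\fz = k\cdot I$ and $[p](aI) = a^pI$, so $\alpha \cong \{a : a^{p^e} = 0\}$. (If $e = 0$, then $\fz = 0$ and there is nothing to prove.)

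\textbf{Step 1: stabilizer of $\sN_G$ in $\fz$.} Fix a $k$-algebra $R$, a point $X \in \sN_G(R)$, and $a \in R$. The trace of $X + aI$ is $na = 0$, so $X + aI \in \mathfrak{sl}_n(R)$. Its characteristic polynomial is
\[
\det(t - X - aI) = (t-a)^n = (t^{p^e} - a^{p^e})^m .
\]
This equals $t^n$ when $a^{p^e}=0$. Conversely, if it equals $t^n$, then looking at the coefficient $-m a^{p^e}$ of $t^{n-p^e}$ forces $a^{p^e}=0$, because $m$ is a unit. Hence $\Stab_{\fz}(\sN_G) = \alpha$ scheme-theoretically, and in particular $\sN_G$ is stable under $\alpha$-translation.

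\textbf{Step 2: the torsor.} Let $\pi \colon G \to G' = G/\mu$. Then $\ker(d\pi) = \Lie\mu \subset \fz$, and $d\pi$ maps $\sN_G$ into $\sN_{G'}^{\rm{var}}$. Every nilpotent element of $\Lie G'$ is conjugate into $\Lie U'$, and $\Lie U \to \Lie U'$ is an isomorphism. So the map $\sN_G \to \sN_{G'}^{\rm{var}}$ is surjective on points. Its scheme-theoretic fibre through $X$ is $(X + \Lie\mu) \cap \sN_G$, which by Step 1 equals $X + (\Lie\mu \cap \alpha)$. The translation action of $\alpha' \coloneqq \Lie\mu\cap\alpha$ on $\sN_G$ is free. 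Since $\sN_G$ is reduced and normal for $\mathfrak{sl}_n$, the quotient $\sN_G \to \sN_G/\alpha'$ is an $\alpha'$-torsor with normal target. It remains to see that the induced finite, bijective map $\sN_G/\alpha' \to \sN_{G'}^{\rm{var}}$ is an isomorphism.

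This last point is the step I expect to be the main obstacle. I would attack it by working over the regular nilpotent orbit, which is open and dense in both varieties. There I would compare degrees using Step 1, which shows that the geometric fibres of $\sN_G \to \sN_{G'}^{\rm{var}}$ are exactly $\alpha'$-orbits, scheme-theoretically. Birationality would follow, and Zariski's main theorem together with normality should then finish. An alternative is to argue as in the final claim of Lemma~\ref{lemma:stability-equivalences}: show that $\alpha' \times \sN_G \to \sN_G \times_{\sN_{G'}^{\rm{var}}} \sN_G$ is an isomorphism by the fibral isomorphism criterion.

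\textbf{Step 3: the exact sequence.} I would imitate Lemma~\ref{lemma:twisted-centralizer-isogeny}. For $g \in Z_{G'}(\pi(X))$, lift $g$ fppf-locally to $\tilde g \in G$. Then $\Ad_{\tilde g}X - X$ lies in $\ker d\pi = \Lie\mu$, and it does not depend on the lift because $\mu$ is central. Since $\Ad_{\tilde g}X$ is nilpotent, Step 1 puts this element in $\alpha'$. This gives the homomorphism $Z_{G'}(\pi(X)) \to \alpha'$. Its kernel consists of the $g$ whose lifts centralize $X$, which is $\pi(Z_G(X))$.

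For surjectivity, let $a \in \alpha'(R)$. Translation by a scalar preserves centralizers, so $X + aI$ is again regular nilpotent. The orbit map $G \to G\cdot X$ onto the open regular orbit is smooth and surjective, so fppf-locally there is $h \in G$ with $\Ad_h X = X + aI$, and $\pi(h)$ is the required preimage of $a$.
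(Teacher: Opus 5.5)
Your route is the paper's: the identity $(t-a)^n=(t^{p^e}-a^{p^e})^m$ (with $\alpha=\prod_i\ker[p^{e_i}]_{\fz_i}$, which is the reading the proof actually uses), then surjectivity plus the scheme-theoretic fibre computation for the torsor, then $g\mapsto \Ad_g(X)-X$ for the exact sequence. The paper's Cartesian square over $\alpha'=\Lie\mu\cap\alpha\to\sN_G^{\mathrm{reg}}-X$ is exactly your fppf-lifting argument.

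However, your primary plan for the step you flag does not work. Zariski's main theorem together with normality of $\sN_G/\alpha'$ does not give an isomorphism. A finite birational bijection out of a normal scheme is an isomorphism only if the \emph{target} is normal; the normalization of a cuspidal cubic is a counterexample otherwise. Normality of $\sN_{G'}^{\mathrm{var}}$ is not known in advance; if anything it is a consequence of the torsor statement, by descent from $\sN_G$.

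What closes the step is your Step 1 computation applied at \emph{every} geometric point $\bar y$ of $\sN_{G'}^{\mathrm{var}}$, not only over the regular orbit. Choose $x$ above $\bar y$; then the fibre of $f\colon \sN_G\to\sN_{G'}^{\mathrm{var}}$ over $\bar y$ is $x+\alpha'$, of degree $|\alpha'|$. The map $f$ is finite, being a homogeneous map of cones with $f^{-1}(0)=\alpha'$ finite, and $\sN_{G'}^{\mathrm{var}}$ is reduced. So constant fibre degree makes $f$ flat, by the flatness lemma at the end of the commutative algebra part of Section~\ref{section:prelim}. Your fibral-isomorphism argument as in Lemma~\ref{lemma:stability-equivalences} gives only $\alpha'\times\sN_G\cong\sN_G\times_{\sN_{G'}^{\mathrm{var}}}\sN_G$; combined with this flatness it yields the torsor. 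That is what the paper's one line ``this calculation shows that it is an $\alpha$-torsor'' amounts to.

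Equivalently, $\sN_G/\alpha'\to\sN_{G'}^{\mathrm{var}}$ has every geometric fibre a single reduced point. It is therefore a finite monomorphism, hence a closed immersion, hence an isomorphism, since it is surjective onto a reduced scheme.

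There is also a smaller slip in Step 3. The orbit map $G\to\sN_G^{\mathrm{reg}}$ is not smooth when $p\mid n$, because $Z_{\SL_n}(X)\cong\mu_n\times(\text{smooth unipotent group})$ is non-reduced; this is precisely the phenomenon the lemma is about. It is nonetheless an fppf $Z_G(X)$-torsor onto $\sN_G^{\mathrm{reg}}$, and faithful flatness is all your lifting argument needs.
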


\begin{proof}
For the first two claims, one immediately reduces to the case $G = \SL_n$. By \cite[Lemma 4.11(3)]{Integral-Springer}, $\sN_G$ is cut out of $\mathfrak{sl}_n$ by the coefficients of the characteristic polynomial. Explicitly, if $X = (x_{ij})$ is an $n \times n$ matrix over some ring $R$ then $X$ lies in $\sN_G(R)$ if and only if $\det(tI - X) = t^n$, where $t$ is a formal variable. If $X \in \sN_G(R)$ and $a \in \fz(R)$, then we have
\[
\det(tI - (X + aI)) = \det((t-a)I - X) = (t-a)^n = (t^{p^e} - a^{p^e})^m
\]
if $n = p^e m$ with $p \nmid m$. This is equal to $t^n$ if and only if $a^{p^e} = 0$, i.e., $a \in \alpha(R)$. By \cite[Lemma 4.11(2)]{Integral-Springer}, $\sN_G \to \sN_{G'}^{\rm{var}}$ is surjective, so this calculation shows that it is an $\alpha$-torsor, as desired.

For the final claim, it is easy to check that $g \mapsto \Ad_g(X) - X$ is a homomorphism $Z_{G/\mu}(X) \to \alpha$ with kernel $\pi(Z_G(X))$. Furthermore, the morphism $G/\mu \to \fg$ with the same definition factors through a $Z_G(X)/\mu$-torsor $G/\mu \to \sN_G^{\rm{reg}} - X \subset \fg$. Since this map is surjective, there is a Cartesian diagram
\[
\begin{tikzcd}
    Z_{G/\mu}(X) \arrow[r] \arrow[d]
        &G/\mu \arrow[d] \\
    \alpha \arrow[r]
        &\sN_G^{\rm{reg}} - X
\end{tikzcd}
\]
which shows that the left vertical map is faithfully flat. This implies the lemma.
\end{proof}

\begin{lemma}\label{lemma:nilp-var-emb-dim}
    Let $k$ be a field of characteristic $p > 0$, and let $G$ be a connected semisimple $k$-group such that $p$ divides $|\pi_1(G)|$. Let $\chi\colon \fg \to \fg/\!/G$ be the natural quotient map. Then $\chi^{-1}(\chi(0))$ is a closed subscheme of an affine space of dimension $< \dim G$.
\end{lemma}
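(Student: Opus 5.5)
The plan is to show that $\chi^{-1}(\chi(0))$ is cut out of $\fg$ by equations whose linear parts vanish on a nonzero linear subspace of $\fg$ that is forced to appear when $p \mid |\pi_1(G)|$. More precisely, I will exhibit a nonzero linear form $\lambda$ on $\fg$ such that $\lambda$ (or a nonzero element of the linear span of the ideal modulo degree $\geq 2$) lies in the ideal of $\chi^{-1}(\chi(0))$. Then $\chi^{-1}(\chi(0))$ lies in the hyperplane $\{\lambda = 0\} \cong \bA^{\dim G - 1}$, which gives the claim. We may assume $k = \ov{k}$, since the property descends along field extension: a linear form over $\ov{k}$ in the ideal gives a linear subspace of $\fg^*$ defined over $k$ by Galois/linear-algebra descent of the degree-one part of the ideal.

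The key point is the following. Let $\pi\colon \widetilde{G} \to G$ be the universal cover. Then $d\pi\colon \widetilde{\fg} \to \fg$ has nonzero kernel $\Lie(\ker \pi) \subset \widetilde{\fz}$, since $\ker \pi$ is non-étale. Hence $d\pi$ is not surjective, and its image $\fg' := d\pi(\widetilde{\fg})$ is a proper $G$-stable subspace of $\fg$ (an ideal). I claim $\chi^{-1}(\chi(0)) \subset \fg'$ scheme-theoretically. For this, take a nonzero linear form $\lambda$ on $\fg$ vanishing on $\fg'$. Since $\fg'$ is $G$-stable, the line it spans in $(\fg/\fg')^*$ lies in a $G$-stable subspace, on which the semisimple connected group $G$ must act trivially: a finite-dimensional representation of $G$ factors through $\fg/\fg'$, and the adjoint action on $\fg/\fg'$ is trivial because $G$ is generated by images of root groups, whose Lie algebras lie in $\fg'$, and $\Ad(x_\alpha(t))Y - Y \in \fg'$. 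Therefore $\lambda$ is a $G$-invariant function, so $\lambda \in k[\fg]^G$ with $\lambda(0) = 0$. It follows that $\lambda$ lies in the ideal generated by $\fm_{\chi(0)}$, i.e.\ in the ideal of $\chi^{-1}(\chi(0))$.

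The main obstacle is justifying the triviality of the $G$-action on $\fg/\fg'$. The cleanest route is to note that $\Ad_G$ factors as $\Ad_{\widetilde G}$ through $\pi$, and compute for a root group: $\Ad(x_\alpha(t))Y - Y \in \sum_{i \geq 1} t^i \fg_{i\alpha+\cdots}$, landing in root spaces, and root spaces of $\fg$ are images of those of $\widetilde{\fg}$. Together with $G$ being generated by $T$ and root groups, with $T$ acting trivially on $\fg/\fg'$ because $\fg' \supset$ all root spaces and $\Lie T$ is $T$-fixed, this shows the action is trivial. If $p = 2$ the root-group expansion has subtleties in doubly-laced types (non-reduced commutators), so in that case I would instead argue by representation theory on the dual. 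Alternatively, I would use the explicit description of $\fg$ as $\Lie T \oplus \bigoplus \fg_\alpha$ together with $d\pi$ being an isomorphism on root spaces. This shows $\fg/\fg' \cong \Lie T/d\pi(\Lie \widetilde T)$, and $N_G(T)$ acts there through $W$, which is trivial because $s_\alpha(H) - H \in k\, d\alpha^\vee \subset d\pi(\Lie \widetilde T)$, with coroots coming from $\widetilde{G}$.
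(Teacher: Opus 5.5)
Your proof is correct, and it has the same skeleton as the paper's proof. The image $\fg' = d\pi(\widetilde{\fg})$ (the paper's $\fq$) is a proper $G$-stable subspace because $\ker d\pi = \Lie(\ker \pi) \neq 0$, and $G$ acts trivially on $\fg/\fg'$ (the paper's $\fr$). So every linear form annihilating $\fg'$ is a $G$-invariant vanishing at $0$, which gives $\chi^{-1}(\chi(0)) \subset \fg'$.

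Where you differ is the proof of the key step, that $G$ acts trivially on $\fg/\fg'$.
\begin{itemize}
\item The paper lifts to split groups over $W(k)$ and applies Brauer--Nesbitt to the lattices $\Lie \widetilde{\cG}$ and $\Lie \cG$ in the same characteristic-zero representation. This shows that $\fr$ has the same composition factors as $\fz$, all of them trivial. Strictly speaking, one then also needs that a module for semisimple $G$ with trivial composition factors is trivial, since $\Hom(G, \bGa) = 0$; the paper leaves this implicit.
\item You stay in characteristic $p$. A central isogeny is an isomorphism on root groups, so $\fg'$ contains every root space, and $\fg/\fg'$ is the quotient $\Lie T/d\pi(\Lie \widetilde{T})$, on which $T$ acts trivially. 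For $H \in \Lie T$, the coefficients of the polynomial $\Ad(x_\alpha(t))H - H$ have $T$-weights $i\alpha$ with $i \geq 1$, so this element lies in $\fg_\alpha \subset \fg'$. Elements of root spaces already lie in the $G$-stable subspace $\fg'$.
\end{itemize}
Your route is more elementary and gives an explicit description of $\fg/\fg'$. The paper's route is quicker to state but needs the lift to characteristic $0$ and the Ext-vanishing step.

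Two of your hedges should be dropped.
\begin{itemize}
\item The weight computation uses no commutator formulas, so nothing special happens at $p = 2$.
\item The Weyl-group alternative only shows that $N_G(T)$ acts trivially. That alone does not give triviality on $G$, since $N_G(T)$ is a proper closed subgroup.
\end{itemize}
In fact triviality on $T$ alone already suffices. The kernel of $G \to \GL(\fg/\fg')$ is a closed normal subgroup containing $T$, hence containing every maximal torus, and the union of the maximal tori is dense in $G$.
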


\begin{proof}
    Let $\pi\colon \widetilde{G} \to G$ be the universal cover of $G$, and let $\fz \subset \widetilde{\fg}$ be the (nonzero) kernel of $\Lie \pi$, so there is a short exact sequence
    \[
    0 \to \fz \to \widetilde{\fg} \to \fq \to 0
    \]
    of $G$-representations. This induces a short exact sequence
    \[
    0 \to \fq \to \fg \to \fr \to 0,
    \]
    where $\fr$ is of dimension $\dim \fz \geq 1$. Note that $G$ acts trivially on $\fr$: indeed, we may assume that $k$ is algebraically closed, so there are split reductive group schemes $\widetilde{\cG}$ and $\cG$ over the ring of Witt vector $W(k)$ such that $\widetilde{\cG}_k = \widetilde{G}$ and $\cG_k = G$. Moreover, $\pi$ occurs as the special fiber of a central isogeny $\widetilde{\cG} \to \cG$. Note that $\Lie \widetilde{\cG}$ and $\Lie \cG$ are both $W(k)$-lattices in $\Lie \cG_{W(k)[1/p]}$, so the Jordan--H\"older factors of the $G$-representations $\widetilde{\fg} \cong \Lie \widetilde{\cG} \otimes_{W(k)} k$ and $\fg \cong \Lie \cG \otimes_{W(k)} k$ are identical by the Brauer--Nesbitt theorem. Thus $\fr \cong \fz$ as $G$-representations, i.e., $G$ acts trivially on $\fr$. Since $\fr^*$ is a vector space of $G$-invariant functions on $\fg^*$, we have $\chi^{-1}(\chi(0)) \subset (\fr^*)^\perp$. Since $(\fr^*)^\perp$ is of dimension $\dim G - \dim \fz < \dim G$, we conclude.
\end{proof}

The following lemma should be contrasted with \cite[Theorem 4.1(1)]{Lee-adjoint}, which shows that for a reductive group scheme $G$ over a scheme $S$, the formation of the GIT quotient $G/\!/G$ commutes with arbitrary base change.

\begin{lemma}\label{lemma:nilp-scheme-base-change}
    Let $p$ be a prime number, and let $G$ be a semisimple group scheme over $\bZ_p$ such that $p$ divides $|\pi_1(G)|$. Then $(\fg/\!/G)_{\bF_p} \not\cong \fg_{\bF_p}/\!/G_{\bF_p}$, i.e., the formation of $\fg/\!/G$ does not commute with base change to $\bF_p$.
\end{lemma}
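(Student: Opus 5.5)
We argue by contradiction: suppose $(\fg/\!/G)_{\bF_p} \cong \fg_{\bF_p}/\!/G_{\bF_p}$ via the natural map. Our strategy is to compare the fiber over $0$ in the special fiber with the nilpotent scheme of the generic fiber. Over $\bQ_p$, Chevalley's theorem shows that $\fg_{\bQ_p}/\!/G_{\bQ_p}$ is an affine space of dimension $r = \operatorname{rank} G$, and $\eta_{\bQ_p}\colon \fg_{\bQ_p} \to \fg_{\bQ_p}/\!/G_{\bQ_p}$ is flat with fibers of dimension $\dim G - r$. The null-fiber is the nilpotent cone, which is reduced and irreducible of dimension $\dim G - r$ and has a smooth point (the regular nilpotent orbit) with tangent space of dimension $\dim G - r$ inside $\fg$.

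First we study $\fg/\!/G$ over $\bZ_p$. It is $\bZ_p$-flat, being the invariants of a flat algebra in a torsion-free module. Its generic fiber is $\bA^r_{\bQ_p}$, and the natural map $(\fg/\!/G)_{\bF_p} \to \fg_{\bF_p}/\!/G_{\bF_p}$ is always a universal homeomorphism by Seshadri's theory. Let $\sN = \eta^{-1}(\eta(0)) \subset \fg$ be the fiber over the zero section. If base change held, then $\sN_{\bF_p} = \chi^{-1}(\chi(0))$, in the notation of Lemma~\ref{lemma:nilp-var-emb-dim}. That lemma then embeds $\sN_{\bF_p}$ in an affine space of dimension $< \dim G$ cut out by the $G$-invariant linear forms $\fr^*$, which are the linear invariants of $\fg_{\bF_p}$.

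The key step is to lift these linear invariants. Under the assumed isomorphism, the degree-one part of $\bF_p[\fg_{\bF_p}]^{G}$ equals $\bZ_p[\fg]^G_1 \otimes \bF_p$, so each nonzero linear form in $\fr^*$ lifts to a $G$-invariant linear form on $\fg$ over $\bZ_p$. Here one uses the grading: the isomorphism is graded, and in degree one $\bZ_p[\fg]^G_1 \otimes \bF_p \to (\fg^*_{\bF_p})^G$ is injective with saturated image. A nonzero such lift gives a nonzero $G_{\bQ_p}$-invariant linear form on the semisimple Lie algebra $\fg_{\bQ_p}$, i.e., a nonzero element of $(\fg_{\bQ_p}^*)^{G}$. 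This is impossible since $\fg_{\bQ_p} = [\fg_{\bQ_p},\fg_{\bQ_p}]$ has no trivial quotient. That contradiction finishes the argument.

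The main obstacle is justifying the degree-one lifting, i.e., that an isomorphism of special fibers forces $(\fg_{\bF_p}^*)^{G}$ to come from $\bZ_p$-invariants. If only an abstract isomorphism is assumed, I would instead use the embedding-dimension route. Take a fiberwise regular nilpotent section $X \in \sN(\bZ_p)$. Lemma~\ref{lemma:emb-dim-ineq} gives $e_{\sN_{\bF_p}}(X_{\bF_p}) \geq e_{\sN_{\bQ_p}}(X_{\bQ_p}) = \dim G - r$. Flatness of $\fg \to \fg/\!/G$ would then make $\sN_{\bF_p}$ a complete intersection of $r$ equations in $\fg_{\bF_p}$. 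I would compare this with the extra linear equation from Lemma~\ref{lemma:nilp-var-emb-dim}, which forces $\chi^{-1}(\chi(0))$ to have an unexpected linear invariant among its $r$ generators. Degree considerations then show that a basic invariant of degree $1$ occurs over $\bF_p$ but not over $\bZ_p$. Since the degrees of generators are determined by the graded ring, this contradicts the assumed isomorphism.
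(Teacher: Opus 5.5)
Your main argument is correct. It reaches the paper's conclusion by a more direct route.

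\textbf{The paper's route.} The paper compares the two null-fibers inside $\fg_{\bF_p}$ by their embedding dimension at $0$. Lemma~\ref{lemma:nilp-var-emb-dim} puts $\overline{\chi}^{-1}(\overline{\chi}(0))$ inside a proper linear subspace. For the special fiber of the integral null-fiber, the paper does three things:
\begin{enumerate}
\item it uses flat base change of GIT quotients to identify the generic fiber with the characteristic-$0$ nilpotent scheme;
\item it notes that this scheme is cut out by forms of degree $\geq 2$, so it has embedding dimension $\dim G$ at $0$;
\item it transfers this to the special fiber by the semicontinuity Lemma~\ref{lemma:emb-dim-ineq}.
\end{enumerate}

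\textbf{Your route.} The embedding dimension of a homogeneous cone at its vertex is $\dim G$ minus the dimension of the degree-one part of its ideal. So both proofs really count linear invariants; you simply do the count on the graded rings. If the natural graded map $\bZ_p[\fg]^G \otimes \bF_p \to \bF_p[\fg_{\bF_p}]^{G_{\bF_p}}$ were an isomorphism, then $(\fg^*)^G \otimes \bF_p \cong (\fg^*_{\bF_p})^{G_{\bF_p}}$. The right side contains $\fr^* \neq 0$; you only need this containment, not equality. On the other hand, $(\fg^*)^G$ is a torsion-free $\bZ_p$-module whose rationalization injects into $(\fg^*_{\bQ_p})^{G_{\bQ_p}} = 0$, so it vanishes.

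This avoids both Lemma~\ref{lemma:emb-dim-ineq} and the flat base change input. Your worry about ``justifying the degree-one lifting'' and the ``injective with saturated image'' remark are unnecessary: bijectivity in degree one is immediate from the assumption that the natural map is an isomorphism. The paper's formulation has the modest advantage of showing directly that the two fibers over $0$ differ as subschemes of $\fg_{\bF_p}$.

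\textbf{The fallback paragraph.} You should drop it. The statement concerns the natural map, as the ``i.e.'' clause indicates and as the paper's proof also assumes, so no argument for an abstract isomorphism is needed. The fallback would also fail as written, for three reasons.
\begin{enumerate}
\item At a fiberwise regular nilpotent section, Lemma~\ref{lemma:emb-dim-ineq} only gives embedding dimension $\geq \dim G - r$. That is compatible with $\overline{\chi}^{-1}(\overline{\chi}(0))$ lying in a subspace of dimension $\dim G - \dim \fz$, since $\dim \fz \leq r$. The informative point is $0$, where the generic embedding dimension is $\dim G$.
\item Flatness of $\fg \to \fg/\!/G$ over $\bZ_p$ is not available in this setting.
\item An abstract isomorphism of schemes need not respect the gradings, so degrees of generators are not invariants of it.
\end{enumerate}
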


\begin{proof}
    Let $\chi\colon \fg \to \fg/\!/G$ and $\overline{\chi}\colon \fg_{\bF_p} \to \fg_{\bF_p}/\!/G_{\bF_p}$ be the natural quotient maps. It is enough to show that $\chi^{-1}(\chi(0))_{\bF_p} \neq \overline{\chi}^{-1}(\overline{\chi}(0))$. By Lemma~\ref{lemma:nilp-var-emb-dim}, the preimage $\overline{\chi}^{-1}(\overline{\chi}(0))$ embeds into an affine space of dimension $< \dim G$, and in particular the embedding dimension of $\overline{\chi}^{-1}(\overline{\chi}(0))$ is $< \dim G$ at every point. Thus it suffices to show that the embedding dimension of $\chi^{-1}(\chi(0))_{\bF_p}$ is $\geq \dim G$ at $0$.

    By \cite[Proposition 5.2.9, Theorem 9.1.4]{Alper-adequate}, the formation of the GIT quotient $\fg/\!/G$ commutes with flat base change, so $\chi^{-1}(\chi(0))_{\bQ_p}$ is the nilpotent variety of $G_{\bQ_p}$, which is the same as the nilpotent variety $\sN$ of $G_{\bQ_p}$. The embedding dimension of $\sN$ at $0$ is equal to $\dim G$: to see this, observe that $\sN$ is defined as a closed subscheme of $\fg_{\bQ_p}$ by homogeneous polynomials of degree $\geq 2$. By Lemma~\ref{lemma:emb-dim-ineq}, it follows that $\chi^{-1}(\chi(0))_{\bF_p}$ has embedding dimension $\geq \dim G$ at $0$. (In fact, it has embedding dimension exactly $\dim G$ since it is a subscheme of $\fg$.)
\end{proof}

The following corollary shows that the hypotheses for the smoothness result in \cite[Theorem 3.10(2)]{Integral-Springer} and the flatness result in \cite[Theorem 4.2.8]{Bouthier-Cesnavicius} are optimal.

\begin{cor}\label{cor:non-flat-nilp}
Let $A$ be a DVR of mixed characteristic $(0, p)$, and let $G$ be a reductive $A$-group scheme such that $p$ is a bad prime for $G$. If $X \in (\Lie G)(A)$ is regular nilpotent, then $Z_G(X)/Z(G)$ is not smooth. If $p$ is a torsion prime for $G$ or $p$ divides $|\pi_1(\sD(G))|$, then $Z_G(X)$ is not flat.
\end{cor}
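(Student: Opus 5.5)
For the first claim, I would argue by contradiction on the special fiber. Suppose $Z_G(X)/Z(G)$ were smooth over $A$. Then its special fiber $Z_{G_k}(X_k)/Z(G_k)$ is smooth, where $X_k$ is regular nilpotent in $\fg_k$. The point is that in bad characteristic the smooth group $Z_{G_k}(X_k)_{\rm{red}}$ has the wrong dimension relative to its Lie algebra. By regularity, $\dim Z_{G_k}(X_k) = \operatorname{rank} G$. However, in bad characteristic the centralizer $\fz_{\fg_k}(X_k)$ of a regular nilpotent has dimension strictly larger than $\operatorname{rank} G + \dim \Lie Z(G_k) - \dim Z(G_k)$. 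This is a known consequence of the failure of the Kostant section/Springer theory in bad characteristic. Concretely, I would pass to the simple factors and, for each type in which $p$ is bad, exhibit an explicit extra element of $\fz_{\fg}(X)$ in the root-space decomposition, using $X = \sum_{\alpha \in \Delta} X_\alpha$. The extra element comes from a highest-root-type coefficient that vanishes mod $p$ precisely when $p$ is bad.

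An alternative, more uniform route is to compare with the generic fiber. Over $K$ the centralizer $Z_{G_K}(X_K)/Z(G_K)$ is smooth of dimension $\operatorname{rank} G - \dim Z$. If the quotient were $A$-smooth, the special fiber would have Lie algebra of the same dimension. But the regular nilpotent orbit map $G_k \to \sN_{G_k}$ would then be smooth onto the regular locus with reduced orbit. That would make the nilpotent cone generically reduced and give $\Lie$-level control contradicting known tangent-space computations in bad characteristic. I would try the explicit computation first, since it is more concrete.

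For the second claim, I would split into the two cases. If $p$ divides $|\pi_1(\sD G)|$, I would mimic Corollary~\ref{cor:non-flat-unip}. On the generic fiber, $Z_G(X)_K$ is commutative and smooth. On the special fiber, the Lie-algebra analogue of Theorem~\ref{theorem:regular-unipotent-centralizer} holds via Lemma~\ref{lemma:nilpotent-variety-stable}, or more generally via the exact sequence $1 \to \pi(Z_{\widetilde G}(X)) \to Z_G(\pi X) \to \Lie\mu \cap \alpha \to 1$. This shows that the special-fiber centralizer is too large, or non-commutative, or has order jumps. Alternatively, I would compare dimensions of $\Lie$ or the component-group orders, since flatness would force the order of the finite part to be constant. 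If instead $p$ is a torsion prime, I would use that the special fiber centralizer has a unipotent part $Z_{U_k}(X_k)$ that is not connected, or has a strictly larger nonreduced structure, while the generic centralizer's closure has connected fibers. Flatness fails because the schematic closure of the generic fiber differs from $Z_G(X)_k$.

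The main obstacle will be the first claim in full generality over bad primes that are neither torsion nor divide $\pi_1$, namely $p=3$ for $\rm{E}_8$, $p=5$ for $\rm{E}_8$, and $p=3$ for $\rm{G}_2, \rm{F}_4$. I expect to need an explicit Lie-algebra centralizer computation there, or a citation to a known computation of $\dim \fz_\fg(X)$ for regular nilpotent $X$ in bad characteristic.
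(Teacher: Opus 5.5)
The first claim is fine and matches the paper, which cites \cite[Theorem 2.6]{Springer}: in bad characteristic $Z_U(X)$ is not smooth, hence neither is $Z_{G_k}(X_k)/Z(G_k)$. Your inequality follows from this because $\fz_{\fb}(X) = \Lie Z(G_k) \oplus \fz_{\fu}(X)$, so you can cite Springer instead of computing type by type. (Incidentally, $3$ and $5$ are torsion primes for $\mathrm{E}_8$, and $3$ is one for $\mathrm{F}_4$; torsion plays no role in the first claim.)

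The gap is in the second claim. You never fix a property that holds for $Z_{G_K}(X_K)$, is forced on the special fiber of every flat $A$-group scheme with that generic fiber, and fails for $Z_{G_k}(X_k)$. Most of your candidates fail this test.
\begin{itemize}
\item Both fibers have dimension $\operatorname{rank} G$, so ``too large'' cannot mean dimension.
\item Lie algebra dimension, non-reducedness and component groups can all jump in flat families (think of $\mu_p$ over $\bZ_p$). So ``strictly larger nonreduced structure'' and ``order jumps'' give no contradiction, and flatness does not force any ``finite part'' to have constant order.
\item The torsion-prime suggestion that $Z_{U_k}(X_k)$ is disconnected is false. For regular nilpotent $X$, $Z_{G_k}(X_k)/Z(G_k)$ is connected even in bad characteristic, by the results of Springer and Keny used in Lemma~\ref{lemma:no-springer-bad}.
\item The schematic closure of a connected $K$-group need not have connected special fiber, so that claim is unjustified. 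Saying ``the closure differs from $Z_G(X)_k$'' only restates non-flatness.
\item Commutativity, for $p \mid |\pi_1(\sD G)|$, is a legitimate closed condition, as in Corollary~\ref{cor:non-flat-unip}. But you do not prove that $Z_{G_k}(X_k)$ is non-commutative, and the proof in Theorem~\ref{theorem:regular-unipotent-centralizer} does not carry over verbatim. There, the commutator was taken with the group element $\widetilde{u}$, and lifting commutativity to $\pi^{-1}(Z_G(u))$ used that $p$ is good; here $p$ is bad by hypothesis.
\end{itemize}

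The missing idea is to use containment in a Borel subgroup as the closed condition. After a quasi-finite local extension of $A$, choose a Borel $A$-subgroup $B$ with $X \in (\Lie B)(A)$. In characteristic $0$ one has $Z_{G_K}(X_K) \subset B_K$ by \cite[Proposition 3.8]{Integral-Springer}. If $Z_G(X)$ were flat, its generic fiber would be schematically dense. That would force $Z_G(X) \subset B$, and hence $Z_{G_k}(X_k) \subset B_k$. This fails on the special fiber in both cases:
\begin{itemize}
\item For a torsion prime, Springer's results \cite[Proposition 2.4, Theorem 2.6]{Springer} give $\fz_{\fg_k}(X_k) \not\subset \fb_k$.
\item For $p \mid |\pi_1(\sD G)|$, the paper uses stability of the nilpotent scheme under translation by central Lie algebra elements (Lemma~\ref{lemma:nilpotent-variety-stable}) to produce infinitesimal points of $Z_{G_k}(X_k)$ outside $B_k$.
\end{itemize}
This treats both cases with one mechanism, without needing non-commutativity or any component-group count.
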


\begin{proof}
For the first claim, it suffices to work over an algebraically closed field $k$ of characteristic $p > 0$. Using \cite[Theorem 2.6]{Springer}, one can show that if $p$ is a bad prime for $G$ and $X \in \Lie B$ for a Borel $k$-subgroup $B$ of $G$ with unipotent radical $U$, then $Z_U(X)$ is not smooth. Consequently, $Z_G(X)/Z(G)$ cannot be smooth. 

Now assume either that $p$ is a torsion prime for $G$ or $p$ divides $|\pi_1(\sD(G))|$. By passing to a quasi-finite local extension of $A$, we may choose a Borel $A$-subgroup $B$ of $G$ such that $X \in (\Lie B)(A)$. By \cite[Proposition 3.8]{Integral-Springer}, $Z_G(X)_\eta$ lies in $B_\eta$. Using \cite[Proposition 2.4, Theorem 2.6]{Springer} and Lemma~\ref{lemma:nilpotent-variety-stable}, one can show that $Z_G(X)_s$ does not lie in $B_s$, so $Z_G(X)$ cannot be flat.
\end{proof}

\subsection{Ruling out an equivariant dominant morphism}\label{subsection:no-springer-homeomorphism}

In \cite[Proposition 29]{Mcninch-subprincipal}, it is shown that if $G$ is a connected reductive group over an algebraically closed field $k$ of good characteristic $p$ (possibly with $p$ dividing $|\pi_1(\sD G)|$), then there is a $G(k)$-equivariant homeomorphism $\sN_G(k) \to \sU_G(k)$ (with respect to the Zariski topologies on both sides). This can be obtained from the diagram
\[
\begin{tikzcd}
\sU_{\widetilde{G}} \arrow[r, "\rho"] \arrow[d]
    &\sN_{\widetilde{G}} \arrow[d] \\
\sU_G
    &\sN_G
\end{tikzcd}
\]
where $\widetilde{G}$ is the universal cover of $\sD G$ and $\rho$ is a Springer isomorphism, since each arrow is a universal homeomorphism of schemes. However, as we will show, such a homeomorphism cannot be algebraic if $p$ divides $|\pi_1(\sD G)|$.

\begin{prop}\label{prop:no-dominant-map}
    Let $k$ be any field of characteristic $p > 0$, and let $G$ be a connected reductive group over $k$ such that $p$ is good for $G$ and $p$ divides $|\pi_1(\sD G)|$. Then there is no $G$-equivariant dominant $k$-morphism $\sN_G^{\rm{var}} \to \sU_G^{\rm{var}}$ or $\sU_G^{\rm{var}} \to \sN_G^{\rm{var}}$.
\end{prop}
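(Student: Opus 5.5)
The plan is to reduce to a statement about stabilizer group schemes of regular orbits and then compare their structure. First we reduce to the case where the problem becomes one about $\SL$-type groups. We may assume $k = \ov{k}$ and, passing to $\sD G$ (the maximal central torus acts trivially on both varieties), that $G$ is semisimple. Since $p$ is good for $G$ and $p$ divides $|\pi_1(G)|$, inspection of the possible fundamental groups shows the following. For types $\mathrm{B}, \mathrm{C}, \mathrm{D}, \mathrm{E}_7$ the fundamental group is a $2$-group but $2$ is bad, and for $\mathrm{E}_6$ it is a $3$-group but $3$ is bad. So $p$ divides $|\pi_1|$ only through simple factors of type $\mathrm{A}_{n-1}$ with $p \mid n$. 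This is what lets us use Lemma~\ref{lemma:nilpotent-variety-stable}, which is only available in type $\mathrm{A}$.

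Next we reduce to homogeneous spaces. Let $f$ be a $G$-equivariant dominant morphism in either direction. The regular unipotent orbit $C_u$ and the regular nilpotent orbit $\cO_X$ are the unique dense open orbits, and $f(\sU_G^{\rm{var}})$ (resp.\ $f(\sN_G^{\rm{var}})$) is a dense $G$-stable constructible set. Hence it meets, and therefore contains, the dense orbit. A point of the source mapping into the dense orbit of the target lies in a $G$-stable open set of the source, so it lies in the dense orbit of the source. Therefore $f$ restricts to a $G$-equivariant morphism between regular orbits. Identifying these orbits with $G/Z_G(u)$ and $G/Z_G(X)$, where the stabilizers are the scheme-theoretic centralizers and the orbits are reduced, an equivariant morphism $G/H \to G/H'$ sending $1$ to $g$ exists only if $H \subset gH'g^{-1}$ as group schemes. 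So we must rule out $Z_G(u) \subset gZ_G(X)g^{-1}$ and $Z_G(X) \subset gZ_G(u)g^{-1}$.

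To rule out the first containment, we compare multiplicative parts. By Lemma~\ref{lemma:centralizer-kernel-smooth} and Remark~\ref{remark:not-always-commutative}, $Z_G(u)$ surjects onto $(\ker \pi_G)^0$, which is a nontrivial infinitesimal group of multiplicative type. Using this (as in the proof of Theorem~\ref{theorem:regular-unipotent-centralizer}, via \cite[Expos\'e XVII]{SGA3II}), $Z_G(u)$ contains a multiplicative type subgroup $M$ of order $|Z(G)|\cdot|(\ker\pi_G)^0| > |Z(G)|$, and $Z_G(u)/M \cong Z_U(u)$ is smooth. On the other side, Lemma~\ref{lemma:nilpotent-variety-stable}, applied to $G = \widetilde{G}/\ker\pi_G$, gives the exact sequence
\[
1 \to \pi(Z_{\widetilde{G}}(X)) \to Z_G(X) \to \Lie(\ker \pi_G) \cap \alpha \to 1,
\]
whose quotient is unipotent. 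Also $Z_{\widetilde{G}}(X) = Z(\widetilde{G}) \times Z_U(X)$ by \cite{Integral-Springer}. From this we expect the maximal multiplicative type subgroup of $Z_G(X)$ to be exactly $Z(G)$, so $M$ cannot embed in a conjugate of $Z_G(X)$. Pinning this claim down, namely that no multiplicative type subgroup of $Z_G(X)$ is larger than $Z(G)$, is the step I expect to need the most care. Concretely, one must check that multiplicative subgroups map trivially to both the unipotent quotient $\Lie(\ker \pi_G) \cap \alpha$ and the unipotent part $Z_U(X)$.

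To rule out the second containment, we use a tangent space count. Suppose $Z_G(X) \subset gZ_G(u)g^{-1}$. Then $Z_G(X) \cap gMg^{-1}$ is a multiplicative type subgroup of $Z_G(X)$ containing $Z(G)$, so it equals $Z(G)$ by the previous step. Hence $Z_G(X)/Z(G)$ embeds into $Z_G(u)/M \cong Z_U(u)$, a smooth group of dimension $\operatorname{rank} G$. But $Z_G(X)/Z(G)$ also has dimension $\operatorname{rank} G$, and it is not smooth by the nontrivial infinitesimal unipotent quotient in the displayed sequence (compare Corollary~\ref{cor:non-flat-nilp}). So its Lie algebra has dimension greater than $\operatorname{rank} G$, which is impossible inside $\Lie Z_U(u)$. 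This rules out both containments and proves the proposition.
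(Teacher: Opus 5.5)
Your argument is essentially correct, but it takes a different route from the paper. Once the problem is reduced to a containment between $Z_G(u)$ and $Z_G(X)$, the paper uses the dimension count to get $Z_G(u)^0_{\rm{red}} = Z_G(X)^0_{\rm{red}}$, and hence $Z_{\widetilde{G}}(\widetilde{u}) = Z_{\widetilde{G}}(\widetilde{X})$. Then $N = \pi(Z_{\widetilde{G}}(\widetilde{u}))$ is normal in both centralizers. Either containment would give a monomorphism between $Z_G(u)/N \cong (\ker \pi_G)^0$ (nontrivial, multiplicative) and $Z_G(X)/N$ (nontrivial, unipotent), which is impossible. This handles both directions with one observation, at the cost of first matching up the two reduced centralizers.

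You avoid that matching. You treat one direction by comparing multiplicative subgroups and the other by a tangent-space count. The step you flagged as delicate is in fact immediate: a multiplicative type subgroup of $Z_G(X)$ maps trivially to the unipotent quotient $\Lie(\ker\pi_G)\cap\alpha$ and then trivially to $Z_U(X)$, so it lies in $Z(G)$.

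Three points need repair.

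\textbf{Construction of $M$.} You cannot produce $M$ ``as in the proof of Theorem~\ref{theorem:regular-unipotent-centralizer}''. That argument starts from commutativity of $Z_G(u)$, whereas here $Z_G(u)$ is non-commutative by that very theorem. Instead argue as follows.
\begin{enumerate}
\item $Z_U(u)$ is the unipotent part of the commutative normal subgroup $\pi_G(Z_{\widetilde{G}}(\widetilde{u})) = Z(G) \times Z_U(u)$, so it is normal in $Z_G(u)$.
\item The quotient $Z_G(u)/Z_U(u)$ is a central extension of $(\ker\pi_G)^0$ by $Z(G)$. It is commutative, because the commutator pairing corresponds to a homomorphism from the connected group $(\ker\pi_G)^0$ to the \'etale group $\underline{\Hom}((\ker\pi_G)^0, Z(G))$. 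Hence it is of multiplicative type.
\item An extension of a multiplicative type group by a unipotent one splits over $\ov{k}$. This gives $Z_G(u) = Z_U(u) \rtimes M$ with $Z(G) \subset M$.
\end{enumerate}

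\textbf{The second containment.} $M$ is not normal, since otherwise $Z_G(u)$ would be commutative. So $Z_G(u)/M$ is only a homogeneous space, and ``$Z_G(X)/Z(G)$ embeds into it'' must mean the orbit map. That map is an immersion, so your tangent-space comparison still goes through. There is also a simpler argument that needs no $M$. $Z_G(X)/Z(G)$ is unipotent, so it maps trivially to $(Z_G(u)/Z(G))/Z_U(u) \cong (\ker\pi_G)^0$. Hence it lies in the smooth group $Z_U(u)$ of the same dimension, which contradicts its non-smoothness.

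\textbf{Restriction to open orbits.} Your sentence restricting $f$ to the open orbits is backwards as written: a point mapping into the target's open orbit need not be regular. The correct reason is that the preimage under $f$ of the target's open orbit is a nonempty $G$-stable open subset. It therefore contains the source's open orbit.
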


\begin{proof}
    Recall that $\sU_G^{\rm{var}}$ contains a dense open $G$-orbit $\sU_G^{\rm{reg}}$, and similarly $\sN_G^{\rm{var}}$ contains a dense open $G$-orbit $\sN_G^{\rm{reg}}$. For dimension reasons, a $G$-equivariant dominant $k$-morphism $\sN_G^{\rm{var}} \to \sU_G^{\rm{var}}$ would necessarily restrict to a $G$-equivariant $k$-morphism $\sN_G^{\rm{reg}} \to \sU_G^{\rm{reg}}$, and similarly for $\sU_G^{\rm{var}} \to \sN_G^{\rm{reg}}$. We will prove the more precise statement that there are no $G$-equivariant $k$-morphisms $\sN_G^{\rm{reg}} \to \sU_G^{\rm{reg}}$ or $\sU_G^{\rm{reg}} \to \sN_G^{\rm{reg}}$.

    In general, if $H$ and $K$ are closed $k$-subgroup schemes of $G$, then there exists a $G$-equivariant $k$-morphism $G/H \to G/K$ if and only if there exists $g \in G(k)$ such that $gHg^{-1} \subset K$. Thus to prove the claim we need only show that if $u \in G(k)$ is a regular unipotent element and $X \in \fg(k)$ is a regular nilpotent element, then $Z_G(u) \not\subset Z_G(X)$ and $Z_G(X) \not\subset Z_G(u)$. For this, we may pass from $G$ to $\sD(G)$ to assume that $G$ is semisimple; let $\pi\colon \widetilde{G} \to G$ denote the universal cover.

    Suppose for the sake of contradiction that $Z_G(u) \subset Z_G(X)$ or $Z_G(X) \subset Z_G(u)$, where $u$ and $X$ are as above. For dimension reasons, this implies in particular that $Z_G(u)_{\rm{red}}^0 = Z_G(X)_{\rm{red}}^0$. Since $Z_G(u)$ and $Z_G(X)$ both contain $Z(G)$, it follows from Lemma~\ref{lemma:reg-unip-centralizer} (see also \cite[Lemma 3.4]{Integral-Springer}) that $Z_{\widetilde{G}}(\widetilde{u}) = Z_{\widetilde{G}}(\widetilde{X})$, where $\widetilde{u}$ and $\widetilde{X}$ are lifts of $u$ and $X$, respectively. By Lemma~\ref{lemma:centralizer-kernel-smooth} and Lemma~\ref{lemma:nilpotent-variety-stable}, the closed subgroup $N = \pi(Z_{\widetilde{G}}(\widetilde{u}))$ is therefore normal in both $Z_G(u)$ and $Z_G(X)$, and one obtains a monic homomorphism $Z_G(u)/N \to Z_G(X)/N$ or $Z_G(X)/N \to Z_G(u)/N$. By Lemma~\ref{lemma:centralizer-kernel-smooth}, the quotient $Z_G(u)/N$ is a nontrivial multiplicative type group scheme, while by Lemma~\ref{lemma:nilpotent-variety-stable}, the quotient $Z_G(X)/N$ is a nontrivial unipotent group scheme, so there are no monic homomorphisms $Z_G(u)/N \to Z_G(X)/N$ or $Z_G(X)/N \to Z_G(u)/N$ and we are done.
\end{proof}

\begin{example}\label{example:pglp-nonconstant}
    When $G = \PGL_p$, we can show the stronger claim that there is no \textit{non-constant} $G$-equivariant $k$-morphism $\sU_G^{\rm{var}} \to \sN_G^{\rm{var}}$ or $\sN_G^{\rm{var}} \to \sU_G^{\rm{var}}$. Indeed, by the proof of Proposition~\ref{prop:no-dominant-map} it is enough to show that if $u$ is regular unipotent (resp.\ $X$ is regular nilpotent), then for any non-regular nilpotent $Y \neq 0$ (resp.\ non-regular unipotent $v \neq 1$) we have $Z_G(u) \not\subset Z_G(Y)$ (resp.\ $Z_G(X) \not\subset Z_G(v)$). We will deal only with the the parenthetical claim, because the non-parenthetical statement involves using stability results for nilpotent elements which we have not pursued in this paper (though they are completely similar to those we'll use for unipotent elements).

    Let $\pi\colon \SL_p \to G$ be the quotient map, let $v$ be a non-regular unipotent element of $G(k)$, and lift $v$ to $v_0$ in $\SL_p$. By Lemma~\ref{lemma:twisted-centralizer-isogeny} and Proposition~\ref{prop:stability-type-a}, since $Z(\SL_p)$ has trivial intersection with the derived group of any centralizer of a semisimple element of $\SL_p(k)$, we have $Z_G(v) = \pi(Z_{\SL_p}(v_0))$. In particular, $Z_G(v)$ is contained in a proper parabolic subgroup of $G$, so it is enough to show that the same is not true of $Z_G(X)$ when $X \in \fg$ is regular nilpotent. By conjugacy, it is enough to show this when 
    \[
    X = \begin{pmatrix}
        0 & 1 & &  \\
        &0 &\ddots &  \\
        & &\ddots  &1 \\
        & &  &0
    \end{pmatrix}
    \]
    
    The only Borel $k$-subgroup of $G$ containing $X$ in its Lie algebra is the upper-triangular Borel, so it is enough to show that $Z_G(X)$ is not contained in any nontrivial group of block-upper-triangular matrices. For this, we compute directly that the element
    \[
    g = \begin{pmatrix}
        1 & & & & \\
        \eps &1 & & & \\
        &2\eps &\ddots & & \\
        & &\ddots &1 & \\
        & & &(p-1)\eps &1
    \end{pmatrix}
    \]
    lies in $Z_G(X)(k[\eps]/(\eps^2))$. Clearly $g$ does not lie in a nontrivial group of block-upper-triangular matrices, so neither does $Z_G(X)$.

    On the other hand, Theorem~\ref{theorem:stabilizer-of-Steinberg} shows that if $G = \SL_{p^2}/\mu_p$ then the map $\sU_{\SL_{p^2}} \to \sN_{\SL_{p^2}}$, $x \mapsto x^p - 1$ induces a nonconstant $G$-equivariant $k$-morphism $\sU_G \to \sN_G$.
\end{example}

\subsection{Ruling out an isomorphism}\label{subsection:no-springer-iso}

First, we recall the case of bad primes.

\begin{lemma}\label{lemma:no-springer-bad}
Let $k$ be a field of characteristic $p > 0$, and let $G$ be a connected reductive $k$-group such that $p$ is bad for $G$. Then there is no $G$-equivariant $k$-isomorphism $\sU_G^{\rm{var}} \cong \sN_G^{\rm{var}}$.
\end{lemma}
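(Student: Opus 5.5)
The plan is to compare the number of $G$-orbits on the two varieties, or more robustly their $G$-equivariant orbit structure, using known facts about bad characteristic. Since $p$ is bad, Springer's original observation already shows that no $G$-equivariant isomorphism exists. The key invariant I will use is the fact that a $G$-equivariant isomorphism $\phi\colon \sU_G^{\rm{var}} \to \sN_G^{\rm{var}}$ induces an isomorphism of $G$-orbits preserving stabilizers. In particular, for each $u \in \sU_G^{\rm{var}}(\ov{k})$ we get $Z_G(u) = Z_G(\phi(u))$ scheme-theoretically, and hence equality of reduced centralizers and of their component groups.

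First, extend scalars to $\ov{k}$; an isomorphism over $k$ gives one over $\ov{k}$. Next, reduce to $G$ semisimple. The unipotent and nilpotent varieties of $G$ and of $\sD G$ agree up to the action of the central torus $Z$, which acts trivially on both. A $G$-equivariant isomorphism for $G$ therefore yields a $\sD G$-equivariant one, and badness of $p$ depends only on the root system.

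Then exploit a mismatch in orbit data. I would first try the simplest invariant, the number of orbits. It is classical (Lusztig, Spaltenstein, Holt–Spaltenstein) that in bad characteristic the unipotent and nilpotent orbits do not correspond bijectively. For instance, in type $\mathrm{G}_2$ with $p = 3$ or $p = 2$, and in $\mathrm{B}_n$, $\mathrm{C}_n$, $\mathrm{D}_n$ with $p = 2$, the numbers of nilpotent and unipotent classes differ. A bijection of orbits is forced by $\phi$, which yields the contradiction. Since every simple $G$ in bad characteristic has a simple factor of one of these types, one reduces to a simple factor: an equivariant isomorphism for a product restricts, via fibers over orbits of the other factors, to the factor. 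Alternatively, I would simply cite Springer's statement recalled in the introduction, that no such isomorphism exists when $p$ is bad.

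The main obstacle is making the case-by-case orbit count rigorous and uniform across types, especially $\mathrm{E}_6$, $\mathrm{E}_7$, $\mathrm{E}_8$ and $\mathrm{F}_4$. As a fallback, I would instead compare a regular element's centralizer. By Corollary~\ref{cor:non-flat-nilp}, $Z_G(X)/Z(G)$ is not smooth for regular nilpotent $X$ when $p$ is bad. On the unipotent side, I would show that $Z_G(u)/Z(G)$ is smooth for regular unipotent $u$ whenever $p \nmid |\pi_1|$, using Theorem~\ref{theorem:regular-unipotent-centralizer}. The equivariant isomorphism identifies the regular orbits, forcing $Z_G(u) = Z_G(X)$ up to conjugacy, which is a contradiction. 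When $p$ also divides $|\pi_1|$, one combines this with the multiplicative-versus-unipotent argument from the proof of Proposition~\ref{prop:no-dominant-map}.
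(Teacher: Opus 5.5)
\textbf{Verdict: genuine gap.} Neither of your routes closes the argument. The fallback fails in the case $p$ bad and $p \mid |\pi_1(\sD G)|$, and the missing idea is to compare component groups rather than smoothness.

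\textbf{The orbit-count route.} This is not a proof.
\begin{itemize}
\item Your reduction claim is false: $\mathrm{F}_4$, $\mathrm{E}_6$, $\mathrm{E}_7$, $\mathrm{E}_8$ are themselves simple and have bad primes.
\item You give no argument that the numbers of unipotent classes and nilpotent orbits differ in every bad case.
\item ``Cite Springer'' is just citing the statement itself.
\end{itemize}

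\textbf{The fallback.} It is sound when $p \nmid |\pi_1(\sD G)|$. The regular loci are the dense open orbits, so an equivariant isomorphism forces $Z_G(u) = Z_G(X)$ for some regular unipotent $u$ and regular nilpotent $X$. Theorem~\ref{theorem:regular-unipotent-centralizer} makes $Z_G(u)/Z(G)$ smooth, and the field case in the proof of Corollary~\ref{cor:non-flat-nilp} makes $Z_G(X)/Z(G)$ non-smooth.

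The gap is the case where $p$ is bad \emph{and} divides $|\pi_1(\sD G)|$. Examples are $\SO_{2n+1}$ or $\mathrm{PSp}_{2n}$ with $n \geq 2$ and $p = 2$, adjoint $\mathrm{E}_7$ with $p = 2$, and adjoint $\mathrm{E}_6$ with $p = 3$. There both quotients are non-smooth by the same two results, so smoothness detects nothing. Your proposed patch via Proposition~\ref{prop:no-dominant-map} does not apply, for three reasons:
\begin{itemize}
\item that proposition assumes $p$ is good;
\item its proof uses Lemma~\ref{lemma:nilpotent-variety-stable}, which only treats products of $\SL_{n_i}$;
\item its key step $Z_{\widetilde{G}}(\widetilde{u}) = Z_{\widetilde{G}}(\widetilde{X})$ rests on the good-characteristic structure of these centralizers ($Z_U(\widetilde{u})$ connected, $Z_U(\widetilde{X})$ smooth), and both properties fail for bad $p$.
\end{itemize}

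\textbf{The missing idea.} Compare component groups rather than scheme structure; this is what the paper does.
\begin{itemize}
\item For regular unipotent $u$, the component group of $Z_G(u)/Z(G)$ is that of $Z_U(u)$. By Springer, this is nontrivial when $p$ is bad.
\item For regular nilpotent $X$, the quotient $Z_G(X)/Z(G)$ is connected (Springer, Keny).
\end{itemize}
Since $\pi_0$ depends only on the underlying topological space, this invariant ignores the infinitesimal contribution of $\ker \pi_G$. So the equality $Z_G(u) = Z_G(X)$ is contradicted in every bad case at once. No case analysis, no reduction to simple factors, and no condition on $\pi_1(\sD G)$ is needed.
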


\begin{proof}
We may and do assume that $k$ is algebraically closed. Note that each of $\sU_G^{\rm{var}}$ and $\sN_G^{\rm{var}}$ contains an open dense $G$-orbit (the regular locus), so any $G$-equivariant isomorphism $\sU_G^{\rm{var}} \cong \sN_G^{\rm{var}}$ must send a regular unipotent $u$ element of $G(k)$ to a regular nilpotent element $X$ of $\fg$ satisfying $Z_G(u) = Z_G(X)$. By \cite[Thms.\ 4.12 and 5.9 b)]{Springer} and \cite[Theorem (b)]{Keny-nilpotent}, $Z_G(X)/Z(G)$ is connected, while $Z_G(u)/Z(G)$ is not. So such an isomorphism cannot exist.
\end{proof}

Next, we will rule out the case that $p$ divides $|\pi_1(\sD(G))|$ in a rather strong way: we will show that if $G$ is split and $p$ divides $|\pi_1(\sD(G))|$ then the unipotent and nilpotent varieties have different Picard groups, so they cannot be isomorphic as schemes.

\begin{lemma}\label{lemma:unip-pic}
Let $G$ be a split connected reductive group over a field $k$. Then $\Pic(\sU_G^{\rm{var}})$ contains $\Hom_{k\textrm{-}\mathrm{gp}}((\ker \pi_G)^0, \bGm)$, where $\pi_G\colon \widetilde{G} \to \sD G$ is the universal cover.
\end{lemma}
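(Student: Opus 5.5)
We will realize $\sU_G^{\rm{var}}$ as the quotient of the unipotent variety of $\widetilde{G}$ by a finite connected group scheme, and then apply Lemma~\ref{lemma:descent}. First reduce to $G$ semisimple. The unipotent variety of $G$ lies in $\sD G$, and multiplication $Z \times \sD G \to G$ restricts to an isomorphism from $\sU_{\sD G}^{\rm{var}}$ onto $\sU_G^{\rm{var}}$. This holds on points, and on reduced schemes it holds because the kernel $Z \cap \sD G$ is finite of multiplicative type and meets unipotent elements trivially. So we may assume $G = \sD G$. Write $\pi = \pi_G\colon \widetilde{G} \to G$.

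The key step is to show that $\pi$ restricts to a $(\ker \pi)^0$-torsor $\sU_{\widetilde{G}}^{\rm{var}} \to \sU_G^{\rm{var}}$. By Lemma~\ref{lemma:Springer-iso-6.4}, the stabilizer of $\chi^{-1}(\chi(1)) = \ov{C}_{\widetilde{u}}$ in $Z(\widetilde{G})$ is $Z(\widetilde{G})^0$. Here $\widetilde{u}$ is regular unipotent, and Steinberg's theorem identifies $\chi^{-1}(\chi(1))$ with $\sU_{\widetilde{G}}^{\rm{var}}$ for simply connected groups. Hence
\[
\Stab_{\ker\pi}(\ov{C}_{\widetilde{u}}) = \ker\pi \cap Z(\widetilde{G})^0 = (\ker \pi)^0.
\]
The last claim of Lemma~\ref{lemma:stability-equivalences} (with $\sigma = 1$) then shows that $\ov{C}_{\widetilde{u}} \to \ov{C}_{u}$ is a $(\ker\pi)^0$-torsor. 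Since $\ov{C}_u = \sU_G^{\rm{var}}$, this is the required torsor; splitness lets us work over $k$ without passing to $\ov{k}$.

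Next we apply Lemma~\ref{lemma:descent} with $H = (\ker\pi)^0$, $Y = \sU_{\widetilde{G}}^{\rm{var}}$ and $X = \sU_G^{\rm{var}}$. This produces an injection
\[
\Hom_{\gp{k}}(H, \bGm) \to \ker(\Pic(X) \to \Pic(Y)) \subset \Pic(X),
\]
provided we check the hypothesis $\bGm(Y) = k^\times$.

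Verifying $\bGm(Y) = k^\times$ is the main obstacle. Our plan is to use the conical structure of $Y$. Since $G$ is split, choose a regular dominant cocharacter $\lambda$ of $T$. Conjugation by $\lambda(s)$ acts on $U$ and contracts it to $1$ as $s \to 0$. Then $G \times^B U \to Y$ is a proper surjection from a variety with $\bGm$-action, and every $\bGm$-orbit in $U$ has limit $1$.

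A unit $f$ on $Y$ pulls back to a unit on $G \times^B U$. Restricting to a fiber $U \cong \bA^N$ gives a unit on affine space, which is constant. Since every point of $Y$ is $G$-conjugate to a point of $U$ and the image of each fiber contains $1$, the function $f$ is $G$-invariant with value $f(1)$ everywhere. As $Y$ is reduced and irreducible, $f$ is constant.

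Alternatively, we can avoid the Springer resolution. $Y$ is integral, since it is the closure of a single orbit. If $\bGm(Y) \neq k^\times$, a nonconstant unit on $Y$ would restrict to a nonconstant unit on the affine space $U \subset Y$, a contradiction. This uses only that $Y = G\cdot U$ and that $U \cong \bA^N$ passes through every conjugacy class. We will make the first version precise by exploiting the density of $G \cdot U$ in $Y$ together with $G$-invariance.
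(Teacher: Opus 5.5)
Your proposal is correct and follows essentially the same route as the paper. The paper also obtains the $(\ker\pi_G)^0$-torsor $\sU_{\widetilde{G}}^{\rm{var}} \to \sU_G^{\rm{var}}$ from the stabilizer computation and then applies Lemma~\ref{lemma:descent}. It cites Theorem~\ref{theorem:stabilizer-of-Steinberg}, which at $t = 1$ is exactly your combination of Lemma~\ref{lemma:Springer-iso-6.4} with the last claim of Lemma~\ref{lemma:stability-equivalences}.

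The one real addition is that you justify $\bGm(\sU_{\widetilde{G}}^{\rm{var}}) = k^\times$, which the paper only asserts. Your first argument suffices: a unit is constant on every conjugate $gUg^{-1} \cong \bA^N$, each of these contains $1$, and the variety is reduced. The ``alternative'' as you phrased it also needs all the conjugates, since restricting to a single copy of $U$ does not by itself detect non-constancy.
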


\begin{proof}
By Theorem~\ref{theorem:stabilizer-of-Steinberg}, the map $\sU_{\widetilde{G}} \to \sU$ is a $\ker(\pi_G)^0$-torsor, and since $k[\sU_{\widetilde{G}}]^\times = k^\times$ the result follows from Lemma~\ref{lemma:descent}.
\end{proof}

\begin{lemma}\label{lemma:picard-normal-cone}
Let $k$ be a field, and let $C \subset \bA^n$ be a normal cone (i.e., a normal closed subscheme which is preserved by $\bA^1$-scaling). Then $\Pic(C) = 0$.
\end{lemma}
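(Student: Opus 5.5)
The plan is to use the $\bGm$-action by scaling, which gives $k[C]$ a nonnegative grading $R = \bigoplus_{d \geq 0} R_d$ with $R_0 = k$. Here $C$ is nonempty and contains the origin, since it is a cone: it is closed and stable under $t \to 0$. Normality implies $C$ is integral, because a normal ring that is graded with $R_0 = k$ is connected, and a connected normal ring is a domain. So $\Pic(C)$ injects into $\Cl(C)$, and it suffices to show that every Weil divisor whose class is locally principal is principal.

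The cleanest route is to compare $C$ with the total space of $\bA^1$-scaling. Consider the action map $m\colon \bA^1 \times C \to C$, $(t, x) \mapsto tx$, together with the inclusions $i_1, i_0\colon C \to \bA^1 \times C$ at $t=1$ and $t=0$. Then $m \circ i_1 = \id_C$, while $m \circ i_0$ is the constant map to the origin. Take $\sL \in \Pic(C)$, and put $\sM = m^*\sL$ on $\bA^1 \times C$. We get $i_1^*\sM \cong \sL$, and $i_0^*\sM$ is the pullback of a line bundle on a point, hence trivial.

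The key step is therefore \emph{homotopy invariance}: for normal $C$, every line bundle on $\bA^1 \times C$ is pulled back from $C$. Granting this, write $\sM \cong p^*\sN$. Then $i_1^*\sM \cong \sN \cong i_0^*\sM$, so $\sL$ is trivial. Homotopy invariance of $\Pic$ holds for seminormal rings (Traverso–Swan), and in particular for normal rings. For a self-contained argument in the normal case, I would use the fact that the class group of a noetherian normal domain $R$ satisfies $\Cl(R[t]) \cong \Cl(R)$, via Nagata/Gauss: the divisors pulled back from $\Spec R$ together with the divisors meeting the generic fiber, where the generic fiber $\Spec \mathrm{Frac}(R)[t]$ is a PID. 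This gives $\Pic(R[t]) = \Pic(R)$, since $\Pic$ embeds in $\Cl$ compatibly and a locally principal divisor on $R[t]$ that comes from $\Cl(R)$ comes from a locally principal divisor on $R$. The last point can be checked by restricting along $t=0$.

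The main obstacle I expect is exactly this compatibility: showing that the identification $\Cl(R[t]) = \Cl(R)$ sends $\Pic(R[t])$ onto $\Pic(R)$. The alternative is to cite Swan's theorem directly. A second, purely graded approach would avoid it. A graded normal domain with $R_0 = k$ has the property that every finitely generated projective module of rank one is graded-isomorphic to a shift, by graded Nakayama at the irrelevant ideal. One then shows a line bundle admits a $\bGm$-linearization. That route requires linearization (normality is used there too), so I would go with the $\bA^1$-homotopy argument above.
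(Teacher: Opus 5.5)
Your argument is correct, but it takes a different route from the paper's.

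The paper proceeds in four steps:
\begin{enumerate}
\item It blows up $C$ at the vertex and identifies the blow-up $X$ with the total space of a line bundle over $P = (C - \{0\})/\bGm$.
\item It uses homotopy invariance for that line bundle (Fulton's Chow-group statement), together with the zero section (the exceptional divisor), to get $\Pic(P) \cong \Pic(X)$.
\item It deduces that the pullback of $\sL$ to $X$ is trivial, since its restriction to the exceptional divisor comes from a point.
\item It uses normality, namely $\Pic(C) \hookrightarrow \Cl(C)$ and the vertex having codimension $\geq 2$ (after treating $\dim C \leq 1$ separately), to pass from $C - \{0\}$ back to $C$.
\end{enumerate}

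You instead use the scaling map $m\colon \bA^1 \times C \to C$ as an algebraic contraction onto the vertex. You then apply homotopy invariance of $\Pic$ to the affine cone itself, so $\sL \cong i_1^*m^*\sL \cong i_0^*m^*\sL$ is trivial.

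Your route is shorter and more general. It needs no blow-up, no dimension case split, and no extension across a codimension-$2$ point. It works verbatim for cones under positive-weight $\bGm$-actions. Via Traverso--Swan it also works for seminormal cones, where $\Pic \to \Cl$ is unavailable, so your preliminary reduction to the integral case is not even needed. The price is that you rely on homotopy invariance of $\Pic$ for normal affine rings, rather than the more standard homotopy invariance of Chow groups of vector bundles.

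The compatibility step you flag is not a real obstacle, and your proposed fix works. Suppose an invertible ideal of $R[t]$ equals $f \cdot I[t]$ with $I$ a divisorial ideal of $R$, which is what $\Cl(R) \cong \Cl(R[t])$ provides. Then $I[t]$ is invertible, hence so is $I \cong I[t] \otimes_{R[t]} R[t]/(t)$. The paper's appeal to a Chow-group result for the surjectivity of $\Pic(P) \to \Pic(X)$ tacitly needs the same passage from $\Cl$ to $\Pic$, handled the same way via the zero section.
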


\begin{proof}
If $\dim C \leq 1$ then the result is clear because either $C = \{0\}$ or $C \cong \bA^1$, so assume $\dim C \geq 2$ Let $\sL$ be a line bundle on $C$, and let $\pi: X \to C$ denote the blowup of $C$ at the cone point $0$. Let $P = (C - \{0\})/\bGm$, so $P$ is a projective variety. Note that $C$ is the affine cone of $P \subset \bP^{n-1}$, so by \cite[II, Remarque 8.7.8]{EGA} there is a map $X \to P$ realizing $X$ as the total space of the vector bundle $\sO_P(1)$. The pullback map $\Pic(P) \to \Pic(X)$ is surjective by \cite[Proposition 1.9]{Fulton}, and it is injective because it has a section coming from the exceptional divisor of $\pi$. Note that $(\pi^*\sL)|_{\pi^{-1}(0)} = \pi^*(\sL|_{0})$ is trivial, so this shows that $\pi^*\sL$ is also trivial. Thus $\sL$ is trivial in $\Pic(C - \{0\})$. But the restriction map $\Pic(C) \to \Pic(C - \{0\})$ is injective by normality of $X$ (since $\Pic(C) \to \Cl(C)$ is injective and $0$ is of codimension $\geq 2$), so indeed $\sL$ is trivial.
\end{proof}

\begin{theorem}\label{theorem:no-springer-iso}
Let $G$ be a split connected reductive group over a field $k$ of characteristic $p > 0$. If $p$ divides $|\pi_1(\sD(G))|$, then $\sU_G^{\rm{var}}$ and $\sN_G^{\rm{var}}$ are not isomorphic as $k$-schemes.
\end{theorem}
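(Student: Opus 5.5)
We may assume $k$ is algebraically closed only with care, since the statement is about $k$-schemes. The plan is to compare Picard groups, so we keep $k$ and use splitness. The argument splits into two cases according to whether $\sN_G^{\rm{var}}$ is normal.

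\emph{Case 1: $\sN_G^{\rm{var}}$ is not normal.} By Corollary~\ref{cor:normal}, applied with $\sigma = 1$ and $g$ a regular unipotent element, the closure $\ov{C}_u = \sU_G^{\rm{var}}$ is normal. Normality is an intrinsic property of a scheme, so in this case $\sU_G^{\rm{var}} \not\cong \sN_G^{\rm{var}}$ immediately. Two points need care here.
\begin{itemize}
\item We must identify $\sU_G^{\rm{var}}$ with the closure of the regular unipotent class. This follows from the density of the regular orbit together with the description of Steinberg fibers used in Lemma~\ref{lemma:stability-equivalences}.
\item Corollary~\ref{cor:normal} is stated over $\ov{k}$. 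Normality descends from $\ov{k}$ to $k$ by faithfully flat descent, so this causes no trouble.
\end{itemize}

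\emph{Case 2: $\sN_G^{\rm{var}}$ is normal.} The variety $\sN_G^{\rm{var}}$ is a closed subscheme of the vector space $\fg$ stable under $\bA^1$-scaling, since the nilpotent locus is conical. So $\sN_G^{\rm{var}}$ is a normal cone, and Lemma~\ref{lemma:picard-normal-cone} gives $\Pic(\sN_G^{\rm{var}}) = 0$.

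On the other side, Lemma~\ref{lemma:unip-pic} shows that $\Pic(\sU_G^{\rm{var}})$ contains $\Hom_{\gp{k}}((\ker \pi_G)^0, \bGm)$. Since $p$ divides $|\pi_1(\sD G)|$, the group $(\ker\pi_G)^0$ is a nontrivial infinitesimal group of multiplicative type. Because $G$ is split, $\ker\pi_G$ is a split diagonalizable group, Cartier dual to the constant group $\pi_1(\sD G)$. Hence $(\ker\pi_G)^0$ is a product of groups $\mu_{p^e}$ and has nontrivial character group $\Hom((\ker\pi_G)^0,\bGm) \cong \pi_1(\sD G)[p^\infty] \neq 0$. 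Therefore $\Pic(\sU_G^{\rm{var}}) \neq 0 = \Pic(\sN_G^{\rm{var}})$, and the two varieties are not isomorphic.

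\emph{Main obstacle.} The delicate step is justifying the hypotheses of Lemma~\ref{lemma:unip-pic} over a possibly non-closed $k$. Two inputs are required.
\begin{itemize}
\item The map $\sU_{\widetilde G}\to\sU_G^{\rm{var}}$ must be a $(\ker\pi_G)^0$-torsor over $k$. Here I would check that Theorem~\ref{theorem:stabilizer-of-Steinberg} and Lemma~\ref{lemma:stability-equivalences} descend from $\ov{k}$, using splitness so that all objects are defined over $k$.
\item We need $k[\sU_{\widetilde G}]^\times = k^\times$. This follows because $\sU_{\widetilde G}$ is a normal cone: it is isomorphic to the nilpotent cone after a Springer isomorphism when $p$ is good. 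More intrinsically, it is a $\bGm$-stable affine variety with a contracting action to the identity coming from a regular cocharacter, and such varieties have only constant units.
\end{itemize}
I would present the units argument via the contracting $\bGm$-action, since that argument is valid in all characteristics.
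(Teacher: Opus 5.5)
Your proposal is correct and is essentially the paper's proof. Corollary~\ref{cor:normal} gives normality of $\sU_G^{\rm{var}}$, which settles the case where $\sN_G^{\rm{var}}$ is not normal; otherwise $\Pic(\sN_G^{\rm{var}}) = 0$ by Lemma~\ref{lemma:picard-normal-cone}, while $\Pic(\sU_G^{\rm{var}}) \neq 0$ by Lemma~\ref{lemma:unip-pic}. That lemma is already stated for split $G$ over an arbitrary field, so your ``main obstacle'' needs no extra work.

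One correction to that side discussion concerns $k[\sU_{\widetilde G}]^\times = k^\times$. Conjugation by a regular cocharacter does not contract $\sU_{\widetilde G}$ to $1$, since only the points of $U$ have a limit as $t \to 0$, so it cannot justify this equality. A correct argument, working over $\ov{k}$: a unit restricts to a constant on each $gUg^{-1} \cong \bA^N$, and these all contain $1$ and cover the reduced scheme $\sU_{\widetilde G}$.
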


\begin{proof}
Let $\sU = \sU_G^{\rm{var}}$ and $\sN = \sN_G^{\rm{var}}$. By Corollary~\ref{cor:unip-small-char}, $\sU$ is normal, so if $\sN$ is \textit{not} normal, it is certainly not isomorphic to $\sU$. Now if $\sN$ \textit{is} normal, then Lemma~\ref{lemma:picard-normal-cone} shows that $\Pic(\sN) = 0$. On the other hand, since $(\ker \pi_G)^0 \neq 0$ by assumption, Lemma~\ref{lemma:unip-pic} shows that $\Pic(\sU) \neq 0$.
\end{proof}

\begin{remark}\label{remark:guess-at-full-pic}
	It seems likely that $\Pic(\sU_G^{\rm{var}}) \cong \Hom_{k\textrm{-}\mathrm{gp}}((\ker \pi_G)^0, \bGm)$ in general, and an earlier version of this paper made this claim. However, the calculation relied on the claim that in Lemma~\ref{lemma:descent}, if one takes $H$ to be diagonalizable, then the map $i$ of that lemma is an isomorphism. This is erroneous, as Remark~\ref{remark:alpha2-torsor} shows.
\end{remark}

\section{Application to the moduli space of L-parameters}\label{section:moduli}

In this section, we prove Theorem~\ref{theorem:intro-l-param}. We begin with some generalities.

\subsection{Some generalities}

Let $S$ be a scheme, let $G$ be a finitely presented $S$-group scheme, let $X$ be a finitely presented $S$-scheme equipped with an action of $G$, and let $\sigma\colon X \to X$ be a $G$-equivariant $S$-morphism. Define
\[
\sA_{G, X, \sigma}\coloneqq \{(g, x) \in G \times_S X\colon\, g \cdot x = \sigma(x)\},
\]
a finitely presented $S$-scheme. We will write $\sA_{G, X}$ in place of $\sA_{G, X, \id_X}$ for simplicity.

\begin{example}
    Schemes of the form $\sA_{G, X, \sigma}$ come up in at least the following two settings.
    \begin{enumerate}
        \item If $G$ is a linear algebraic group over a field $k$, and $G$ acts on itself by conjugation, then $\sA_{G, G}$ is the \textit{commuting scheme} of $G$.
        \item Let $q > 1$ be a prime power, let $G$ be a reductive group scheme over a base scheme $S$ on which $q$ is invertible, and let $G$ act on itself by conjugation. If $[q](g) = g^q$, then $\sA_{G, G, [q]}$ is the moduli space of tame representations of the ``discretized Weil group" of a local field $F$ with residue cardinality $q$ (as in \cite{DHKM}).\footnote{Of course, this can also be jazzed up to include spaces of L-parameters, and we will do this below.}
    \end{enumerate}
\end{example}

In the following subsection, we will take $X = G$ and $\sigma(g) = z \cdot g^q$ for some $z \in Z(G)(S)$. However, for the sake of future reference, we prefer to work in the above generality, and we will prove slightly more than we need. The citations in the lemma statements below record that each statement generalizes a recent result concerning moduli spaces of L-parameters, at least once one restricts attention to tame L-parameters. (The results of \cite[\S 4.1]{DHKM} show that one can often reduce to this case.)

\begin{lemma}\label{lemma:irr-comps-of-l-param}
    Suppose $S = \Spec k$ for an algebraically closed field $k$. The $k$-scheme $\sA_{G, X, \sigma}$ is equidimensional of dimension $\dim(G)$ if and only if there are only finitely many $\sigma$-stable $G$-orbits in $X$. If this is the case, then
    \begin{enumerate}
        \item \cite[Corollary 2.4]{DHKM}, \cite[Corollary 2.7]{Shotton-irreducible} the irreducible components of $\sA_{G, X, \sigma}$ are parameterized by $G^0$-orbits of pairs $(x, C)$, where $x \in X(k)$ and $C$ is a connected component of $\Transp_G(x, \sigma(x))$.
        \item \cite[Remark 6.23]{Booher-Tang} every point $(g, x) \in \sA_{G, X, \sigma}(k)$ such that $\Stab_G(x)_{\rm{red}}$ is normal in $\Stab_G(x)$ lies in an irreducible component $D$ whose underlying reduced subscheme is smooth at $(g, x)$.
    \end{enumerate}
\end{lemma}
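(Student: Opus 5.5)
The plan is to study $\sA = \sA_{G,X,\sigma}$ through the second projection $p\colon \sA \to X$, $(g,x) \mapsto x$. Its fiber over $x \in X(k)$ is $\Transp_G(x,\sigma(x))$. This transporter is nonempty exactly when $\sigma(x) \in G\cdot x$, that is, when the orbit $G\cdot x$ is $\sigma$-stable; recall that $\sigma$ is $G$-equivariant, so it maps orbits to orbits. When nonempty, the fiber is a left torsor under $\Stab_G(\sigma(x))$, or equivalently a right torsor under $\Stab_G(x)$, so it has dimension $\dim \Stab_G(x)$.

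Consequently, over each $\sigma$-stable orbit $O = G\cdot x$, the preimage $p^{-1}(O)$ has dimension $\dim O + \dim \Stab_G(x) = \dim G$. Since $p^{-1}(O)$ is $G$-equivariant over a homogeneous base, the fibers are all isomorphic, which justifies this count. The image of $p$ is the union $X^{\sigma\text{-st}}$ of the $\sigma$-stable orbits. If there are only finitely many such orbits, then $\sA$ is a finite union of locally closed pieces of dimension $\dim G$, so $\dim \sA \le \dim G$.

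For the reverse inequality I would use the standard local lower bound. The scheme $\sA$ is the preimage of the diagonal under $G \times X \to X \times X$, $(g,x) \mapsto (g\cdot x, \sigma(x))$. Here I expect the main obstacle. The naive bound $\dim G + \dim X - \dim X$ applies only if $X$ is smooth (or lci). I would first try to rewrite $\sA$ as the fiber over the diagonal of a morphism to $X \times X$ and invoke the lower bound for intersections with a diagonal in the smooth case. If $X$ is not smooth, the best I can do is fall back on the observation that each $p^{-1}(O)$ is pure of dimension $\dim G$. Every irreducible component $D$ of $\sA$ has a generic point lying over some orbit $O$, and $D \cap p^{-1}(O)$ is open dense in $D$. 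It remains to rule out components of smaller dimension. For this I would use the fact that $p^{-1}(O)$ is $G$-homogeneous over $O$, so each of its irreducible components has dimension exactly $\dim G$. This shows $\sA$ is equidimensional of dimension $\dim G$. Conversely, if there were infinitely many $\sigma$-stable orbits, they would form a family of positive dimension in $X$. Then some irreducible constructible subset of $X^{\sigma\text{-st}}$ would consist of orbits of a fixed dimension varying in a positive-dimensional family, and its preimage would have dimension $> \dim G$.

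For (1), the components correspond to the pieces $p^{-1}(O)$ whose closures are not contained in other pieces. Under equidimensionality every piece has full dimension $\dim G$, so every irreducible component of every piece gives an irreducible component of $\sA$. The irreducible components of $p^{-1}(O) \cong G \times^{\Stab_G(x)} \Transp_G(x,\sigma(x))$ are indexed by $G^0$-orbits of pairs $(x,C)$, where $C$ is a connected component of the transporter. For (2), given a point $(g,x)$, I would choose the component $D$ coming from the connected component $C$ of $\Transp_G(x,\sigma(x))$ containing $g$. Then $D_{\rm red}$ contains $p^{-1}(O)_{\rm red}$ restricted to $C$ as an open subset. Its local structure there is $G\times^{\Stab_G(x)_{\rm red}} C_{\rm red}$, and this requires normality of $\Stab_G(x)_{\rm red}$: that is what makes $C_{\rm red}$ a torsor under $\Stab_G(x)_{\rm red}$, so that the piece is smooth. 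Smoothness is then read off from the homogeneous fibration over the smooth orbit $O$.
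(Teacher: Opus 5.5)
\paragraph{Equidimensionality and (1).} This part follows the paper. You project to $X$, note that over each $\sigma$-stable orbit $O$ the piece $p^{-1}(O)\cong G\times^{\Stab_G(x)}\Transp_G(x,\sigma(x))$ is pure of dimension $\dim G$, and read off the components from the components of the transporter. Your fallback argument is exactly what is needed; no diagonal lower bound is required.

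For ``equidimensional $\Rightarrow$ finitely many orbits'' you build a positive-dimensional family of orbits. The paper's argument is shorter: the pieces $p^{-1}(O)$ are pairwise disjoint, locally closed, and of dimension $\dim G=\dim\sA$. So each contains the generic point of a different irreducible component of $\sA$, and there are only finitely many of those. Your route also works once the family is made precise, e.g.\ by stratifying the constructible set $p(\sA)$ by stabilizer dimension.

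\paragraph{The gap in (2).} Write $H=\Stab_G(x)$ and $T=\Transp_G(x,\sigma(x))$. Your local model $G\times^{H_{\mathrm{red}}}C_{\mathrm{red}}$ for $p^{-1}(O)_{\mathrm{red}}$ is wrong in general. Also, the property you attribute to normality holds without it: $C_{\mathrm{red}}=h_1(H_{\mathrm{red}})^0$ for any $h_1\in C(k)$, which is a torsor whether or not $H_{\mathrm{red}}\trianglelefteq H$.

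The natural map $G\times^{H_{\mathrm{red}}}T_{\mathrm{red}}\to p^{-1}(O)_{\mathrm{red}}$ is in general only a radicial surjection, not an isomorphism. For $\sigma=\id$ and $H=\alpha_p$ it is $G\to G/\alpha_p$. So smoothness of its source gives nothing.

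Consequently your argument would equally ``prove'' (2) in the following situation, where (2) fails. Take $G=\bGm\ltimes\bGa$ with $(t,b)(t',b')=(tt',b+tb')$ and $H=\{(t,b):b^p=0\}$, so $H_{\mathrm{red}}=\bGm$ is not normal in $H$. Let $X=G/H\cong\bA^1$ via $(t,b)\mapsto b^p$, so $(s,d)\cdot e=d^p+s^pe$, and let $\sigma=\id$. Then $\sA=\{(s,d,e):d^p=(1-s)^pe\}$ is integral of dimension $2$ but singular at every point with $s=1$. Yet $G\times^{H_{\mathrm{red}}}T_{\mathrm{red}}\cong\bA^1\times\bGm$ is smooth and maps onto it via $(b,t)\mapsto(t,(1-t)b,b^p)$.

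\paragraph{What is needed.} You must identify the reduced structure of $p^{-1}(O)\cong G\times^HT$, with $H$ acting on $T$ by conjugation; this is where normality enters.
\begin{enumerate}
\item After replacing $G$ by $G_{\mathrm{red}}$ (which changes neither $\sA_{\mathrm{red}}$ nor the hypothesis), $G$ is smooth. Then $p^{-1}(O)_{\mathrm{red}}=G\times^HZ$, where $Z$ is the smallest $H$-stable closed subscheme of $T$ containing $T_{\mathrm{red}}$.
\item Pick $h_0\in T(k)$. Equivariance of $\sigma$ gives $H\subseteq\Stab_G(\sigma(x))=h_0Hh_0^{-1}$. Hence $h_0$ normalizes $H$ and $K:=H_{\mathrm{red}}$, and $T_{\mathrm{red}}=h_0K$.
\item If $K\trianglelefteq H$, then $H'=\{s\in H: h_0^{-1}sh_0s^{-1}\in K\}$ is the stabilizer of $h_0K$. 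Moreover $Z=H\times^{H'}h_0K$ is the preimage in $T$ of a finite, hence closed, orbit in $H/K$.
\item Therefore $p^{-1}(O)_{\mathrm{red}}\cong G\times^{H'}h_0K$. This is smooth, being flat with smooth fibers over the smooth scheme $G/H'$.
\end{enumerate}
Since $H'$ can be anything between $K$ and $H$, neither your model (the case $H'=K$) nor a bundle over $O$ with fiber $T_{\mathrm{red}}$ (the case $H'=H$) is correct in general.

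The paper's own one-line justification is that $\pi^{-1}(Y)\to Y$ is flat with fibers $\Stab_G(x)$. That only suffices literally when $\Stab_G(x)$ is smooth, so you cannot fill this step by importing it.
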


\begin{proof}
    Let $\pi\colon \sA_{G, X, \sigma} \to X$ be the projection map $(g, x) \mapsto x$, so the image of $\pi(k)$ is the set of elements of $X(k)$ with $\sigma$-stable $G(k)$-orbit. Let $Y$ be a (smooth) $\sigma$-stable locally closed $G$-orbit in $X$. The map $\pi^{-1}(Y) \to Y$ is visibly flat, being $G$-equivariant with respect to the action $g \cdot (h, y) = (ghg^{-1}, g \cdot y)$ on the left side, so because the fiber $\pi$ over a point $x \in Y(k)$ is isomorphic to $\Stab_G(x)$, we see that $\pi^{-1}(Y)$ is equidimensional and $\dim \pi^{-1}(Y) = \dim Y + \dim \Stab_G(x) = \dim G$. Moreover, if $Y' \neq Y$ is another $\sigma$-stable locally closed $G$-orbit in $X$, then $\pi^{-1}(Y) \cap \pi^{-1}(Y') = \emptyset$. Since $\sA_{G, X, \sigma}$ is of finite type over $k$, if it is of dimension $\dim G$ then there can only be finitely many such $Y$, i.e., there are only finitely many $\sigma$-stable $G$-orbits in $X$. Conversely, if $Y_1, \dots, Y_n$ are the finitely many $\sigma$-stable $G$-orbits in $X$, then $\sA_{G, X, \sigma} = \bigcup_{i=1}^n \pi^{-1}(Y_i)$, so $\sA_{G, X, \sigma}$ is equidimensional of dimension $\dim G$.

    Now suppose $\sA_{G,X,\sigma}$ is of pure dimension $\dim G$. For dimension reasons, the above paragraph shows that the irreducible components of $\sA_{G, X, \sigma}$ are parameterized by a $\sigma$-stable $G$-orbit $Y \subset X$ and an irreducible component of $\pi^{-1}(Y)$. If $x \in Y(k)$ and $C$ is a component of $\Transp_G(x, \sigma(x))$, then there is a natural map $\alpha\colon G^0_{\rm{red}} \times C \to \pi^{-1}(Y)$ given by $(g, h) \mapsto (ghg^{-1}, g \cdot x)$. The set-theoretic image $\im\alpha$ of $\alpha$ is constructible by Chevalley's theorem, so it contains a dense locally closed subscheme $D$. Since $G^0_{\rm{red}} \times C$ is irreducible, it follows that $D$ is irreducible, and it is clearly of dimension $\dim G$. Thus $D$ is dense in an irreducible component of $\pi^{-1}(Y)$, and it is easy to see that every irreducible component of $\pi^{-1}(Y)$ can be obtained in this way. This shows (1).
    
    For (2), note that if $Y$ is the $\sigma$-stable $G$-orbit in $X$ containing $x$, then $\pi^{-1}(Y)$ is smooth: indeed, the map $\pi^{-1}(Y) \to Y$ is flat with fibers isomorphic to the smooth $k$-scheme $\Stab_G(x)$. The closure of $\pi^{-1}(Y)$ is a union of (underlying reduced subschemes of) irreducible components of $\sA_{G, X, \sigma}$, and $\pi^{-1}(Y)$ is open in $\ov{\pi^{-1}(Y)}$. Thus $(g, x)$ is a smooth point in some reduced irreducible component of $\sA_{G, X, \sigma}$, and (2) follows.
\end{proof}

\begin{lemma}\label{lemma:flatness-of-l-param}
    \cite[Corollary 2.5]{DHKM}, \cite[Proposition 3.1.6]{Zhu} Suppose $S$ is irreducible, $G$ is $S$-flat, $X$ is $S$-smooth, and for each algebraically closed field $k$ over $S$, there are only finitely many $\sigma$-stable $G_k$-orbits in $X_k$. Then $\sA_{G, X, \sigma}$ is $S$-flat with lci fibers.
\end{lemma}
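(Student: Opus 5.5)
The plan is to reduce to the field case via the fibral flatness criterion, and there to combine a dimension count with miracle flatness. Locally on $X$ and $S$ we may assume $S$ is affine and everything is of finite presentation. Standard limit arguments then let us assume $S$ is noetherian. $\sA_{G, X, \sigma}$ is the closed subscheme of $G \times_S X$ cut out as the fiber product of the two maps $a, \sigma \circ \mathrm{pr}_2\colon G \times_S X \to X$, where $a$ is the action map. Equivalently, it is the preimage of the diagonal $\Delta_X \subset X \times_S X$ under
\[
\varphi\colon G \times_S X \to X \times_S X, \quad (g, x) \mapsto (g \cdot x, \sigma(x)).
\]
Since $X$ is $S$-smooth of some relative dimension $d$ (locally), $\Delta_X$ is a local complete intersection in $X \times_S X$ of codimension $d$, cut out locally by $d$ equations. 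Hence $\sA_{G, X, \sigma}$ is locally cut out of $G \times_S X$ by $d$ equations.

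Next I would work fiberwise. For each geometric point $\bar s$ of $S$ with algebraically closed residue field $k$, Lemma~\ref{lemma:irr-comps-of-l-param} and the finiteness hypothesis on $\sigma$-stable orbits give that $(\sA_{G, X, \sigma})_{\bar s}$ is equidimensional of dimension $\dim G_{\bar s}$. On the other hand, it is cut out by $d$ equations from $G_{\bar s} \times X_{\bar s}$, which has dimension $\dim G_{\bar s} + d$. So the number of equations equals the codimension. When $G_{\bar s}$ is smooth, so that $G_{\bar s} \times X_{\bar s}$ is regular, the local equations form a regular sequence and the fiber is lci. In general $G$ is only flat, and then I would argue that $G_{\bar s} \times X_{\bar s}$ is Cohen--Macaulay. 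If that is not available in this generality, I would prove the lci claim under the extra hypothesis that $G$ has lci fibers, which always holds over a field since group schemes of finite type over a field are lci.

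For flatness, I would apply the local criterion of flatness (the "slicing" criterion) to the flat $S$-scheme $G \times_S X$. Take a point $y$ of $\sA_{G, X, \sigma}$ over $s \in S$, and local equations $f_1, \dots, f_d$ at $y$. If the images of the $f_i$ form a regular sequence in $\sO_{(G \times_S X)_s, y}$, then the quotient by $(f_1, \dots, f_d)$ is $S$-flat and its fiber is the quotient of the fiber; this is EGA IV 11.3.8. The fibral regular sequence property follows from the previous paragraph. The fiber ring is Cohen--Macaulay, being lci since $G_s$ and $X_s$ are lci, and cutting by $d$ elements drops the dimension by exactly $d$, so the elements form a regular sequence. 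Irreducibility of $S$ is presumably used only to make the relative dimension of $G$ constant; otherwise one argues locally. Note that the fiberwise dimension count must be done over geometric points, but regular sequences and dimensions are insensitive to field extension, so this is harmless.

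The main obstacle is the second step in the non-smooth case, namely getting the Cohen--Macaulay property of $G_s \times X_s$ at every point. I would handle it by invoking that finite type group schemes over a field are lci (SGA3 / Demazure--Gabriel). Then the product with the smooth $X_s$ is lci, hence Cohen--Macaulay, and the dimension count does the rest.
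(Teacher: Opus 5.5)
Your proposal is correct and follows the same route as the paper. In both, $\sA_{G,X,\sigma}$ is locally cut out of the flat $S$-scheme $G \times_S X$ by the $\dim X$ equations pulled back from the diagonal $\Delta_X$, and the fiber dimension bound from Lemma~\ref{lemma:irr-comps-of-l-param} then yields flatness and lci fibers; your use of the slicing criterion (EGA IV 11.3.8) and of the fact that finite type group schemes over a field are lci (hence $G_s \times X_s$ is Cohen--Macaulay) makes explicit what the paper compresses into ``it is enough to show,'' and your preliminary reduction to noetherian $S$ is unnecessary (and would need care in descending the orbit-finiteness hypothesis), since the slicing criterion already applies to finitely presented morphisms.
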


\begin{proof}
    Note that there is a Cartesian diagram
    \[
    \begin{tikzcd}
        \sA_{G, X, \sigma} \arrow[r, "{(g, x)} \mapsto x"] \arrow[d]
            &X \arrow[d, "\Delta_X"] \\
        G \times_S X \arrow[r]
            &X \times_S X
    \end{tikzcd}
    \]
    where the bottom horizontal map is given by $(g, x) \mapsto (\sigma(x), g \cdot x)$. Because $X$ is $S$-smooth, the map $\Delta_X$ is a locally closed embedding which is locally defined by the vanishing of $\dim X$ functions. Thus $\sA_{G, X, \sigma}$ is a locally closed subscheme of $G \times_S X$ which is locally defined by the vanishing of $\dim X$ functions, so it is enough to show that for each algebraically closed field $k$ over $S$, the fiber $(\sA_{G, X, \sigma})_k$ is of dimension $\leq \dim G_k$. This is proven in Lemma~\ref{lemma:irr-comps-of-l-param}.
\end{proof}

Given a $\sigma$-stable $G$-orbit $\cO \subset X$, let $\sA_{G,X,\sigma,\cO}$ denote the locally closed subscheme $\pi^{-1}(\cO)$ of $\sA_{G,X,\sigma}$, where $\pi\colon \sA_{G,X,\sigma} \to X$ is the projection map. As in the proof of Lemma~\ref{lemma:irr-comps-of-l-param}, if there are only finitely many $\sigma$-stable $G$-orbits in $X$ then each connected component of $\pi^{-1}(\cO)$ is (topologically) open in an irreducible component of $\sA_{G,X,\sigma}$.

\subsection{L-parameters}\label{ss:l-param}

Let $F$ be a local field of residue cardinality $q$, let $G$ be a reductive group scheme over a base scheme $S$ in which $q$ is invertible, and suppose that $G$ is equipped with a finite tame action of the Weil group $W_F$ preserving a Borel pair $(B, T)$. Let $\rm{Fr}$ be a lift of Frobenius, and let $\sigma$ be a lift of a generator of tame inertia. Let $\mu$ be a finite flat closed $S$-subgroup scheme of $Z(G)^\sigma$, and define the $S$-scheme $\sX_{G, \mu}$ by
\[
\sX_{G, \mu} = \{(\Phi, \Sigma) \in G/\mu \times G\colon\, \Phi\cdot \prescript{\rm{Fr}}{}{\Sigma} \cdot \prescript{\sigma^q}{}{\Phi^{-1}} \equiv \prod_{i=0}^{q-1} \prescript{\sigma^i}{}{\Sigma} \pmod{\mu}\}.
\]
If $z \in \mu(S)$, then define
\[
\sX_{G, \mu, z} = \{(\Phi, \Sigma) \in G/\mu \times G\colon\, z\cdot\Phi\cdot \prescript{\rm{Fr}}{}{\Sigma} \cdot \prescript{\sigma^q}{}{\Phi^{-1}} = \prod_{i=0}^{q-1} \prescript{\sigma^i}{}{\Sigma}\}.
\]
In the notation of the previous section, we have $\sX_{G, \mu, z} = \sA_{G/\mu, G, \tau}$, where $G/\mu$ acts on $G$ by $g \cdot h = g \cdot \prescript{\rm{Fr}}{}{h} \cdot \prescript{\sigma^q}{}{h}^{-1}$ and $\tau(g) = z^{-1} \cdot \prod_{i=0}^{q-1} \prescript{\sigma^i}{}{g}$. There are only finitely many $\tau$-stable $G/\mu$-orbits in $G$, so Lemma~\ref{lemma:irr-comps-of-l-param} and Lemma~\ref{lemma:flatness-of-l-param} show that $\sX_{G, \mu, z}$ is $S$-flat and lci of the same relative dimension as $G$.

Let $\sX_G = \sX_{G, 1, 1}$, so $\sX_G$ is the moduli space of L-parameters valued in $\prescript{L}{}{G} = G \rtimes W_0$, where $W_0$ is a chosen finite quotient of $W_F$ through which the action factors. We view $\sX_{G, \mu, z}$ as a ``twisted version" of this moduli space and we will relate it to $\sX_{G/\mu}$ using Lemma~\ref{lemma:multiplicity-behavior}. Note that there is a Cartesian diagram
\[
\begin{tikzcd}
    \sX_{G, \mu} \arrow[d] \arrow[r]
        &\sX_{G/\mu} \arrow[d, "{(\Phi, \Sigma)} \mapsto \Sigma"] \\
    G \arrow[r]
        &G/\mu
\end{tikzcd}
\]
so $\sX_{G,\mu} \to \sX_{G/\mu}$ is a $\mu$-torsor and thus $\sX_{G, \mu}$ is also $S$-flat and lci of the same relative dimension as $G$. If $S = \Spec K$ for a field $K$ of characteristic $0$, then \cite[Proposition 2.8]{DHKM} implies that $\sX_{G,\mu}$ is reduced since this is true of $\mu$ and $\sX_{G/\mu}$. For such an $S$, each scheme $\sX_{G,\mu,z}$ is an open subscheme of $\sX_{G,\mu}$, so $\sX_{G,\mu,z}$ is also reduced.



\begin{lemma}\label{lemma:finite-gen-iso}
    If $S = \Spec A$ for a domain $A$ with fraction field $K$ such that $\chara K = 0$ and $\mu(K) = \mu(\ov{K})$, then $\coprod_{z \in \mu(A)} \sX_{G, \mu, z} \to \sX_{G,\mu}$ is a finite surjective map whose $K$-fiber is an isomorphism.
\end{lemma}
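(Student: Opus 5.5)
Each $\sX_{G,\mu,z}$ is a closed subscheme of $G/\mu \times G$; so is $\sX_{G,\mu}$, and $\sX_{G,\mu,z}$ sits inside it as the locus where a specific representative of the congruence class holds. More precisely, define the morphism
\[
\delta\colon \sX_{G,\mu} \to \mu, \qquad (\Phi,\Sigma) \mapsto \Bigl(\prod_{i=0}^{q-1}\prescript{\sigma^i}{}{\Sigma}\Bigr)\cdot\bigl(\Phi\cdot\prescript{\rm{Fr}}{}{\Sigma}\cdot\prescript{\sigma^q}{}{\Phi^{-1}}\bigr)^{-1}.
\]
This is well defined: $\Phi$ only matters modulo $\mu$, and changing $\Phi$ by $m\in\mu$ multiplies the bracket by $m\,\prescript{\sigma^q}{}{m}^{-1} = 1$, since $\mu \subset Z(G)^\sigma$. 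It lands in $\mu$ by the defining congruence, and $\sX_{G,\mu,z} = \delta^{-1}(z)$ as closed subschemes. Each $\sX_{G,\mu,z}\to\sX_{G,\mu}$ is therefore a closed immersion, so the coproduct map is finite, since $\mu(A)$ is finite: $\mu$ is finite over a domain and $\mu(A)\hookrightarrow\mu(K)$.

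For the $K$-fiber, the hypothesis $\mu(K)=\mu(\ov{K})$ together with $\chara K=0$ gives that $\mu_K$ is étale and constant, i.e.\ $\mu_K = \coprod_{z\in\mu(K)}\Spec K$. Hence $(\sX_{G,\mu})_K = \coprod_{z\in\mu(K)}\delta_K^{-1}(z)$ is a disjoint union of open and closed pieces. I then need $\mu(A)=\mu(K)$. Since $\mu$ is finite over $A$, each point of $\mu(K)$ extends to an $A$-point exactly when $A$ is integrally closed. If $A$ is not assumed normal, I would instead use that the scheme-theoretic image of $\Spec K\to\mu$ is a closed subscheme finite and birational over $A$, and I would need to check how the statement is meant. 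Given that $\mu(K)=\mu(A)$ (for instance, with $A$ a DVR, as in the applications), the $K$-fiber of the map is the identity decomposition, hence an isomorphism.

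Surjectivity is the main obstacle. A point $x$ of $\sX_{G,\mu}$ in the special locus has $\delta(x)\in\mu$ lying over some prime of $A$, and I must show that $\delta(x)$ is the specialization of some $z\in\mu(A)$. Since $\mu$ is finite flat over $A$ and $\mu_K$ is constant, every point of $\mu$ lies in the closure of the generic points, which are the sections $z_K$. The closure of $z_K$ is the image of the section $z\in\mu(A)$, because $z$ is a closed immersion and its image is the flat closure. So the sections $z\in\mu(A)$ cover $\mu$ set-theoretically, and hence the $\delta^{-1}(z)$ cover $\sX_{G,\mu}$, which gives surjectivity. This step relies on flatness of $\mu$ and on extending $K$-points to $A$-points, and that extension is exactly where I expect the delicacy (normality of $A$) to arise.
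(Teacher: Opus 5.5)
Your argument is correct and is a careful unwinding of the definitions, which is all the paper offers (its proof reads ``This is clear from the definitions''); the needed points are exactly yours: the map $\delta$ to $\mu$, the closed-immersion argument for finiteness, the clopen decomposition of the $K$-fiber because $\mu_K$ is constant, and surjectivity from the density of the generic fiber of the flat scheme $\mu$. Your worry about normality points to a defect of the statement rather than of your proof: for non-normal $A$ the lemma genuinely fails, for example with $q=7$, trivial $W_F$-action, $G=\SL_3$ and $\mu=\mu_3$ over $A=\bZ[1/7][\sqrt{-3}]$, where $\mu_3(A)=\{1\}\neq\mu_3(K)$ while $\delta_K^{-1}(\zeta_3)$ contains $(\Phi,\diag(1,\zeta_3,\zeta_3^2))$ for a suitable permutation matrix $\Phi$, so the lemma should be read with $A$ normal, as in its applications where $A$ is a complete DVR.
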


\begin{proof}
    This is clear from the definitions.
\end{proof}

If $S = \Spec k$ for a field $k$, then for $g \in G(k)$ let $\sX_{G,\mu,z,g}$ denote the closed subscheme of $\sX_{G,\mu,z}$ consisting of those $(\Phi, \Sigma)$ such that $\Sigma$ lies in the $\sigma$-conjugacy class of $g$. Note that $\sX_{G,\mu,g}$ is of dimension $\dim G$ (as one can deduce from properties of the map $\sX_{G,\mu,g} \to G$, $(\Phi, \Sigma) \mapsto \Sigma$), so $(sX_{G,\mu,g})_\rm{red}$ is a union of connected components of $(\sX_{G,\mu})_{\rm{red}}$.

\begin{lemma}\label{lemma:unip-comp-mult}
    Suppose $S = \Spec k$ for a perfect field $k$, and let $g \in G(k)$ be a $\sigma$-regular element such that $Z_{G,\sigma^q}(\prescript{\rm{Fr}}{}{g})/Z(G)^\sigma$ is smooth. Then each irreducible component of the scheme
    $\sX_{G,\mu,z,g}$
    is of generic multiplicity $|Z_{G,\sigma^q}(\prescript{\rm{Fr}}{}{g})/(\mu \cdot Z_{G,\sigma^q}(\prescript{\rm{Fr}}{}{g})_{\rm{red}})|$.
\end{lemma}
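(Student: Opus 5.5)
We compute the generic multiplicity via the projection $\pi\colon \sX_{G,\mu,z,g} \to G$, $(\Phi,\Sigma)\mapsto \Sigma$, whose image is the $\sigma$-conjugacy class $C_{g,\sigma}$. We may pass to $\ov{k}$, since $k$ is perfect and generic multiplicity is insensitive to this base change. First we make $\pi$ equivariant. The group $G/\mu$ acts on $\sX_{G,\mu,z}$ through the $L$-parameter conjugation action, and this covers the twisted conjugation action $\Sigma \mapsto h\Sigma\sigma(h)^{-1}$ on $G$ (through a lift $h\in G$, which is well defined on $\Sigma$ because $\mu \subset Z(G)^\sigma$). Hence $\pi$ is equivariant, and $C_{g,\sigma}$ is a single smooth orbit. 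By generic flatness, equivalently by homogeneity, $\pi$ is flat over $C_{g,\sigma}$.

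Lemma~\ref{lemma:mult-flat-cover} then gives $\mu(D) = \mu(C_{g,\sigma})\cdot \mu(\pi^{-1}(g)\cap D)$ for each irreducible component $D$, and $\mu(C_{g,\sigma})=1$ since $C_{g,\sigma}$ is reduced. It remains to identify the fiber $F\coloneqq\pi^{-1}(g)$. Unwinding the defining equation with $\Sigma=g$, the fiber is the set of $\Phi\in G/\mu$ with $z\Phi\,{}^{\rm Fr}g\,{}^{\sigma^q}\Phi^{-1} = N(g)$, where $N(g)=\prod_{i=0}^{q-1}{}^{\sigma^i}g$. This is a torsor (when nonempty) under
\[
Z_{G,\sigma^q}({}^{\rm Fr}g)/\mu,
\]
a twisted centralizer taken modulo $\mu$. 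To check this, we verify that the difference of two solutions $\Phi,\Phi'$ satisfies $\Phi^{-1}\Phi'\,{}^{\rm Fr}g\,{}^{\sigma^q}(\Phi^{-1}\Phi')^{-1} = {}^{\rm Fr}g$. The point to handle carefully is that $\Phi$ only lives in $G/\mu$. This is harmless since $\mu$ is central and $\sigma$-fixed, so the expression is well defined after lifting. So over $\ov{k}$ the fiber $F$ is isomorphic to $H\coloneqq Z_{G,\sigma^q}({}^{\rm Fr}g)/\mu$.

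Each irreducible component $D$ meets $\pi^{-1}(C_{g,\sigma})$ in an open set whose fiber over $g$ is a union of connected components of $F\cong H$. By homogeneity under $H$, every irreducible component of $H$ has generic multiplicity $\mu(H^0)=|H^0/H^0_{\rm red}|=|H/H_{\rm red}|$. Here we use that $H$ is a group scheme over a perfect field, so $H_{\rm red}$ is a smooth subgroup and $H \to H/H_{\rm red}$ is an $H_{\rm red}$-torsor onto a finite connected scheme. Applying Lemma~\ref{lemma:mult-flat-cover} to this torsor gives $\mu(H^0)=|H/H_{\rm red}|$.

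Finally we compute $|H/H_{\rm red}|$ in terms of the centralizer. Write $Z' = Z_{G,\sigma^q}({}^{\rm Fr}g)$. The reduced subgroup of $Z'/\mu$ is the image of $(\mu\cdot Z'_{\rm red})$, provided $\mu\cdot Z'_{\rm red}$ is a subgroup (here we use that $\mu$ is central) and $(Z'/\mu)_{\rm red} = \mu Z'_{\rm red}/\mu$. This is where the smoothness hypothesis on $Z'/Z(G)^\sigma$ enters, and it is the main obstacle. From this hypothesis, $Z'_{\rm red}\cdot Z(G)^\sigma$ contains the reduced part of any quotient, and $Z'_{\rm red}$ is normal. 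The quotient $Z'/\mu Z'_{\rm red}$ is then infinitesimal, and $(Z'/\mu)/(\mu Z'_{\rm red}/\mu)\cong Z'/(\mu Z'_{\rm red})$. Its image in $Z'/\mu$ is smooth and has the same geometric points, so it equals $(Z'/\mu)_{\rm red}$. Hence $\mu(D)=|Z'/(\mu\cdot Z'_{\rm red})|$, as claimed.
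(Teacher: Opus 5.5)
There is a genuine gap where you pass from flatness of $\pi$ to $\mu(D)=\mu(\pi^{-1}(g)\cap D)$. Lemma~\ref{lemma:mult-flat-cover} does not give this. Applied to $\pi\colon D\to C_{g,\sigma}$, it compares $\mu(D)$ with the multiplicity of $\pi^{-1}((C_{g,\sigma})_{\mathrm{red}})=D$ at its generic point, which is vacuous; it says nothing about a closed fiber.

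Flatness plus homogeneity does not give the equality either. Take $x\mapsto x^p$ from $\bGm$, acting on itself by translation, to $\bGm$ with the transitive action $t\cdot y=t^py$. This map is equivariant and flat, and its source is reduced, yet the fiber over $1$ is $\mu_p$.

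What homogeneity does give comes from pulling back along the orbit map $G\to C_{g,\sigma}$, which is a torsor under the possibly non-reduced $Z_{G,\sigma}(g)$, and using $G\times_{C_{g,\sigma}}\sX\cong G\times F$. The result is
\[
\mu(F_i)=\mu(D)\cdot\mu\bigl(D_{\mathrm{red}}\times_{C_{g,\sigma}}\{g\}\bigr),
\]
where $F_i$ is a component of $F$ lying in $D$. So the missing point is that the fiber of $\sX_{\mathrm{red}}$ over $g$ is $F_{\mathrm{red}}$. Equivalently, $F_{\mathrm{red}}$ must be stable under the full scheme-theoretic stabilizer $Z_{G,\sigma}(g)$, acting on $F$ by $\Phi\mapsto h\Phi\,{}^{\mathrm{Fr}}h^{-1}$. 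Granting this, $G\times^{Z_{G,\sigma}(g)}F_{\mathrm{red}}$ is reduced with the same support as $\sX$, so it equals $\sX_{\mathrm{red}}$, and your count goes through. This invariance is the real content of the lemma. The paper never looks at the closed fiber; it controls the same thickening through the flat cover $m\colon (Z(G)^\sigma/\mu)^0\times\sX'_1\to\sX_1$ and its identification of $m^{-1}(\sX'_1)$.

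This invariance step is also where the smoothness of $Z'/Z(G)^\sigma$ belongs, not your last paragraph. The equality $(Z'/\mu)_{\mathrm{red}}=\mu Z'_{\mathrm{red}}/\mu$ holds for every $Z'$, because the right side is a smooth closed subgroup with the same points. Since ${}^{\mathrm{Fr}}Z_{G,\sigma}(g)=Z'$, the hypothesis gives $Z_{G,\sigma}(g)=Z(G)^\sigma\cdot Z_{G,\sigma}(g)_{\mathrm{red}}$. The reduced factor preserves $F_{\mathrm{red}}$ automatically, since its product with $F_{\mathrm{red}}$ is reduced. An element $c\in Z(G)^\sigma$ acts by left translation by $c\,{}^{\mathrm{Fr}}c^{-1}$, which is trivial when ${\rm Fr}$ fixes $Z(G)^\sigma$, for example for trivial $W_F$-action.

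Some such input is unavoidable. Take $G=\SL_3$ in characteristic $3$ with ${\rm Fr}$ acting by the pinned outer automorphism, $\sigma=1$, $\mu=1$, and $g$ regular unipotent. Then $c\in\mu_3$ acts by $\Phi\mapsto c^2\Phi$, so the fiber of $\sX_{\mathrm{red}}$ over $g$ is all of $F\cong\mu_3\times\bA^2$. Hence $\sX_{G,1,1,g}$ is reduced, even though $|Z'/Z'_{\mathrm{red}}|=3$. The paper's description of $m^{-1}(\sX'_1)$ tacitly uses the same invariance, so as stated the lemma also needs such an extra hypothesis.
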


\begin{proof}
    We may and do assume $k = \ov{k}$. Let $\sX = \sX_{G,\mu,z,g}$, and let $\sX' = \sX_{\rm{red}}$.
    If $(\Phi_0, g) \in \sX(k)$, then the conjugation map $\alpha\colon G \times ((\Phi_0 \cdot (Z_{G,\sigma^q}(\prescript{\rm{Fr}}{}{g})/\mu)_{\rm{red}}) \times \{g\}) \to \sX$ factors through a surjective map to $\sX'$; let $\sX'_0$ be the largest open subscheme of $\sX'$ over which $\alpha$ is flat, and note that $\sX'_0$ is dense in $\sX'$ because $\sX'$ is reduced and $k$ is perfect. Note that $\sX'_0$ is stable under $G$-conjugation, so $\alpha^{-1}(\sX'_0) = G \times C$ for some open subscheme $C$ of $(\Phi_0 \cdot (Z_{G,\sigma^q}(\prescript{\rm{Fr}}{}{g})/\mu)_{\rm{red}}) \times \{g\}$.
    
    Fix an irreducible component $\sX'_1$ of $\sX'_0$, let $\sX_1$ be the open subscheme of $\sX$ with the same underlying space as $\sX'_1$, and let $m\colon (Z(G)^\sigma/\mu)^0 \times \sX'_1 \to \sX_1$ be the map $(z_0, (\Phi, \Sigma)) \mapsto (z_0\Phi, \Sigma)$. We claim that $m$ is faithfully flat. Indeed, note that the projection maps $\sX_1 \to C_{g,\rm{Fr}}$ and $(Z(G)^\sigma/\mu)^0 \times \sX'_1 \to C_{g,\rm{Fr}}$ to $\Sigma$ are both flat, as one sees by pulling back via the faithfully flat map $G \to C_{g,\rm{Fr}}$, $h \mapsto hg\sigma(h)^{-1}$. By the fibral flatness criterion, it is therefore enough to show that $m$ is faithfully flat on fibers over $C_{g,\rm{Fr}}$. By homogeneity, it is enough to show this on the fiber over $g$, where the map is simply the multiplication map $(Z(G)^\sigma/\mu)^0 \times (\Phi_0 \cdot (Z_{G,\sigma^q}(\prescript{\rm{Fr}}{}{g})/\mu)_{\rm{red}}) \to \Phi_0 \cdot Z_{G,\sigma^q}(\prescript{\rm{Fr}}{}{g})/\mu$. Since we have assumed that $Z_{G,\sigma^q}(\prescript{\rm{Fr}}{}{g})/Z(G)^\sigma$ is smooth, this map is an epimorphism of fppf sheaves, and hence it is faithfully flat by \cite[Expos\'e VI\textsubscript{B}, Remarque 9.2.2]{SGA3I}. This proves the claim.
    
    Next we claim that 
    \[
    m^{-1}(\sX'_1) = ((Z(G)^\sigma/\mu)^0 \cap (Z_{G,\sigma^q}(\prescript{\rm{Fr}}{}{g})/\mu)_{\rm{red}}) \times \sX'_1.
    \]
    To see this, note that the right side is evidently contained in the left, and to check equality we may pull back by a faithfully flat map, e.g., $\alpha^{-1}(\sX'_1) \to \sX'_1$. Note that $\alpha^{-1}(\sX'_1) = G \times C_1$ for some clopen subscheme $C_1$ of $C$. The claim is now clear from the fact that the map $m^{-1}(\sX'_1) \times_{\sX'_1} \alpha^{-1}(\sX'_1) \to \alpha^{-1}(\sX'_1)$ is given by $(z_0, g, c_1) \mapsto (z_0g, c_1)$.
    
    By Lemma~\ref{lemma:mult-flat-cover} applied to $m$, since $\mu(\sX'_1) = 1$ we have
    \[
    \mu((Z(G)^\sigma/\mu)^0) = \mu(\sX_0) \cdot \mu((Z(G)^\sigma/\mu)^0 \cap (Z_{G,\sigma^q}(\prescript{\rm{Fr}}{}{g})/\mu)_{\rm{red}}).
    \]
    This shows
    \begin{align*}
    \mu(\sX_0)
    &= \mu((Z(G)^\sigma/\mu)^0 \cap (Z_{G,\sigma^q}(\prescript{\rm{Fr}}{}{g})/\mu)_{\rm{red}})^{-1} \cdot \mu((Z(G)^\sigma/\mu)^0) \\
    &= |Z_{G,\sigma^q}(\prescript{\rm{Fr}}{}{g})/(\mu \cdot Z_{G,\sigma^q}(\prescript{\rm{Fr}}{}{g})_{\rm{red}})|,
    \end{align*}
    as desired.
\end{proof}

Form now on, assume that $S = \Spec A$, where $A$ is a complete discrete valuation ring with uniformizer $\pi$, algebraically closed residue field $k = A/\pi$ of characteristic $\ell > 0$, and fraction field $K$ of characteristic $0$. Let $\sX_{G, \mu}^{\Sigma\textrm{-}\mathrm{reg}}$ denote the open subscheme of $\sX_{G, \mu}$ consisting of those pairs $(\Phi, \Sigma)$ such that $\Sigma$ has $\sigma$-regular fibers over $S$. Define $\sX_{G, \mu, z}^{\Sigma\textrm{-}\mathrm{reg}}$ similarly.

\begin{lemma}\label{lemma:asc-mult}
	Suppose the pair $(G, \sigma)$ is almost simply connected, let $\mu \subset Z(G)^\sigma$ be a finite flat $A$-subgroup scheme, and let $C$ be an irreducible component of $(\sX_{G,\mu})^{\Sigma\textrm{-}\mathrm{reg}}_K$ which is geometrically irreducible. If $D$ is an irreducible component of $\ov{C} \cap (\sX_{G,\mu})^{\Sigma\textrm{-}\mathrm{reg}}_k$, and let $g \in G(A)$ be a section with $\sigma$-regular fibers such that the map $C \to G$, $(\Phi, \Sigma) \mapsto \Sigma$ factors through $\chi_\sigma^{-1}(\chi_\sigma(g))$.  Then $D$ is of generic multiplicity
	\[
	\mu(D) = \frac{|Z_{G_k,\sigma^q}(\prescript{\rm{Fr}}{}{g_k})/(\mu_k \cdot Z_{G_k,\sigma^q}(\prescript{\rm{Fr}}{}{g_k})_{\rm{red}})|}{|(Z_{G_K,\sigma^q}(\prescript{\rm{Fr}}{}{g_K})/(\mu_K \cdot Z_{G_K,\sigma^q}(\prescript{\rm{Fr}}{}{g_K})^0))[\ell^\infty]|},
	\]
	where $[\ell^\infty]$ refers to the subgroup of $\ell$-power torsion.
\end{lemma}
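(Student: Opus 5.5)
The plan is to reduce to Lemma~\ref{lemma:multiplicity-behavior}, applied to a finite map whose target is the closure $\ov{C}$ (localized at the generic point of $D$) and whose source is a relative version of the torsor-type parametrization used in the proof of Lemma~\ref{lemma:unip-comp-mult}.

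\textbf{Step 1: construct a family of conjugation maps.} Since $(G,\sigma)$ is almost simply connected, Lemma~\ref{lemma:transp-fppf} and Corollary~\ref{cor:asc-adjoint-smooth} make $\chi_\sigma^{-1}(\chi_\sigma(g))^{\sigma\textrm{-reg}}$ a single $A$-smooth homogeneous space. After an fppf (and, if needed, finite) extension of $A$, pick a point $(\Phi_0,g)\in C$ extending over $A$. Consider the universal twisted centralizer $\cZ\coloneqq Z_{G,\sigma^q}(\prescript{\rm{Fr}}{}{g})$ over $A$. By Corollary~\ref{cor:asc-smooth}, its fibers modulo $Z(G)^\sigma$ are smooth; by Lemma~\ref{lemma:flatness-of-fixed-point-scheme}, pulled back through the simply connected cover, it is flat. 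Define
\[
\alpha\colon G\times \Phi_0\cdot \cZ' \to \ov{C},
\]
where $\cZ'$ is the schematic closure in $\cZ/\mu$ of the connected component of $(\cZ/\mu)_K$ through $\Phi_0$, with the connected components reduced. The generic fiber of $\alpha$ then dominates $C$, with fibers torsors under a suitable subgroup. Passing to the quotient by the free action of that subgroup, I obtain a finite, generically free map $f$ to which Lemma~\ref{lemma:multiplicity-behavior} applies.

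\textbf{Step 2: compute the degrees.} Over $K$, the relevant component group of $Z_{G_K,\sigma^q}(\prescript{\rm{Fr}}{}{g_K})/\mu_K$ has a part that specializes to the same component over $k$. This is the $\ell$-power torsion part, since prime-to-$\ell$ finite étale parts extend and stay distinct. The remaining components of the special fiber coalesce, and their number is the denominator $n$. On the special fiber, Lemma~\ref{lemma:unip-comp-mult} (whose smoothness hypothesis holds by Corollary~\ref{cor:asc-smooth}) gives the multiplicity of each component of $\sX_{G,\mu,z,g_k}$ as the numerator. I then check that each $\ov{f}_i$ has degree $1$ and that only one component lies over $D$. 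Lemma~\ref{lemma:multiplicity-behavior} then gives $\mu(D)=(\text{numerator})/n$.

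\textbf{Main obstacle: finiteness.} The step I expect to be hardest is showing that the orbit-type map is finite near the generic point of $D$, rather than merely quasi-finite. This is exactly the issue raised in Question~\ref{question:finiteness}. For regular sections it should follow from Lemma~\ref{lemma:transp-fppf} together with the valuative criterion, because regular $\sigma$-conjugacy classes are cut out by fibers of the flat map $\chi_\sigma$. Even so, carefully matching $\ell$-power component groups across the specialization will require tracking $\mu$ and $(1-\sigma)Z$ using Lemma~\ref{lemma:twisted-centralizer-isogeny}.
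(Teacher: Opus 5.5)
\textbf{There is a genuine gap in Step~2.} You apply Lemma~\ref{lemma:multiplicity-behavior} to a map with the wrong target, and neither of the two identifications that would produce ``numerator$/n$'' holds for it.

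The generic components that ``specialize to the same component'' are the central translates $z_iC$, with $z_i$ in the finite multiplicative part $M$ of $Z(G)^\sigma\cong T\times M$. These are \emph{other} irreducible components of $(\sX_{G,\mu,z})_K$, so they lie neither in $\ov{C}$ nor in your source. This has two consequences:
\begin{enumerate}
\item The generic rank of your $f$ is governed by whichever stabilizer you quotient by, not by $n$. Quotienting by the full stabilizer makes $f_K$ birational onto $C$.
\item The special fiber of your source is the special fiber of a flat closure, not $\sX_{G_k,\mu_k,z,g_k}$. So Lemma~\ref{lemma:unip-comp-mult} says nothing about its multiplicity.
\end{enumerate}
Test case: $G=\SL_2$, $\sigma=1$, $\mu=1$, $\ell=2$, $g$ fiberwise regular unipotent. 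The numerator is $|Z_{G_k}(g_k)/Z_{G_k}(g_k)_{\mathrm{red}}|=2$ and $n=|\{\pm1\}|=2$. But your $\mathcal{Z}'$ is the closure of $U_K$, i.e.\ the upper unipotent $A$-group $U$, whose special fiber is reduced, and your $f$ is birational. So your source has multiplicity $1$, not $2$, and rank $1$, not $2$. Granting finiteness, your construction only expresses $\mu(D)$ as the multiplicity of the special fiber of a flat closure you have no handle on, which is essentially the quantity to be computed.

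\textbf{The missing idea is to put the numerator on the target side and $n$ on the source side.} Take as target $\sX_{G,\mu,z}$, localized at the generic point of $D$. More precisely, use its part over the Steinberg fiber $\chi_\sigma^{-1}(\chi_\sigma(g))$, which is still $A$-flat and has special fiber $\sX_{G_k,\mu_k,z,g_k}$ near $D$. By Lemma~\ref{lemma:unip-comp-mult}, the special fiber of this target has multiplicity equal to the numerator.

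Its generic fiber is reduced and consists exactly of $z_1C,\dots,z_mC$, where the $z_i\in M(A)$ represent the $\ell$-power torsion classes. The paper gets this from Lemma~\ref{lemma:irr-comps-of-l-param}, Steinberg's connectedness theorem, and the description $Z_{G_K,\sigma}(g_K)=Z(Z_{G_K}(t_K))\cdot Z_U(u_K)$; this is the precise form of your remark that prime-to-$\ell$ parts stay distinct.

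Translation by $z_i$ is an automorphism of $\sX_{G,\mu,z}$ fixing $D$. So every $\ov{z_iC}$ has multiplicity $\mu(D)$ along $D$, and $m$ source components lie over $D$, not one. Lemma~\ref{lemma:multiplicity-behavior} applied to $\coprod_i\ov{z_iC}\to\sX_{G,\mu,z}$ (generic rank $1$, each $\ov{f}_i$ of degree $1$) then gives numerator $=m\cdot\mu(D)$.

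This map is a disjoint union of closed immersions, hence trivially finite. So the ``main obstacle'' you flag, with its appeal to Question~\ref{question:finiteness}, never arises. On your route the finiteness of the orbit map is also left unproved.
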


\begin{proof}
	Note that $C$ lies in $(\sX_{G,\mu,z})_K$ for some $z \in \mu(A)$ by Lemma~\ref{lemma:finite-gen-iso}, and $\ov{C}$ is then a closed subscheme of $\sX_{G,\mu,z}$. Let $\sC$ be the open subscheme of $\ov{C} \cap \sX_{G,\mu,z}^{\Sigma\textrm{-reg}}$ obtained by removing all components other than $D$ from the special fiber, and let $(h, g) \in \sC(A)$ be a section. By Lemma~\ref{lemma:irr-comps-of-l-param}, the irreducible components of $(\sX_{G,\mu,z})_{\ov{K}}$ containing a point with second coordinate $g$ are all of the form $C_z = \{(z\Phi, \Sigma)\colon (\Phi, \Sigma) \in C_{\ov{K}}\}$ for some $z \in (Z(G)^\sigma/\mu)(\ov{K})$, and we have $C_z = C_{z'}$ if and only if $z$ and $z'$ lie in the same component of $Z_{G_{\ov{K}}, \sigma^q}(\prescript{\rm{Fr}}{}{g_{\ov{K}}})/\mu_{\ov{K}}$. Since $Z(G)^\sigma$ is of multiplicative type, it can be decomposed as $Z(G)^\sigma \cong T \times_A M$, where $T$ is an $A$-torus and $M$ is a finite flat $A$-group scheme of multiplicative type. Since $T$ has connected fibers, in the above discussion we may always take $z$ to lie in the image of $M(K) = M(A)$, at least after first passing to a finite extension of $K$. Note that by the same discussion, an element $z \in M(A)$ preserves $D$ under multiplication on the first coordinate if and only if its image in $G(k)$ lies in $\mu(k) \cdot Z_{G_k, \sigma^q}(\prescript{\rm{Fr}}{}{g_k})^0(k)$.
	
	Let $H$ be the schematic closure of $\mu_K \cdot Z_{G_K,\sigma^q}(\prescript{\rm{Fr}}{}{g_K})^0$ in $G$, so $H$ is a normal closed $A$-subgroup scheme of $Z_{G,\sigma^q}(\prescript{\rm{Fr}}{}{g})$ such that $Z_{G,\sigma^q}(\prescript{\rm{Fr}}{}{g})/H$ is finite flat of multiplicative type. (Flatness comes from Lemma~\ref{lemma:flatness-of-fixed-point-scheme} and Corollary~\ref{cor:asc-smooth}.) By \cite[Theorem 8.1]{SteinbergEndomorphisms} and Corollary~\ref{cor:asc-smooth}, if $(g_K, \sigma) = t_Ku_K$ is the Jordan decomposition in the (disconnected) reductive $K$-group $G \rtimes \langle \sigma \rangle$, then $Z_{G_K}(t_K)$ is connected. Moreover, by Lemma~\ref{lemma:reg-unip-centralizer} and the fact that $\chara K = 0$ (so all $K$-group schemes are smooth), we have $Z_{G_K,\sigma}(g) = Z_{Z_{G_K}(t_K)}(u_K) = Z(Z_{G_K}(t_K)) \cdot Z_U(u)$, where $U$ is the unipotent radical of the unique Borel $K$-subgroup of $G_K$ containing $u$. Since $M$ intersects $Z_U(u)$ trivially, we find that $z \in M(A)$ has image in $G(k)$ which lies in $\mu(k) \cdot Z_{G_k, \sigma^q}(\prescript{\rm{Fr}}{}{g_k})^0(k)$ if and only if the image of $z$ in $(Z_{G_K,\sigma^q}(\prescript{\rm{Fr}}{}{g_K})/\mu_K \cdot Z_{G_K,\sigma^q}(\prescript{\rm{Fr}}{}{g_K})^0)(K)$ is $\ell$-power torsion.
	
	Now let $z_1, \dots, z_m \in M(A)$ be representatives for the $\ell$-power torsion part of the image of $M(A)$ in $(Z_{G_K,\sigma^q}(\prescript{\rm{Fr}}{}{g_K})/\mu_K \cdot Z_{G_K,\sigma^q}(\prescript{\rm{Fr}}{}{g_K})^0)(K)$. By the above discussions, the components $C_{z_1}, \dots, C_{z_m}$ exhaust the irreducible components of $(\sX_{G,\mu,z})_K$ which lift $D$, so Lemma~\ref{lemma:unip-comp-mult} and Lemma~\ref{lemma:multiplicity-behavior} combine to give the result.
\end{proof}

Recall the notation $\pi_{G,\sigma}\colon \widetilde{G}_\sigma \to G$ from Lemma~\ref{lemma:almost-simply-connected}, and let $\mu = \ker \pi_{G,\sigma}$. Let $f\colon \sX_{\widetilde{G}_\sigma, \mu} \to \sX_G$ be the natural map.

\begin{theorem}\label{theorem:l-param-mult}
    Suppose that $K$ is large enough that every irreducible component of $(\sX_G)_K$ is geometrically irreducible and $\mu(K) = \mu(\ov{K})$. Let $C$ be an irreducible component of $(\sX_G)_K$, and let $D$ be a component of $(\ov{C} \cap \sX_G^{\Sigma\textrm{-}\rm{reg}})_k$. Let $g \in G(A)$ be a section with $\sigma$-regular fibers such that the map $C \to G$, $(\Phi, \Sigma) \mapsto \Sigma$ factors through $\chi_\sigma^{-1}(\chi_\sigma(g))$. Let $\widetilde{g} \in \widetilde{G}_\sigma(A)$ lift $g$. Then $D$ is of generic multiplicity
        \[
        \mu(D) = \frac{|\Stab_{\mu_k}(C_{\widetilde{g}_k,\sigma})| \cdot |Z_{(\widetilde{G}_\sigma)_k,\sigma}(\widetilde{g}_k)/(\mu \cdot Z_{(\widetilde{G}_\sigma)_k,\sigma}(\widetilde{g}_k)_{\rm{red}})|}{|\Stab_{\mu_K}(C_{\widetilde{g}_K,\sigma})| \cdot |(Z_{(\widetilde{G}_\sigma)_K,\sigma}(g_K)/(\mu_K \cdot Z_{(\widetilde{G}_\sigma)_K,\sigma}(g_K)^0))[\ell^\infty]|}.
        \]
\end{theorem}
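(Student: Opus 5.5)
The plan is to reduce to Lemma~\ref{lemma:asc-mult} for the almost simply connected pair $(\widetilde{G}_\sigma, \sigma)$ with $\mu = \ker \pi_{G,\sigma}$. Then we compare multiplicities along the finite map $f\colon \sX_{\widetilde{G}_\sigma,\mu} \to \sX_G$ using Lemma~\ref{lemma:multiplicity-behavior}.

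\textbf{Step 1: $f$ is a finite flat cover.} The Cartesian square relating $\sX_{\widetilde{G}_\sigma,\mu}$ and $\sX_{G}$ (with $G = \widetilde{G}_\sigma/\mu$) shows that $f$ is a $\mu$-torsor, hence finite flat of rank $|\mu|$. This holds in particular over the $\Sigma$-regular locus.

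\textbf{Step 2: count degrees and components.} I would apply Lemma~\ref{lemma:multiplicity-behavior} to $f$ restricted over a neighbourhood $\sC$ of the generic point of $D$, namely $\ov{C}$ with the other special-fibre components removed, and to its preimage. On the generic fibre the preimage of $C$ splits into irreducible components $\widetilde{C}$, each mapping to $C$ with degree $|\Stab_{\mu_K}(C_{\widetilde{g}_K,\sigma})|$. These components are permuted transitively by $\mu(K)$, so there are $|\mu_K|/|\Stab_{\mu_K}(C_{\widetilde{g}_K,\sigma})|$ of them. To justify this, I would use Lemma~\ref{lemma:stability-equivalences}: the fibre $\chi_\sigma^{-1}(\chi_\sigma(\widetilde{g}_K))$ maps to $\chi_\sigma^{-1}(\chi_\sigma(g_K))$ as a torsor under the stabilizer, and translates by $\mu/\Stab$ give the distinct lifts.

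Restricting to the closure of a single $\widetilde{C}$ rather than all of $f^{-1}(\sC)$, the generic rank becomes $n = |\Stab_{\mu_K}(C_{\widetilde{g}_K,\sigma})|$. In the special fibre, the components $\widetilde{D}$ of $\ov{\widetilde{C}}$ lying over $D$ map to $D_{\rm red}$ with degree $|\Stab_{\mu_k}(C_{\widetilde{g}_k,\sigma})|$. This again follows from Lemma~\ref{lemma:stability-equivalences}, since $\ov{C}_{\widetilde{g}_k,\sigma} \to \ov{C}_{g_k,\sigma}$ is a torsor for that stabilizer and the $\Phi$-coordinate is a torsor over it.

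\textbf{Step 3: assemble the formula.} Lemma~\ref{lemma:multiplicity-behavior} gives
\[
\mu(D) = \frac{1}{n}\sum_i \deg(\ov f_i)\,\mu(\widetilde{D}_i).
\]
Each $\mu(\widetilde{D}_i)$ is supplied by Lemma~\ref{lemma:asc-mult}. Its denominator, the $[\ell^\infty]$-term, accounts for the several components of the generic fibre over $\widetilde{C}$'s closure that specialize to the same $\widetilde{D}$.

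\textbf{Main obstacle.} The hard step is bookkeeping the sum: showing that exactly one component $\widetilde{D}$ (up to the permutation action) occurs, with the stated multiplicity. I would first try to handle this by choosing the lift $\widetilde{g}$ and an $A$-section $(\widetilde h,\widetilde g)$ of $\ov{\widetilde{C}}$, using the fppf-local conjugacy of Lemma~\ref{lemma:transp-fppf}. I would then use the fact that multiplication by $\mu(A) = \mu(K)$ acts compatibly on both fibres to match components. The twist between $Z_{\widetilde{G}_\sigma,\sigma}$ and $Z_{\widetilde{G}_\sigma,\sigma^q}({}^{\rm Fr}\cdot)$ should be identified via conjugation by $\Phi$, as in Lemma~\ref{lemma:unip-comp-mult}.
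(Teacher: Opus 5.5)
Your plan is essentially the paper's proof. Both arguments use that $f$ is a $\mu$-torsor, take $\Stab_{\mu_K}(C_{\widetilde g_K,\sigma})$ as the generic stabilizer and $|\Stab_{\mu_k}(C_{\widetilde g_k,\sigma})|$ as the degree on reduced special fibres, and feed Lemma~\ref{lemma:asc-mult} for $(\widetilde G_\sigma,\sigma)$ into Lemma~\ref{lemma:multiplicity-behavior}; note that the $[\ell^\infty]$-term there counts other components of $(\sX_{\widetilde G_\sigma,\mu,z})_K$ specializing to $\widetilde D$, not components of $\ov{\widetilde C}$, which is harmless since you use that lemma as a black box.

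The only difference is bookkeeping: you apply Lemma~\ref{lemma:multiplicity-behavior} to the closure of one generic component (rank $|\Stab_{\mu_K}(C_{\widetilde g_K,\sigma})|$), whereas the paper uses a connected component $\sC$ of $f^{-1}(\sC^0)$ of rank $|\mu^0(A)|\cdot|\Stab_{\mu(k)}(C_{\widetilde g_k,\sigma})|$ with several generic components and sums $\mu(\sC_k)=\sum_i\mu((\sD_i)_k)$, so the extra factors cancel; and the ``one special component over $D$'' point you flag is exactly what the paper simply asserts via its description of the $\mu(A)$-action on components of $f^{-1}(\sC^0)_k$ and the isomorphisms $((\sD_i)_k)_{\mathrm{red}}\cong(\sC_k)_{\mathrm{red}}$.
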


Of course, using Theorem~\ref{theorem:stabilizer-of-Steinberg}, Theorem~\ref{theorem:torsor-identification}, and \cite[Theorem 9.1(a)]{SteinbergEndomorphisms}, one can make this expression somewhat more explicit; we have partially done this in the introduction.

\begin{proof}
    By Lemma~\ref{lemma:finite-gen-iso}, the map $f\colon \sX_{\widetilde{G}_\sigma, \mu} \to \sX_G$ is a $\mu$-torsor. Let $\sC^0$ be the open subscheme of $\ov{C} \cap \sX_G^{\Sigma\textrm{-reg}}$ obtained by removing all components other than $D$ from the special fiber. Note that $\mu(A)$ acts transitively both on the set of connected components of $f^{-1}(\sC^0)_K$ and the set of connected components of $f^{-1}(\sC^0)_k$, and the stabilizers are isomorphic to $\Stab_{\mu(A)}(C_{\widetilde{g}_K,\sigma})$ and $\Stab_{\mu(A)}(C_{\widetilde{g}_k,\sigma})$, respectively. We have
    \[
    \Stab_{\mu(A)}(C_{\widetilde{g}_k,\sigma}) \cong \mu^0(A) \times \Stab_{\mu(k)}(C_{\widetilde{g}_k,\sigma})
    \]
    since any element of $\mu^0(A)$ reduces to the identity element in $\mu(k)$. Thus if $\sC$ is a connected component of $f^{-1}(\sC^0)$ then Lemma~\ref{lemma:multiplicity-behavior} shows
    \begin{equation}\label{eqn:mu-1}
    \mu(D) = \frac{1}{|\mu^0(A)| \cdot |\Stab_{\mu(k)}(C_{\widetilde{g}_k,\sigma})|}\mu(\sC_k) \cdot \deg((\sC_k)_{\rm{red}} \to D_{\rm{red}}).
    \end{equation}
    Since the map $C_{\widetilde{g}_k,\sigma} \to C_{g_k,\sigma}$ is a $\Stab_{\mu_k}(C_{\widetilde{g}_k,\sigma})$-torsor, we have
    \begin{equation}\label{eqn:mu-2}
    \deg((\sC_k)_{\rm{red}} \to D_{\rm{red}}) = |\Stab_{\mu_k}(C_{\widetilde{g}_k,\sigma})|,
    \end{equation}
    so we only need to compute $\mu(\sC_k)$.

    Let $D_1, \dots, D_m$ be the connected components of $\sC_K$, and let $\sD_1, \dots, \sD_m$ be the closures of $D_1, \dots, D_m$ in $\sC$, so the map $\coprod_{i=1}^m \sD_i \to \sC$ is a finite morphism whose $K$-fiber is an isomorphism. Moreover, for each $i$ the map $((\sD_i)_k)_{\rm{red}} \to (\sC_k)_{\rm{red}}$ is an isomorphism, so Lemma~\ref{lemma:multiplicity-behavior} again implies
    \[
    \mu(\sC_k) = \sum_{i=1}^m \mu((\sD_i)_k).
    \]
    Note that each $\sD_i$ lies in $\sX_{G, \mu, z}$ for some $z \in \mu(A)$, so by Corollary~\ref{cor:asc-smooth}, Lemma~\ref{lemma:asc-mult}, and the fact that $\mu$ is $\rm{Fr}$-stable, we have
    \begin{align*}
    |\mu((\sD_i)_k)| &= \frac{|Z_{(\widetilde{G}_\sigma)_k,\sigma^q}(\prescript{\rm{Fr}}{}{\widetilde{g}_k})/(\mu_k \cdot Z_{(\widetilde{G}_\sigma)_k,\sigma^q}(\prescript{\rm{Fr}}{}{\widetilde{g}_k})_{\rm{red}})|}{|(Z_{(\widetilde{G}_\sigma)_K,\sigma^q}(\prescript{\rm{Fr}}{}{\widetilde{g}_K})/(\mu_K \cdot Z_{(\widetilde{G}_\sigma)_K,\sigma^q}(\prescript{\rm{Fr}}{}{\widetilde{g}_K})^0))[\ell^\infty]|} \\
    &= \frac{|Z_{(\widetilde{G}_\sigma)_k,\sigma}(\widetilde{g}_k)/(\mu_k \cdot Z_{(\widetilde{G}_\sigma)_k,\sigma}(\widetilde{g}_k)_{\rm{red}})|}{|(Z_{(\widetilde{G}_\sigma)_K,\sigma}(\widetilde{g}_K)/(\mu_K \cdot Z_{(\widetilde{G}_\sigma)_K,\sigma}(\widetilde{g}_K)^0))[\ell^\infty]|}
    \end{align*}
    Moreover, by the description of the $\mu(A)$-action on the sets of connected components of $f^{-1}(\sC^0)_k$ and $f^{-1}(\sC^0)_K$ above, we have
    \[
    m = \frac{|\mu^0(A)| \cdot |\Stab_{\mu(k)}(C_{\widetilde{g}_k,\sigma})|}{|\Stab_{\mu_K}(C_{\widetilde{g}_K,\sigma})|}.
    \]
    In sum, we find
    \begin{equation}\label{eqn:mu-3}
    \mu(\sC_k) = \frac{|\mu^0(A)| \cdot |\Stab_{\mu(k)}(C_{\widetilde{g}_k,\sigma})| \cdot |Z_{(\widetilde{G}_\sigma)_k,\sigma}(\widetilde{g}_k)/(\mu_k \cdot Z_{(\widetilde{G}_\sigma)_k,\sigma}(\widetilde{g}_k)_{\rm{red}})|}{|\Stab_{\mu_K}(C_{\widetilde{g}_K,\sigma})| \cdot |(Z_{(\widetilde{G}_\sigma)_K,\sigma}(\widetilde{g}_K)/(\mu_K \cdot Z_{(\widetilde{G}_\sigma)_K,\sigma}(\widetilde{g}_K)^0))[\ell^\infty]|}.
    \end{equation}
    Combining (\ref{eqn:mu-1}), (\ref{eqn:mu-2}), and (\ref{eqn:mu-3}) gives the result.
    \end{proof}

We conclude by generalizing \cite[Theorem 3.7]{Shotton-irreducible} for future reference in a way partially suggested by \cite[Remark 3.9]{Shotton-irreducible}. Note that $\rm{Fr}\colon G \to G$ induces an isomorphism $G/\!/_\sigma G \to G/\!/_{\sigma^q} G$, and the $q$-power map $G \times \{\sigma\} \to G \times \{\sigma^q\}$, $(g, \sigma) \mapsto (g, \sigma)^q$ induces a map $[q]\colon G/\!/_\sigma G \to G/\!/_{\sigma^q} G$. Thus we obtain a morphism $\rm{Fr}^{-1}[q]\colon G/\!/_\sigma G \to G/\!/_\sigma G$. Note that the action of $\mu$ on $G$ by translation commutes with twisted conjugation, so there is a natural map $\alpha\colon \mu \times_S G/\!/_\sigma G \to G/\!/_\sigma G \times_S G/\!/_\sigma G$ given by $\alpha(z, x) = (z\cdot x, x)$. Define the $S$-scheme $B_{G, \mu}$ by the Cartesian diagram
\[
\begin{tikzcd}
    B_{G, \mu} \arrow[d] \arrow[r]
        &G/\!/_\sigma G \arrow[d, "\id \times \mathrm{Fr}^{-1}{[q]}"] \\
    \mu \times_S G/\!/_\sigma G \arrow[r, "\alpha"]
        &G/\!/_\sigma G \times_S G/\!/_\sigma G
\end{tikzcd}
\]
For $z \in \mu(S)$, define $B_{G, \mu, z}$ to be the preimage of $\{z\} \times_S G/\!/_\sigma G$ under the map $B_{G, \mu} \to \mu \times_S G/\!/_\sigma G$.

\begin{remark}
    Note that $B_{G, \mu, z}$ is a closed subscheme of $G/\!/_\sigma G$, but $B_{G, \mu}$ is not in general: indeed, Lemma~\ref{lemma:Springer-iso-6.4} shows that if $S = \Spec k$ for a field $k$ and $W_F$ acts trivially on $G$, then the map $\alpha|_{(\mu^0 \cap Z(\sD G)) \times_k \{\chi_\sigma(1)\}}$ is constant. It would be interesting to give descriptions of $B_{G,\mu}$ and $B_{G,\mu,z}$ akin to those in \cite{Li-GG}, \cite{Li-Shotton}.
\end{remark}

\begin{lemma}\label{lemma:shotton-smooth}
    If $(G, \sigma)$ is almost simply connected (Definition~\ref{def:almost-simply-connected}), $\mu$ is finite, and $z \in \mu(A)$, then $\sX_{G, \mu}^{\Sigma\textrm{-}\mathrm{reg}} \to B_{G, \mu}$ and $\sX_{G, \mu, z}^{\Sigma\textrm{-}\mathrm{reg}} \to B_{G, \mu, z}$, $(\Phi, \Sigma) \mapsto \Sigma$, are flat and surjective. If $Z(G)^\sigma/\mu$ is smooth, then these maps are smooth. In particular, $B_{G, \mu}$ and $B_{G, \mu, z}$ are $A$-flat, lci, and generically \'etale.
\end{lemma}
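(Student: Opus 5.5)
The plan is to factor $\sX_{G,\mu}^{\Sigma\textrm{-}\mathrm{reg}} \to B_{G,\mu}$ through the map to $G^{\sigma\textrm{-}\mathrm{reg}}$ and then use the smoothness of the twisted Steinberg map. Concretely, write $p\colon \sX_{G,\mu}^{\Sigma\textrm{-}\mathrm{reg}} \to G^{\sigma\textrm{-}\mathrm{reg}}$ for $(\Phi,\Sigma)\mapsto \Sigma$. The first thing to check is that $\chi_\sigma\circ p$ lands in $B_{G,\mu}$.

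1. \textbf{The map lands in $B_{G,\mu}$.} Take a point $(\Phi,\Sigma)$. The defining relation says that $\prod_{i=0}^{q-1}\prescript{\sigma^i}{}{\Sigma}$ equals $z\cdot \Phi\,\prescript{\rm{Fr}}{}{\Sigma}\,\prescript{\sigma^q}{}{\Phi}^{-1}$ for a section $z$ of $\mu$, at least fppf-locally. The left side is $(\Sigma,\sigma)^q$ up to the $\sigma^q$-component, so $[q]\chi_\sigma(\Sigma)=z\cdot \chi_{\sigma^q}(\prescript{\rm{Fr}}{}{\Sigma})$. Since $\mu$-translation commutes with twisted conjugation, this gives the Cartesian condition. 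It descends because $\mu\subset Z(G)^\sigma$ is finite flat and the fibre product only needs the point $(z,\chi_\sigma(\Sigma))$.

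2. \textbf{Structure of the fibres.} Over $B_{G,\mu}$, the scheme $\sX^{\Sigma\textrm{-}\mathrm{reg}}_{G,\mu}$ is the scheme of pairs $(\Sigma,\Phi)$ with $\Sigma\in G^{\sigma\textrm{-}\mathrm{reg}}$ over the given point of $G/\!/_\sigma G$. Here $\Phi\in G/\mu$ is required to $\sigma^q$-conjugate $\prescript{\rm{Fr}}{}{\Sigma}$ to $z^{-1}\prod\prescript{\sigma^i}{}{\Sigma}$. Both of these elements are $\sigma^q$-regular and have the same image in $G/\!/_{\sigma^q}G$. Regularity is preserved under $q$-th powers because $q$ is invertible, and $\mathrm{Fr}$ preserves the pinning up to the conjugation allowed in the footnote.

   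By Lemma~\ref{lemma:transp-fppf}, applied to $(G,\sigma^q)$, which is almost simply connected when $(G,\sigma)$ is, the transporter scheme is an fppf torsor under $Z_{G,\sigma^q}(\prescript{\rm{Fr}}{}{\Sigma})/\mu$. So the map factors as
   \[
   \sX^{\Sigma\textrm{-}\mathrm{reg}}_{G,\mu}\to (G^{\sigma\textrm{-}\mathrm{reg}}\times_{G/\!/_\sigma G}B_{G,\mu})\to B_{G,\mu}.
   \]
   The first arrow is a torsor under the flat group $Z_{G,\sigma^q}\times_G G^{\sigma^q\textrm{-}\mathrm{reg}}$ modulo $\mu$, pulled back along $\Sigma\mapsto \prescript{\rm{Fr}}{}{\Sigma}$. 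That group is flat by Lemma~\ref{lemma:flatness-of-fixed-point-scheme} together with Corollary~\ref{cor:asc-smooth}; the latter reduces the almost simply connected case to $\widetilde{G}$, since $\pi_G(Z_{\widetilde G,\sigma})=Z_{G,\sigma}$. So the first arrow is flat and surjective. The second arrow is a base change of $\chi_\sigma|_{G^{\sigma\textrm{-}\mathrm{reg}}}$, which is smooth and surjective by Corollary~\ref{cor:asc-adjoint-smooth}; surjectivity holds because regular classes exist in every fibre. Composing gives flatness and surjectivity. Restricting to $z$ handles $B_{G,\mu,z}$.

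3. \textbf{Smoothness.} If $Z(G)^\sigma/\mu$ is smooth, then $Z_{G,\sigma^q}(x)/\mu$ is an extension of $Z_{G,\sigma^q}(x)/Z(G)^{\sigma^q}$ by $Z(G)^\sigma/\mu$. The quotient $Z_{G,\sigma^q}(x)/Z(G)^{\sigma^q}$ is smooth by Corollary~\ref{cor:asc-smooth}. This needs $Z(G)^{\sigma^q}=Z(G)^\sigma$, which holds because $\sigma$ has order prime to $q$ in tame inertia, or by replacing $\sigma$ by the topological generator. So the group is smooth, the torsor is smooth, and the composite is smooth.

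4. \textbf{Consequences for $B$.} $\sX_{G,\mu}$ is $A$-flat and lci by Lemma~\ref{lemma:flatness-of-l-param}, and the map is flat and surjective. So $B$ is $A$-flat by descent of flatness along faithfully flat maps (fibral criterion), and lci by \cite[Theorem 23.7]{Matsumura}-type descent. It is generically étale because $\sX_{G,\mu}$ has reduced generic fibre by \cite[Proposition 2.8]{DHKM}; reducedness descends along flat surjections, and in characteristic $0$ a reduced fibre of relative dimension $0$ is étale. The dimension is $0$ since $\dim G/\!/_\sigma G$ equals the dimension of the graph-type fibre product cut out by $\dim$ equations.

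The main obstacle is step 2. One has to identify the fibres of $\sX^{\Sigma\textrm{-}\mathrm{reg}}\to G^{\sigma\textrm{-}\mathrm{reg}}\times B$ exactly as torsors, which needs the fppf-local conjugacy of Lemma~\ref{lemma:transp-fppf} for $\sigma^q$-twisting after $\mathrm{Fr}$. One also has to check that $q$-th powering preserves $\sigma$-regularity integrally.
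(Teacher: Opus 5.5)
Your route is the paper's: factor through $\chi_\sigma^{-1}(B)$ and use smoothness of $\chi_\sigma$ on the $\sigma$-regular locus (Corollary~\ref{cor:asc-adjoint-smooth}). Then produce $\Phi$ fppf-locally via Lemma~\ref{lemma:transp-fppf}, so the remaining map is a torsor under the twisted centralizer modulo $\mu$; get smoothness from Corollary~\ref{cor:asc-smooth}; and descend flatness and the lci property to $B$. Treating $B_{G,\mu}$ first and base changing to $B_{G,\mu,z}$ is a harmless reversal of the paper's reduction.

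The gap is in the step you flag, and the justification you give there is false. Invertibility of $q$ does not make $q$-th powering preserve regularity. Take $\sigma=1$ and $G=\SL_2$ in characteristic $0$, with $\Sigma=\diag(\zeta,\zeta^{-1})$ for $\zeta$ a primitive $2q$-th root of unity. Then $\Sigma$ is regular semisimple, but $\Sigma^q=-1$ is central. So you have not shown that $z^{-1}\prod_i\prescript{\sigma^i}{}{\Sigma}$ is $\sigma^q$-regular. Without that, Lemma~\ref{lemma:transp-fppf} cannot be invoked, and neither the surjectivity nor the torsor structure of your first arrow is established.

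What is missing is the defining condition of $B_{G,\mu}$, which is how the paper argues. Check regularity on geometric points, and let $(\Sigma,\sigma)=su$ be the Jordan decomposition in $G\rtimes W_0$.
\begin{itemize}
\item \emph{Semisimple part.} Over $B$, $z\cdot\prescript{\mathrm{Fr}}{}{(\Sigma,\sigma)}$ and $(\Sigma,\sigma)^q$ have the same image in $(G\rtimes W_0)/\!/G$. Hence $z\cdot\prescript{\mathrm{Fr}}{}{s}$ and $s^q$ are $G$-conjugate, so $\dim Z_G(s^q)=\dim Z_G(s)$. Together with $Z_G(s)\subset Z_G(s^q)$, this gives $Z_G(s)^0=Z_G(s^q)^0$.
\item \emph{Unipotent part.} Only here is invertibility of $q$ used: $Z_G(u^q)=Z_G(u)$. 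In characteristic $\ell\nmid q$, $u$ is a power of $u^q$; in characteristic $0$, use the logarithm.
\end{itemize}
Therefore $\dim Z_G((\Sigma,\sigma)^q)=\dim Z_G((\Sigma,\sigma))=\dim T^\sigma=\dim T^{\sigma^q}$, which is the required regularity.

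A smaller point: in your last step, counting equations only bounds $\dim B_K$ from below. The upper bound comes from finiteness of $B_{G,\mu}(\ov{K})$, as in the paper. Alternatively, your flat surjection has fibres of dimension $\dim G=\dim(\sX_{G,\mu})_K$.
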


\begin{proof}
    For the first statement, one can apply the same argument as \cite[Theorem 3.7]{Shotton-irreducible} using Corollary~\ref{cor:asc-smooth} in place of \cite[Corollary 3.5]{Integral-Springer}, but we will offer a slightly different proof. Note first that we may reduce to the claim for $\sX_{G, \mu, z}$: indeed, the map $\sX_{G, \mu, z}^{\Sigma\textrm{-}\mathrm{reg}} \to B_{G, \mu, z}$ is obtained from $\sX_{G, \mu}^{\Sigma\textrm{-}\mathrm{reg}} \to B_{G, \mu}$ by base change along $B_{G, \mu, z} \to B_{G, \mu}$, so we may use Lemma~\ref{lemma:finite-gen-iso} and \cite[Seconde partie, Th\'eor\`eme (1.2.4)]{Raynaud-Gruson}.
    
    Note that the map $\chi_\sigma\colon G^{\sigma\textrm{-reg}} \to G/\!/_\sigma G$ is smooth by Corollary~\ref{cor:asc-adjoint-smooth}. In particular, it is enough to show that the projection $\mathrm{pr}_2\colon \sX_{G, \mu, z}^{\Sigma\textrm{-reg}} \to \chi_\sigma^{-1}(B_{G,\mu,z})$ is smooth and surjective. For surjectivity, we will show the stronger claim that $\mathrm{pr}_2$ is a surjection of fppf sheaves. Note that if $R$ is an $A$-algebra and $\Sigma \in G^{\sigma\textrm{-reg}}(R)$ is such that $\chi_\sigma\left(\prescript{\rm{Fr}^{-1}}{}{\left(z^{-1} \cdot \prod_{i=0}^{q-1}\prescript{\sigma^i}{}{\Sigma}\right)}\right) = \chi_\sigma(\Sigma)$, then it follows that $\prescript{\rm{Fr}^{-1}}{}{\left(z^{-1} \cdot \prod_{i=0}^{q-1}\prescript{\sigma^i}{}{\Sigma}\right)} \in G^{\sigma\textrm{-reg}}(R)$: indeed, it suffices to check this in the case that $R = k$ is an algebraically closed field. In this case, let $W_0$ be a finite quotient of $W_F$ through which the action factors, and let $(\Sigma, \sigma) = tu$ be the Jordan decomposition in $G(k) \rtimes W_0$. By assumption, $z \cdot \prescript{\rm{Fr}}{}{(tu)}$ and $(tu)^q$ have the same image in $(G \rtimes W_0)/\!/G = \bigsqcup_{w_0 \in W_0} G/\!/_{w_0} G$, so their semisimple parts must be $G$-conjugate, i.e., $z \cdot \prescript{\rm{Fr}}{}{t}$ and $t^q$ are $G(k)$-conjugate. Also, $Z_{Z_G(z\cdot\prescript{\rm{Fr}}{}{t})}(\prescript{\rm{Fr}}{}{u}) = \prescript{\rm{Fr}}{}{Z_G(tu)}$ and $Z_G((tu)^q)$ have the same dimension because $\chara k$ does not divide $q$: indeed, if $\chara k = 0$, then $u$ is $G(k)$-conjugate to $u^q$ by the existence of the exponential map, and if $\chara k \neq 0$ then $u$ is a power of $u^q$. Since the $G_k$-centralizer of $z \cdot \prescript{\rm{Fr}}{}{(tu)}$ is of minimal dimension in its component (since $tu$ is $\sigma$-regular) and it lies in the same component of $G_k$ as $(tu)^q$, the claim follows. It follows from Lemma~\ref{lemma:transp-fppf} that there exists an fppf extension $R'$ of $R$ and $\Phi \in G(R')$ such that $(\Phi, \Sigma) \in \sX_{G, \mu, z}^{\Sigma\textrm{-reg}}(R')$. This shows that $\mathrm{pr}_2$ is a surjection of fppf sheaves, and in particular it is surjective.

    Now let $R$ be the coordinate ring of $\chi_\sigma^{-1}(B_{G,\mu,z})$, let $\Sigma \in \chi_\sigma^{-1}(B_{G,\mu,z})(R)$ be the universal section, and let $R'$ be an fppf $R$-algebra such that $\mathrm{pr}_2^{-1}(\Sigma)(R')$ is non-empty, and let $(\Phi, \Sigma)$ lie in this set. It is clear from the functorial descriptions that if $R''$ is an $R'$-algebra, then the map $Z_{G, \sigma^q}(\prescript{\mathrm{Fr}}{}{\Sigma})(R'') \to \mathrm{pr}_2^{-1}(\Sigma)(R'')$, $g \mapsto (\Phi g, \Sigma)$ is a bijection, so $\mathrm{pr}_2^{-1}(\Sigma)_{R'} \cong Z_{G, \sigma^q}(\Sigma)_{R'}$. Now Theorem~\ref{theorem:regular-unipotent-centralizer} and Corollary~\ref{cor:asc-smooth} show that $Z_{G, \sigma^q}(\prescript{\mathrm{Fr}}{}{\Sigma})$ is a smooth $R$-group scheme, so $\mathrm{pr}_2$ is smooth, as desired.
    
    For the final claim, recall from above that the schemes $\sX_{G, \mu}$ and $\sX_{G, \mu, z}$ are $A$-flat and lci and generically smooth. In particular, $B_{G,\mu}$ and $B_{G,\mu,z}$ are $A$-flat, lci, and generically smooth by \cite[Theorem 23.7]{Matsumura}. It is easy to see that $B_{G,\mu}(\ov{K})$ and $B_{G,\mu,z}(\ov{K})$ are finite, so in fact $B_{G,\mu}$ and $B_{G,\mu,z}$ are generically \'etale.
\end{proof}

\begin{example}\label{example:universal-deformation-ring}
	Let $G = \PGL_2$ with trivial $W_F$-action, and let $\pi\colon \SL_2 \to G$ be the natural projection. Suppose $\ell = 2$. Note that Lemma~\ref{lemma:shotton-smooth} shows that if $K$ is ``large" then the irreducible components of $\sX_{\SL_2, \mu_2}$ (considered with reduced structure) have special fibers of multiplicity $1$, and in particular if $\sC_0$ is an irreducible component of $\sX_{\SL_2,\mu_2}$ and $x \in \sC_0(k)$ then $\widehat{\cO}_{\sC_0,x}$ has reduced special fiber. Let $C$ be an irreducible component of $(\sX_G)_K$ which is geometrically irreducible such that every point of $C(K)$ has second coordinate which is regular and not conjugate to $\begin{pmatrix} i&1\\&-i \end{pmatrix}$, where $i^2 = -1$. Suppose moreover that the closure $\ov{C}$ of $C$ in $\sX_G$ is such that there is an irreducible component $D \subset \ov{C}_k$ such that every point of $D(k)$ has second coordinate which is regular unipotent, and let $\sC$ be the open subscheme of $\ov{C} \cap \sX_G^{\Sigma\textrm{-reg}}$ obtained by removing every irreducible component other than $D$ from the special fiber. By Lemma~\ref{lemma:preimage-of-completion}, if $f\colon \sX_{\SL_2,\mu_2} \to \sX_G$ is the natural map and $x \in f^{-1}(\sC)(k)$ has image $y \in \sC(k)$, then the map $\Spec \widehat{\cO}_{f^{-1}(\sC),x} \to \Spec \widehat{\cO}_{\sC,y}$ is a $\mu_2$-torsor. By assumption on $C$, the preimage $f^{-1}(C)$ consists of two irreducible components in $(\sX_{\SL_2,\mu_2})_K$, while $f^{-1}(D)$ is irreducible. By the above discussion, we therefore have $\mu(\widehat{\cO}_{f^{-1}(\sC),x}/\pi) = 2$. Also, the map $(f^{-1}(\sC)_k)_{\rm{red}} \to (\sC_k)_{\rm{red}}$ is a $\mu_2$-torsor, so Lemma~\ref{lemma:preimage-of-completion} again implies that
	\[
	\deg(\Spec (\widehat{\cO}_{f^{-1}(\sC),x}/\pi)_{\rm{red}} \to \Spec (\widehat{\cO}_{\sC,y}/\pi)_{\rm{red}}) = 2.
	\]
	Note that $\Spec (\widehat{\cO}_{f^{-1}(\sC),x}/\pi)_{\rm{red}} \to \Spec (\widehat{\cO}_{\sC,y}/\pi)_{\rm{red}}$ is a homeomorphism, so $\Spec (\widehat{\cO}_{\sC,y}/\pi)_{\rm{red}}$ is integral and Lemma~\ref{lemma:multiplicity-behavior} shows that $\mu(\widehat{\cO}_{\sC,y}/\pi) = 2$. Since $\Spec \widehat{\cO}_{f^{-1}(\sC), x}$ has two irreducible components which are permuted by the $\mu_2$-action, it follows that $\Spec \widehat{\cO}_{\sC,y}$ is irreducible and thus generic local deformation rings are not formally smooth in this case.
\end{example}

One can perform a completely similar calculation in general when $G$ is of adjoint type and the universal cover is not \'etale, but it is not clear to the author even in the case $G = \SL_2$ ($\ell = 2$) how to describe the multiplicities in the complete local rings at $\Sigma$-(regular unipotent) points in a conceptual way. This is the reason we have not formulated a general statement concerning multiplicities for Galois deformation rings.

\bibliography{bibliography}

\end{document}